\newtheorem{firstthm}{Proposition}[section] 
\newtheorem{thm}[firstthm]{Theorem}
\newtheorem{lemma}[firstthm]{Lemma}
\newtheorem{claim}[firstthm]{Claim}
\newtheorem{coro}[firstthm]{Corollary} 
\newtheorem{prop}[firstthm]{Proposition}
\newtheorem{conj}[firstthm]{Conjecture}
\theoremstyle{definition}
\newtheorem{defin}[firstthm]{Definition}
\newtheorem{construct}[firstthm]{Construction}
\newtheorem{setup}[firstthm]{Setup}
\newtheorem{rem}[firstthm]{Remark}
\newcommand{\sm}{\setminus}
\newcommand{\eps}{\varepsilon}
\newcommand{\Z}{{\mathbb Z}}
\newcommand{\Prob}{{\mathbb P}}
\newcommand{\Exp}{{\mathbb E}}
\newcommand{\ab}{{\bf a}}
\newcommand{\yb}{{\bf y}}
\newcommand{\0}{{\bf 0}} 
\newcommand{\1}{{\bf 1}}
\newcommand{\Part}{\mathcal{P}}
\newcommand{\Qart}{\mathcal{Q}} 
\newcommand{\Sart}{{\mathcal S}}
\newcommand{\hPart}{{\hat{\mathcal{P}}}}
\newcommand{\hQart}{{\hat{\mathcal{Q}}}}
\newcommand{\R}{\mathbb{R}}
\newcommand{\db}{{\bf d}}
\newcommand{\ub}{{\bf u}} 
\newcommand{\vb}{{\bf v}}
\newcommand{\ib}{{\bf i}}
\newcommand{\jb}{{\bf j}}
\newcommand{\xb}{{\bf x}}
\newcommand{\nb}{{\bf n}}
\newcommand{\PM}{\mathrm{{\bf PM}}}
\newcommand{\mc}[1]{\mathcal{#1}}
\newcommand{\mb}[1]{\mathbb{#1}}
\newcommand{\brac}[1]{\left( #1 \right)}
\newcommand{\bracc}[1]{\left\{ #1 \right\}}
\newcommand{\bgen}[1]{\left\langle #1 \right\rangle}
\newcommand{\sub}{\subseteq}
\newcommand{\es}{\emptyset}
\newcommand{\N}{{\mathbb N}}
\date{\today}
\title{Polynomial-time perfect matchings in dense hypergraphs}
\author{Peter Keevash, Fiachra Knox and Richard Mycroft}
\thanks{Research supported in part by ERC grant 239696 and EPSRC grant EP/G056730/1.}
\begin{document}
\vspace*{-0.8cm}
\begin{abstract}
Let $H$ be a $k$-graph on $n$ vertices, with minimum codegree at least $n/k + cn$ for some fixed $c > 0$. In this paper we construct a polynomial-time algorithm which finds either a perfect matching in $H$ or a certificate that none exists. This essentially solves a problem of Karpi\'nski, Ruci\'nski and Szyma\'nska; Szyma\'nska previously showed that this problem is NP-hard for a minimum codegree of $n/k - cn$. Our algorithm relies on a theoretical result of independent interest, in which we characterise any such hypergraph with no perfect matching using a family of lattice-based constructions. 
\end{abstract}
\maketitle
\vspace*{-0.6cm}

\section{Introduction} \label{IntroSec}
The question of whether a given $k$-uniform hypergraph (or $k$-graph) $H$ contains a perfect matching (i.e.\ a partition of the vertex set into edges), while simple to state, is one of the key questions of combinatorics. 
In the graph case $k = 2$, Tutte's Theorem~\cite{Tutte47} gives necessary and sufficient conditions for $H$ to contain a perfect matching, and Edmonds' Algorithm~\cite{edmonds} finds such a matching in polynomial time.
However, for $k \geq 3$ this problem was one of Karp's celebrated 21 NP-complete problems~\cite{Karp72}. 
Results for perfect matchings in hypergraphs have many potential practical applications;
one example which has garnered interest in recent years is the `Santa Claus' allocation problem (see~\cite{AFS08}).
Since the general problem is intractable provided P $\neq$ NP, it is natural to seek conditions on $H$ which render the problem tractable or even guarantee that a perfect matching exists. In recent years a substantial amount of progress has been made in this direction.
One well-studied class of such conditions are minimum degree conditions. This paper provides an algorithm that essentially eliminates the hardness gap between the sparse and dense cases for the most-studied of these conditions.

\subsection{Minimum degree conditions}
Suppose that $H$ has $n$ vertices and that $k$ divides $n$ (we assume this throughout, since it is a necessary condition for $H$ to contain a perfect matching).
In the graph case, a simple argument shows that a minimum degree of $n/2$ guarantees a perfect matching. 
Indeed, Dirac's Theorem~\cite{dirac} states that this condition even guarantees that $H$ contains a Hamilton cycle.
For $k \geq 3$, there are several natural definitions of the minimum degree of $H$.
Indeed, for any set $A \subseteq V(H)$, the \emph{degree} $d(A)=d_H(A)$ of $A$ is the number of edges of $H$ containing $A$.
Then for any $1 \leq \ell \leq k-1$, the \emph{minimum $\ell$-degree} $\delta_{\ell}(H)$ of $H$ is the minimum of $d(A)$ over all subsets $A \subseteq V(H)$ of size $\ell$. Two cases have received particular attention: the minimum $1$-degree $\delta_{1}(H)$ is also known as the \emph{minimum vertex degree} of $H$, and the minimum $(k-1)$-degree $\delta_{k-1}(H)$ as the \emph{minimum codegree} of $H$.

For sufficiently large $n$, R\"odl, Ruci\'nski and Szemer\'edi~\cite{RRS09} determined the minimum codegree which guarantees a perfect matching in $H$ to be exactly $n/2 - k + c$, where $c \in \{1.5, 2, 2.5, 3\}$ is an explicitly given function of $n$ and $k$.
They also showed that the condition $\delta_{k-1}(H) \geq n/k + O(\log n)$ is sufficient to guarantee a matching covering all but
$k$ vertices of $H$, i.e.\ one edge away from a perfect matching; their conjecture that $\delta_{k-1}(H) \geq n/k$ suffices for
this was recently proved by Han~\cite{H1}. This provides a sharp contrast to the graph case, where a minimum degree of $\delta(G) \geq n/2 - \eps n$ only guarantees the existence of a matching covering at least $n - 2\eps n$ vertices.
There is a large literature on minimum degree conditions for perfect matchings in hypergraphs, see e.g.~\cite{AGS09, Alon+, CK, DH, HPS, KRS, KM11p, Khp1, Khp2, KO, KOT, LMp, LM, MR2, Pikhurko, RRS09, Szy, TZ12, TZp} and the survey by R\"odl and Ruci\'nski~\cite{RR10} for details.

Let $\PM(k, \delta)$ be the decision problem of determining whether a $k$-graph $H$ with $\delta_{k-1}(H) \geq \delta n$ contains a perfect matching.
Given the result of~\cite{RRS09}, a natural question to ask is the following: 
For which values of $\delta$ can $\PM(k, \delta)$ be decided in polynomial time? 
This holds for $\PM(k, 1/2)$ by the main result of~\cite{RRS09}.
On the other hand, $\PM(k, 0)$ includes no degree restriction on $H$ at all,
so is NP-complete by the result of Karp~\cite{Karp72}.
Szyma\'nska~\cite{Szy09, Szy} proved that for $\delta < 1/k$ 
the problem $\PM(k, 0)$ admits a polynomial-time reduction to $\PM(k, \delta)$ and hence $\PM(k, \delta)$ is also NP-complete,
while Karpi\'nski, Ruci\'nski and Szyma\'nska~\cite{KRS} showed that there exists $\eps > 0$ such that $\PM(k, 1/2-\eps)$ is in P.
This left a hardness gap for $\PM(k, \delta)$ when $\delta \in \left[1/k, 1/2 - \eps\right)$.

In this paper we provide an algorithm which eliminates this hardness gap almost entirely. 
Moreover, it not only solves the decision problem, but also provides a perfect matching or a certificate that none exists.

\begin{thm} \label{main}
Fix $k \geq 3$ and $\gamma > 0$. 
Then there is an algorithm with running time $O(n^{3k^2 - 7k + 1})$, which given any $k$-graph $H$ on $n$ vertices with $\delta_{k-1}(H) \geq (1/k + \gamma)n$, finds either a perfect matching or a certificate that no perfect matching exists. 
\end{thm}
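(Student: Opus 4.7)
The plan is to combine a structural dichotomy with an algorithmic version of the absorbing method. The structural theorem to be proved should say: every $k$-graph $H$ with $\delta_{k-1}(H)\ge(1/k+\gamma)n$ and no perfect matching is close, up to a small number of exceptional edges and vertices, to a \emph{lattice-based construction}, meaning a vertex partition $\Part=(V_1,\ldots,V_d)$ with $d\le k$ such that the edge index vectors $(|e\cap V_1|,\ldots,|e\cap V_d|)$ generate a proper sublattice $L\subset\Z^d$ with $(|V_1|,\ldots,|V_d|)\notin L$. Such a partition is a genuine obstruction, since every perfect matching decomposes into edges whose index vectors sum to $(|V_1|,\ldots,|V_d|)$, so $\Part$ itself would serve as a certificate of non-existence.

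With this structural tool, the algorithm has two regimes. In the \emph{non-extremal} regime (far from every lattice construction), I would use a polynomial-time implementation of the absorbing method: first greedily construct a small absorbing set $A\subset V(H)$ whose absorbing property holds because sufficiently many absorbing gadgets exist for each $k$-tuple of vertices (the abundance of such gadgets being guaranteed precisely by non-extremality, and a random selection being derandomisable by the method of conditional expectations); then apply Han's result, whose proof is algorithmic, to produce a matching on $V(H)\setminus A$ covering all but $k-1$ vertices; finally use $A$ to absorb the leftovers. In the \emph{extremal} regime, I would enumerate a polynomial-size family of candidate partitions, derived for instance by clustering vertices according to their codegree signatures on a bounded random test set (of which there are only constantly many possible types), and for each candidate compute the generated lattice $L$ and check whether it contains the part-size vector. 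If not, output $\Part$ as a certificate; if yes, modify the absorbing step to exploit the exceptional edges that cross the would-be lattice structure and produce a perfect matching using them.

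The main obstacle will be the algorithmic extraction of the lattice partition, and more delicately, the borderline case where $H$ is close to a lattice construction yet a small number of exceptional edges do allow a perfect matching. Handling this case requires a robust effective form of the structural theorem, which either produces a certifying partition or guides the absorbing procedure to use the exceptional edges, all within the $O(n^{3k^2-7k+1})$ budget. I expect the bulk of the technical work to lie in turning the lattice obstruction into a testable polynomial-time certificate and in adapting the absorbing gadgets to the almost-extremal setting where only a handful of edges violate the partition; the non-extremal case should follow from a careful but essentially standard derandomisation of known existence proofs.
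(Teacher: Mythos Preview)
Your proposal has a genuine gap in the certificate notion. You propose that the certificate be a partition $\Part$ with $\ib_\Part(V)\notin L_\Part(H)$, and your structural theorem would assert that every $H$ as in the hypothesis with no perfect matching admits such a partition. For $k=3$ this is correct (it is exactly Theorem~\ref{PMNeccSuff3case}), but for $k\ge 4$ it is false: Construction~\ref{nopm} gives a $k$-graph $H$ with $\delta_{k-1}(H)\ge n/3-k-1$ and no perfect matching, yet $\ib_\Part(V)\in L_\Part(H)$ for \emph{every} partition $\Part$. On this input your algorithm would find no certifying partition, enter the ``find a matching'' branch, and necessarily fail. The issue is that a single exceptional vertex can force $\ib_\Part(V)\in L_\Part(H)$ for all $\Part$ while still not providing enough disjoint exceptional edges to repair the underlying divisibility obstruction.

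The paper's remedy is to replace your certificate by a \emph{$C$-far full pair} $(\Part,L)$: here $L$ is a prescribed full lattice (not necessarily all of $L_\Part(H)$), a bounded set of vertices meets every edge whose index lies outside $L$, and the pair is \emph{insoluble}, meaning no matching of size at most $|\Part|-1$ has $\ib_\Part(V\setminus V(M))\in L$. Theorem~\ref{PMNeccSuff} shows that $H$ has a perfect matching if and only if no such $2k(k-3)$-certificate exists, and the search over such pairs is what gives the running time. Your ``borderline case'' analysis is exactly where this subtlety bites, and it cannot be resolved by ``exploiting exceptional edges'' without first formulating the right solubility condition.

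Separately, the paper's method for actually \emph{finding} the matching is quite different from absorbing: it repeatedly invokes the decision procedure to locate a safe edge $e$ (one whose removal leaves a $k$-graph that still has a perfect matching), removes $V(e)$, and iterates. The difficulty that the minimum codegree is not preserved under such removals is handled not by derandomised absorbers but by tracking a potential function $\sum_A t_A^2$ on codegree deficiencies (Lemma~\ref{FindPMLemma}); this is what keeps the hypotheses of the decision procedure valid throughout and yields the stated running time.
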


A preliminary version of this algorithm (with a slower running time) appeared as an extended abstract \cite{KKMabs}.

\subsection{Lattices and divisibility barriers}

Theorem~\ref{main} relies on a result of Keevash and Mycroft~\cite{KM11p} giving fairly general sufficient conditions which ensure a perfect matching in a $k$-graph. 
In this context, their result essentially states that if $H$ is a $k$-graph on $n$ vertices, and $\delta_{k-1}(H) \geq n/k + o(n)$, then $H$ either contains a perfect matching or is close to one of a family of lattice-based constructions termed `divisibility barriers'. 
These constructions play a key role in this paper, so we now describe them in some detail.

The simplest example of a divisibility barrier is the following construction, given as one of the two extremal examples in~\cite{RRS09}.

\begin{construct} \label{RRSconstruct}
Let $A$ and $B$ be disjoint sets such that $|A|$ is odd and $|A\cup B| = n$, and let $H$ be the $k$-graph on $A \cup B$ whose edges are all $k$-sets which intersect $A$ in an even number of vertices.
\end{construct}

We consider a partition to include an implicit order on its parts.
To describe divisibility barriers in general, we make the following definition.

\begin{defin} \label{def:lattices0} Let $H$ be a $k$-graph and let $\Part$ be a partition of $V(H)$ into $d$ parts. 
Then the \emph{index vector} $\ib_\Part(S) \in \Z^d$ of a subset $S \subseteq V(H)$ with respect to $\Part$ is the vector whose coordinates are the sizes of the intersections of $S$ with each part of $\Part$, i.e.\ $\ib_\Part(S)_X = |S \cap X|$ for $X \in \Part$. Further,
\begin{enumerate}[(i)]
\item $I_\Part(H)$ denotes the set of index vectors $\ib_\Part(e)$ of edges $e \in H$, and
\item $L_\Part(H)$ denotes the lattice (i.e.\ additive subgroup) in $\Z^d$ generated by $I_\Part(H)$.
\end{enumerate}
\end{defin}

A \emph{divisibility barrier} is a $k$-graph $H$ which admits a partition $\Part$ of its vertex set $V$ such that $\ib_\Part(V) \notin L_\Part(H)$; the next proposition shows that such an $H$ contains no perfect matching.
To see that this generalises Construction~\ref{RRSconstruct}, let $\Part$ be the partition into parts $A$ and $B$; then $L_\Part(H)$ is the lattice of vectors $(x, y)$ in $\Z^2$ for which $x$ is even, and $|A|$ being odd implies that $\ib_\Part(V) \notin L_\Part(H)$.

\begin{prop} \label{divbarrier}
Let $H$ be a $k$-graph with vertex set $V$. If there is a partition $\Part$ of $V$ 
with $\ib_\Part(V) \notin L_\Part(H)$ then $H$ does not contain a perfect matching.
\end{prop}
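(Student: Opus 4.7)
The proof should be a short direct argument by contradiction, essentially unpacking the definitions, so I would not expect any real obstacle. Here is the plan.

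The plan is to suppose for contradiction that $H$ does contain a perfect matching $M = \{e_1, \dots, e_{n/k}\}$, and derive that $\ib_\Part(V) \in L_\Part(H)$. The key observation is that the index vector map $\ib_\Part(\cdot)$ is additive on disjoint unions: for any disjoint $S, T \sub V$ and any part $X \in \Part$, $|(S \cup T) \cap X| = |S \cap X| + |T \cap X|$, so $\ib_\Part(S \cup T) = \ib_\Part(S) + \ib_\Part(T)$. Since $M$ partitions $V$, iterating this gives $\ib_\Part(V) = \sum_{i=1}^{n/k} \ib_\Part(e_i)$.

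Now each summand $\ib_\Part(e_i)$ lies in $I_\Part(H)$ by definition, hence in $L_\Part(H)$, and $L_\Part(H)$ is by definition closed under addition (being an additive subgroup of $\Z^d$). Therefore the sum $\ib_\Part(V)$ lies in $L_\Part(H)$, contradicting the hypothesis $\ib_\Part(V) \notin L_\Part(H)$. This completes the proof.

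The only conceivable subtlety is making sure one invokes $L_\Part$ rather than $I_\Part$ at the right step — a perfect matching expresses $\ib_\Part(V)$ as a nonnegative integer combination of vectors in $I_\Part(H)$, which is in fact stronger than lying in the lattice they generate, so the containment $\ib_\Part(V) \in L_\Part(H)$ follows a fortiori. There is no real hard part here; the statement is a definitional necessary condition, and its value lies not in the proof but in motivating the subsequent structural characterisation via divisibility barriers.
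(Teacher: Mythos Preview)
Your proposal is correct and essentially identical to the paper's proof: both observe that for any matching $M$ one has $\ib_\Part(V(M)) = \sum_{e \in M} \ib_\Part(e) \in L_\Part(H)$, so if $\ib_\Part(V) \notin L_\Part(H)$ then $M$ cannot be perfect. The paper phrases this as a direct contrapositive rather than a contradiction, but the content is the same.
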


\begin{proof}
Suppose $M$ is a matching in $H$. Then $\ib_\Part(V(M)) = \sum_{e \in M} \ib_\Part(e) \in L_\Part(H)$.
But $\ib_\Part(V) \notin L_\Part(H)$, so $V(M) \ne V$, i.e.\ $M$ is not perfect.
\end{proof}

A special case of the main theoretical result of this paper is the following theorem, which states that the converse of Proposition~\ref{divbarrier} holds for sufficiently large $3$-graphs as in Theorem~\ref{main}. Thus we obtain an essentially best-possible strong stability `Andrasfai-Erd\H{o}s-Sos analogue' for the result of R\"odl, Ruci\'nski and Szemer\'edi~\cite{RRS09} in the case $k=3$.

\begin{thm} \label{PMNeccSuff3case} For any $\gamma > 0$ there exists $n_0 = n_0(\gamma)$ such that the following statement holds. Let $H$ be a $3$-graph on $n \geq n_0$ vertices, such that $3$ divides $n$ and $\delta_{2}(H) \geq (1/3 + \gamma)n$, and suppose that $H$ does not contain a perfect matching. Then there is a subset $A \subseteq V(H)$ such that $|A|$ is odd but every edge of $H$ intersects $A$ in an even number of vertices.
\end{thm}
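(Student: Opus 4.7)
The strategy is to invoke the main structural result of Keevash and Mycroft~\cite{KM11p}, which implies that any $3$-graph $H$ satisfying our codegree hypothesis either contains a perfect matching or admits a divisibility-barrier partition. Since $H$ has no perfect matching, we obtain a partition $\Part = (V_1, \ldots, V_d)$ of $V(H)$ into $d \leq d_0(\gamma)$ parts, each of size at least $\eta n$ for some $\eta = \eta(\gamma) > 0$, with $\ib_\Part(V) \notin L_\Part(H)$. It then suffices to prove $d = 2$ and to analyse the resulting lattice.

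The case $d = 1$ is immediate: $L_\Part(H) = 3\Z$ and $n \in 3\Z$. For $d \geq 3$ we aim for a contradiction by showing $L_\Part(H) \supseteq \{v \in \Z^d : \sum_l v_l \equiv 0 \pmod 3\}$; granted this, $\ib_\Part(V) \in L_\Part(H)$, contradicting the barrier. Since $L_\Part(H)$ contains at least one edge vector of coordinate sum $3$, it suffices to show $e_l - e_m \in L_\Part(H)$ for every pair of part indices $l \neq m$, where $e_s$ is the $s$-th basis vector of $\Z^d$. To produce $e_l - e_m$, it suffices to exhibit two edges of the form $\{a, b, c\}$, $\{a', b, c\}$ with $a \in V_l$, $a' \in V_m$, i.e.\ a pair $\{b, c\}$ whose common neighbourhood meets both $V_l$ and $V_m$. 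A double-counting argument using the codegree condition shows at least $(1/3+\gamma)\binom{n}{2}$ pairs have common neighbourhood hitting any given part $V_s$; further exploiting the linear sizes of the parts and $d \geq 3$, the auxiliary graph on parts with $\{l, m\}$ an edge iff some pair's common neighbourhood meets both $V_l$ and $V_m$ is connected, and every $e_l - e_m$ then follows by telescoping along a path.

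Assuming $d = 2$, write $\Part = (V_1, V_2)$ so $I_\Part(H) \subseteq \{(3,0), (2,1), (1,2), (0,3)\}$. A case analysis over subsets shows that the barrier $\ib_\Part(V) \notin L_\Part(H)$ forces all vectors in $I_\Part(H)$ to share the same first-coordinate parity: either $I_\Part(H) \subseteq \{(0,3), (2,1)\}$ (all even) or $I_\Part(H) \subseteq \{(3,0), (1,2)\}$ (all odd). Any mixed case yields $L_\Part(H) = \{v \in \Z^2 : v_1 + v_2 \equiv 0 \pmod 3\}$ by direct lattice computation, so $(|V_1|, |V_2|) \in L_\Part(H)$ since $3 \mid n$. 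The degenerate case $I_\Part(H) \subseteq \{(3,0), (0,3)\}$ is excluded because every cross-pair $\{b, c\}$ with $b \in V_1$, $c \in V_2$ would then have codegree $0$, contradicting the hypothesis.

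In the all-even case, $L_\Part(H) = \Z(0,3) + \Z(2,1) = \{(2b, 3a+b) : a, b \in \Z\}$ and $(|V_1|, |V_2|) \in L_\Part(H)$ iff $|V_1|$ is even, so the barrier forces $|V_1|$ to be odd; taking $A := V_1$ yields the required subset, since every edge then meets $V_1$ in $0$ or $2$ vertices. The all-odd case is symmetric with $A := V_2$. The main obstacle is the connectivity step for $d \geq 3$: extracting a witnessing pair for every pair of parts requires more than the naive double-counting (which is tight around $\gamma = 1/6$), and a finer argument drawing on the robustness properties of the Keevash--Mycroft partition is needed to handle all $\gamma > 0$ uniformly.
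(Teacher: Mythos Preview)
Your proposal has a genuine gap at its starting point. The Keevash--Mycroft result does \emph{not} produce a partition $\Part$ with $\ib_\Part(V) \notin L_\Part(H)$ for the exact edge-lattice of $H$. What it yields (see Theorem~\ref{HypergraphMatching} combined with Lemma~\ref{PrunePartiteComplex}) is a partition $\Part$ for which the \emph{robust} lattice $L^\mu_\Part(H)$ is incomplete and transferral-free; equivalently, one may delete $o(n^3)$ edges to obtain $H' \subseteq H$ with $\ib_\Part(V) \notin L_\Part(H')$. Your subsequent $d=2$ case analysis, which concludes that every edge of $H$ meets $A$ in an even number of vertices, only establishes this for edges of $H'$. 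The remaining $o(n^3)$ edges of $H \setminus H'$ are uncontrolled, and a single edge with odd intersection would destroy the conclusion. Upgrading from ``almost all edges'' to ``all edges'' is precisely the content of the present paper: one must show that if even one such bad edge exists, it can be used (together with the approximate structure) to build a perfect matching. This is handled here via the machinery of robust maximality, full pairs, and solubility; for $k=3$ the argument simplifies (as sketched in Section~\ref{3CaseSketch}) but is not vacuous.

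A secondary remark: the difficulty you flag for $d \geq 3$ is a red herring. The Keevash--Mycroft partition comes with parts of size at least $\delta_{k-1}(H) - o(n) > n/3$ (cf.\ Proposition~\ref{NonParproperties}(\ref{NonParproperties:final})), so at most two parts fit and $d \leq 2$ is immediate. There is no connectivity argument to make. The obstacle you should be worrying about is not how to rule out $d \geq 3$, but how to pass from the robust lattice $L^\mu_\Part(H)$ to the exact lattice $L_\Part(H)$ when $d=2$.
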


Theorem~\ref{PMNeccSuff3case} can be used to decide $\PM(3,1/3+\gamma)$, as the existence of a subset $A$ as in the theorem can be checked using (simpler versions of) the algorithms in Section~2. However, the case $k=3$ is particularly simple because there is only one maximal divisibility barrier; for $k \geq 4$, the next construction shows that the converse of Proposition~\ref{divbarrier} does not hold for general $k$-graphs as in Theorem~\ref{main}.

\begin{construct} \label{nopm}
Let $A$, $B$ and $C$ be disjoint sets of vertices with $|A\cup B \cup C| = n$, $|A|,|B|,|C| = n/3 \pm 2$ and $|A| = |B| + 2$. Fix some vertex $x \in A$, and let $H$ be the $k$-graph with vertex set $A \cup B \cup C$ whose edges are
\begin{enumerate}
\item any $k$-set $e$ with $|e \cap A| = |e \cap B|$ modulo 3, and
\item any $k$-set $(x, z_1, \dots, z_{k-1})$ with $z_1, \dots, z_{k-1}$ in $C$.
\end{enumerate}
\end{construct}

Construction~\ref{nopm} satisfies $\delta_{k-1}(H) \geq n/3 - k - 1$, so if $k \geq 4$ then $H$ meets the degree condition of
Theorem~\ref{main}. Moreover, for any partition $\Part$ of $V(H)$ we have $\ib_\Part(V(H)) \in L_\Part(H)$. 
To see this, suppose on the contrary that it is false, and fix a counterexample $\Part$ with as few parts as possible.
Then $L_\Part(H)$ cannot contain any transferral (see Definition \ref{def:full}),
as otherwise we could merge the corresponding parts of $\Part$ to obtain a counterexample with fewer parts. 
It follows that $A$, $B$ and $C$ must each be contained within some part of $\Part$. 
We consider the case $\Part=(A,B,C)$ and omit the easy cases where $\Part$ has two parts 
(clearly the partition with one part is not a counterexample, as $k \mid n$).
Write $\ib_\Part(V(H)) = (x,y,z)$, where $x + y + z = n$.
Note that $(0,0,k)$, $(1,1,k-2)$ and $(1,0,k-1)$ are all in $I_\Part(H)$.
Then $(x,y,z) = y(1,1,k-2) + (x-y)(1,0,k-1) + (n/k-x)(0,0,k) \in L_\Part(H)$,
so $\Part$ is not a counterexample.

However, $H$ does not contain a perfect matching. To see this, let $M$ be a matching in $H$, and note that any edge $e \in M$ has
$|e \cap A| = |e \cap B|$ modulo 3, except for at most one edge of $M$ which has $|e \cap A| = |e \cap B| + 1$ modulo 3. 
Then $\ib_\Part(V(M))_1 - \ib_\Part(V(M))_2 \in \{0, 1\}$ modulo 3, 
whereas $\ib_\Part(V(H))_1 - \ib_\Part(V(H))_2 = |A| - |B| = 2$,
so $V(M) \neq V(H)$, that is, $M$ is not perfect. 

\subsection{Approximate divisibility barriers}

Our starting point will be (a special case of) a result of Keevash and Mycroft~\cite{KM11p} on approximate divisibility barriers. First we introduce a less restrictive degree assumption, which follows from the assumption in Theorem~\ref{main} when $\gamma>0$ is small. (The reason for doing so will become clear at the end of Section 2.) 
We will use the following setup throughout the paper.

\begin{setup} \label{setup}
Suppose that $k \ge 3$, that $1/n_0 \ll \eps \ll \gamma \ll 1/k$ and that $n \ge n_0$ satisfies $k \mid n$.

Let $H$ be a $k$-graph on $n$ vertices such that
\begin{itemize}
\item[(deg)]  $\delta_1(H) \geq \gamma n^{k-1}$, and
\item[(codeg)] at most $\eps n^{k-1}$ $(k-1)$-sets $A \sub V(H)$ have $d_H(A) < (1/k + \gamma)n$.
\end{itemize}
\end{setup}

A result from~\cite{KM11p} (stated here as Theorem~\ref{HypergraphMatching}), combined with Lemma~\ref{PrunePartiteComplex}, implies that under Setup~\ref{setup}, if $H$ does not contain a perfect matching then we can delete $o(n^k)$ edges from $H$ to obtain a subgraph $H'$ for which there exists a partition $\Part$ of $V(H')$ such that $\ib_\Part(V(H')) \notin L_\Part(H')$. Thus if $H$ is far from a divisibility barrier then it has a perfect matching. On the other hand, if $H$ is itself a divisibility barrier then Proposition~\ref{divbarrier} implies that $H$ does not have a perfect matching. However, our algorithm cannot search directly for such partitions $\Part$, since the number of edges to be deleted, while small compared to $n^k$, is still large from a computational perspective. 

The main theoretical contribution of this paper is to fill the gap between these cases,
by giving a condition for the existence of a perfect matching under Setup~\ref{setup}
that is necessary and sufficient, and also efficiently checkable.

\subsection{Definitions}
Before giving the statement we require several definitions. 
Firstly, it will be sufficient to consider the following special classes of lattices.
To motivate this definition, we remark that all of our lattices will be edge-lattices,
any incomplete lattice can be simplified to one that is transferral-free,
and our assumptions on $H$ will imply that if its edge-lattice with respect to some partition
is transferral-free then it is full.
Note also that our definitions depend on $k$,
but we consider this to be fixed throughout the paper.
We write $|\Part|$ for the number of parts of a partition $\Part$.

\begin{defin} \label{def:full}
Suppose $L$ is a lattice in $\Z^d$.
\begin{enumerate}[(i)]
\item We say that $\ib \in \Z^d$ is an \emph{$r$-vector} if it has non-negative co-ordinates that sum to $r$.
We write $\ub_j$ for the `unit' $1$-vector that is $1$ in co-ordinate $j$ and $0$ in all other co-ordinates.
\item We say that $L$ is an \emph{edge-lattice} if it is generated by a set of $k$-vectors.
\item We write $L_{\max}^d$ for the lattice generated by all $k$-vectors.
\item We say that $L$ is \emph{complete} if $L = L_{\max}^d$, otherwise it is \emph{incomplete}.
\item A \emph{transferral} is a non-zero difference $\ub_i-\ub_j$ of $1$-vectors.
\item We say that $L$ is \emph{transferral-free} if it does not contain any transferral.
\item We say that a set $I$ of $k$-vectors is \emph{full} if 
for every $(k-1)$-vector $\vb$ there is some $i \in [d]$ such that $\vb + \ub_i \in I$.
\item We say that $L$ is \emph{full} if it contains a full set of $k$-vectors and is transferral-free.
\end{enumerate}
\end{defin}

Note that $L_{\max}^d$ can be equivalently defined as the lattice of all vectors in $\Z^d$ whose coordinates sum to a multiple of $k$.
Often the dimensions of $\Z^d$ will correspond to parts of a partition $\Part$ of a set; in this case we refer to $\Z^\Part$ and $L_{\max}^\Part$ instead of $\Z^d$ and $L_{\max}^d$.

Now we come to the structures that appear in our characterisation.
To motivate this definition, 
we remark that the first step in our strategy for finding a perfect matching 
will be to identify a `canonical' partition and lattice,
then delete the vertices covered by a small matching so that the index of the remaining set is on the lattice.

\begin{defin} \label{Certificate} Let $H$ be a $k$-graph. 
\begin{enumerate}[(i)]
\item A \emph{full pair} $(\Part, L)$ for $H$ consists of a partition $\Part$ of $V(H)$ into $d \le k-1$ parts 
and a full edge-lattice $L \subseteq \Z^d$ (possibly distinct from $L_\Part(H)$).
\item A (possibly empty) matching $M$ of size at most $|\Part| - 1$ is a \emph{solution} for $(\Part, L)$ (in $H$)
if $\ib_\Part(V(H) \backslash V(M)) \in L$; we say that $(\Part, L)$ is \emph{soluble} if it has a solution, otherwise \emph{insoluble}.
\item A full pair $(\Part, L)$ is \emph{$C$-far} for $H$ if some set of $C$ vertices intersects every edge $e \in H$ with $\ib_\Part(e) \notin L$.
\item A \emph{$C$-certificate} for $H$ is an insoluble $C$-far full pair for $H$.
\end{enumerate}
\end{defin}

We will see later that there is no loss of generality in considering partitions into at most $k-1$ parts 
because of our codegree assumption, or in requiring solutions to have size at most $|\Part| - 1$, 
as if there is any matching $M$ such that $\ib_\Part(V(H) \backslash V(M)) \in L$ then there is one with $|M| \le |\Part| - 1$.
In particular, if $H$ has a perfect matching then any full pair is soluble, so there is no $C$-certificate for $H$ for any $C \geq 0$.

\subsection{Main structural result} 
Now we can state our structural characterisation 
for the perfect matching problem under Setup~\ref{setup}.

\begin{thm} \label{PMNeccSuff}
Under Setup~\ref{setup}, $H$ has a perfect matching if and only if there is no $2k(k-3)$-certificate for $H$.
\end{thm}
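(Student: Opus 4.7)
The forward direction is quick. If $H$ admits a perfect matching $M^*$ and $(\Part, L)$ is any full pair, then $\ib_\Part(V(H) \setminus V(M^*)) = \0 \in L$, so at least one matching satisfies the index condition required of a solution. The claim recorded just after Definition~\ref{Certificate}---that whenever such a matching exists, one can be found of size at most $|\Part|-1$---immediately yields solubility of $(\Part, L)$, so no $C$-certificate can exist.

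The backward direction I would prove by contraposition. Assuming $H$ has no perfect matching, Theorem~\ref{HypergraphMatching} combined with Lemma~\ref{PrunePartiteComplex} produces a partition $\Part_0$ and a subgraph $H_0 \subseteq H$ with $|H \setminus H_0| = o(n^k)$ and $\ib_{\Part_0}(V(H)) \notin L_{\Part_0}(H_0)$. I would then refine this into a full pair $(\Part, L)$ by repeatedly merging any two parts $i,j$ for which $\ub_i - \ub_j$ lies in the current lattice; each merge preserves the edge-lattice property and keeps the index vector of $V(H)$ off the updated lattice, while strictly reducing the number of parts. The codegree hypothesis of Setup~\ref{setup} then forces $L$ to contain a full set of $k$-vectors, and also forces $|\Part| \le k-1$: on $k$ parts, a transferral-free lattice containing a full set would coincide with $L_{\max}^k$, which contains every $k$-vector and in particular $\ib_\Part(V(H))$, a contradiction.

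The substantive obstacle is establishing the $C$-far condition with the sharp constant $C = 2k(k-3)$. Call an edge $e \in H$ \emph{bad} if $\ib_\Part(e) \notin L$. The bound $|H \setminus H_0| = o(n^k)$ is far too weak to force all bad edges onto an $O(1)$ vertex set, so the concentration has to come from a saturation argument. If a vertex $v$ were incident to many bad edges realising many distinct cosets modulo $L$, one would combine them with typical codegree extensions to either (i) absorb a new $k$-vector into $L$ while keeping it full and transferral-free, or (ii) produce a fresh transferral, triggering further merging of parts. Iterating these absorption moves until neither applies bounds the set of vertices meeting any bad edge; the explicit constant $2k(k-3)$ arises from combining $|\Part| \le k-1$ with the combinatorial cost of each $(k-1)$-vector extension.

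Insolubility is then a short verification. If some matching $M$ of size at most $|\Part|-1 \le k-2$ satisfied $\ib_\Part(V(H) \setminus V(M)) \in L$, then $\sum_{e \in M} \ib_\Part(e) \equiv \ib_\Part(V(H)) \pmod{L}$, a non-trivial coset. Since $|V(M)| \le k(k-2)$, at least one edge of $M$ must be bad, and a finite case analysis---splitting on how many edges of $M$ are bad, invoking transferral-freeness of $L$, and using the bound on the bad-edge support from the previous step---rules out every possibility, completing the proof.
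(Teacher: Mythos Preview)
Your forward direction is fine and matches the paper. The backward direction, however, has two genuine gaps, and the overall architecture diverges from the paper in a way that cannot be easily repaired.

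\textbf{Gap 1: the saturation argument is not an argument.} You correctly observe that $|H \setminus H_0| = o(n^k)$ from Theorem~\ref{HypergraphMatching} is far too weak to concentrate bad edges on $O(1)$ vertices. But your proposed fix---absorb bad index vectors into $L$ or merge parts, and iterate---does not work as stated. Each absorption may create a transferral, and each merge may push $\ib_\Part(V)$ onto the lattice, destroying the very property you need for insolubility. Nothing you have written controls this. The paper does not attempt to concentrate bad edges at all; instead, given an \emph{arbitrary} insoluble full pair, Lemma~\ref{ImplySoluble} constructs a new $2k(k-3)$-far insoluble pair by merging parts according to a descending chain of \emph{subgroups} of the coset group $G(\Part,L)$. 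The constant $2k(k-3)$ comes from a delicate analysis of subsequence sums in abelian groups (Lemmas~\ref{UnionOfCosets}--\ref{UniqueMaximal} and the numerical Lemma~\ref{Calc}), not from anything resembling codegree extensions.

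\textbf{Gap 2: insolubility does not follow.} Your Step~4 assumes the pair you built is insoluble because $\ib_\Part(V(H)) \notin L$. But that only forces any solution $M$ to contain a bad edge; it does not forbid one. A single bad edge $e$ with $\ib_\Part(e)$ in the same coset as $\ib_\Part(V(H))$ already solves the pair, and nothing in your construction rules this out. The ``finite case analysis'' you invoke has no content. In the paper, insolubility is never verified directly in this way: rather, the argument runs in the \emph{opposite} direction. One assumes every $2k(k-3)$-far full pair is soluble, deduces via Lemma~\ref{ImplySoluble} that \emph{every} full pair is soluble (Theorem~\ref{WeakerPMNS}), then takes a robustly maximal partition $\Part$, uses solubility of $(\Part, L_\Part^\mu(H))$ to remove a small matching $M$, and applies the key Lemma~\ref{NonPartiteLemma} to find a perfect matching in $H \setminus V(M)$. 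Your proposal never invokes Lemma~\ref{NonPartiteLemma} or any substitute for it, and without it there is no bridge from ``soluble pair'' to ``perfect matching''---which is precisely what you would need to derive a contradiction in your contrapositive.
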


We remark that most of the work in proving Theorem~\ref{PMNeccSuff} lies in establishing the ``if'' part of the statement; the ``only if'' part follows easily from the results in Section~\ref{FullSection}.
Our algorithm for the decision problem is essentially an exhaustive search for a $2k(k-3)$-certificate, although we also need to provide an algorithm to efficiently list the partitions $\Part$ that may arise in Definition~\ref{Certificate}.
Thus Theorem~\ref{PMNeccSuff} is required to prove correctness of the algorithm.
We remark that the constant $2k(k-3)$ is within a factor $2$ of being best-possible (see Conjecture~\ref{StrongerPMNS}).
Furthermore, the constant $1/k$ in Setup~\ref{setup} is best-possible, as shown by the `space barrier' construction
(see the concluding remarks for the definition and further discussion).

\subsection{Contents}
In the next section we present the algorithmic details of the results in this introduction. 
That is, we assume Theorem~\ref{PMNeccSuff} and deduce Theorem~\ref{main}. 
The rest of the paper is devoted to the proof of Theorem~\ref{PMNeccSuff}, beginning with Section~\ref{SketchSection} in which we briefly sketch the ideas of the proof and its key lemmas
(Lemmas~\ref{NonPartiteLemma} and~\ref{kPartiteLemma}).
In Section~\ref{PrelimSection} we introduce a number of necessary preliminaries,
including results from~\cite{KM11p}, some convex geometry and well-known probabilistic tools. 
Section~\ref{RobMaxSection} focuses on an important definition, that of `robust maximality', and some of its properties;
this turns out to be the correct notion for the key lemmas.
In Section~\ref{FullSection} we establish various properties of full lattices,
including a characterisation in terms of finite abelian groups.
After these preparations, we prove the key lemmas in Section~\ref{KeyLemmaSection}.
In Section~\ref{deduceThm} we deduce Theorem~\ref{PMNeccSuff} from Lemma~\ref{NonPartiteLemma}
and some additional lemmas on subsequence sums in finite abelian groups.
Section~\ref{DeferredProofsSection} contains some technical proofs which were deferred from earlier sections.
Finally, in Section~\ref{PartiteSection} we state the multipartite versions of our results (omitting the similar proofs)
and make some concluding remarks.

\subsection{Notation and terminology}
A \emph{hypergraph} $H$ consists of a vertex set $V(H)$ and a set $E(H)$ of edges $e \subseteq V(H)$. 
We frequently identify a hypergraph $H$ with its edge set, for example, writing $e \in H$ for $e \in E(H)$ and $|H|$ for $|E(H)|$. A \emph{$k$-uniform} hypergraph, or \emph{$k$-graph}, is a hypergraph in which every edge is a \emph{$k$-set}, that is, a set of size $k$. 
Given a hypergraph $H$ and $A \sub V(H)$, the \emph{neighbourhood} of $A$ in $H$ is $H(A) = \{B \sub V(H) \sm A: A \cup B \in
H\}$. 
Note that $|H(A)|=d_H(A)$ is the degree of $A$ in $H$.

The hypergraph $H[A]$ (as distinct from $H(A)$) is the hypergraph with vertex set $A$ 
whose edges are the edges of $H$ which are contained in $A$.
We use $H-A$ and $H \sm A$ interchangeably to denote the hypergraph obtained from $H$ 
by deleting $A$ and all edges which intersect $A$ (this is identical to $H[V(H) \sm A]$). 
If $H$ is a hypergraph and $H' \sub H$
we say that $H'$ is a \emph{subgraph} of $H$ (we prefer to avoid the terms `subhypergraph' and `sub-$k$-graph').

We use bold font for vectors and normal font for their co-ordinates, e.g.\ $\vb=(v_1,\dots,v_d)$. We write $\0$ and $\1$ for the vectors whose co-ordinates are all zero and one respectively (the dimension of the vector will be clear from the context). $B^d(\vb, r)$ denotes the ball of radius $r$ around $\vb \in \Z^d$; we sometimes omit the dimension $d$.
We will often work with vectors $\vb$ in $\Z^d$, for some $d$, in which the coordinates are indexed by some ordered 
partition $\Part$ of a set $V$ with $d$ parts. 
If $X$ is the $j$th
part of $\Part$ for some $j \in [d]$ we write $v_X = v_j$. 

We say that an event $E$ holds \emph{with high probability} 
if $\mb{P}(E) = 1-e^{-\Omega(n^c)}$ for some $c>0$ as $n \to \infty$; note that when $n$ is sufficiently large,
by union bounds we can assume that any specified polynomial number of such events all occur. We write $[r]$ to denote the set of integers from $1$ to $r$, and $x \ll y$ to mean for any $y \geq 0$ there exists $x_0 \geq 0$ such
that for any $x \leq x_0$ the following statement holds. Similar statements with more constants are defined
similarly. Also, we write $a = b \pm c$ to mean $b - c \leq a \leq b + c$.
Throughout the paper we omit floor and ceiling symbols where they do not affect the argument. 

\section{Algorithms and analysis} \label{AlgSection}

We start with the following theorem, which can be used to solve the decision problem of determining whether or not $H$ has a perfect matching. Note that the running time for $k=3$ is $O(n^3)$, which we cannot reasonably expect to improve, as a faster algorithm would not be able to query all edges of $H$.

\begin{thm} \label{polytime}
Under Setup~\ref{setup}, Procedure~\ref{mainProc} determines whether $H$ contains a perfect matching in time $O(n^{3k^2 - 8k})$.
\end{thm}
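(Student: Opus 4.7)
The plan is to combine the structural characterisation of Theorem~\ref{PMNeccSuff} with a carefully organised exhaustive search. By Theorem~\ref{PMNeccSuff}, $H$ admits a perfect matching if and only if no $C$-certificate exists, where $C := 2k(k-3)$. Procedure~\ref{mainProc} should therefore enumerate the candidate data making up a certificate $(\Part,L)$ together with a $C$-vertex cover $B$ of the edges with index outside $L$, and for each verify insolubility; correctness is then immediate from Theorem~\ref{PMNeccSuff}.

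First I would bound the number of candidate lattices. Since we restrict to full edge-lattices $L \sub \Z^d$ with $d \le k-1$, and the $k$-vectors in $\Z^d$ form a finite set of size $\binom{d+k-1}{k-1}$, the number of possible $L$ is a constant depending only on $k$, contributing only a constant factor overall. Next, to enumerate candidate partitions, I would iterate over all $C$-sets $B \sub V(H)$; there are $O(n^C) = O(n^{2k^2-6k})$ such sets. Fixing $B$ and $L$, the $C$-far condition forces every edge of $H-B$ to have index vector in $L$, which together with the codegree hypothesis of Setup~\ref{setup} essentially determines $\Part$: the parts should be recoverable by computing, for each vertex $v \in V(H) \setminus B$, the pattern of its degree into the various $k$-vector types of $L$ and bucketing vertices with matching patterns into a common part. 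This can be done in $O(n^k)$ time, after which the vertices of $B$ can be assigned to the $d \le k-1$ parts in $O(1)$ additional ways.

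Given a resulting candidate pair $(\Part,L)$, insolubility is verified directly from Definition~\ref{Certificate}(ii): we check that no matching $M$ of size at most $|\Part|-1 \le k-2$ satisfies $\ib_\Part(V(H) \setminus V(M)) \in L$. Enumerating such an $M$ means choosing at most $k(k-2) = k^2-2k$ vertices of $H$ and checking they decompose as a matching with the required index complement, taking $O(n^{k^2-2k})$ time per candidate. Multiplying $O(n^{2k^2-6k})$ choices of $B$ by the $O(n^{k^2-2k})$ insolubility test yields the overall $O(n^{3k^2-8k})$ running time, as claimed. The $O(1)$ factors from the lattice enumeration and the extension of $\Part$ to $B$ are absorbed into the constant.

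The main obstacle I anticipate is the intermediate step: showing that, given $B$ and $L$, the partition $\Part$ is essentially \emph{forced} by local codegree statistics in $H-B$, so that candidate partitions can be listed cheaply rather than searched over exhaustively. The right conceptual input should be the interplay between fullness and transferral-freeness of $L$ (Definition~\ref{def:full}) and the (codeg) assumption of Setup~\ref{setup}: these should together imply that if all edges of $H-B$ have index vector in $L$, then the induced partition is canonical and recoverable from degree information. Making this rigorous and polynomial-time is where most of the algorithmic work lies, and is presumably the content of the ``algorithm to efficiently list the partitions $\Part$'' promised after Theorem~\ref{PMNeccSuff}.
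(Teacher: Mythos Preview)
Your overall architecture is correct and matches the paper's: enumerate sets $S$ of size at most $2k(k-3)$ (contributing $O(n^{2k^2-6k})$), enumerate the $O(1)$ full edge-lattices $L$, list the compatible partitions $\Part$ of $V(H)\setminus S$, and for each check solubility by brute-forcing matchings of size at most $k-2$ (contributing $O(n^{k^2-2k})$); correctness is exactly Theorem~\ref{PMNeccSuff}.

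The one point where your sketch diverges from the paper is the partition-listing step. Your proposed mechanism --- bucketing each vertex $v$ by ``the pattern of its degree into the various $k$-vector types of $L$'' --- is circular as stated: you cannot compute $\ib_\Part(e)$ for edges through $v$ without already knowing $\Part$. The paper's Procedure~\ref{OmegaProc} resolves this differently. It fixes a high-codegree $(k-1)$-set $A$, branches over the $d^{k-1}$ possible assignments of $A$ to parts, and then \emph{propagates}: once $y_1,\dots,y_{k-1}$ are assigned and $xy_1\cdots y_{k-1}\in H$, fullness and transferral-freeness of $L$ pin down a unique part for $x$. The codegree hypothesis guarantees that each branching step floods at least $(1/k+\gamma)n$ new vertices before another branch is needed, so the search tree has depth at most $k$ and at most $d^{2k-2}$ leaves, giving the $O(n^{k+1})$ bound of Lemma~\ref{OmegaPolyTime} (and $O(n^k)$ with the queue refinement of Remark~\ref{AlgRemark}, needed for the $k=3$ case). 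You correctly flagged this step as the crux; the actual content is this seed-and-propagate argument rather than a one-shot degree-pattern computation.
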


\begin{procedure}
\SetAlgoVlined
\SetAlgoNoEnd
\caption{DeterminePM()}
\KwData{A $k$-graph $H$ as in Setup~\ref{setup}.}
\KwResult{Determines whether $H$ has a perfect matching.}
\If{$n < n_0$}{Examine every set of $n/k$ edges in $H$, and halt with appropriate output.}
\ForEach{set $S \subseteq V(H)$ of size at most $2k(k-3)$,
 integer $d \in [k-1]$, full edge-lattice $L \subseteq \Z^d$
 and partition $\Part$ of $V(H)$ into $d$ parts so that any edge $e \in H$ which does not intersect $S$ has $\ib_\Part(e) \in L$}
{\If{there is no matching $M \subseteq H$ of size at most $d-1$ such that $\ib_\Part(V(H) \sm V(M)) \in L$}{Output ``no perfect matching'' and halt.}
}
Output ``perfect matching'' and halt.
\label{mainProc}
\end{procedure}

Procedure~\ref{mainProc} is essentially an exhaustive search for a $2k(k-3)$-certificate. It is clear that the ranges of $S$, $d$, $L$ and $M$ in the procedure can be listed by brute force in polynomial time. However, brute force cannot be used for $\Part$, as there are potentially exponentially many possibilities to consider, so first we provide an algorithm to construct all possibilities for $\Part$.

We imagine each vertex class $V_j$ to be a `bin' to which vertices may be assigned, and keep track of a set $U$ of vertices yet to be assigned to a vertex class. So initially we take each $V_j$ to be empty and $U = V(H)$. The procedure operates as a \emph{search tree}; at certain points the instruction is to branch over a range of possibilities. 
This means to select one of these possibilities and continue with this choice, then, when the algorithm halts, to return to the branch point, select the next possibility, and so forth. 
Each branch may produce an output partition; the output of the procedure consists of all output partitions. An informal statement of our procedure is that we generate partitions by repeatedly branching over all possible assignments of a vertex to a partition class, exploring all consequences of each assignment before branching again. 
Furthermore, we only branch over assignments of vertices which satisfy the following condition. Given a set of assigned vertices, we call an unassigned vertex $x$ \emph{reliable} if there exists a set $B$ of $k-2$ assigned vertices such that $d(x \cup B) \geq (1/k + \gamma)n$.

\begin{lemma} \label{OmegaPolyTime} 
Under Setup~\ref{setup}, for any $d \in [k-1]$ and full edge-lattice $L \subseteq \Z^d$,
there are at most $d^{2k-2}$ partitions $\Part$ of $V(H)$ such that $\ib_\Part(e) \in L$ for every $e \in H$, 
and Procedure~\ref{OmegaProc} lists them in time $O(n^{k+1})$.
\end{lemma}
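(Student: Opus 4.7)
The plan is to give an explicit description of Procedure~\ref{OmegaProc} and then establish the two assertions of the lemma. The procedure maintains a partial assignment $\phi: V' \to [d]$ on an ``assigned'' set $V' \sub V(H)$, starting from $V' = \es$, and alternates between two operations. In the \emph{forcing} step, we scan all edges $e \in H$: whenever $e$ has exactly one unassigned vertex $y$, so that $e \sm \{y\} \sub V'$, the constraint $\ib_\Part(e) \in L$ determines $\phi(y)$ uniquely---for if two distinct values $i \neq j$ were both consistent, then $\ub_i - \ub_j \in L$, contradicting the transferral-freeness of $L$. We apply the forced assignment (or prune this branch if it conflicts with a prior assignment) and iterate until no more forcing is possible. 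In the \emph{branching} step, invoked when forcing stalls with $V' \neq V(H)$, we choose an unassigned vertex $x$ and split into $d$ sub-problems, one per value of $\phi(x) \in [d]$. When $|V'| < k-2$ (bootstrap phase) we branch on an arbitrary unassigned vertex; once $|V'| \geq k-2$, we branch only on reliable vertices. Correctness is immediate: in any valid partition $\Part$, forced assignments are automatically satisfied, and branching exhausts all choices, so $\Part$ corresponds to some leaf of the search tree.

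The key claim is that every root-to-leaf path contains at most $2k-2$ branching steps, giving at most $d^{2k-2}$ leaves and hence at most $d^{2k-2}$ output partitions. The bootstrap phase contributes at most $k-2$ branchings. Let $D := n - |V'|$ denote the deficit at any stage. For each subsequent reliable branch on a pair $(B, x)$ with $|B| = k-2$ and $d(B \cup \{x\}) \geq (1/k + \gamma)n$, immediate forcing adds all $y \in V(H) \sm V'$ with $B \cup \{x, y\} \in H$, of which there are at least $d(B \cup \{x\}) - |V' \sm B| \geq D - (1-1/k-\gamma)n + (k-2)$; this is $\Omega(n)$ whenever $D$ is large. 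Further forcing cascades through other $(k-1)$-subsets of the enlarged $V'$ containing $x$ (most of which have codegree at least $(1/k+\gamma)n$ by the codegree hypothesis), driving the deficit down until $D \leq \gamma n$. At this point no further branching is needed: for any $y \in V(H) \sm V'$, the number of edges through $y$ meeting $V(H) \sm V'$ at some other vertex is at most $D \binom{n-2}{k-2} < \gamma n^{k-1} \leq d_H(y)$, so some edge $e \ni y$ has $e \sm \{y\} \sub V'$ and $y$ is forced. A careful accounting bounds the number of reliable branches by $k$, yielding at most $2k-2$ branchings in total.

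The running time follows straightforwardly. Each forcing pass inspects all $O(n^k)$ edges in constant time per edge, and at most $n$ productive passes occur per root-to-leaf path (each assigning at least one new vertex). Combined with at most $d^{2k-2} = O(1)$ leaves (as $d \leq k-1$ is bounded), the total time is $O(n^{k+1})$. The main obstacle is the deficit analysis behind the $2k-2$ bound: a reliable branch need not by itself reduce $D$ substantially when the immediate forced set overlaps heavily with $V'$, and one must appeal to cascade forcing and the minimum-degree hypothesis to guarantee sustained progress until the final forcing-only regime kicks in.
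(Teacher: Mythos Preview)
Your overall structure mirrors the paper's approach, but there is a genuine gap in the branching-depth argument, and you yourself flag it as ``the main obstacle'' without resolving it. The missing idea is simple and decisive: at the moment you are forced to branch on a reliable vertex $x$ with witness $B$ of $k-2$ assigned vertices, \emph{every} neighbour $y$ of $B \cup \{x\}$ must be unassigned. For if some such $y$ were already in $V'$, then the edge $B \cup \{x,y\}$ would have $k-1$ assigned vertices, and $x$ would have been forced rather than branched on. Hence the forcing step immediately after branching on $x$ assigns at least $d(B \cup \{x\}) \ge (1/k+\gamma)n$ \emph{new} vertices, with no overlap with $V'$. This replaces your inequality $d(B \cup \{x\}) - |V' \sm B|$ (which can be negative once $|V'|$ is large) and eliminates any need for the ``cascade forcing'' you invoke but do not carry out. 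With this observation, each branch point after the bootstrap increases $|V'|$ by at least $(1/k+\gamma)n$, and since branching ceases once $|V'| \ge (1-\gamma)n$, at most $k$ such branches occur.

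There is a second, smaller gap in your bootstrap. You branch on $k-2$ \emph{arbitrary} vertices and then switch to reliable branching; but the existence of a reliable vertex is only guaranteed once $|V'| \ge (1/k+\gamma)n$, not when $|V'| = k-2$. With $k-2$ arbitrarily chosen vertices, there is no reason any extension to a $(k-1)$-set should have high codegree. The paper avoids this by initially choosing a single $(k-1)$-set $A$ with $d_H(A) \ge (1/k+\gamma)n$ (which exists by (codeg)) and branching over all $d^{k-1}$ assignments of $A$ at once; this simultaneously handles the bootstrap and guarantees that the first round of forcing assigns at least $(1/k+\gamma)n$ vertices. You could equally fix your version by choosing the $k-2$ bootstrap vertices inside such a set $A$, but this needs to be said.
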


\begin{procedure}
\SetAlgoVlined
\SetAlgoNoEnd
\caption{ListPartitions()}
\KwData{A $k$-graph $H$ and a full edge-lattice $L \subseteq \Z^d$.}
\KwResult{Outputs all partitions $\Part$ of $V(H)$ with $\ib_\Part(e) \in L$ for every $e \in H$.}
\BlankLine
{Set $U = V(H)$ and let $\Part = (V_1, \ldots, V_d)$ be a partition of $V \sm U$ (so initially $V_i = \emptyset$).\\}
{Choose arbitrarily $A \subseteq V(H)$ of size $k-1$ such that $d(A) \geq (1/k+\gamma) n$.\\
Branch over all possible assignments of vertices in $A$ to $V_1, \ldots, V_d$.}
\BlankLine
\While{$U \neq \emptyset$}
{\eIf{$xy_1\dots y_{k-1} \in H$ for some vertices $x \in U$ and $y_1,\dots, y_{k-1} \notin U$}
{Fix $j \in [k]$ such that $\ib_\Part(y_1 \dots y_{k-1}) + \ub_j \in L$.\\
Assign $x$ to $V_j$ and remove $x$ from $U$.}
{Choose $x \in U$ which is reliable.\\
Branch over all possible assignments of $x$.
}
}
\lIf{$\ib_\Part(e) \in L$ for every $e \in H$}{halt with output $\Part$.}
\label{OmegaProc}
\end{procedure}

\begin{proof} 
First we note that since $L$ is full, the instruction ``Fix $j \in [k]$ such that $\ib_\Part(y_1 \dots y_{k-1}) + \ub_j \in L$'' in Procedure~\ref{OmegaProc} is well-defined, since for any $(k-1)$-vector $\vb$ there is precisely one $j \in [k]$ such that $\vb + \ub_j \in L$.

Next we show that if the number of assigned vertices is at least $(1/k + \gamma) n$ and at most $(1 - \gamma)n$ then there is always a reliable unassigned vertex. To see this, note that the number of sets $x \cup B$, where $x$ is unassigned and $B$ is a set of $k-2$ assigned vertices, is at least $\gamma n\binom{n/k + \gamma n}{k-2} > \eps n^{k-1}$ 
(recall from Setup~\ref{setup} that $\eps \ll \gamma$). 
Hence some such $x \cup B$ has degree at least $(1/k + \gamma)n$, and so $x$ is reliable.

Observe that after branching initially over all possible assignments of $A$, at least $(1/k + \gamma)n$ vertices will be assigned (all the neighbours of $A$) before the procedure branches again.
Further, the procedure will no longer branch once there are fewer than $\gamma n$ unassigned vertices remaining.
Indeed, in this case for any unassigned vertex $x$ there are fewer than $\gamma n^{k-1}$ edges containing $x$ and another unassigned vertex. 
Thus condition~(deg) implies that every unassigned vertex $x$ is contained in some edge $x y_1 \dots y_{k-1}$ of $H$ where $y_1, \dots, y_{k-1}$ are assigned. 
Hence when branching, we may always choose a reliable vertex as stated in the procedure.

The final line of the procedure ensures that any partition $\Part$ of $V(H)$ which is output has that property that $\ib_\Part(e) \in L$ for every $e \in H$. The converse is also true: any partition $\Part$ of $V(H)$ such that $\ib_\Part(e) \in L$ for every $e \in H$ will be output by some branch of the procedure.
To see this, consider the branch of the procedure in which, at each branch point, the vertex $x$ under consideration is assigned to the vertex class in which it lies in $\Part$.
By our initial remark, every other vertex of $H$ must also be assigned to the vertex class in which it lies in $\Part$.
We conclude that Procedure~\ref{OmegaProc} indeed runs correctly, returning all partitions $\Part$ of $V(H)$ such that $\ib_\Part(e) \in L$ for every $e \in H$.

It remains to bound the number of such partitions.
Consider some $x$ over which the procedure branches.
Then there can be no edge $x y_1 \dots y_{k-1}$ of $H$ where $y_1, \dots, y_{k-1}$ are assigned.
Suppose that this is the case, and let $B = y_1y_2\dots y_{k-2}$ be a set of assigned vertices 
such that $d(\{x\} \cup B) \geq (1/k+\gamma) n$.
(Such a $B$ must exist since we chose $x$ to be reliable.)
None of the $(1/k + \gamma)n$ vertices $v$ such that $xvy_1 \dots y_{k-2}$ is an edge of $H$ can have been assigned, and each will be assigned before the next branch of the procedure. 
We conclude that after any branch in the procedure and before the next branch, at least $(1/k + \gamma) n$ vertices are assigned.

Hence the search tree has depth at most $k$. 
Since the degree of the root is at most $d^{k-1}$ and every other vertex has $d$ children, the search tree has at most $d^{2k-2}$ leaves. 
At most one partition is output at each leaf, so at most $d^{2k-2}$ partitions $\Part$ will be output by the algorithm, as required. Furthermore, over all branches there will be at most $d^{2k-2} n$ iterations of the \emph{while} loop, and the condition of the first \emph{if} statement takes $O(n^k)$ operations to check, and so the overall running time is $O(n^{k+1})$.
\end{proof}

\begin{rem} \label{AlgRemark} In fact, the step in which the algorithm finds $xy_1\dots y_{k-1} \in H$ for some vertices $x \in U$ and $y_1,\dots, y_{k-1} \notin U$ can be made more efficient by maintaining a queue of `new' vertices and only considering $k$-tuples for which $y_1$ is the vertex at the front of the queue. 
Initially the queue consists of the vertices of $A$.
Whenever a vertex is assigned it is added to the back of the queue, and the front vertex $y_1$ is removed from the queue (i.e.~ceases to be new) once we have considered all $k$-tuples $xy_1 \dots y_{k-1}$ with $x \in U$ and $y_1, \dots, y_{k-1} \notin U$ in which $y_1$ is the only new vertex.
With this modification, the running time of the algorithm can be reduced to $O(n^k)$.
Also, for the purpose of Lemma~\ref{OmegaPolyTime} we could replace the constant 
$1/k+\gamma$ in Setup~\ref{setup} by any positive constant independent of $n$
and still deduce that the number of possible partitions is independent of $n$.
\end{rem}

\begin{proof}[Proof of Theorem~\ref{polytime}.]
Let $H$ be a $k$-graph as in Setup~\ref{setup}.
We begin by noting that Procedure~\ref{mainProc} determines whether $H$ has a perfect matching. 
Indeed, this is trivial if $n < n_0$, and if $n \geq n_0$ it follows from Theorem~\ref{PMNeccSuff},
as Procedure~\ref{mainProc} determines whether there is a $2k(k-3)$-certificate $(\Part, L)$ for $H$.
We estimate the running time as follows.
There are at most $n^{2k(k-3)}$ choices of sets $S$, and these can be generated in time $O(n^{2k(k-3)})$. 
Also, there are only a constant number of choices for $d$ and $L$, and these can be generated in constant time. 
Indeed, since $L$ is an edge-lattice, it is generated by a set of $k$-vectors,
and the number of such generating sets is bounded by a function of $k$.

For each choice of $S, d$ and $L$, generating the list of choices for $\Part$ takes time $O(n^{k+1})$ by Lemma~\ref{OmegaPolyTime} applied to $H \sm S$.
Further the number of choices for $\Part$ is constant, and for each one it takes time $O(n^{k(k-2)})$ to check for the existence of the matching $M$.
When $k > 3$, $k(k-2) > k+1$ and so we conclude that the running time is $O(n^{3k^2 - 8k})$, as required.
In the case $k = 3$, we use an improved algorithm with running time $O(n^3)$ (which exists as noted in Remark~\ref{AlgRemark}) to generate the list of choices for $\Part$, thus obtaining the result.
\end{proof}

Now we will motivate the proof of our main result, Theorem~\ref{main}. 
We start by using Procedure~\ref{mainProc} to check whether $H$ contains a perfect matching. 
If $H$ has no perfect matching then this is certified by the $2k(k-3)$-certificate $(\Part, L)$ found by Procedure~\ref{mainProc}. 
So suppose that $H$ does contain a perfect matching. How can we find it? A naive attempt at a proof is the following well-known idea. We examine each edge $e$ of $H$ in turn and use the same procedure to test whether deleting the vertices of $e$ would still leave a perfect matching in the remainder of $H$, in which case we say that $e$ is \emph{safe}. There must be some safe edge $e$, which we add to our matching, then repeat this process, until the number of vertices falls below $n_0$, at which point we can find a perfect matching by a constant-time brute force search. 

The problem with this naive attempt is that as we remove edges, the minimum codegree may become too low to apply Procedure~\ref{mainProc}, and then the process cannot continue. To motivate the solution to this problem, suppose that we have oracle access to a uniformly random edge from some perfect matching. Such an edge is safe, and if we repeatedly remove such random edges, standard large deviation estimates show that with high probability the minimum codegree condition is preserved (replacing $\gamma$ by $\gamma/2$, say). As an aside, we note that since Linear Programming has a polynomial time algorithm, we can construct a distribution $(p_e)$ on the safe edges such that $\sum_{e: x \in e} p_e = k/n$ for every vertex $x$; using this distribution instead of the oracle provides a randomised algorithm for finding a perfect matching. 

Our actual algorithm is obtained by derandomising the oracle algorithm. Instead of a minimum codegree condition, we bound the sum of squares of codegree `deficiencies', which is essentially the condition (codeg) of Setup~\ref{setup}. 
We also need to introduce the vertex degree condition (deg), otherwise we do not have an effective bound on the number of partitions $\Part$ in Lemma~\ref{OmegaPolyTime}. 
Conditions (i) and (ii) in the following lemma effectively serve as proxies for (codeg) and (deg) respectively.
Note that by $H-e$ we mean the $k$-graph obtained from $H$ by deleting all vertices of $e$ and all edges incident to them.

\begin{lemma} \label{FindPMLemma} Suppose that $1/n \ll \eps \ll \gamma \ll 1/k$, that $k \geq 3$ and that $H$ is a $k$-graph on $n$ vertices. For each set $A$ of $k-1$ vertices of $H$ let $t_A = \max(0, (1/k + \gamma)n - d_H(A))$. Suppose that
\begin{enumerate}[(i)]
\item $\sum_{A \in \binom{V(H)}{k-1}} t_A^2 < \eps \gamma^2 n^{k+1}/4 + 3kn^k$,
\item $n^{k-1}/3k! - \delta_1(H) 
+ \sum_{A \in \binom{V(H)}{k-1}} \tfrac{t_A^2}{\sqrt{\eps}\gamma^2 n^2} < \sqrt{\eps} n^{k-1}$,
\item $H$ contains a perfect matching.
\end{enumerate}
Then we can find, in time at most $O(n^{3k^2 - 7k})$, an edge $e \in H$ such that (i), (ii) and (iii) also hold for $H-e$ with $n-k$ in place of $n$.
\end{lemma}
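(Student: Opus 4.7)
The plan is to derandomise the randomised strategy sketched just before the lemma statement. The algorithm iterates over all $|E(H)| \leq \binom{n}{k}$ edges $e$ of $H$; for each candidate it verifies (iii) for $H-e$ by calling Procedure~\ref{mainProc}, which takes time $O((n-k)^{3k^2-8k}) = O(n^{3k^2-8k})$ by Theorem~\ref{polytime}, and it verifies (i) and (ii) directly by recomputing codegrees and $\delta_1(H-e)$ in $O(n^k)$ time. It returns the first edge that passes all three tests. The total running time is $O(n^k \cdot n^{3k^2-8k}) = O(n^{3k^2-7k})$, matching the stated bound.

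For correctness it suffices to show that some such $e$ exists. Fix any perfect matching $M$ of $H$ (guaranteed by (iii)) and pick $e$ uniformly at random from $M$; then $M \setminus \{e\}$ is a perfect matching of $H-e$, so (iii) holds for every $e \in M$. I will argue that each of (i) and (ii) fails for $H-e$ with probability strictly less than $1/2$, so a union bound produces an $e \in M$ satisfying all three conditions.

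For (i), set $\alpha_A(e) = |\{v \in e : A \cup \{v\} \in E(H)\}|$, so that for $A$ disjoint from $e$ we have $d_{H-e}(A) = d_H(A) - \alpha_A(e)$ and hence $(t'_A)^2 \leq (t_A + \alpha_A(e) - (1+k\gamma))^2$. Linearity of expectation gives $\Exp_e \alpha_A(e) = (k/n)d_H(A)$ and $\Prob(A \cap e \neq \emptyset) \geq k/n$ (as $A$ meets at least one edge of $M$). Expanding the squared binomial, summing over $A$, and invoking the identity $t_A d_H(A) = (1/k+\gamma)n\,t_A - t_A^2$ (valid when $t_A > 0$), the positive cross term $\tfrac{2k}{n}\sum_A t_A d_H(A) = 2(1+k\gamma)\sum_A t_A - \tfrac{2k}{n}\sum_A t_A^2$ is cancelled to leading order by the linear correction $-2(1+k\gamma)\sum_A t_A$ coming from the $-(1+k\gamma)$ shift. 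Combining this with the Cauchy--Schwarz bound on $\sum_A t_A$ via (i), and comparing with the target $\eps\gamma^2(n-k)^{k+1}/4 + 3k(n-k)^k$, one obtains $\Exp_e \sum_{A \cap e = \emptyset}(t'_A)^2 < \tfrac12\bigl[\eps\gamma^2(n-k)^{k+1}/4 + 3k(n-k)^k\bigr]$, so Markov delivers the required probability. For (ii), $\delta_1(H-e) \geq \delta_1(H) - k\Delta_2(H) = \delta_1(H) - O(n^{k-2})$, which is negligible compared to the slack $\sqrt{\eps}\,n^{k-1}$, and the sum term is controlled by the analysis for (i).

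The main obstacle is the delicate bookkeeping in (i). Because $\sum_A t_A$ is only of order $\sqrt{\eps}\,\gamma n^k$ (by Cauchy--Schwarz from condition (i)), a naive treatment of the cross term $\tfrac{2k}{n}\sum_A t_A d_H(A)$ would exceed the ``budget'' available from the contraction of the target from $\eps\gamma^2 n^{k+1}/4 + 3kn^k$ down to $\eps\gamma^2(n-k)^{k+1}/4 + 3k(n-k)^k$; the cancellation with the $-(1+k\gamma)$ shift, together with the $+3kn^k$ buffer designed into (i), is what lets the argument close. This also explains why condition (ii) must introduce the vertex-degree quantity $n^{k-1}/3k! - \delta_1(H)$ with the peculiar normalisation $\sqrt{\eps}\,\gamma^2 n^2$ on the sum term: this is precisely the scale at which the codegree deficiencies feed back into the minimum-degree estimate on $H-e$.
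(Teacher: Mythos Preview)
Your running-time analysis is essentially right and matches the paper. The existence argument, however, has a genuine gap.

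\medskip

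\textbf{The Markov step for (i) does not close.} You assert that
\[
\Exp_{e \in M}\Bigl[\sum_{A \cap e = \emptyset}(t_A^e)^2\Bigr] \;<\; \tfrac12\bigl[\eps\gamma^2(n-k)^{k+1}/4 + 3k(n-k)^k\bigr],
\]
but this is false in general. The expected change in $\chi_1 := \sum_A t_A^2$ when a random edge of $M$ is deleted is only of order $\chi_1/n$ (your own cancellation calculation yields a contraction factor of roughly $1 - 2k/n$), while $\chi_1$ itself may be within a $(1-o(1))$ factor of the target bound in~(i). Hence $\Exp_e[\chi_1^e]/(\text{target for }H-e)$ is close to $1$, not below $1/2$, and Markov gives nothing usable. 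An averaging argument does show that \emph{some} $e \in M$ satisfies (i$)^e$, but then you cannot union-bound with (ii).

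\medskip

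\textbf{The treatment of (ii) is incomplete.} You write that the change $\delta_1(H) - \delta_1(H-e) = O(n^{k-2})$ is ``negligible compared to the slack $\sqrt{\eps}\,n^{k-1}$''. But (ii) is only a strict inequality with no quantified slack, and when $\chi_2 := \max(0,\, n^{k-1}/3k! - \delta_1(H)) > 0$ the increase $\chi_2^e - \chi_2 \le k\binom{n}{k-2}$ can exceed the decrease $\sqrt{\eps}n^{k-1} - \sqrt{\eps}(n-k)^{k-1}$ in the right-hand side (since $\sqrt{\eps}$ is tiny). Controlling $\chi_1^e$ by the averaging of part (i) is not enough to absorb this.

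\medskip

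\textbf{What the paper does instead.} The paper splits into two cases. If $\chi_2 > 0$, i.e.\ some vertex $x$ has $d_H(x) \le n^{k-1}/3k!$, it takes \emph{any} edge $e \in E'$ through $x$. Because every $(k-1)$-set through $x$ has large deficiency $t_A$, removing $x$ drops $\chi_1$ by a fixed positive fraction of $n^k$ (specifically at least $n^k/7k!$), and this massive drop dominates both the $O(n^{k-2})$ increase in $\chi_2$ and the change in the targets. If $\chi_2 = 0$, the paper averages over $M$ exactly as you do, but instead of Markov it extracts a \emph{single} edge satisfying the deterministic inequality
\[
\sum_{A}\Bigl(2t_A(|H(A)\cap e| - 1 - k\gamma) - |A\cap e|\,t_A^2\Bigr) \le -\frac{k(k+1)\chi_1}{n},
\]
and then verifies (i$)^e$ and (ii$)^e$ directly for that edge. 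The two-case split and the deterministic verification are the missing ingredients; your union-bound-over-two-events strategy cannot replace them.
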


The proof of Lemma~\ref{FindPMLemma} involves messy calculations, so we defer the full proof until Section~\ref{DeferredProofsSection} and for now just describe the idea. 
While $\delta_1(H) > n^{k-1}/3k!$ we only need to maintain condition (i); then an averaging argument shows that the required edge exists. On the other hand, if there is a vertex of small degree we can remove any edge containing it: this so greatly decreases $\sum_{A \in \binom{V(H)}{k-1}} t_A^2$ as to compensate for any further decrease in $\delta_1(H)$.

\medskip

\begin{proof}[Proof of Theorem~\ref{main}.]
We begin by running Procedure~\ref{mainProc} to confirm that $H$ contains a perfect matching; if it does not, then we obtain a $2k(k-3)$-certificate for $H$, which by Theorem~\ref{PMNeccSuff} is a certificate that no perfect matching exists.
If $H$ does contain a perfect matching, then we repeatedly apply Lemma~\ref{FindPMLemma} to delete edges of $H$ (along with their vertices). Initially, condition (iii) of Lemma~\ref{FindPMLemma} holds by assumption, and conditions (i) and (ii) hold as the codegree assumption implies $t_A=0$ for any $(k-1)$-set $A$ and $\delta_1(H) \ge k^{-1} \binom{n}{k-1}$.
Since Lemma~\ref{FindPMLemma} ensures that its conditions are preserved after an edge is deleted, we may repeat until $n$ is too small for further application of Lemma~\ref{FindPMLemma}. 
At this point the number of vertices remaining in $H$ is bounded by a constant depending only on $\eps, \gamma$ and $k$ (this follows from the definition of $\ll$), so we can use the brute-force algorithm to find a perfect matching in the remainder of $H$ in constant time. 
Together with the deleted edges this forms a perfect matching in $H$.
Since $k$ vertices are deleted from $H$ with each application of Lemma~\ref{FindPMLemma}, at most $n/k$ applications are required in total.
Thus the total running time of the algorithm is $O(n^{3k^2 - 7k + 1})$.
\end{proof}

\section{Outlines of the proofs} \label{SketchSection}

In this section we briefly sketch the proof of Theorem~\ref{PMNeccSuff}. 
The easier direction is that if there is a perfect matching then there is no $2k(k-3)$-certificate.
Let $H$ be a $k$-graph as in Setup~\ref{setup} that contains a perfect matching $M^*$. 
To show that there is no $2k(k-3)$-certificate for $H$, 
it suffices to show that every $2k(k-3)$-far full pair $(\Part, L)$ for $H$ is soluble.
To accomplish this, we first show that $L$ is a subgroup of $L_{\max}^\Part$ with index at most $|\Part|$.
Then by a simple application of the pigeonhole principle, we can find a matching $M$ in $H$ of size at most $|\Part|-1$ 
for which $\ib_\Part(M)$ lies in the same coset of $L$ in $L_{\max}^\Part$ as $\ib_\Part(M^*)$. 
Since $\ib_\Part(V(H) \sm V(M^*)) = \0 \in L$ we deduce that $\ib_\Part(V(H) \sm V(M)) \in L$, 
and so $M$ solves $(\Part, L)$. 

Let us now turn our attention to the remaining implication of the theorem,
i.e.\ that if every $2k(k-3)$-far full pair is soluble then there is a perfect matching.
We reduce this to an easier version of the theorem in which we assume that every full pair is soluble.
The proof of this reduction relies on properties of subsequence sums in abelian groups, 
that (roughly speaking) allow us to convert any insoluble full pair into a $2k(k-3)$-far full pair by merging parts.
To prove the easier version, we begin by identifying a canonical partition $\Part$ of $V(H)$:
we choose $\Part$ to be `robustly maximal' with respect to $H$,
which roughly speaking means that it is an approximate divisibility barrier 
with no other approximate divisibility barrier `hidden' inside. 
By reassigning a small number of vertices to different parts 
we can further ensure that every vertex lies in many edges $e \in H$ with $\ib_\Part(e) \in L$. 
We will see that $(\Part, L)$ is a full pair, so by our assumption it has a solution $M$. 
Our key lemma will show that under these conditions $H \sm V(M)$ has a perfect matching, 
which together with $M$ forms a perfect matching in $H$.

We prove the key lemma by reducing it to a partite version, using a random $k$-partition of $H$.
Thus for much of the paper we work with a $k$-partite $k$-graph $H$ whose vertex classes each have size $n$.
Our codegree conditions will apply only to $(k-1)$-sets which contain at most one vertex from any vertex class. 
We will work with the `$\mu$-robust' edge-lattice $L^\mu_\Part(H)$, which is defined similarly to $L_\Part(H)$, except that we require many ($\mu |V(H)|^k$) edges of a given index vector for it to be included in the generating set (see Definition~\ref{def:lattices}).

A key idea in the proof of is that of splitting $H$ into a number of $k$-partite $k$-graphs, 
each of which satisfies a stronger version of the properties of $H$.
More precisely, if almost all partite $(k-1)$-sets of vertices of $H$ have degree at least $n/\ell + \gamma n$ and $L_\Part^\mu(H)$ is transferral-free, then we can find a set of $k$-partite subgraphs whose vertex sets partition $V(H)$, 
such that each satisfies a stronger codegree condition: 
almost all partite $(k-1)$-sets have degree at least $1/(\ell-1)$ proportion of the size of each vertex class.
By induction on $\ell$ we will obtain a perfect matching in each subgraph,
which together give a perfect matching in $H$.

\subsection{An analogue for tripartite 3-graphs} \label{3CaseSketch} To illustrate this idea better, we now outline the proof of an analogous result to Theorem~\ref{PMNeccSuff3case} for $3$-partite $3$-graphs. Indeed, let $H$ be a $3$-partite $3$-graph with vertex classes $V_1, V_2$ and $V_3$ each of size $n$. Suppose that any partite pair in $H$ has codegree at least $(1/3 + \gamma) n$. We shall prove that either $H$ contains a perfect matching, or $V(H)$ can be partitioned into two parts $A$ and $B$ such that every edge of $H$ has an even number of vertices in $A$, but $|A|$ is odd (i.e.\ there is a divisibility barrier).

We begin by applying Theorem~\ref{HypergraphMatching} to $H$. 
This implies that either $H$ contains a perfect matching (in which case we are done) or that there is a partition $\Part$ of $V(H)$ into parts of size at least $(1/3 + \gamma/2)n$ which refines the $3$-partition of $V(H)$ and is such that $L_\Part^\mu(H)$ is incomplete and transferral-free. Note that this means that $\Part$ refines each vertex class $V_i$ into at most two parts; in fact, it is not hard to see that $\Part$ must partition each vertex class $V_i$ into precisely two parts, $W_{i}^1$ and $W_{i}^2$.

We now decompose $H$ into eight $3$-partite subgraphs $(H^{ijk})_{i, j, k \in [2]}$, where $H^{ijk}$ consists of all edges of $H$ which contain one vertex from each of $W_{1}^i, W_{2}^j$ and $W_{3}^k$. So every edge of $H$ lies in precisely one subgraph $H^{ijk}$.
Since $L_\Part^\mu(H)$ is transferral-free, we may assume without loss of generality that almost all of the edges of $H$ lie in one of $H^{111}, H^{122}, H^{212}$ or $H^{221}$ (the remaining subgraphs have very low density).
As described earlier, we now observe that almost all partite pairs in each subgraph have codegree at least $1/2 + \gamma/2$ proportion of the size of the remaining vertex class in that subgraph. 
Indeed, our assumption on $H$ tells us that any pair of vertices $xy$ with $x \in W_{1}^1$ and $y \in W_{2}^1$ has at least $(1/3 + \gamma)n$ neighbours in $V_3$ (i.e.\ vertices $z \in V_3$ such that $xyz \in H$).  
But since $H_{112}$ has very low density, very few such pairs $xy$ can have many neighbours in $W_{3}^2$.
So almost all of these pairs must have $(1/3 + \gamma/2)n \geq (1/2 + \gamma/2)|W_{31}|$ neighbours in $W_{31}$. So $H^{111}$ satisfies a significantly stronger codegree condition than that satisfied by $H$, ignoring the fact that a small number of pairs, which we will call \emph{bad} pairs, fail this codegree condition. A similar argument applies to $H^{122}$, $H^{212}$ and $H^{221}$.

At this point we delete a (small) matching $M$ in $H$ to achieve two aims. Firstly, $M$ will cover all vertices which lie in many bad pairs. Since there are few bad pairs there will only be a small number of such vertices. Secondly, after deleting the vertices covered by $M$ there will be an \emph{even} number of vertices remaining in $W^2 := W_1^2 \cup W_2^2 \cup W^2_3$. This can be done unless $|W^2|$ is odd and every edge of $H$ intersects $W^2$ in an even number of vertices; in this case we are done, and $H$ contains no perfect matching. 

Now we delete the vertices covered by $M$ from $H$; this only slightly weakens the codegree condition on $H^{111}$, $H^{122}$, $H^{212}$ and $H^{221}$ since $M$ does not contain many edges. Then we choose a random partition of the remaining vertices of $H$ into subsets $S_{111}, S_{122}, S_{212}$ and $S_{221}$ under the constraints that 
(i) each subset contains equally many vertices from each vertex class $V_j$, and
(ii) $S_{ijk} \subseteq W_1^i \cup W_2^j \cup W_3^k$. 
Such partitions exist since each part $W_j^i$ has size greater than $n/3$ and $|W^2|$ is even (this is a special case of Proposition~\ref{applyfarkas}). Let $H'_{111}$ be the $3$-partite subgraph of $H$ induced by $S_{111}$, and define $H'_{122}$, $H'_{212}$, and $H'_{221}$ similarly. The fact that the partition was chosen randomly implies that, in each subgraph, almost every partite pair has codegree at least $(1/2 + \gamma/2)n'$, where $n'$ is the number of vertices in each part, and no vertex lies in many bad pairs. 
It follows by~\cite[Theorem 2]{AGS09} (or by Theorem~\ref{HypergraphMatching}) 
that each subgraph contains a perfect matching; together with the deleted matching this yields a perfect matching in $H$.

\subsection{The general case} 

The case $k=3$ is particularly simple because there is only one maximal divisibility barrier; 
recall that for $k \geq 4$, Construction~\ref{nopm} shows that a naive generalisation of Theorem~\ref{PMNeccSuff3case}
does not hold. For the general case of Theorem~\ref{PMNeccSuff}, we use the ideas outlined in Section~\ref{3CaseSketch},
but there are additional complications. Firstly, there are now many possibilities for the partition $\Part$ returned by applications
of Theorem~\ref{HypergraphMatching}. Secondly, multiple applications of the above technique are required to successively
improve the codegree condition on the $k$-graphs into which $H$ is decomposed. 

These complications will be handled by our partite key lemma (Lemma~\ref{kPartiteLemma}), which can be seen as a
variation on Theorem~\ref{HypergraphMatching} specialised to $k$-partite $k$-graphs with vertex classes of size $n$ in
which almost all partite $(k-1)$-sets have codegree at least $(1/\ell + \gamma)n$, with two key differences.
One is that, instead of the condition of Theorem~\ref{HypergraphMatching} that $L = L_\Part^\mu(H)$ must be complete, 
we now only require that $\ib_\Part(V(H)) \in L$. 
The other is that, whereas the condition of Theorem~\ref{HypergraphMatching} must hold for any partition $\Part$ of $V(H)$ into sufficiently large parts, we now only require that $\ib_\Part(V(H)) \in L$ for a single `canonical' partition $\Part$ 
which meets two additional requirements.

Firstly, we require that every vertex must lie in many edges $e \in H$ with $\ib_\Part(e) \in L$. 
This condition can be seen as ensuring that each vertex of $H$ lies in the `correct part' of $\Part$. 
For example, consider the extremal example described in Construction~\ref{RRSconstruct}. 
If we fix some small $\mu > 0$ and move a single vertex $v$ from part $A$ to part $B$ (but do not change the edge set of $H$), 
then the only edges whose index changes are the fewer than $n^{k-1}$ edges which contain~$v$. 
So $L^\mu_\Part(H)$ is unchanged, and $H$ doesn't have a perfect matching (since $H$ itself is unchanged), 
but we now have $\ib_\Part(V(H)) \in L^\mu_\Part(H)$, due to $v$ being in the `wrong part' of $\Part$.
Secondly, we must assume that $L$ is transferral-free. 
However this requirement, while necessary, is not sufficient, as the following example will show.

\begin{construct} \label{NestedConstruct}
Fix $k \geq 5$ and let $\Part = \{W_{1}, W_{2}\}$ be a partition of $V$. 
Let $\Qart$ be a refinement of $\Part$ which divides $W_{1}$ into $V_{11}$ and $V_{12}$ and $W_2$ into $V_{21}$ and $V_{22}$.
Suppose that $|W_1|$ is even, but that $|V_{11} \cup V_{21}|$ is odd.
Let $H$ be the $k$-graph on $V$ whose edges are precisely the $k$-subsets of $V$ 
which contain an even number of vertices in $W_1$ and an even number of vertices in $V_{11} \cup V_{21}$.
\end{construct}

In Construction~\ref{NestedConstruct}, $L_\Part^\mu(H)$ is transferral-free,
$\delta_{k-1}(H) \geq (1/k + \gamma)n$ and $\ib_\Part(V(H)) \in L_\Part^\mu(H)$.
Thus the conditions relating to $\Part$ do not preclude the existence of a perfect matching, 
but the conditions relating to $\Qart$ do.
Indeed, $H$ cannot contain a perfect matching, since it is a subgraph of the $k$-graph described in Construction~\ref{RRSconstruct}, where $A = V_{11} \cup V_{21}$ and $B = V_{12} \cup V_{22}$. 
To avoid this kind of situation, we insist that there is no strict refinement $\Qart$ of $\Part$ 
into not-too-small parts such that $L_{\Qart}^\mu(H)$ is transferral-free. 
Note that the trivial partition into a single part satisfies this requirement if and only if $H$ is not close to a divisibility barrier.
In fact, we require the stronger property that for some $\mu' \gg \mu$ (i.e.\ even at a much weaker `detection threshold') the lattice $L_\Qart^{\mu'}(H)$ is not transferral-free for any strict refinement $\Qart$ of $\Part$ into not-too-small parts.
This property is `robust' in the sense that when we delete a small number of vertices from $H$ (as we will need to do), the property is preserved, albeit with weaker constants. (Section~\ref{RobMaxSection} is devoted to the study of robust maximality.)
If $\Part$ satisfies both of these conditions, then Lemma~\ref{kPartiteLemma} states that $H$ must contain a perfect matching. 

We prove Lemma~\ref{kPartiteLemma} by induction on $\ell$ by a similar argument to that in Section~\ref{3CaseSketch} (roughly speaking, there we reduced the $\ell = 3$ case to the $\ell = 2$ case, for $k = 3$). That is, we apply Theorem~\ref{HypergraphMatching} to yield a perfect matching (in which case we are done), or a partition $\Part$ of $V(H)$ into parts of size at least $(1/\ell + \gamma) n$ such that $L^\mu_{\Part}(H)$ is incomplete and transferral-free. For the base case of the induction we observe that the latter outcome is impossible for $\ell = 2$. 
Next, for each $\ib \in I^\mu_\Part(H)$, we define $H_\ib$ to be the `canonical' induced subgraph of $H$ on the union of the parts $W \in \Part$ such that $i_W = 1$, and $\Part_\ib$ to be the restriction of $\Part$ to $V(H_\ib)$.
By a similar argument to the previous section (formalised in Proposition~\ref{properties}), 
we find that each $H_\ib$ satisfies a codegree condition similar to that on $H$, but with $\ell - 1$ in place of $\ell$. 
Our aim is then to find vertex-disjoint subgraphs $\hat{H}_\ib \subseteq H_\ib$ whose vertex sets partition $V(H)$, each
of which satisfies the conditions of Lemma~\ref{kPartiteLemma} 
with the stronger codegree condition ($\ell - 1$ in place of $\ell$). 
We can then apply the inductive hypothesis to find a perfect matching in each $\hat{H}_\ib$; 
together these form a perfect matching in $H$.

To do this, we first take a robustly maximal refinement $\Qart_\ib$ of $\Part_\ib$ for each $\ib$,
and delete a small matching that covers all `bad' vertices.
We then need to ensure that our partition into subgraphs $H_\ib$ satisfies 
$\ib_{\Qart_\ib}(V(\hat{H}_\ib)) \in L_{\Qart_\ib}^{\mu}(H_\ib)$ for each $\ib$;
accomplishing this is the most technical part of the proof.
Observe that whether this condition is satisfied depends only on the values 
of $|V(\hat{H}_\ib) \cap Y|$ for parts $Y \in \Qart_\ib$.
Thus we need to choose the sizes of the intersections $V(\hat{H}_\ib) \cap Z$ for parts $Z \in \Qart^\cap$,
where $\Qart^\cap$ is the `meet' of the partitions $\{\Qart_\ib \mid \ib \in I\}$.
We achieve this in three stages.
Firstly, in Claim~\ref{rhoclaim}, we choose rough targets for the size of each $V(\hat{H}_\ib)$; here we use a geometric method which relies on Farkas' Lemma (Theorem~\ref{farkas}).
Secondly, in Claim~\ref{cupclaim}, we choose how many vertices each $\hat{H}_\ib$ will take from each part of the `join' $\Qart^\cup$ of the partitions $\Qart_\ib$ (this step deals with the problem illustrated by Construction~\ref{NestedConstruct}).
Thirdly, in Claim~\ref{capclaim}, we use Baranyai's Matrix Rounding Theorem 
to refine this choice to obtain the intersection sizes we require.
Finally, we choose the vertex set of each $\hat{H}_\ib$ at random with the given number of vertices from each part of $\Qart^\cap$. 
In Claim~\ref{choiceclaim} we demonstrate that with high probability this random selection does indeed give subgraphs $\hat{H}_\ib$ to which we can apply the inductive hypothesis, completing the proof.

\section{Preliminaries} \label{PrelimSection}

This section contains theoretical background needed for the rest of the paper, 
organised into the following subsections:
\ref{subsec:pil} Partitions, index vectors and lattices,
\ref{HypMatchThrySec} Hypergraph matching theory,
\ref{subsec:cg} Convex geometry,
\ref{subsec:baranyai} Baranyai's Matrix Rounding Theorem,
\ref{subsec:conc} Concentration of probability.

\subsection{Partitions, index vectors and lattices} \label{subsec:pil}

We will frequently speak of a partition $\Qart$ of a vertex set $V$. We use this term in a slightly non-standard way to mean a family of pairwise-disjoint subsets of $V$ whose union is $V$ (so it is possible for a part to be empty, though this will rarely be the case). Furthermore, we implicitly fix an order on the parts of the partition, so we may consistently speak of, for example, the $i$th part of $\Qart$. Given $V' \sub V$, the \emph{restriction} of $\Qart$ to $V'$, denoted $\Qart[V']$, is the partition of $V'$ 
with parts $X \cap V'$ for $X \in \Qart$. 

Many of our results will apply specifically to partite $k$-graphs. Let $\Part$ partition a set $V$. 
Then we say that a set $S \subseteq V$ is \emph{$\Part$-partite} if $S$ has at most one vertex in any part of $\Part$. 
We say that a hypergraph $H$ on $V$ is \emph{$\Part$-partite} if every edge of $H$ is $\Part$-partite. 
In this case we refer to the parts of $\Part$ as the \emph{vertex classes} of $H$.
Usually, we will consider $k$-graphs $H$ which are \emph{$k$-partite},
i.e.\ $\Part$-partite for some partition $\Part$ of $V(H)$ into $k$ parts.

Given a $k$-graph $H$ and a partition $\Part$ of $V(H)$, the lattice $L_\Part(H)$ in Definition~\ref{def:lattices0} can be seen as `detecting' where edges of $H$ lie with respect to $\Part$. However, the information conveyed by $L_\Part(H)$ is by itself insufficient, as shown by the $k$-graph formed in Construction~\ref{nopm}. 
Indeed, in that instance $L_\Part(H)$ was complete, but some index vectors did not represent enough edges:
specifically, $H$ did not contain two disjoint edges with different numbers of vertices in $A$ and $B$ modulo $3$.
Thus in the proof of Theorem~\ref{PMNeccSuff} we will frequently want to know which index vectors are represented by many edges; this is achieved by the following definition.

\begin{defin}[Robust edge-lattices] \label{def:lattices} Let $H$ be a $k$-graph and $\Part$ be a partition of $V(H)$ into $d$ parts. Then for any $\mu > 0$,
\begin{enumerate}[(i)]
\item $I_\Part^\mu(H)$ denotes the set of all $\ib \in \Z^d$ such that at least $\mu |V(H)|^k$ edges $e \in H$ have $\ib_\Part(e) = \ib$.
\item $L^\mu_\Part(H)$ denotes the lattice in $\Z^d$ generated by $I_\Part^\mu(H)$.
\end{enumerate}
\end{defin}

The constant $\mu$ can be viewed as the `detection threshold': 
it specifies the number of edges of a given index which are required for this index to contribute to $L_\Part^\mu(H)$.
For any $\mu < \mu'$ it is clear that $L_\Part^{\mu'}(H) \subseteq L_\Part^\mu(H) \subseteq L_\Part(H)$.
Next we show that robust edge-lattices are insensitive to small perturbations. 

\begin{defin} \label{close}
Let $\Part$ be a partition of a set $V$ of size $n$ and $H$ be a $k$-graph on $V$.
\begin{enumerate}[(i)]
\item We say that a $k$-graph $H'$ on a set $V' \sub V$ is \emph{$\alpha$-close} to $H$
if we have $|V'| \ge (1-\alpha)n$ and $|H \triangle H'| \leq \alpha n^k$.
\item We say that a partition $\Part'$ of a set $V' \sub V$ is \emph{$\alpha$-close} to $\Part$
if for some $d$ we have $|\Part'|=|\Part|=d$ and $\sum_{i \in [d]} |V_i \triangle V'_i| \le \alpha n$, 
where $\Part = (V_1, \ldots, V_d)$ and $\Part' = (V'_1, \ldots, V'_d)$.
\end{enumerate}
\end{defin}

Note that Definition~\ref{close}(i) is asymmetric; that is, it does not imply that $H$ is $\alpha$-close to $H'$.

\begin{prop} \label{similarlattices}
Under the setup of Definition~\ref{close}, 
we have $I_\Part^\mu(H) \subseteq I_{\Part'}^{\mu-2\alpha}(H')$, 
so $L_\Part^\mu(H) \subseteq L_{\Part'}^{\mu-2\alpha}(H')$.
Also, if $\mu \le 1/k$ then $I_{\Part'}^\mu(H') \subseteq I_{\Part}^{\mu-3\alpha}(H)$, 
so $L_{\Part'}^\mu(H') \subseteq L_{\Part}^{\mu-3\alpha}(H)$.
\end{prop}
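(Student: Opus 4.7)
The plan is to prove both set inclusions by direct counting on edges; the lattice inclusions then follow immediately, since enlarging a generating set can only enlarge the lattice it generates. The main device will be a single `exceptional' vertex set
\[
D := \bigcup_{i \in [d]}(V_i \triangle V'_i) \sub V,
\]
consisting of every vertex that either lies in $V \sm V'$ or is reassigned to a different part by $\Part'$ than by $\Part$. Since a vertex of $V \sm V'$ lies in $V_j \sm V'_j$ for its (unique) $\Part$-part $V_j$, it is counted by $\sum_i |V_i \triangle V'_i|$, and so $|D| \le \alpha n$. The key observation is that any edge $e$ disjoint from $D$ automatically satisfies $e \sub V'$ and $e \cap V_i = e \cap V'_i$ for every $i$, so $\ib_\Part(e) = \ib_{\Part'}(e)$.

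For the first inclusion I fix $\ib \in I_\Part^\mu(H)$, so there are at least $\mu n^k$ witnessing edges $e \in H$. From these I discard the at most $|H \triangle H'| \le \alpha n^k$ edges not in $H'$ and the at most $|D|\cdot n^{k-1} \le \alpha n^k$ edges meeting $D$. What remains is at least $(\mu - 2\alpha)n^k \ge (\mu - 2\alpha)|V'|^k$ edges of $H'$ with $\ib_{\Part'}(e) = \ib$, giving $\ib \in I_{\Part'}^{\mu - 2\alpha}(H')$.

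For the second inclusion I run the same argument in reverse, starting from at least $\mu|V'|^k$ edges of $H'$ with $\ib_{\Part'}(e) = \ib$. The only new point — and the sole place where the hypothesis $\mu \le 1/k$ enters — is converting the threshold from $|V'|^k$ to $n^k$: Bernoulli gives $|V'|^k \ge (1-\alpha)^k n^k \ge (1 - k\alpha)n^k$, so
\[
\mu|V'|^k \ge \mu n^k - k\mu\alpha n^k \ge \mu n^k - \alpha n^k.
\]
Subtracting the at most $\alpha n^k$ edges of $H' \sm H$ and the at most $\alpha n^k$ edges meeting $D$ leaves at least $(\mu - 3\alpha)n^k$ edges of $H$ with $\ib_\Part(e) = \ib$, so $\ib \in I_\Part^{\mu - 3\alpha}(H)$. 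The argument is essentially routine; the only care needed is in bookkeeping the three sources of loss (edges killed by $H \triangle H'$, edges meeting $D$, and in part (ii) the Bernoulli loss), and the asymmetry between $2\alpha$ and $3\alpha$ in the statement is explained precisely by the last of these.
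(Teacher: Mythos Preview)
Your proof is correct and follows essentially the same approach as the paper's: both argue by discarding the at most $\alpha n^k$ edges lost to $H \triangle H'$ and the at most $\alpha n^k$ edges whose index vector may change (those meeting your set $D$), with the extra $\alpha n^k$ loss in part (ii) coming from converting the $|V'|^k$ threshold to an $n^k$ threshold via $\mu \le 1/k$. Your explicit introduction of $D$ and the Bernoulli bound make the bookkeeping slightly more transparent than the paper's terse version, but the content is identical.
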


\begin{proof}
Consider $\ib \in I_\Part^\mu(H)$. By definition, at least $\mu n^k$ edges $e \in H$ have $\ib_\Part(e) = \ib$.
Of these edges, at most $\alpha n^k$ are not in $H'$, and at most $\alpha n^k$ of those in $H'$ have $\ib_{\Part'}(e) \ne \ib$.
Therefore $\ib \in I_{\Part'}^{\mu-2\alpha}(H')$. 
Similarly, consider $\ib \in I_{\Part'}^\mu(H')$.
At least $\mu (n-\alpha n)^k > (\mu-\alpha)n^k$ edges $e \in H'$ have $\ib_{\Part'}(e) = \ib$,
at most $\alpha n^k$ are not in $H$, and at most $\alpha n^k$ have $\ib_{\Part}(e) \ne \ib$.
Therefore $\ib \in I_{\Part}^{\mu-3\alpha}(H)$. 
\end{proof}

Let $\Part$ and $\Part'$ be partitions of a set $V$. 
We say that $\Part'$ \emph{refines} $\Part$ if every part of $\Part'$ is a subset of a part of $\Part$.

\begin{defin} \label{def:Restriction}
Let $\Part$ and $\Part'$ be partitions of a set $V$ such that $\Part$ refines $\Part'$. 
Let $\ib$ be an index vector with respect to $\Part$. 
Then the \emph{projection} $(\ib \mid \Part')$ of $\ib$ on $\Part'$ is defined by
$$(\ib \mid \Part')_W = \sum_{X \in \Part, X \subseteq W} \ib_X$$
for each $W \in \Part'$. We also write $(I \mid \Part') = \{ (\ib \mid \Part') : \ib \in I\}$
when $I$ is a set of index vectors with respect to $\Part$. 
\end{defin}

As an example under the setup of Definition~\ref{def:Restriction}, note that 
if $S \subseteq V$ then $(\ib_{\Part}(S) \mid \Part') = \ib_{\Part'}(S)$.

Our next result concerns the behaviour of edge-lattices with respect to projection.
An informal statement is that if $\Qart$ refines $\Part$, then the projection of a robust edge-lattice in $\Z^\Qart$ is contained in a robust edge-lattice in $\Z^\Part$,
and a `weak converse' holds, namely that any vector in a robust edge-lattice in $\Z^\Part$ is the projection of some vector in a robust edge-lattice in $\Z^\Qart$.

\begin{prop} \label{InverseRestriction} Let $\Part$ and $\Qart$ be partitions of a set $V$ of $n$ vertices such that $\Qart$ refines $\Part$, and let $H$ be a $k$-graph on $V$. 
\begin{enumerate}[(i)]
\item If $\ib \in L_{\Qart}^\mu(H)$ then $(\ib \mid \Part) \in L_{\Part}^\mu(H)$.
\item For any $\ib \in L_{\Part}^\mu(H)$ there exists $\ib' \in L_{\Qart}^{\mu/m}(H)$ such that $(\ib' \mid \Part) = \ib$, where $m := (k+1)^{|\Qart| - |\Part|}$.
\end{enumerate}
\end{prop}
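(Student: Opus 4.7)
The plan is to prove both parts by working directly with the generating sets of the lattices and exploiting the fact that projection is a $\Z$-linear map. In both cases, the content is entirely local: it suffices to handle a single generator $\jb$ of the source lattice and then extend linearly.

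For part~(i), the argument is immediate. Write $\ib = \sum_{\jb \in I_\Qart^\mu(H)} c_\jb \jb$ with integer coefficients $c_\jb$. Since $(\cdot \mid \Part): \Z^\Qart \to \Z^\Part$ is linear, we have $(\ib \mid \Part) = \sum c_\jb (\jb \mid \Part)$, so it is enough to show $(\jb \mid \Part) \in I_\Part^\mu(H)$ for each $\jb \in I_\Qart^\mu(H)$. By definition, at least $\mu n^k$ edges $e \in H$ satisfy $\ib_\Qart(e) = \jb$, and each such edge satisfies $\ib_\Part(e) = (\ib_\Qart(e) \mid \Part) = (\jb \mid \Part)$ by the remark following Definition~\ref{def:Restriction}. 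So $(\jb \mid \Part) \in I_\Part^\mu(H)$ as required.

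For part~(ii), I would again reduce to a single generator: write $\ib = \sum_\jb c_\jb \jb$ with $\jb \in I_\Part^\mu(H)$, and seek for each such $\jb$ a lift $\jb' \in I_\Qart^{\mu/m}(H)$ with $(\jb' \mid \Part) = \jb$; then $\ib' := \sum c_\jb \jb'$ lies in $L_\Qart^{\mu/m}(H)$ and projects to $\ib$. The key combinatorial estimate is a bound on the number of $k$-vectors in $\Z^\Qart$ that project to a given $k$-vector $\jb \in \Z^\Part$. For each $W \in \Part$, say $W$ is split by $\Qart$ into $t_W$ parts, so that $\sum_W(t_W-1) = |\Qart|-|\Part|$. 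A lift is determined by choosing, for each $W$, a composition of $\jb_W$ into $t_W$ non-negative integer parts. Since $\jb_W \le k$, the first $t_W-1$ parts can each take at most $k+1$ values and determine the last, giving at most $(k+1)^{t_W-1}$ compositions per $W$, and hence at most $\prod_W (k+1)^{t_W-1} = (k+1)^{|\Qart|-|\Part|} = m$ lifts in total.

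Now pigeonhole on the set of at least $\mu n^k$ edges $e \in H$ with $\ib_\Part(e) = \jb$ (each of which has $\ib_\Qart(e)$ equal to one of these $m$ lifts) yields some lift $\jb'$ represented by at least $\mu n^k/m$ edges, so $\jb' \in I_\Qart^{\mu/m}(H)$ and $(\jb' \mid \Part) = \jb$ by construction. Assembling the lifts linearly completes the proof. I do not expect any serious obstacle here: the only non-routine ingredient is the lift-counting bound, which is why the constant $m = (k+1)^{|\Qart|-|\Part|}$ has the form it does, and the rest is a direct pigeonhole/linearity argument.
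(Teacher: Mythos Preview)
Your proposal is correct and follows essentially the same approach as the paper: both parts are proved by handling a single generator of the lattice (using that projection is $\Z$-linear), with (i) immediate and (ii) using exactly the same lift-counting bound $\prod_W (k+1)^{t_W-1}=(k+1)^{|\Qart|-|\Part|}$ followed by pigeonhole.
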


\begin{proof} For (i), consider $\ib \in I_{\Qart}^\mu(H)$. 
At least $\mu n^k$ edges $e \in H$ have $\ib_{\Qart}(e) = \ib$. 
Each such edge has $\ib_{\Part}(e) = (\ib \mid \Part)$, so $(\ib \mid \Part) \in I_{\Part}^\mu(H)$. 
Since projection is linear the result follows.

For (ii), consider $\ib \in I_{\Part}^\mu(H)$. At least $\mu n^k$ edges $e \in H$ have $\ib_\Part(e) = \ib$. 
Note that there are at most $m$ index vectors $\ib'$ with respect to $\Qart$ which satisfy $(\ib' \mid \Part) = \ib$, since for every part $X \in \Part$ which contains $r$ parts $Y_1, \ldots, Y_r$ of $\Qart$ there are at most $(k+1)^{r-1}$ ways of partitioning $i_X$ into $i'_{Y_1}, \ldots, i'_{Y_r}$. 
So by the pigeonhole principle there is some $\ib'$ with $(\ib' \mid \Part) = \ib$ for which at least $\mu n^k/m$ edges $e \in H$ have $\ib_{\Qart}(e) = \ib'$, that is, $\ib' \in I_{\Qart}^{\mu/m}(H)$. Again the result follows by linearity of projection.
\end{proof}

\subsection{Hypergraph matching theory} \label{HypMatchThrySec} 
In this subsection we describe a central theorem from~\cite{KM11p} that plays a key role in this paper. 
First we need the following definitions. A \emph{$k$-complex $J$} is a hypergraph such that $\emptyset \in J$, every edge of $J$ has size at most $k$, and $J$ is closed downwards, that is, if $e \in J$ and $e' \subseteq e$ then $e' \in J$. We write $J_r$ to denote the $r$-graph on $V(J)$ formed by edges of size $r$ in $J$. Also, we use the following notion of partite degree in a multipartite $k$-complex from~\cite{KM11p}. Let $\Part$ partition a set $V$ into $k$ parts $V_1, \dots, V_k$,
and let $J$ be a $\Part$-partite $k$-complex on $V$. For each $0 \leq j \leq k-1$ the
\emph{partite minimum $j$-degree} $\delta^*_j(J)$ is defined to be the largest $m$ such that any $j$-edge $e$ has
at least $m$ extensions to a $(j+1)$-edge in any part not used by $e$, i.e.\
$$\delta^*_j(J) := \min_{e \in J_j} \min_{i: e \cap V_i = \emptyset} |\{v \in V_i : e \cup \{v\} \in J\}|.$$
The \emph{partite degree sequence} is then $\delta^*(J) = (\delta_0^*(J), \dots, \delta_{k-1}^*(J))$.
Note that we suppress the dependence on $\Part$ in our notation: this will always be clear from the context. 
Minimum degree sequence conditions on a $k$-complex take the form $\delta^*(J) \geq (a_0, \dots, a_{k-1})$, where the inequality is to be interpreted pointwise. Finally, observe that if $J$ is a $\Part$-partite $k$-complex on $V$, and $\Qart$ is a partition of $V$ which refines $\Part$ into $d$ parts, then every edge $e \in J_k$ has $(\ib_\Qart(e) \mid \Part) = \ib_\Part(e) = \1$. So we say that a lattice $L \subseteq \Z^d$ is \emph{complete with respect to $\Qart$} if $\ib \in L$ for every $\ib \in \Z^d$ with $(\ib \mid  \Part) = \1$, and \emph{incomplete with respect to $\Qart$} otherwise. The following is a simplified form of~\cite[Theorem 2.13]{KM11p}.

\begin{thm}\cite{KM11p}\label{HypergraphMatching}
Suppose that $1/n \ll \mu \ll \gamma, 1/k$. 
Let~$\Part'$ partition a set $V$ into parts $V_1, \dots, V_k$ each of size~$n$.
Suppose that $J$ is a $\Part'$-partite $k$-complex on $V$ with $$\delta^*(J) \geq \left(n, \left(\frac{k-1}{k} + \gamma \right)n, \left(\frac{k-2}{k} + \gamma \right) n, \dots, \left(\frac{1}{k} + \gamma \right) n\right).$$
Then either $J_k$ contains a perfect matching, or $J_k$ is close to a divisibility barrier,
in that there is some partition $\Part$ of $V(J)$ into $d \le k^2$
parts of size at least  $\delta^*_{k-1}(J) - \mu n$ such that $\Part$ refines $\Part'$
and $L^\mu_\Part(J_k)$ is incomplete with respect to $\Part'$ and transferral-free.
\end{thm}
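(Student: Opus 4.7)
The plan is to combine the absorbing method with a lattice-based analysis of obstructions, so that the structural alternative in the theorem arises precisely as the failure mode of the absorber construction.

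First, I would construct an \emph{absorbing matching} $M_{\mathrm{abs}} \subseteq J_k$ of size $o(n)$ such that every sufficiently small $\Part'$-balanced set $S$ is absorbed by $M_{\mathrm{abs}}$, meaning $J_k[V(M_{\mathrm{abs}}) \cup S]$ has a perfect matching. For each $k$-tuple $T$ with one vertex in each part of $\Part'$, one introduces \emph{absorbing gadgets}---small matchings that can be switched to cover $V(\mathrm{gadget}) \cup T$---and uses the degree sequence $\delta^*_j(J) \ge ((k-j)/k + \gamma)n$ at each level $j$ to build $\Omega(n^{2k-1})$ gadgets per $T$ by iterated extension. A random selection and deletion argument then produces $M_{\mathrm{abs}}$. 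Second, I would find a matching in $J_k \setminus V(M_{\mathrm{abs}})$ covering all but $o(n)$ vertices per part: by LP duality the codegree bound $\delta^*_{k-1}(J) \ge (1/k + \gamma)n$ yields a fractional perfect matching, which can be rounded to an almost-perfect integer matching using the strong partite degree conditions (either directly or via hypergraph regularity). Since all matchings built so far are $\Part'$-partite, the final leftover is $\Part'$-balanced and is absorbed by $M_{\mathrm{abs}}$.

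The main obstacle is to classify the failure modes of the absorber step and show that each corresponds to the structural alternative in the theorem. The key observation is that the count of absorbing gadgets for $T$ is governed by the set $I^\mu_\Part(J_k)$ of frequently realised index vectors, for partitions $\Part$ refining $\Part'$. I would process $\Part$ iteratively, starting from $\Part'$ and alternating two moves: while $L^\mu_\Part(J_k)$ contains a transferral $\ub_i - \ub_j$, merge the corresponding parts of $\Part$ (these parts behave interchangeably at the lattice level); conversely, while $\Part$ admits a non-trivial refinement into parts of size $\ge \delta^*_{k-1}(J) - \mu n$ that genuinely enlarges the realised structure, refine. The process terminates since $|\Part| \le k^2$ throughout: each of the $k$ vertex classes of $\Part'$ can be split into at most $k$ parts of the required minimum size. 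The terminal $\Part$ has $L^\mu_\Part(J_k)$ transferral-free, and either $L^\mu_\Part(J_k)$ is complete with respect to $\Part'$---in which case the two phases above go through and a perfect matching is produced---or $L^\mu_\Part(J_k)$ is incomplete, which is exactly the alternative conclusion of the theorem.

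The hardest step is the quantitative link between absorber abundance and completeness of $L^\mu_\Part(J_k)$: one must show that completeness implies, uniformly for every balanced $T$, at least $\mu^{O(1)} n^{2k-1}$ absorbers. This uniformity requires the partite degree sequence at every lower level, not merely the codegree condition, and would likely proceed by an iterated extension argument that ascends through the levels of the complex $J$, at each step using $\delta^*_j$ to extend partial absorbers while maintaining enough freedom in the realised index vectors. Making this dichotomy precise, together with the transition from ``scarce absorbers'' to a specific incomplete transferral-free lattice witnessed by a bounded refinement of $\Part'$, is the main technical content one inherits from the original argument in \cite{KM11p}.
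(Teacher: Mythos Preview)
This theorem is not proved in the present paper at all: it is quoted verbatim as a simplified form of \cite[Theorem~2.13]{KM11p} and used as a black box. There is therefore no ``paper's own proof'' to compare your proposal against; the paper simply imports the result and moves on.

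For what it is worth, your outline is broadly in the spirit of the actual argument in \cite{KM11p}: that paper does combine a lattice-based structural analysis with an absorbing/regularity approach, and the dichotomy you describe --- either the robust edge-lattice is complete (and a perfect matching can be assembled) or one obtains a transferral-free incomplete lattice on a bounded refinement --- is the correct shape of the conclusion. However, your sketch is quite schematic at the crucial points. In particular, the step you flag as hardest (uniform abundance of absorbers given completeness of $L^\mu_\Part(J_k)$) is not a short lemma but a substantial part of \cite{KM11p}, handled there via hypergraph regularity and a geometric analysis of the space of index vectors rather than a direct iterated-extension count. Your description of the iterative merge/refine process to reach a terminal $\Part$ is also somewhat loose: one needs to argue carefully that merging and refining do not cycle, and that the terminal partition genuinely witnesses the alternative conclusion with the stated part-size lower bound $\delta^*_{k-1}(J)-\mu n$. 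None of this is wrong as a plan, but it is a plan for reproving a cited result rather than something the present paper asks you to do.
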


\subsection{Convex geometry} \label{subsec:cg}
Given points $\vb_1, \dots, \vb_r \in \R^d$, we define their {\em positive cone} as
$$PC(\{\vb_1, \dots, \vb_r\}) := \bracc{\sum_{j \in [r]} \lambda_{j} \vb_j : \lambda_1, \dots, \lambda_r \geq 0}.$$
A classical result, commonly known as Farkas' Lemma, shows that any point $\vb$ outside of the positive cone of these points 
can be separated from them by a separating hyperplane.

\begin{thm}[Farkas' Lemma] \label{farkas}
Suppose $\vb \in \R^d \sm PC(Y)$ for some finite set $Y \subseteq \R^d$.
Then there is some $\ab \in \R^d$ such that $\ab \cdot \yb \ge 0$ for every $\yb \in Y$ and $\ab \cdot \vb < 0$.
\end{thm}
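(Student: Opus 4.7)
The plan is to obtain the separating vector $\ab$ as (essentially) the displacement from $\vb$ to its nearest point in $PC(Y)$, exploiting the fact that $PC(Y)$ is a closed convex set. The argument splits into two parts: first showing closedness of the finitely generated cone $PC(Y)$, and then extracting the separating hyperplane from the nearest-point projection.

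First I would establish that $PC(Y)$ is closed in $\R^d$. Convexity is immediate from the definition. For closedness, the plan is to use a Carath\'eodory-type reduction: I claim every point of $PC(Y)$ can be written as a non-negative combination of a linearly independent subset of $Y$, by repeatedly using any non-trivial linear dependence among the generators used with positive coefficients to zero out one coefficient while keeping the rest non-negative. Since $Y$ is finite, there are only finitely many linearly independent subsets $Y' \subseteq Y$, so $PC(Y) = \bigcup_{Y'} PC(Y')$. For each such $Y' = \{\yb_1, \dots, \yb_r\}$, the map $(\lambda_1, \dots, \lambda_r) \mapsto \sum_i \lambda_i \yb_i$ is an injective linear map from $\R_{\ge 0}^r$ into $\R^d$ and hence a homeomorphism onto its image; thus $PC(Y')$ is closed, and a finite union of closed sets is closed.

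Next, with $PC(Y)$ closed, convex, non-empty, and not containing $\vb$, I would pick the nearest point $\wb \in PC(Y)$ to $\vb$ (which exists by compactness of $PC(Y) \cap B^d(\vb, R)$ for large $R$) and set $\ab := \wb - \vb$, so $\ab \ne \0$. To check $\ab \cdot \yb \ge 0$ for every $\yb \in Y$, I would note that $\wb + t\yb \in PC(Y)$ for all $t \ge 0$, expand $\|\wb + t\yb - \vb\|^2 \ge \|\wb-\vb\|^2$, and let $t \to 0^+$ to obtain $\ab \cdot \yb \ge 0$. To compute $\ab \cdot \vb$, I would use the analogous perturbations $(1 \pm t)\wb \in PC(Y)$ (valid for small $t \ge 0$ since $\wb$ has non-negative coefficients in $Y$) to deduce $\ab \cdot \wb = 0$, and then
\[
\ab \cdot \vb \;=\; \ab \cdot (\wb - \ab) \;=\; \ab \cdot \wb - \|\ab\|^2 \;=\; -\|\ab\|^2 \;<\; 0,
\]
which gives the required separation. (In the degenerate case $\wb = \0$, we have $\ab = -\vb$ and $\ab \cdot \vb = -\|\vb\|^2 < 0$ directly, since $\vb \ne \0$.)

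The main obstacle is the closedness step: for a general convex set this can fail under linear images, so it is essential here that $Y$ is finite and we can reduce to the linearly independent case. Everything else is a standard nearest-point separation argument, which is clean once closedness is in hand.
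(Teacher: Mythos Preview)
The paper does not actually prove Farkas' Lemma; it is simply quoted as a classical result in Section~\ref{subsec:cg} with no argument supplied. So there is nothing to compare against.

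Your proof is correct and is one of the standard routes: establish closedness of the finitely generated cone via the conic Carath\'eodory reduction to linearly independent generating sets, then apply the nearest-point projection onto a closed convex set to produce the separating functional. The verification that $\ab \cdot \wb = 0$ (using both $(1+t)\wb$ and $(1-t)\wb$ for small $t \ge 0$) and hence $\ab \cdot \vb = -\|\ab\|^2 < 0$ is clean, and you correctly flagged that closedness of $PC(Y)$ is the only non-trivial ingredient, relying essentially on the finiteness of $Y$.
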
 
 
We also need the following (slightly rephrased) simple proposition from~\cite{KM11p}.

\begin{prop}[\cite{KM11p}, Proposition 4.8] \label{vector_as_small_sum}
Suppose that $1/s \ll 1/r, 1/d$. Let $X \subseteq \Z^d \cap B^d(\0, r)$, and let $L_X$ 
be the sublattice of $\Z^d$ generated by $X$. Then for any vector 
$\xb \in L_X \cap B^d(\0, r)$ we may choose integers~$a_\ib$ with $|a_\ib| \leq s$ 
for each $\ib \in X$ such that $\xb = \sum_{\ib \in X} a_\ib \ib$.
\end{prop}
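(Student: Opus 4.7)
The plan is to prove this via a finiteness (compactness) argument, exploiting that $r$ and $d$ are to be treated as constants while $s$ need only be a sufficiently large function of them (this is the meaning of $1/s \ll 1/r, 1/d$). Because the statement is qualitative rather than effective, I will not attempt to produce an explicit bound on $s$.

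First I would observe that $\Z^d \cap B^d(\0, r)$ is a finite set, of cardinality at most $(2r+1)^d$. Consequently the collection of admissible sets $X \subseteq \Z^d \cap B^d(\0, r)$ is finite, and for each such $X$ the intersection $L_X \cap B^d(\0, r)$ is also finite as a bounded subset of the discrete set $\Z^d$. Altogether there are only finitely many admissible pairs $(X, \xb)$ with $\xb \in L_X \cap B^d(\0, r)$.

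Next, for each such pair, the very definition of $L_X$ as the sublattice of $\Z^d$ generated by $X$ ensures that at least one integer representation $\xb = \sum_{\ib \in X} a_\ib \ib$ exists. The set of valid representations forms a non-empty coset of the integer kernel $K := \ker(\pi) \subseteq \Z^X$, where $\pi: \Z^X \to \Z^d$ sends $(a_\ib)_{\ib \in X}$ to $\sum_\ib a_\ib \ib$. On this coset the quantity $\max_\ib |a_\ib|$ takes values in $\Z_{\ge 0}$, so its infimum is attained at some particular integer representation. Writing $s(X, \xb)$ for this attained minimum, I then set $s_0(r,d) := \max_{(X, \xb)} s(X, \xb)$, where the maximum ranges over the finite collection of admissible pairs. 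Any $s \geq s_0(r,d)$ then satisfies the conclusion, which is exactly what the hypothesis $1/s \ll 1/r, 1/d$ provides.

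The only subtle point in this argument is verifying that the minimum on the coset is attained, but this is automatic since $\max_\ib |a_\ib|$ takes non-negative integer values. The natural alternative, should one want a quantitative $s = s(r,d)$, would be to run lattice basis reduction: extract a short (say Hermite-reduced) basis $\bb_1, \dots, \bb_m$ of $L_X$ with each $\bb_i$ expressible as an integer combination of $X$ with coefficients bounded in terms of $r, d$, bound the coordinates of $\xb$ with respect to this basis using $\|\xb\| \le r$, and compose. I would expect the main technical obstacle there to be controlling the coefficients in the lift from the reduced basis back to $X$, but since only the qualitative statement is invoked in the paper (via the $\ll$ convention), this route is unnecessary.
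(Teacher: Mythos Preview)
Your compactness argument is correct: there are only finitely many pairs $(X,\xb)$ with $X \subseteq \Z^d \cap B^d(\0,r)$ and $\xb \in L_X \cap B^d(\0,r)$, each admits some integer representation by definition of $L_X$, the minimum of $\max_\ib |a_\ib|$ is attained since it takes non-negative integer values, and a finite maximum over all pairs gives a valid $s_0(r,d)$.

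The paper does not actually prove this proposition; it is quoted without proof from~\cite{KM11p} as a ``simple proposition''. So there is nothing in the present paper to compare your argument against. Your finiteness proof is a perfectly adequate justification; as you note, one could also proceed constructively via lattice reduction to obtain an explicit bound, but since only the qualitative $\ll$ statement is used downstream, the non-effective argument suffices.
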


\subsection{Baranyai's Matrix Rounding Theorem} \label{subsec:baranyai}

The proof of Lemma~\ref{kPartiteLemma} (on which the key lemma, Lemma~\ref{NonPartiteLemma}, relies) will use Baranyai's Matrix Rounding Theorem~\cite{B}
(see also~\cite[Theorem 7.5]{VLW}).

\begin{thm} (Baranyai's Matrix Rounding Theorem) \label{Baranyai} 
Let $A$ be a real matrix. Then there exists an integer matrix $B$ whose entries, row sums, column sums and the sum of all the entries are the entries, row sums, column sums and the sum of all the entries respectively of $A$, each rounded either up or down.
\end{thm}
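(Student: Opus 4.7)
The plan is to realise the rounding as a flow-feasibility problem in a network with integer capacity bounds, and then invoke the standard integrality theorem for feasible flows in such networks. Suppose $A = (a_{ij})$ is an $m \times n$ real matrix with row sums $r_i = \sum_j a_{ij}$, column sums $c_j = \sum_i a_{ij}$ and total $T = \sum_{i,j} a_{ij}$. First I would construct a directed network $N$ on the vertex set $\{s,t\} \cup \{R_1,\dots,R_m\} \cup \{C_1,\dots,C_n\}$, where each arc carries an integer lower and upper flow bound as follows: an arc $s \to R_i$ with bounds $[\lfloor r_i \rfloor, \lceil r_i \rceil]$ for every $i$, an arc $R_i \to C_j$ with bounds $[\lfloor a_{ij} \rfloor, \lceil a_{ij} \rceil]$ for every $i$ and $j$, an arc $C_j \to t$ with bounds $[\lfloor c_j \rfloor, \lceil c_j \rceil]$ for every $j$, and a single return arc $t \to s$ with bounds $[\lfloor T \rfloor, \lceil T \rceil]$.

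Next, I would observe that the real-valued assignment sending flow $a_{ij}$ along $R_i \to C_j$, flow $r_i$ along $s \to R_i$, flow $c_j$ along $C_j \to t$, and flow $T$ along $t \to s$ is a feasible circulation in $N$: conservation of flow at $R_i$ is exactly the identity $r_i = \sum_j a_{ij}$, conservation at $C_j$ is $c_j = \sum_i a_{ij}$, conservation at $s$ and $t$ is immediate, and every arc-flow lies between its integer lower and upper bounds. Since all bounds are integers, the classical integrality theorem for feasible circulations in networks with integer capacity bounds (provable via total unimodularity of the node-arc incidence matrix, or via a direct cycle-cancellation argument) supplies an integer circulation $f$ that respects the same bounds. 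Setting $b_{ij} := f(R_i \to C_j)$, flow conservation at $R_i$ forces $\sum_j b_{ij} = f(s \to R_i) \in \{\lfloor r_i \rfloor, \lceil r_i \rceil\}$, conservation at $C_j$ forces $\sum_i b_{ij} = f(C_j \to t) \in \{\lfloor c_j \rfloor, \lceil c_j \rceil\}$, the return flow along $t \to s$ lies in $\{\lfloor T \rfloor, \lceil T \rceil\}$, and by construction every $b_{ij}$ lies in $\{\lfloor a_{ij} \rfloor, \lceil a_{ij} \rceil\}$.

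The main obstacle is really just the integrality theorem invoked above; it is standard but does require some care to prove cleanly. If a self-contained alternative is preferred, I would proceed by induction on the number of arcs of $N$ carrying non-integer flow. Given such an arc, flow conservation at its endpoints forces the existence of a second non-integer arc meeting each endpoint, so one can trace a closed cycle of non-integer arcs in the undirected version of $N$; pushing $\pm \varepsilon$ around this cycle (with signs according to arc orientation) preserves flow conservation, keeps every arc within its integer bounds for all sufficiently small $\varepsilon > 0$, and for some critical $\varepsilon$ drives at least one arc-flow to an integer value, strictly reducing the non-integer count. Iterating this reduction produces an integer feasible circulation, and hence the required matrix $B$.
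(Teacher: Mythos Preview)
Your argument is correct. The paper does not prove this theorem at all; it simply quotes it as a known result (citing Baranyai~\cite{B} and van~Lint--Wilson~\cite{VLW}), so there is no proof in the paper to compare against.

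Your route via feasible circulations with integer lower and upper bounds is one of the standard proofs of this result. The network construction is exactly right, the real matrix gives a feasible real circulation, and the integrality theorem for circulations (or the self-contained cycle-cancellation reduction you sketch) delivers the integer rounding with the required row-sum, column-sum, and total-sum constraints. One small point worth making explicit in a write-up: when you trace non-integer arcs to find a cycle, the walk in the underlying undirected graph must eventually repeat a vertex by finiteness, and the segment between repetitions is the cycle you want; also, since each capacity interval $[\lfloor x\rfloor,\lceil x\rceil]$ has width at most~$1$, an integer in it is automatically either $\lfloor x\rfloor$ or $\lceil x\rceil$. With those details the argument is complete.
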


\subsection{Concentration of probability} \label{subsec:conc}

We will need the following inequalities, known as Chernoff bounds, as applied to sums of Bernoulli random variables
(i.e. random variables which take values in $\{0, 1\}$) and hypergeometric random variables. The hypergeometric random
variable $X$ with parameters $(N,m,n)$ is defined as $X = |T \cap S|$, where $S \subseteq [N]$ is a fixed set of size
$m$, and $T \subseteq [N]$ is a uniformly random set of size $n$. If $m=pN$ then $X$ has mean $pn$. 
The following is~\cite[Theorem 2.8]{JLR}.

\begin{lemma} \label{VaryingChernoff} 
Let $X$ be a sum of independent Bernoulli random variables and $0<a<3/2$. 
Then $\Prob(|X - \Exp X| \geq a\Exp X) \le 2\exp{(-\frac{a^2}{3}\Exp X)}$.
\end{lemma}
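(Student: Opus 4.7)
The plan is to prove this classical Chernoff bound via the standard exponential moment (Bernstein) method, handling the upper and lower tails separately and then combining via a union bound for the factor of $2$.

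First I would write $X = \sum_{i=1}^N X_i$ with $X_i$ independent Bernoulli with means $p_i$, and set $\mu = \Exp X = \sum p_i$. For any real $t$, I would compute
\[
\Exp[e^{tX_i}] = 1 - p_i + p_i e^t = 1 + p_i(e^t - 1) \le \exp\bigl(p_i(e^t-1)\bigr),
\]
using $1+x \le e^x$. By independence, $\Exp[e^{tX}] \le \exp\bigl(\mu(e^t-1)\bigr)$. For the upper tail, applying Markov's inequality to $e^{tX}$ with $t > 0$ gives
\[
\Prob(X \ge (1+a)\mu) \le e^{-t(1+a)\mu} \Exp[e^{tX}] \le \exp\bigl(\mu(e^t - 1 - t(1+a))\bigr),
\]
which is minimized by $t = \log(1+a)$, yielding the familiar bound $\Prob(X \ge (1+a)\mu) \le \bigl(e^a/(1+a)^{1+a}\bigr)^\mu$. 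The lower tail is analogous with $t<0$: optimizing gives $\Prob(X \le (1-a)\mu) \le \bigl(e^{-a}/(1-a)^{1-a}\bigr)^\mu$ (valid for $0<a<1$; for $a \ge 1$ the event is empty so the inequality is trivial).

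The remaining ingredient is the numerical estimate
\[
\max\bigl(e^a/(1+a)^{1+a},\; e^{-a}/(1-a)^{1-a}\bigr) \le e^{-a^2/3} \quad \text{for } 0 < a < 3/2.
\]
I would verify this by setting $f(a) = (1+a)\log(1+a) - a - a^2/3$ for the upper tail (and the analogous $g(a) = (1-a)\log(1-a) + a - a^2/3$ for the lower tail), checking $f(0)=g(0)=0$ and showing $f'(a), g'(a) \ge 0$ on the relevant interval, which reduces to elementary one-variable calculus (expanding $\log(1\pm a)$ via its Taylor series and comparing coefficients). Combining the two tails via a union bound absorbs a factor of $2$ and completes the bound.

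The main obstacle is purely the numerical optimization in the last step: the constant $a^2/3$ is tight for this two-sided range $0<a<3/2$, and verifying the calculus inequality cleanly (rather than obtaining a weaker constant like $a^2/4$) requires a careful comparison of Taylor coefficients of $\log(1\pm a)$ with $a + a^2/2 \pm a^2/3$. Once that analytic inequality is in hand, the probabilistic portion of the argument is routine.
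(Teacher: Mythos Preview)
The paper does not prove this lemma at all: it is simply quoted as \cite[Theorem~2.8]{JLR}. Your proposal is the standard Cram\'er--Chernoff argument and is essentially correct, so there is nothing to compare against.

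There is, however, one small slip in your calculus sketch for the upper tail. You propose to show $f(a) = (1+a)\log(1+a) - a - a^2/3 \ge 0$ on $(0,3/2)$ by showing $f'(a) \ge 0$ there. But $f'(a) = \log(1+a) - 2a/3$ satisfies $f'(3/2) = \log(5/2) - 1 < 0$, so $f'$ is not nonnegative on the whole interval. What is true is that $f''(a) = 1/(1+a) - 2/3$ changes sign once (at $a=1/2$), so $f'$ increases then decreases; since $f'(0)=0$, $f$ has a single interior maximum and then decreases. Thus to conclude $f \ge 0$ on $[0,3/2]$ you should instead check the endpoint directly: $f(3/2) = \tfrac{5}{2}\log\tfrac{5}{2} - \tfrac{9}{4} > 0$, and unimodality of $f$ then gives $f \ge 0$ throughout. (Your lower-tail claim is fine: $g''(a) = 1/(1-a) - 2/3 > 0$ on $(0,1)$ does give $g' \ge 0$, and for $a \ge 1$ the lower-tail event is trivially bounded.) With this correction the argument goes through.
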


\begin{coro} \label{SumOfHyper}
Let $X$ be a sum of independent hypergeometric random variables and $0<a<3/2$. 
Then $\Prob(|X - \Exp X| \geq a\Exp X) \le 2\exp{(-\frac{a^2}{3}\Exp X)}$.
\end{coro}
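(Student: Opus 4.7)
The plan is to reduce the hypergeometric case to the Bernoulli case already handled in Lemma~\ref{VaryingChernoff}, using the classical comparison of Hoeffding between hypergeometric and binomial distributions. Recall that if $X$ is hypergeometric with parameters $(N,m,n)$ and $Y$ is binomial with parameters $(n, m/N)$, then $\Exp X = \Exp Y = mn/N$, and Hoeffding proved that $\Exp \phi(X) \le \Exp \phi(Y)$ for every convex function $\phi$; applied with $\phi(x) = e^{tx}$ this gives $\Exp e^{tX} \le \Exp e^{tY}$ for every $t \in \R$.

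First I would couple the variables: given $X = \sum_{i=1}^r X_i$ with the $X_i$ independent hypergeometrics, let $Y_i$ be the matching binomials chosen jointly independent, and set $Y = \sum_i Y_i$. By independence the moment generating functions factor, so
\[
\Exp e^{tX} = \prod_i \Exp e^{tX_i} \le \prod_i \Exp e^{tY_i} = \Exp e^{tY},
\]
while $\Exp X = \Exp Y$.

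Next I would observe that the Chernoff bound is derived by applying Markov's inequality to $e^{tX}$ with an optimally chosen $t > 0$ (for the upper tail) or $t < 0$ (for the lower tail), so it depends on the distribution of the underlying variable only through its moment generating function. Combined with the previous step, this means that the exponential tail bound proved for $Y$ in Lemma~\ref{VaryingChernoff} automatically transfers to $X$: since $Y$ is itself a sum of independent Bernoulli random variables (each $Y_i$ being such a sum), the lemma gives $\Prob(|Y - \Exp Y| \ge a\Exp Y) \le 2\exp(-\tfrac{a^2}{3}\Exp Y)$, and the MGF comparison above yields the same bound for $X$ with $\Exp X$ in place of $\Exp Y$.

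The only non-routine input is Hoeffding's MGF comparison between the hypergeometric and the associated binomial distribution, which I would cite rather than reprove; all other steps are a mechanical repetition of the standard Chernoff derivation, so no real obstacle is expected.
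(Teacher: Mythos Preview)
Your argument is correct, but it differs from the paper's. The paper invokes a result of Vatutin and Mikha\u{\i}lov (referenced as Lemma~1 of~\cite{VM}, via Remark~2.11 of~\cite{JLR}): each hypergeometric random variable can be written \emph{exactly} as a sum of independent (not identically distributed) Bernoulli random variables. Granting this, $X$ is literally a sum of independent Bernoullis, and Lemma~\ref{VaryingChernoff} applies as a black box with no need to reopen the Chernoff derivation.

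Your route via Hoeffding's convex-order domination of the hypergeometric by the matching binomial is equally valid and arguably uses a more widely known fact. The trade-off is that you cannot quote Lemma~\ref{VaryingChernoff} as a black box: you must note that its proof proceeds through the moment generating function, so that the MGF inequality $\Exp e^{tX} \le \Exp e^{tY}$ (for all real $t$) transfers the Markov-inequality step from $Y$ to $X$. You have stated this correctly; just be aware that the concluding sentence (``the MGF comparison above yields the same bound for $X$'') is shorthand for this unpacking, not a formal transfer of the tail probability itself.
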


\begin{proof} We follow Remark 2.11 of~\cite{JLR}. By Lemma 1 of~\cite{VM}, each of the hypergeometric random variables may be expressed as a sum of independent (but not identically distributed) Bernoulli random variables. Hence Lemma~\ref{VaryingChernoff} implies the result. \end{proof}

We will also need a form of the well-known Azuma-Hoeffding inequality. A sequence $(X_0, \dots, X_n)$ of random
variables is a \emph{martingale} if $\Exp(|X_j|)$ is finite and $\Exp(X_j | X_0, \dots, X_{j-1}) = X_{j-1}$ for any $j \in [n]$. 
The following is (a weaker form of)~\cite[Theorem 2.25]{JLR}.

\begin{thm} \label{Azuma} 
Let $A_0, A_1, \ldots, A_n$ be a martingale such that $|A_{i} - A_{i-1}| \leq C$ for every $i \in [n]$. 
Then $\Prob(|A_n - A_0| > tC) \leq 2\exp\left(-t^2/2n\right)$ for any $t > 0$.
\end{thm}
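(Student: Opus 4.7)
The plan is to use the standard exponential moment (Chernoff-type) argument for martingales. First I would reduce to a one-sided estimate: since $(A_0, -A_1 + 2A_0, \dots)$ — or more simply $(-A_i + 2A_0)$ viewed as a martingale with the same bounded differences — has the same Lipschitz constant $C$, it suffices to prove $\Prob(A_n - A_0 > tC) \le \exp(-t^2/(2n))$; a union bound over the two tails then yields the factor of $2$ in the statement.

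Let $D_i := A_i - A_{i-1}$ for $i \in [n]$, so that $|D_i| \le C$ and $\Exp(D_i \mid A_0,\dots,A_{i-1}) = 0$ by the martingale property. The key ingredient is \emph{Hoeffding's lemma}: for any real random variable $X$ with $\Exp X = 0$ and $|X| \le C$, and any $\lambda > 0$,
\[
\Exp\bigl(e^{\lambda X}\bigr) \le e^{\lambda^2 C^2 / 2}.
\]
This follows from the convexity bound $e^{\lambda x} \le \tfrac{C-x}{2C}\, e^{-\lambda C} + \tfrac{C+x}{2C}\, e^{\lambda C}$ valid on $[-C, C]$: taking expectations kills the linear term and leaves $\cosh(\lambda C)$, which is at most $e^{\lambda^2 C^2/2}$ by a term-by-term comparison of Taylor series.

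Applied conditionally to $D_i$ given $A_0, \dots, A_{i-1}$, this gives $\Exp(e^{\lambda D_i} \mid A_0,\dots,A_{i-1}) \le e^{\lambda^2 C^2/2}$, and iterating the tower property yields
\[
\Exp\bigl(e^{\lambda(A_n - A_0)}\bigr) = \Exp\bigl(e^{\lambda \sum_{i=1}^n D_i}\bigr) \le e^{n\lambda^2 C^2/2}.
\]
Markov's inequality then produces
\[
\Prob(A_n - A_0 > tC) \le e^{-\lambda tC}\, \Exp\bigl(e^{\lambda(A_n - A_0)}\bigr) \le \exp\!\bigl(-\lambda tC + n\lambda^2 C^2/2\bigr),
\]
and optimising in $\lambda$ (take $\lambda = t/(nC)$) gives the desired bound $\exp(-t^2/(2n))$. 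Combining with the analogous bound for the lower tail finishes the proof.

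There is no real obstacle: the result is entirely routine given Hoeffding's lemma. The only care needed is in the conditional expectation step — ensuring that at each stage $D_i$ is bounded almost surely so that Hoeffding's lemma applies to its conditional law — but this is immediate from the hypothesis $|A_i - A_{i-1}| \le C$.
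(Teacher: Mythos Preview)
Your proof is correct and is the standard exponential-moment argument for the Azuma--Hoeffding inequality. Note, however, that the paper does not supply its own proof of this statement: it is simply quoted as (a weaker form of) Theorem~2.25 in Janson, \L uczak and Ruci\'nski's \emph{Random Graphs}, so there is nothing in the paper to compare against beyond observing that what you have written is exactly the textbook proof one would find in such a reference.
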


We will apply Theorem~\ref{Azuma} via the following corollary.

\begin{coro} \label{ApplyAzuma}
Let $c>0$ and $H$ be a $k$-graph on $n$ vertices with $|H| \ge cn^k$.
Let $\Qart$ be a partition of $V(H)$ and let $(n_Z)_{Z \in \Qart}$ be integers such that $c|Z| \le n_Z \le |Z|$ for each $Z \in \Qart$.
Suppose that $S \sub V(H)$ is chosen uniformly at random
subject to the condition that $|S \cap Z| = n_Z$ for each $Z \in \Qart$.
Then with high probability $|H[S]| = (1 \pm c)\mb{E}|H[S]|$.
\end{coro}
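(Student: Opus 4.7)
The plan is to apply Azuma's inequality (Theorem~\ref{Azuma}) to a Doob martingale obtained by revealing $S$ one vertex at a time. I will realise $S$ as follows: for each part $Z \in \Qart$ independently, sample a uniformly random bijection $\pi_Z\colon [|Z|] \to Z$, and set $S_Z := \pi_Z([n_Z])$ so that $S = \bigcup_{Z \in \Qart} S_Z$. Fixing an ordering of the parts and of positions within each part gives a total of $n = \sum_Z |Z|$ revelation steps. Let $M_t$ be the conditional expectation of $|H[S]|$ given the first $t$ revealed values of the $\pi_Z$; then $(M_t)_{t=0}^n$ is a martingale from $M_0 = \mb{E}|H[S]|$ to $M_n = |H[S]|$.

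The crucial ingredient is the bounded-differences estimate $|M_t - M_{t-1}| \le 2n^{k-1}$, which I prove by a swap coupling. At step $t$, suppose we reveal $\pi_Z(j)$ for some $Z$ and $j$. For any two admissible candidate values $a, b \in Z$, I couple the uniform completions of $\pi_Z$ under the conditions $\pi_Z(j) = a$ and $\pi_Z(j) = b$ by transposing $a$ and $b$ in the remaining positions. A short case analysis on whether $j$ and the position $j^*$ of $b$ in the $a$-completion both lie in $[n_Z]$ shows that the resulting two sets $S$ have symmetric difference contained in $\{a, b\}$. Since each vertex lies in at most $\binom{n-1}{k-1} \le n^{k-1}$ edges of $H$, the value $|H[S]|$ can differ by at most $2n^{k-1}$ between the two outcomes, and the bound on $|M_t - M_{t-1}|$ follows because $M_t$ is one such conditional expectation and $M_{t-1}$ is a convex combination of them. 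A routine computation using $|H| \ge cn^k$ and $n_Z \ge c|Z|$ gives $\mb{E}|H[S]| \ge \alpha n^k$ for some $\alpha = \alpha(c,k) > 0$: writing $a_Z := |e \cap Z|$, for each $e \in H$ we have $\Prob(e \sub S) = \prod_Z \binom{n_Z}{a_Z}/\binom{|Z|}{a_Z}$, which is bounded below by a positive constant for a constant fraction of edges (those with all vertices in parts where $|Z|$ is either large enough for $n_Z/|Z| \ge c$ to transfer to the $a_Z$-fold product, or so small that the ratio is trivially bounded). Applying Theorem~\ref{Azuma} with $C = 2n^{k-1}$ to the $n$-step martingale then yields
\[
\Prob\bigl(\bigl||H[S]| - \mb{E}|H[S]|\bigr| > c\, \mb{E}|H[S]|\bigr)
\le 2\exp\!\left(-\frac{(c\alpha n^k)^2}{2n \cdot (2n^{k-1})^2}\right)
= 2\exp(-\Omega(n)),
\]
which is high probability in the sense of the paper.

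The main obstacle is controlling the martingale increments at the right scale. A naive McDiarmid argument using one independent coordinate per part (namely the whole permutation $\pi_Z$) gives differences of size $O(|Z|\,n^{k-1})$ and sum of squares $\sum_Z (|Z|\,n^{k-1})^2 = O(n^{2k})$; paired with the deviation target $\Omega(n^k)$ this only yields $\exp(-O(1))$ concentration and is useless. Refining the martingale so as to reveal a single position per step produces $n$ increments, and the swap coupling is precisely what keeps each increment down to $O(n^{k-1})$, delivering the required $\exp(-\Omega(n))$ bound.
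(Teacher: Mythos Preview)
Your proposal is correct and follows essentially the same approach as the paper: realise $S$ via independent uniform permutations $\pi_Z$ of each part, reveal these one position at a time to form a Doob martingale of length $n$, use a transposition coupling to get the $2n^{k-1}$ Lipschitz bound, lower-bound $\mb{E}|H[S]|$ by $\Omega_{c,k}(n^k)$, and conclude via Theorem~\ref{Azuma}. Your expectation lower bound is stated a little loosely; the paper makes it precise by noting that at least $cn^k/2$ edges have at most one vertex in any part of size below $cn/2$ (a short counting argument), and for each such edge $\Prob(e \subseteq S) \ge (c/2)^k$, giving $\mb{E}|H[S]| \ge (c/2)^{k+1} n^k$.
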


\begin{proof}
We consider each $S \cap Z$ to be chosen as the first $n_Z$ elements of a random permutation $\pi_Z$ of $Z$.
We apply Theorem~\ref{Azuma} with $A_i = \mb{E}(|H[S]| \mid \mc{F}_i)$, 
where $\mc{F}_i$ is the algebra of events generated by revealing
the values of the permutations on the first $i$ elements of $V(H)$.
Note that $A_0 = \mb{E}|H[S]|$ and $A_n = |H[S]|$.
Also, $|H[S]|$ is `$2n^{k-1}$-Lipschitz', in that its value can change by at most $2n^{k-1}$
when any transposition is applied to any of the permutations.
It is not hard to deduce that $|A_{i} - A_{i-1}| \leq 2n^{k-1}$ for all $i \in [n]$. 
Finally, we claim that $\mb{E}|H[S]| \ge (c/2)^{k+1} n^k$.
Indeed, at least $c n^k/2$ edges of $H$ have at most one vertex 
within any part of size less than $cn/2$, and for any such edge $e$,
we have $\mb{P}(e \sub S) \ge (c/2)^k$. 
So by Theorem~\ref{Azuma}, applied with $C=2n^{k-1}$ and $t = (c/2)^{k+2}n$,
with high probability $|H[S]|$ and $\mb{E}|H[S]|$ do not differ by more than $c\mb{E}|H[S]|$.
\end{proof}

\section{Robust Maximality} \label{RobMaxSection}

Given a $k$-graph $H$, there may be many approximate divisibility barriers,
i.e.\ partitions $\Part$ of $V(H)$ such that $L_\Part^\mu(H)$ is incomplete for some constant $\mu>0$;
for example, any refinement of such a partition has this property.
In this section we will identify a canonical such partition, which is `robustly maximal'.
The definition at which we will arrive has a number of advantages.
Firstly, any approximate divisibility barrier in $H$ will imply the existence of at least one robustly maximal partition $\Part$.
Secondly, we can give a condition which implies the existence of a perfect matching in $H$
and refers to index vectors with respect to $\Part$ alone.
Thirdly, as the term `robustly' suggests, the property is insensitive to small modifications.
All of these properties will be stated precisely and proved later in the section.

To build up to the definition, we first discuss one way to avoid taking a partition that is too fine.
Recall that a lattice $L \subseteq \Z^d$ is \emph{transferral-free} 
if $L$ does not contain any difference of unit vectors $\ub_i - \ub_j$ with $i,j \in [d]$ and $i \neq j$.
Given any approximate divisibility barrier, we can repeatedly merge parts to obtain one 
with a transferral-free robust edge-lattice, which we may consider to be its `simplest' version.
Indeed, let $H$ be a $k$-graph and consider a partition $\Part$ of $V(H)$ 
such that $L_\Part^\mu(H)$ is incomplete 
and contains $\ub_X - \ub_Y$, for some distinct parts $X$, $Y$ of $\Part$.
Let $\Part'$ be the partition formed from $\Part$ by merging the parts $X$ and $Y$.
Then $L_{\Part'}^{(k+1)\mu}(H)$ is also incomplete. Indeed, if this is not the case,
then for any $\ib \in L_{\max}^\Part$ we have $(\ib \mid \Part') \in L_{\Part'}^{(k+1)\mu}(H)$,
so by Proposition~\ref{InverseRestriction}(ii) there is $\ib' \in L_\Part^\mu(H)$ with $(\ib' \mid \Part') = (\ib \mid \Part')$.
Since $\ub_X - \ub_Y \in L_\Part^\mu(H)$ we deduce that $\ib \in L_\Part^\mu(H)$.
However, this implies that $L_\Part^\mu(H)$ is complete, which is a contradiction.

Next we recall from Construction~\ref{NestedConstruct} that a transferral-free approximate divisibility barrier
may have another one `hidden' inside it, so we will require a maximality property to rule this out.
A first attempt might be to say that $\Part$ should be maximal such that $L_\Part^\mu(H)$ is transferral-free, 
but then we would obtain a rather fragile property that is sensitive to small modifications of $\Part$, $\mu$ and $H$.
Indeed, in the course of our proof we will need to remove small matchings from $H$ 
in order to cover vertices which are exceptional in various ways.
We also want the property to be preserved with high probability 
by taking an induced subgraph on a randomly chosen set of vertices.
These requirements lead naturally to the following key definition.

\begin{defin} \label{def:RobustMax} 
Let $H$ be a $k$-graph. 
We say that a partition $\Part$ of $V(H)$ is 
\emph{$(c, c', \mu, \mu')$-robustly maximal} with respect to $H$ if
\begin{enumerate}[(i)]
\item $L_\Part^\mu(H)$ is transferral-free and 
all parts of $\Part$ have size at least $c|V(H)|$,
\item $L_{\Part'}^{\mu'}(H)$ is not transferral-free for any partition $\Part'$ of $V(H)$ 
with parts of size at least $c'|V(H)|$ that strictly refines $\Part$.
\end{enumerate}
\end{defin}

We will see in the key lemma that robustly maximal partitions are `canonical', 
in that they capture all necessary information on approximate divisibility barriers.
The next proposition allows us to refine any transferral-free approximate divisibility barrier
to obtain a robustly maximal partition. 
(Note that if $\Part$ is trivial and $\Part'=\Part$ 
then $H$ does not have any approximate divisibility barrier).

\begin{prop} \label{RobustExistence} Let $k \geq 2$ be an integer and $c > 0$ be a constant. 
Let $s = \lfloor 1/c \rfloor$ and fix constants $0 < \mu_1 < \dots < \mu_{s+1}$ and $c_1, \dots, c_{s+1} \geq c$. 
Suppose that $H$ is a $k$-graph on $n$ vertices, and $\Part$ is a partition of $V(H)$ with parts of size at least $c_1 n$ such that $L_\Part^{\mu_1}(H)$ is transferral-free. Then there exists $t \in [s]$ and a partition $\Part'$ of $V(H)$ that refines $\Part$ and is $(c_t, c_{t+1}, \mu_t, \mu_{t+1})$-robustly maximal with respect to $H$.
\end{prop}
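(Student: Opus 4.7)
The plan is to build $\Part'$ by iterated refinement, starting from $\Part$ and using the failure of condition~(ii) of robust maximality as long as it fails to produce a strictly finer partition with the appropriate parameters. Termination is guaranteed by a simple bound on the number of parts.

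More precisely, I would define a sequence of partitions $\Part^{(1)}, \Part^{(2)}, \ldots$ of $V(H)$ as follows. Set $\Part^{(1)} := \Part$; by hypothesis this has parts of size at least $c_1 n$ and $L_{\Part^{(1)}}^{\mu_1}(H)$ is transferral-free. Inductively, suppose we have constructed $\Part^{(t)}$ refining $\Part$, with parts of size at least $c_t n$, such that $L_{\Part^{(t)}}^{\mu_t}(H)$ is transferral-free. If $\Part^{(t)}$ is $(c_t, c_{t+1}, \mu_t, \mu_{t+1})$-robustly maximal with respect to $H$ then we terminate and output $\Part' := \Part^{(t)}$. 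Otherwise, by the negation of part~(ii) of Definition~\ref{def:RobustMax}, there exists a strict refinement $\Part^{(t+1)}$ of $\Part^{(t)}$ with all parts of size at least $c_{t+1} n$ such that $L_{\Part^{(t+1)}}^{\mu_{t+1}}(H)$ is transferral-free. Since refinement is transitive, $\Part^{(t+1)}$ also refines $\Part$, and so the inductive hypotheses are maintained for the next step.

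It remains to verify that the process terminates within $s$ steps. Because $\Part^{(t+1)}$ is a strict refinement of $\Part^{(t)}$, we have $|\Part^{(t+1)}| > |\Part^{(t)}|$. On the other hand, since every part of $\Part^{(t)}$ has size at least $c_t n \geq c n$, we have $|\Part^{(t)}| \leq 1/c$, and hence $|\Part^{(t)}| \leq \lfloor 1/c \rfloor = s$. Therefore the sequence of partitions cannot continue past step $s$, so the process must halt at some $t \in [s]$, yielding a partition $\Part'$ with the required properties.

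There is no real obstacle here: the argument is essentially a monovariant (the number of parts) combined with the observation that a strict refinement exists whenever condition~(ii) fails. The only minor point to be careful about is that refinement is transitive, so each $\Part^{(t)}$ remains a refinement of the original $\Part$, and that the parameters $(c_t, c_{t+1}, \mu_t, \mu_{t+1})$ used at step $t$ match exactly those needed for the definition of robust maximality.
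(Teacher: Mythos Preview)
Your proof is correct and follows essentially the same approach as the paper: iteratively refine using the failure of condition~(ii) in Definition~\ref{def:RobustMax}, and terminate because the number of parts is bounded by $s = \lfloor 1/c \rfloor$. The only cosmetic difference is that you explicitly note that condition~(i) is maintained inductively so any failure of robust maximality must come from~(ii), which the paper leaves implicit.
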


\begin{proof} 
We start by setting $\Part^{(1)} = \Part$, then we proceed iteratively. 
At step $t$, if there is a partition $\Part^+$ which strictly refines $\Part^{(t)}$ into parts of size at least $c_{t+1} n$
such that the lattice $L_{\Part^+}^{\mu_{t+1}}(H)$ is transferral-free, then we let $\Part^{(t+1)} = \Part^+$.
Otherwise we terminate with output $\Part^{(t)}$. 
By definition each $\Part^{(t)}$ refines $\Part$ and has parts of size at least $c_t n$. 
Furthermore, if the algorithm terminates at time $t$, then $L_{\Part^{(t)}}^{\mu_{t}}(H)$ is transferral-free by choice of $\Part^{(t)}$, but for any $\Part^+$ which strictly refines $\Part^{(t)}$ into parts of size at least $c_{t+1} n$ the lattice $L_{\Part^+}^{\mu_{t+1}}(H)$ is not transferral-free. 
That is, $\Part^{(t)}$ is $(c_t, c_{t+1}, \mu_t, \mu_{t+1})$-robustly maximal with respect to $H$. 
It remains to show that this algorithm must terminate with $t \leq 1/c$.
Indeed, $|\Part^{(i)}| > |\Part^{(i-1)}|$ for $i \ge 2$, 
but $\Part^{(t)}$ can have at most $1/c$ parts, 
since each part has size at least $c_{t} n \geq cn$. 
\end{proof}

Next we show that robust maximality is insensitive to small modifications.

\begin{lemma} \label{RobustInherit} 
Suppose that $1/n \ll \mu, \mu', \alpha \ll c', c, 1/k$. 
Let $\Part$ be a partition of a set $V$ of size $n$ and $H$ be a $k$-graph on $V$.
Let $V' \subseteq V$, let $\Part'$ be a partition of $V'$ that is $\alpha$-close to $\Part$, and let $H'$ be a $k$-graph on $V'$ that is $\alpha$-close to $H$.
If $\Part$ is $(c, c', \mu, \mu')$-robustly maximal with respect to $H$
then $\Part'$ is $(c-\alpha, c'+3\alpha, \mu+3\alpha, \mu'-2\alpha)$-robustly maximal with respect to $H'$.
\end{lemma}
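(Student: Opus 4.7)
The plan is to unpack Definition~\ref{def:RobustMax}(i) and (ii) for $(\Part', H')$ separately. Throughout I will use Proposition~\ref{similarlattices}; its hypothesis $\mu \le 1/k$ needed for the second statement is ensured by the standing chain $\mu, \mu', \alpha \ll 1/k$.

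For clause (i), the part-size condition is immediate: $|V'_i| \ge |V_i| - \alpha n \ge (c-\alpha)n \ge (c-\alpha)|V'|$. Transferral-freeness of $L_{\Part'}^{\mu+3\alpha}(H')$ follows from the second statement of Proposition~\ref{similarlattices}, which gives $L_{\Part'}^{\mu+3\alpha}(H') \subseteq L_\Part^\mu(H)$; the latter is transferral-free by hypothesis, and sublattices of transferral-free lattices are transferral-free.

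For clause (ii), the main step is to lift any strict refinement $\Qart'$ of $\Part'$ with parts of size $\ge (c'+3\alpha)|V'|$ to a strict refinement $\Qart$ of $\Part$ with parts of size $\ge c'n$ that is $\alpha$-close to $\Qart'$. Once this is done, the first statement of Proposition~\ref{similarlattices} yields $L_\Qart^{\mu'}(H) \subseteq L_{\Qart'}^{\mu'-2\alpha}(H')$; since $\Part$ is $(c,c',\mu,\mu')$-robustly maximal with respect to $H$, $L_\Qart^{\mu'}(H)$ contains a transferral, and this transferral is inherited by $L_{\Qart'}^{\mu'-2\alpha}(H')$, giving clause (ii). To build $\Qart = (Y_j)_j$, I would index the parts of $\Qart' = (Y'_j)_j$ so that $Y'_j$ lies in the $i(j)$-th part of $\Part'$, and call $v \in V$ \emph{misplaced} if it lies in $V_i \setminus V'_i$ for some $i$. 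Then place each non-misplaced $v \in V' \cap V_i \cap V'_i$ into $Y_j$, where $j$ is the unique index with $v \in Y'_j$; and place each misplaced $v \in V_i$ into some $Y_j$ with $i(j) = i$ (such a $j$ exists because $V'_i \ne \emptyset$). Then $Y_j \subseteq V_{i(j)}$ by construction; the nonemptiness of two parts $Y_{j_1}, Y_{j_2}$ with $i(j_1) = i(j_2)$ (which exist since $\Qart'$ strictly refines $\Part'$) shows $\Qart$ strictly refines $\Part$; and $|Y_j| \ge |Y'_j| - |\text{misplaced} \cap V'| \ge (c'+3\alpha)(1-\alpha)n - \alpha n \ge c'n$ for small $\alpha$.

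The only real subtlety is the closeness bound. Each misplaced $v \in V'$ contributes exactly $2$ to both $\sum_i |V_i \triangle V'_i|$ (being in $V_i \setminus V'_i$ for one $i$ and in $V'_{i'} \setminus V_{i'}$ for another) and to $\sum_j |Y_j \triangle Y'_j|$ (being moved from one $Y'_j$ to a different $Y_{j'}$); each $v \in V \setminus V'$ contributes $1$ to both sums. Hence $\sum_j |Y_j \triangle Y'_j| = \sum_i |V_i \triangle V'_i| \le \alpha n$, giving the required $\alpha$-closeness on the nose. A looser accounting yielding only $3\alpha$-closeness would produce $L_\Qart^{\mu'}(H) \subseteq L_{\Qart'}^{\mu'-6\alpha}(H')$, which is too weak to transfer transferral-freeness to $L_{\Qart'}^{\mu'-2\alpha}(H')$; so identifying this exact cancellation is the core of the argument.
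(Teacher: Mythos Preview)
Your proof is correct and follows the same approach as the paper's: verify clause~(i) directly via Proposition~\ref{similarlattices}, then for clause~(ii) lift a putative refinement $\Qart'$ of $\Part'$ to a refinement $\Qart$ of $\Part$ by reassigning only the misplaced vertices, and apply Proposition~\ref{similarlattices} again to push a transferral from $L_\Qart^{\mu'}(H)$ into $L_{\Qart'}^{\mu'-2\alpha}(H')$. Your explicit accounting showing $\sum_j |Y_j \triangle Y'_j| = \sum_i |V_i \triangle V'_i|$ is a nice justification of the $\alpha$-closeness that the paper asserts without detail.
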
 

\begin{proof}
Since $\Part$ has parts of size at least $cn$, and $\Part'$ is $\alpha$-close to $\Part$, each part of $\Part'$ has size at least $(c - \alpha)n$.
Also, Proposition~\ref{similarlattices} implies that $L_{\Part'}^{\mu+3\alpha}(H') \sub L_\Part^\mu(H)$.
Since $L_\Part^\mu(H)$ is transferral-free, so is $L_{\Part'}^{\mu+3\alpha}(H')$. 
All that remains is to show that there is no partition $\Part^+$ of $V'$ which strictly refines $\Part'$ into parts of size at least $(c'+3\alpha)|V'|$ such that the lattice $L_{\Part^+}^{\mu'-2\alpha}(H')$ is transferral-free. 

Suppose for a contradiction that some such $\Part^+$ exists, with $d$ parts $\Part^+_1, \dots, \Part^+_d$. We use $\Part^+$ to form a partition $\Part^*$ of $V$ whose parts $\Part^*_1, \dots, \Part^*_d$ correspond to those of $\Part^+$. Note that all but at most $\alpha n$ vertices $u \in V(H)$ are members of $V(H')$ which lie in the same part of $\Part'$ as $\Part$. We include each such vertex $u$ in the part $\Part^*_j$ corresponding to the part $\Part^+_j$ of $\Part^+$ which contains $u$. Next, we assign each of the at most $\alpha n$ remaining vertices $v \in V(H)$ to an arbitrary part of $\Part^*$ which is a subset of the part of $\Part$ containing $v$. Observe that $\Part^*$ is then a refinement of $\Part$. Furthermore, $\Part^+$ is $\alpha$-close to $\Part^*$, so $L_{\Part^*}^{\mu'}(H) \subseteq L_{\Part^+}^{\mu'-2\alpha}(H')$ by Proposition~\ref{similarlattices}, and this implies that
 $L_{\Part^*}^{\mu'}(H)$ is transferral-free. 
However, $\Part^*$ has parts of size at least $(c'+3\alpha)|V'|- \alpha n \geq c'n$, so the existence of $\Part^*$ contradicts the robust maximality of $\Part$ with respect to $H$, completing the proof.
\end{proof}

Our next result shows that with robust maximality
we can improve the `weak converse' of projection from Proposition~\ref{InverseRestriction} 
to a (genuine) converse (with weaker parameters). 

\begin{prop} \label{robmaxinverse} 
Suppose that $1/n \ll \mu \ll \mu' \ll c', c, 1/k$. 
Let $H$ be a $k$-graph on $n$ vertices,  
$\Part$ be a partition of $V(H)$ that is $(c, c', \mu, \mu')$-robustly maximal with respect to $H$, 
and $\Qart$ be a partition of $V(H)$ which refines $\Part$ into parts of size at least $c'n$. 
Suppose that $\ib \in L_\Part^{\mu'}(H)$. 
Then $\ib' \in L_\Qart^\mu(H)$ for any index vector $\ib'$ 
with respect to $\Qart$ such that $(\ib' \mid \Part) = \ib$. 
\end{prop}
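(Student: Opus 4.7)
The plan is to use Proposition~\ref{InverseRestriction}(ii) to produce a first lift of $\ib$ into $L_\Qart^\mu(H)$ and then exploit the robust maximality of $\Part$ to show that the remaining ambiguity also lies in $L_\Qart^\mu(H)$. If $\Qart = \Part$ then $\ib' = \ib \in L_\Part^{\mu'}(H) \sub L_\Part^\mu(H)$ and we are done, so I may assume $\Qart$ strictly refines $\Part$. Since every part of $\Qart$ has size at least $c'n$ we have $|\Qart| \le 1/c'$, so $m := (k+1)^{|\Qart|-|\Part|}$ is a constant depending only on $c'$ and $k$; combined with $\mu \ll \mu'$ this gives $\mu \le \mu'/m$. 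Applying Proposition~\ref{InverseRestriction}(ii) to $\ib$ produces some $\ib'' \in L_\Qart^\mu(H)$ with $(\ib'' \mid \Part) = \ib$, and since $L_\Qart^\mu(H)$ is a subgroup of $\Z^\Qart$ it suffices to show $\ib' - \ib'' \in L_\Qart^\mu(H)$. The difference $\ib' - \ib''$ projects to $\0$ on $\Part$, so it is an integer combination of transferrals $\ub_{Y_1} - \ub_{Y_2}$ with $Y_1, Y_2 \in \Qart$ lying in the same part of $\Part$. The proposition therefore reduces to the claim that every such transferral lies in $L_\Qart^\mu(H)$.

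To prove this claim I would introduce the equivalence relation $\sim$ on $\Qart$ defined by $Y_1 \sim Y_2$ iff $Y_1, Y_2$ lie in the same part of $\Part$ and $\ub_{Y_1} - \ub_{Y_2} \in L_\Qart^\mu(H)$; transitivity is immediate because $L_\Qart^\mu(H)$ is closed under addition. Let $\Qart^*$ be the coarsening of $\Qart$ whose parts are the $\sim$-equivalence classes, so that $\Qart$ refines $\Qart^*$, $\Qart^*$ refines $\Part$, and every part of $\Qart^*$ has size at least $c'n$. The claim is equivalent to $\Qart^* = \Part$; I would prove it by supposing for contradiction that $\Qart^*$ strictly refines $\Part$.

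Under this assumption, the robust maximality of $\Part$ forces $L_{\Qart^*}^{\mu'}(H)$ to contain some transferral $\ub_{Z_1} - \ub_{Z_2}$. Projecting it onto $\Part$ via Proposition~\ref{InverseRestriction}(i) and using that $L_\Part^\mu(H)$ is transferral-free (with $\mu \le \mu'$) forces $Z_1, Z_2$ to lie in the same part of $\Part$. A second application of Proposition~\ref{InverseRestriction}(ii) (the constant parameter loss is again absorbed by $\mu \ll \mu'$) lifts this transferral to some $\vb \in L_\Qart^\mu(H)$ with $(\vb \mid \Qart^*) = \ub_{Z_1} - \ub_{Z_2}$. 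Fix arbitrary $Y_i \in \Qart$ with $Y_i \sub Z_i$ for $i = 1,2$. Then the restriction of $\vb - (\ub_{Y_1} - \ub_{Y_2})$ to each part $Z \in \Qart^*$ has zero coordinate sum, so is an integer combination of transferrals between parts of $\Qart$ inside $Z$; all such parts are $\sim$-equivalent and so each such transferral already lies in $L_\Qart^\mu(H)$ by definition of $\sim$. Summing over $Z \in \Qart^*$ gives $\vb - (\ub_{Y_1} - \ub_{Y_2}) \in L_\Qart^\mu(H)$, hence $\ub_{Y_1} - \ub_{Y_2} \in L_\Qart^\mu(H)$, so $Y_1 \sim Y_2$, contradicting $Y_1 \sub Z_1$ and $Y_2 \sub Z_2$ being in distinct $\Qart^*$-parts.

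The main obstacle is executing the parameter bookkeeping cleanly: each application of Proposition~\ref{InverseRestriction}(ii) weakens the detection threshold by a factor that depends on the number of parts, but the bound $|\Qart| \le 1/c'$ makes this factor a constant depending only on $c'$ and $k$, so the hierarchy $\mu \ll \mu' \ll c', c, 1/k$ absorbs all losses and the two uses of the lifting proposition fit into the single gap between $\mu$ and $\mu'$.
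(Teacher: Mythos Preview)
Your proof is correct, but it is organised differently from the paper's. The paper argues by induction on $q=|\Qart|$: at each step it applies robust maximality directly to the current refinement $\Qart$ to obtain a transferral $\ub_X-\ub_{X'}\in L_\Qart^{\mu'}(H)$, merges $X$ and $X'$ to form a partition $\Qart'$ with $q-1$ parts, uses the inductive hypothesis to place $(\ib'\mid\Qart')$ in $L_{\Qart'}^{\mu'/(k+1)^{q-1-p}}(H)$, lifts back via Proposition~\ref{InverseRestriction}(ii), and corrects using the transferral. The explicit threshold $\mu'/(k+1)^{q-p}$ is tracked throughout.

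Your route instead makes a single initial lift of $\ib$, reduces the problem to showing that all within-$\Part$-part transferrals lie in $L_\Qart^\mu(H)$, and then proves that in one shot by forming the quotient $\Qart^*$ under the relation $\sim$ and applying robust maximality \emph{once} to $\Qart^*$ to derive a contradiction. This avoids the induction and uses only two appeals to Proposition~\ref{InverseRestriction}(ii) in total, at the cost of hiding the quantitative threshold inside the hierarchy $\mu\ll\mu'$. The paper's version is more transparent about the exact loss, while yours is slicker structurally; both rest on the same two ingredients (robust maximality forcing transferrals in strict refinements, and Proposition~\ref{InverseRestriction} for moving between lattices).
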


\begin{proof}
Let $p$ be the number of parts of $\Part$. 
We prove the following statement by induction on $q$: if $\Qart$ is a refinement of $\Part$ into $q$ parts of size at least $c'n$ and $\ib'$ is an index vector with respect to $\Qart$ such that $(\ib' \mid \Part) = \ib$ then $\ib' \in L_\Qart^{\mu'/(k+1)^{q-p}}(H)$. 
The base case $q = p$ is trivial, since then we must have $\Qart = \Part$, so $\ib' = \ib \in L_\Qart^{\mu'}(H)$ by assumption. 
Assume therefore that we have proved the statement for any refinement $\Qart'$ of $\Part$ into $q-1$ parts of size at least $c'n$, and that $\Qart$ refines $\Part$ into $q$ parts of size at least $c'n$. 
Since $\Part$ is $(c, c', \mu, \mu')$-robustly maximal we know that $L_\Qart^{\mu'}(H)$ is not transferral-free, and so there are distinct parts $X, X' \in \Qart$ such that $\ub_X - \ub_{X'} \in L_\Qart^{\mu'}(H)$. 
Note that $(\ub_X - \ub_{X'} \mid \Part) \in L_\Part^{\mu'}(H)$ by Proposition~\ref{InverseRestriction}(i); since $L_\Part^{\mu'}(H)$ is transferral-free, this implies that $(\ub_X - \ub_{X'} \mid \Part) = \0$ and hence that $X$ and $X'$ are contained in the same part of $\Part$.
Let $\Qart'$ be formed from $\Qart$ by merging $X$ and $X'$ into a single part.
Then $\Qart'$ is a refinement of $\Part$ into $q-1$ parts of size at least $c'n$,
so $(\ib' \mid \Qart') \in L_{\Qart'}^{\mu'/(k+1)^{q-1-p}}(H)$ by our inductive hypothesis. 
By Proposition~\ref{InverseRestriction}(ii) there exists $\ib^* \in L_{\Qart}^{\mu'/(k+1)^{q-p}}(H)$ with $(\ib^* \mid \Qart') = (\ib' \mid \Qart')$. 
Now $\ib^*$ differs from $\ib'$ only in the co-ordinates corresponding to $X$ and $X'$; 
as $\ub_X - \ub_{X'} \in L_\Qart^{\mu'}(H) \subseteq L_{\Qart}^{\mu'/(k+1)^{q-p}}(H)$ 
it follows that $\ib' \in L_{\Qart}^{\mu'/(k+1)^{q-p}}(H)$, completing the induction. 
Since $\Qart$ as in the statement can have at most $1/c'$ parts, we conclude that 
$\ib' \in L_{\Qart}^{\mu'/(k+1)^{1/c'}}(H) \subseteq L_{\Qart}^{\mu}(H)$.
\end{proof}

Another important property of robust maximality is that it is preserved by random selection.

\begin{lemma} \label{RobustRandom} 
Suppose that $1/n \ll \mu \ll \mu' \ll c', c \ll \eta, 1/k$. Let $H$ be a $k$-graph on $n$ vertices and $\Part$ be
a partition of $V(H)$ that is $(c, c', \mu, \mu')$-robustly maximal with respect to $H$. 
Let $\Part'$ be a partition of $V(H)$ that refines $\Part$ and let $(n_Z)_{Z \in \Part'}$ be integers such that $\eta |Z| \le n_Z \le |Z|$ for each $Z \in \Part'$.
Suppose that $S \sub V(H)$ is chosen uniformly at random subject to the condition that $|S \cap Z| = n_Z$ for each $Z \in \Part'$.
Then with high probability $\Part[S]$ is $(\eta c, 3c'/\eta, \mu/c, (\mu')^3)$-robustly maximal with respect to $H[S]$.
\end{lemma}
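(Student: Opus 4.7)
We verify the two parts of Definition~\ref{def:RobustMax} for $\Part[S]$ with respect to $H[S]$. Condition~(i) is deterministic: each part $Z\cap S$ of $\Part[S]$ has size $\sum_{Z'\in\Part',\,Z'\subseteq Z}n_{Z'}\ge\eta|Z|\ge\eta cn\ge\eta c n'$, so the size bound holds. For transferral-freeness, the hierarchy $c\ll\eta,1/k$ forces $c\le\eta^k$, and since $n'\ge\eta n$ (summing $n_{Z'}\ge\eta|Z'|$ over $\Part'$), any $\ib\in I_{\Part[S]}^{\mu/c}(H[S])$ is witnessed by at least $(\mu/c)(n')^k\ge(\mu/c)(\eta n)^k\ge\mu n^k$ edges of $H$ with the same $\Part$-index (noting every part $Z\cap S$ is nonempty). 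Hence, under the identification $\Z^{\Part[S]}=\Z^{\Part}$, $I_{\Part[S]}^{\mu/c}(H[S])\subseteq I_\Part^{\mu}(H)$ and so $L_{\Part[S]}^{\mu/c}(H[S])\subseteq L_\Part^{\mu}(H)$, which is transferral-free by hypothesis.

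For condition~(ii), let $\Part^+$ be any strict refinement of $\Part[S]$ into parts of size at least $(3c'/\eta)n'$. Pick $Z_0\in\Part$ that $\Part^+$ strictly refines, let $A_1,\dots,A_r$ (with $r\ge 2$) be the $\Part^+$-parts in $Z_0\cap S$, and extend to a partition $\hat\Part$ of $V(H)$ that refines only $Z_0$, by setting $\hat A_j:=A_j\cup D_j$ where $(D_j)_j$ partitions $Z_0\setminus S$ proportionally to $|A_j|$. The split parts have size at least $(3c'/\eta)n'\ge 3c'n$ and the unchanged parts have size at least $cn\ge c'n$, so robust maximality of $\Part$ with respect to $H$ yields a transferral in $L_{\hat\Part}^{\mu'}(H)$; projecting to $\Part$ by Proposition~\ref{InverseRestriction}(i) and using the transferral-freeness of $L_{\Part}^{\mu'}(H)\subseteq L_{\Part}^{\mu}(H)$, the transferral must have the form $\ub_{\hat A_i}-\ub_{\hat A_j}$, and Proposition~\ref{vector_as_small_sum} lets us write $\ub_{\hat A_i}-\ub_{\hat A_j}=\sum_{\ib}n_\ib\ib$ with $\ib\in I_{\hat\Part}^{\mu'}(H)$ and $|n_\ib|$ bounded by a constant.

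The descent to $L_{\Part^+}^{(\mu')^3}(H[S])$ proceeds in two stages. First, for each such $\ib$, an edge of the fixed set $E_\ib^{\hat\Part}$ lies in $H[S]$ with probability at least $(\eta/2)^k$, so Corollary~\ref{ApplyAzuma} gives that with high probability $|E_\ib^{\hat\Part}\cap H[S]|\ge(\mu')^3 n^k\ge(\mu')^3(n')^k$; under the identification of coordinates, these edges witness $\ib\in I_{\hat\Part[S]}^{(\mu')^3}(H[S])$. Second, for each non-$Z_0$ block $Z\cap S$ that $\Part^+$ further refines into $W_1,\dots,W_s$, lift the $\ib$-coordinate at $Z\cap S$ proportionally to the sizes $|W_a|$, obtaining a $\Part^+$-index $\ib'$; a further application of Corollary~\ref{ApplyAzuma} shows $\ib'\in I_{\Part^+}^{(\mu')^3}(H[S])$. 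Because the transferral $\ub_{\hat A_i}-\ub_{\hat A_j}$ has zero coordinate at every non-$Z_0$ part, the proportional choice forces $\sum_\ib n_\ib\ib'=\ub_{A_i}-\ub_{A_j}$ in $\Z^{\Part^+}$ (any correction would have to be a combination of differences $\ub_{W_a}-\ub_{W_b}$ with zero total weight, which vanishes by proportionality), giving the desired transferral in $L_{\Part^+}^{(\mu')^3}(H[S])$.

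The main obstacle is that both $\hat\Part$ and the concentration events depend on the adversarially-chosen $\Part^+$, and there are exponentially many possible $\Part^+$'s, so a direct union bound fails. The way through is to reduce to the constantly-many cases indexed by the \emph{template} of $\Part^+$ (the number of parts of $\Part^+$ in each $\Part$-block, which is at most $\eta/(3c')$) and to show that, for each template, the random $S$ with high probability satisfies the concentration needed for \emph{every} $\Part^+$ of that template simultaneously. Combined with the subexponentially small failure probability of each concentration event from Azuma--Hoeffding, this closes the union bound and completes the verification of condition~(ii).
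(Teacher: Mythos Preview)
Your argument for condition~(i) is fine and matches the paper's. The gap is in condition~(ii), and it is exactly the obstacle you yourself flag but do not actually overcome.

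The difficulty is that the concentration events you invoke are not about fixed sets. Your extension $\hat\Part$ depends on the adversary's partition $\Part^+$ of $S$ (through the blocks $A_j$) and on $S$ itself (through $Z_0\setminus S$). Consequently the edge sets $E_\ib^{\hat\Part}$ are not determined before $S$ is sampled, so you cannot apply Corollary~\ref{ApplyAzuma} to them as ``fixed sets''. Your proposed remedy---group by template and prove a single high-probability event covering \emph{all} $\Part^+$ of a given template---is asserted, not proved, and I do not see how to make it go through: two partitions $\Part^+$ with the same template can give rise to entirely different families $E_\ib^{\hat\Part}$, and there is no uniform concentration statement that handles them simultaneously. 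The ``second stage'' is worse: once $S$ is fixed and the adversary has chosen $\Part^+$, there is no remaining randomness, so invoking Corollary~\ref{ApplyAzuma} to pass from a $\hat\Part[S]$-index to a specific ``proportional'' $\Part^+$-index $\ib'$ is meaningless. The adversary can choose the refinement of the non-$Z_0$ blocks after seeing $H[S]$, and nothing in your argument prevents them from arranging that your designated $\ib'$ is sparse. Your closing sentence about proportionality forcing the correction to vanish is also not justified.

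The paper's solution is to discretise \emph{before} the adversary acts, via the weak hypergraph regularity lemma applied to $H$ (not $H[S]$). This produces a reduced $k$-graph $R$ on a bounded number $n_R$ of clusters. The only high-probability events needed are that each cluster $Y$ satisfies $|Y\cap S|\ge\eta n_0/2$---constantly many events, independent of $\Part^+$. Given an adversarial transferral-free refinement $\Part^\circ$ of $\Part[S]$, one then randomly assigns clusters to a partition $\Sart$ of $V(R)$ representative of $\Part^\circ$ (Claim~\ref{RobustRandomClaim}), obtains a corresponding coarse partition $\Qart^*$ of $V$, shows it is close to a genuine refinement $\Part^*$ of $\Part$ with parts of size at least $c'n$, and finally proves the chain $I_{\Part^*}^{\mu'}(H)\subseteq I_{\Qart^*}^{\mu'/2}(H)\subseteq I_{\Sart}^{\mu'/10}(R)\subseteq I_{\Part^\circ}^{(\mu')^3}(H[S])$ (Claim~\ref{RobustRandomClaim2}). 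Robust maximality of $\Part$ then forces a transferral in $L_{\Part^*}^{\mu'}(H)$, which descends to one in $L_{\Part^\circ}^{(\mu')^3}(H[S])$, a contradiction. The point is that regularity converts the exponential family of possible $\Part^\circ$ into a bounded combinatorial object (partitions of $V(R)$), eliminating the union-bound problem entirely.
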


The proof of this lemma requires the weak hypergraph regularity lemma,
so we defer the details to Section~\ref{DeferredProofsSection}.

\section{Fullness} \label{FullSection}

This section develops the theory of full lattices.
In the first subsection we give a characterisation in terms of finite abelian groups.
In the second subsection we give an equivalent definition of solubility,
via an application of the pigeonhole principle that will also be useful in later sections.
In the last subsection we consider a partite form of fullness
that will be needed for the partite form of our key lemma.

\subsection{The structure of full lattices}
In this subsection we characterise full lattices in terms of finite abelian groups.
Recall that a set $I$ of $k$-vectors of dimension $d$ is full if 
for every $(k-1)$-vector $\vb$ there is some $i \in [d]$ such that $\vb + \ub_i \in I$.
Recall also that a lattice $L$ is full if 
it contains a full set $I$ of $k$-vectors and is transferral-free.
We start by showing that $L$ is generated by $I$, so is an edge-lattice.

\begin{lemma} \label{NonParproperties0} 
Suppose $k \geq 3$ and $L$ is a full lattice in $\Z^\Part$, where $\Part$ is a partition of a set $V$. 
Let $I \sub L$ be a full set of $k$-vectors and let $L'$ be the lattice generated by $I$.
Then 
\begin{enumerate}[(i)]
\item for any $X_1, X'_1, X_2 \in \Part$, there exists $X'_2 \in \Part$ 
such that $\ub_{X_1} + \ub_{X_2} - \ub_{X'_1} - \ub_{X'_2} \in L'$,
\item\label{NonParproperties:index} 
for any $\ib \in L_{\max}^{\Part}$ and $X \in \Part$ there is $X' \in \Part$ such that $\ib - \ub_X + \ub_{X'} \in L'$, and
\item\label{EdgeLattice} $L=L'$ is an edge-lattice.
\end{enumerate}
\end{lemma}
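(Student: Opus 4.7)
For (i), the plan is to exploit the fullness of $I$ twice. Given $X_1, X_1', X_2 \in \Part$, I first apply fullness to the $(k-1)$-vector $\ub_{X_1} + (k-2)\ub_{X_2}$ to obtain some $Y_1$ with $\ub_{X_1} + (k-2)\ub_{X_2} + \ub_{Y_1} \in I \subseteq L'$. Since $k \geq 3$, the vector $\ub_{X_1'} + \ub_{Y_1} + (k-3)\ub_{X_2}$ is also a $(k-1)$-vector, and fullness yields $Y_2$ with this vector plus $\ub_{Y_2}$ in $I \subseteq L'$. Subtracting these two elements of $L'$ gives $\ub_{X_1} + \ub_{X_2} - \ub_{X_1'} - \ub_{Y_2} \in L'$, so I will set $X_2' := Y_2$.

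For (ii), I will derive it from (i) by telescoping. Applying (i) with the parameter choice $(X_1, X_1', X_2) = (B, A, X)$ rephrases it as: for any $A, B \in \Part$ and any $X$, there is $X_1 \in \Part$ with $\ub_A - \ub_B \equiv \ub_X - \ub_{X_1} \pmod{L'}$. To handle general $\ib \in L_{\max}^\Part$ with coord sum $mk$, I pick any $\jb \in I$ (which is nonempty because fullness applied to the $(k-1)$-vector $(k-1)\ub_W$ produces a member), and decompose $\ib - m\jb$, which has coord sum zero, as $\sum_{s=1}^r (\ub_{A_s} - \ub_{B_s})$ by pairing positive against negative coordinates. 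Starting from $X_0 := X$, I iteratively apply (i) to produce $X_1, \ldots, X_r$ satisfying $\ub_{A_s} - \ub_{B_s} \equiv \ub_{X_{s-1}} - \ub_{X_s} \pmod{L'}$ for each $s$. Summing, these congruences telescope to $\ib - m\jb \equiv \ub_X - \ub_{X_r} \pmod{L'}$; since $m\jb \in L'$, this gives $\ib - \ub_X + \ub_{X_r} \in L'$, so $X' := X_r$ works.

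For (iii), the inclusion $L' \subseteq L$ is immediate since $I \subseteq L$. For the reverse, I will use the paper's convention that all lattices are edge-lattices, so $L$ is generated by the set of $k$-vectors it contains; it therefore suffices to show every such $k$-vector lies in $L'$ (in fact, in $I$). Given a $k$-vector $\jb \in L$, I pick any $X \in \Part$ with $j_X \geq 1$, which makes $\vb := \jb - \ub_X$ a $(k-1)$-vector. By fullness of $I$ there is $Y$ with $\vb + \ub_Y \in I \subseteq L$; subtracting $\jb \in L$ produces $\ub_Y - \ub_X \in L$, which is either zero or a transferral. Transferral-freeness of $L$ forces $Y = X$, whence $\jb = \vb + \ub_X \in I \subseteq L'$, completing the proof.

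The main obstacle is arranging the iteration in (ii) so that the existentially produced vertices form a chain that starts at the prescribed $X$ and telescopes correctly; this forces the slightly counterintuitive choice of parameters $(B_s, A_s, X_{s-1})$ in each step, reversing the apparent direction of (i). In (iii), the crucial phenomenon is that fullness of $I$ combined with transferral-freeness of $L$ rigidly pins down every $k$-vector of $L$ to lie in $I$, which is what prevents $L$ from properly containing $L'$.
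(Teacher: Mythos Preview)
Your proofs of (i) and (ii) are correct. Part (i) is essentially the paper's argument with the auxiliary $(k-3)$-vector chosen to be $(k-3)\ub_{X_2}$ rather than an arbitrary one. Part (ii) takes a genuinely different route: the paper instead picks $\ib' \in L'$ minimising $\sum_Z |i'_Z - i_Z|$ subject to matching coordinate sum, shows via (i) that this minimum is at most $2$, and then applies (i) once more to move the remaining discrepancy onto the prescribed coordinate $X$. Your telescoping argument is more explicit but equally valid; the minimisation is slightly slicker in that it avoids the decomposition $\ib - m\jb = \sum_s(\ub_{A_s} - \ub_{B_s})$.

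There is, however, a genuine gap in your (iii). You invoke a ``convention that all lattices are edge-lattices'' to conclude that $L$ is generated by its $k$-vectors, and then (correctly) show every $k$-vector in $L$ lies in $I$. But that sentence in the paper is a motivational remark, not a standing hypothesis: the content of (iii) is precisely that a full lattice \emph{turns out} to be an edge-lattice, so assuming this is circular. Your argument only pins down the $k$-vectors of $L$, which says nothing about general elements of $L$. The paper instead applies (ii) to an arbitrary $\ib \in L$: one obtains $\ib - \ub_X + \ub_{X'} \in L' \subseteq L$ for some $X, X'$, whence $\ub_X - \ub_{X'} \in L$, and transferral-freeness forces $X = X'$, giving $\ib \in L'$. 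This exploits the full strength of (ii), which handles all of $L_{\max}^\Part$ rather than only $k$-vectors. Your own proof of (ii) is already strong enough to run this argument verbatim, so the fix is immediate.
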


\begin{proof}
To prove (i), we start by fixing any $(k-3)$-vector $\ib'$. 
Since $I$ is full, we can find $Y \in \Part$ such that $\ib' + \ub_{X_1} + \ub_{X_2} + \ub_Y \in I$. 
Similarly, we can find $X'_2 \in \Part$ such that $\ib' + \ub_{X'_1} + \ub_Y + \ub_{X'_2} \in I$. 
Then $\ub_{X_1} + \ub_{X_2} - \ub_{X'_1} - \ub_{X'_2}$ is the difference of these two index
vectors and hence lies in $L'$. 

For (ii), consider $\ib' \in L'$ that minimises $\sum_{Z \in \Part} |i'_Z - i_Z|$
subject to $\sum_{Z \in \Part} i'_Z = \sum_{Z \in \Part} i_Z$. We claim that $\sum_{Z \in \Part} |i'_Z - i_Z| \leq 2$.
Indeed, if not we may choose $X_1, X'_1, X_2$ such that either 
\begin{enumerate}[(a)]
\item $X_1 \neq X_2$, $i_{X_1} - i'_{X_1} > 0$, $i_{X_2} - i'_{X_2} > 0$ 
and $i_{X'_1} - i'_{X'_1} < 0$, or
\item $X_1 = X_2$, $i_{X_1} - i'_{X_1} > 1$ and $i_{X'_1} - i'_{X'_1} < 0$.
\end{enumerate}
In either case, we may apply (i) to choose $X'_2 \in \Part$ such that $\ib^* = \ub_{X_1} + \ub_{X_2} - \ub_{X'_1} - \ub_{X'_2} \in L'$. 
Then $\ib' + \ib^*$ contradicts our choice of $\ib'$. 
Thus the claim holds, so $\ib' = \ib - \ub_Y + \ub_Y'$ for some $Y, Y' \in \Part$. 
By (i) again, we can choose $X'$ such that 
$\ib^{**} = \ub_X + \ub_Y - \ub_{X'} - \ub_{Y'} \in L'$. 
Therefore $\ib + \ub_X - \ub_{X'} = \ib' + \ib^{**} \in L'$, as claimed. 

To see (iii), consider any $\ib \in L$. By (ii) we have $\ib' = \ib - \ub_X + \ub_{X'} \in L'$ for some parts $X$ and $X'$.
Then $\ub_X - \ub_{X'} = \ib-\ib' \in L$. Since $L$ is transferral-free we have $X=X'$. Therefore $\ib = \ib' \in L'$.
\end{proof}

Full lattices have the following maximality property.

\begin{prop}\label{FullMax}
Suppose $L$ and $L'$ are edge-lattices in $\Z^d$
such that $L$ is full, $L'$ is transferral-free and $L \sub L'$.
Then $L=L'$.
\end{prop}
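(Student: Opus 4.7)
The plan is to show $L' \subseteq L$ (the reverse inclusion is given), by taking an arbitrary $\ib' \in L'$ and using the ``index adjustment'' property of full lattices from Lemma~\ref{NonParproperties0}(ii) to produce a vector in $L$ that differs from $\ib'$ only by a transferral, then invoking the fact that $L'$ is transferral-free to conclude these are equal.

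More concretely, first I would observe that since $L'$ is an edge-lattice it is generated by $k$-vectors, and every $k$-vector lies in $L_{\max}^d$; hence $L' \subseteq L_{\max}^d$. So any $\ib' \in L'$ is in $L_{\max}^d$, which is exactly the hypothesis needed to apply Lemma~\ref{NonParproperties0}(\ref{NonParproperties:index}) to $L$ (using that $L$ is full). Fixing any $X \in [d]$, that lemma yields some $X' \in [d]$ such that $\ib' - \ub_X + \ub_{X'} \in L$.

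Now $L \subseteq L'$, so $\ib' - \ub_X + \ub_{X'}$ also lies in $L'$. Subtracting this from $\ib' \in L'$ gives $\ub_X - \ub_{X'} \in L'$. Since $L'$ is transferral-free, this forces $X = X'$, and therefore $\ib' - \ub_X + \ub_{X'} = \ib'$, so $\ib' \in L$. As $\ib' \in L'$ was arbitrary, $L' \subseteq L$, completing the proof.

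There is no real obstacle here; the only care needed is in verifying that $\ib' \in L_{\max}^d$ before applying Lemma~\ref{NonParproperties0}, which is immediate from $L'$ being an edge-lattice. Everything else is a one-line application of the lemma and the transferral-free property.
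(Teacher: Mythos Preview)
Your proof is correct and follows essentially the same idea as the paper's: find an element of $L$ differing from a given element of $L'$ by $\ub_X - \ub_{X'}$, then use that $L'$ is transferral-free to force $X=X'$. The only minor difference is packaging: the paper argues directly from the definition of a full generating set (picking a $k$-vector $\ib \in I' \setminus I$ and completing a $(k-1)$-vector via fullness of $I$), whereas you invoke Lemma~\ref{NonParproperties0}(\ref{NonParproperties:index}) as a black box, which makes your argument a direct inclusion proof rather than a proof by contradiction.
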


\begin{proof}
Let $I$ and $I'$ be full generating sets for $L$ and $L'$. Suppose that $L \ne L'$, so $I \ne I'$.
Choose $\ib \in I' \sm I$ and write $\ib = \ib' + \ub_i$,
where $\ib'$ is a $(k-1)$-vector and $i \in [d]$.
Since $I$ is full, we have $\ib'+\ub_j \in I$ for some $j \in [d]$.
Note that $j \ne i$, as $\ib \notin I$. But $I \sub L'$,
so $L'$ contains the transferral $\ub_i-\ub_j = \ib - (\ib'+\ub_j )$.
This contradiction shows that $L=L'$.
\end{proof}

Next we define a group of cosets that is naturally associated with any lattice.

\begin{defin}
Suppose $L$ is an edge-lattice in $\Z^\Part$, where $\Part$ is a partition of a set $V$.
\begin{enumerate}[(i)]
\item The \emph{coset group} of $(\Part, L)$ is $G = G(\Part, L) = L_{\max}^{\Part}/L$.
\item For any $\ib \in L_{\max}^{\Part}$, the \emph{residue} of $\ib$ in $G$ is $R_G(\ib)=\ib+L$.
For any $A \sub V$ of size divisible by $k$, 
the \emph{residue} of $A$ in $G$ is $R_G(A)=R_G(\ib_\Part(A))$.
\end{enumerate}
\end{defin}

Next we show that $|G(\Part,L)| = |\Part|$ when $L$ is full and $k \ge 3$.
Note that this may not hold if $k=2$, as shown by the example
where $|\Part|=2$ and $L$ is generated by $(1,1)$.

\begin{lemma} \label{GroupSize} 
Suppose $k \geq 3$ and $L$ is a full lattice in $\Z^\Part$, where $\Part$ is a partition of a set $V$. 
Then $|G(\Part,L)| = |\Part|$.
\end{lemma}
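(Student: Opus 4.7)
The plan is to exhibit an explicit set of $|\Part|$ coset representatives for $G = L_{\max}^\Part / L$ and then use the two defining properties of a full lattice (fullness gives surjectivity, transferral-freeness gives injectivity).

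Fix an arbitrary base part $X_0 \in \Part$. For each $X' \in \Part$, consider the vector $\ub_{X_0} - \ub_{X'} \in \Z^\Part$; since its coordinates sum to $0$, it lies in $L_{\max}^\Part$, and so defines a residue $R_G(\ub_{X_0} - \ub_{X'}) \in G$. I claim that the map $\psi: \Part \to G$ given by $\psi(X') = R_G(\ub_{X_0} - \ub_{X'})$ is a bijection, from which $|G| = |\Part|$ follows.

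For surjectivity, take any $\ib \in L_{\max}^\Part$. Part (\ref{NonParproperties:index}) of Lemma~\ref{NonParproperties0} (applied with $X = X_0$) produces some $X' \in \Part$ with $\ib - \ub_{X_0} + \ub_{X'} \in L$, so $R_G(\ib) = R_G(\ub_{X_0} - \ub_{X'}) = \psi(X')$. For injectivity, suppose $\psi(X') = \psi(X'')$ for $X', X'' \in \Part$. Then $(\ub_{X_0} - \ub_{X'}) - (\ub_{X_0} - \ub_{X''}) = \ub_{X''} - \ub_{X'} \in L$. Since $L$ is transferral-free by hypothesis, this forces $X' = X''$.

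No real obstacle arises: the two required features of the map are exactly packaged by parts (ii) and the transferral-free clause in the definition of fullness, and part (iii) of Lemma~\ref{NonParproperties0} was only needed implicitly to ensure we are working with the edge-lattice $L$ itself. The case $|\Part|=1$ is consistent, since then $\psi$ identifies the unique element $0 \in G$, matching $L = L_{\max}^\Part$.
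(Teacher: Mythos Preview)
Your proof is correct and takes essentially the same approach as the paper: both construct a bijection between $\Part$ and the cosets of $L$ in $L_{\max}^\Part$, using Lemma~\ref{NonParproperties0}(\ref{NonParproperties:index}) for surjectivity and transferral-freeness for injectivity. The only cosmetic difference is the choice of coset representatives: you use differences $\ub_{X_0} - \ub_{X'}$ with a fixed base part $X_0$, whereas the paper uses $k$-vectors $\ib' + \ub_Y$ for a fixed $(k-1)$-vector $\ib'$.
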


\begin{proof}
Fix any $(k-1)$-vector $\ib'$. We claim that every coset $L + \vb$ of $L$ in
$L_{\max}^{\Part}$ contains an index vector $\ib' + \ub_{Y}$ for some $Y \in \Part$.
To see this, note that since $L$ is full there exists $X \in \Part$ such that $\ib' + \ub_{X} \in L$. 
Also, by Lemma~\ref{NonParproperties0}(\ref{NonParproperties:index}) 
we can choose $Y \in \Part$ such that $-\vb - \ub_X + \ub_{Y} \in L$. 
But now $\ib' + \ub_{Y} = (\ib' + \ub_X) + (-\vb - \ub_X + \ub_{Y}) + \vb \in L + \vb$, as claimed.
Furthermore, $L + \vb$ cannot contain $\ib' + \ub_{Y}$ and $\ib' + \ub_{Y'}$ for distinct parts $Y$, $Y'$ of $\Part$,
as then $\ub_{Y}-\ub_{Y'} \in L$, contradicting the fact that $L$ is transferral-free. 
Therefore  $|G(\Part,L)| = |\Part|$.
\end{proof}

Now we describe the structure of $L$ in terms of its coset group.
If $G$ is an abelian group, $g \in G$ and $r$ is a non-negative integer
then we write $rg$ for the sum of $r$ copies of $g$.
We show that any full lattice arises from the following construction.

\begin{construct} \label{FullConstruct}
Let $G$ be an abelian group. Let $\Part$ be a partition of a set $V$ into $|G|$ parts, identified with $G$.
Fix $g_0 \in G$. Let $I(G,g_0)$ be the set of $k$-vectors $\ib \in L_{\max}^{\Part}$ with 
\[\sum_{g \in G} i_g g = g_0.\]
Let $L(G,g_0)$ be the lattice generated by $I(G,g_0)$.
\end{construct} 

\begin{lemma} \label{FullConstructProps}
$I(G,g_0)$ and $L(G,g_0)$ are full, and $L(G,g_0)$ is the set of index vectors $\ib \in L_{\max}^{\Part}$ with 
\[\sum_{g \in G} i_g g = (k^{-1} \sum_{g \in G} i_g) g_0.\]
\end{lemma}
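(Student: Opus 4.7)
The plan is to prove this in three stages: verify that $I(G,g_0)$ is full, deduce that $L(G,g_0)$ is full, and then identify $L(G,g_0)$ with the described subgroup via a homomorphism. To show that $I(G,g_0)$ is full, take any $(k-1)$-vector $\vb$, set $h = g_0 - \sum_{g \in G} v_g g$ in $G$, and observe that $\vb + \ub_h \in I(G, g_0)$ since its coordinates sum to $k$ and its weighted group sum equals $g_0$. For transferral-freeness of $L(G,g_0)$, suppose $\ub_g - \ub_h = \sum_j \lambda_j \ib^{(j)}$ with each $\ib^{(j)} \in I(G,g_0)$. Summing coordinates gives $0 = k \sum_j \lambda_j$, hence $\sum_j \lambda_j = 0$; evaluating the weighted group sum then gives $g - h = \sum_j \lambda_j \, g_0 = 0$. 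So $g = h$, the transferral is trivial, and $L(G,g_0)$ is full.

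For the characterization, I would define $\phi : L_{\max}^{\Part} \to G$ by
\[ \phi(\ib) = \sum_{g \in G} i_g \, g \;-\; \Bigl(k^{-1} \sum_{g \in G} i_g\Bigr) g_0, \]
which is well-defined on $L_{\max}^{\Part}$ (where $k \mid \sum_g i_g$) and visibly additive, hence a group homomorphism. Let $L^{*} = \ker \phi$; this is precisely the set of $\ib \in L_{\max}^{\Part}$ described in the lemma. Every generator $\ib \in I(G, g_0)$ satisfies $\phi(\ib) = g_0 - g_0 = 0$, so $L(G,g_0) \subseteq L^{*}$.

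To deduce equality I would compare indices in $L_{\max}^{\Part}$. The homomorphism $\phi$ is surjective: evaluating on the $k$-vector $\ub_g + (k-1)\ub_0 \in L_{\max}^{\Part}$ gives $\phi = g - g_0$, which ranges over all of $G$ as $g$ varies. Hence $|L_{\max}^{\Part}/L^{*}| = |G|$. On the other hand, having now verified that $L(G,g_0)$ is full, Lemma~\ref{GroupSize} applies to give $|L_{\max}^{\Part}/L(G,g_0)| = |\Part| = |G|$. Combined with $L(G,g_0) \subseteq L^{*}$, equality of the two quotient sizes forces $L(G,g_0) = L^{*}$. The main obstacle is really this reverse inclusion $L^{*} \subseteq L(G,g_0)$: a direct decomposition of an arbitrary element of $L^{*}$ into elements of $I(G,g_0)$ would be awkward (one would need to cancel out arbitrary integer coefficients while respecting both the coordinate-sum and weighted-sum constraints), but the index-counting shortcut via Lemma~\ref{GroupSize} sidesteps this entirely once fullness of $L(G,g_0)$ is in hand.
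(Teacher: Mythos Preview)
Your proof is correct. The first two steps --- showing $I(G,g_0)$ is full via $h = g_0 - \sum_g v_g g$, and showing $L(G,g_0)$ is transferral-free --- match the paper's argument (the paper phrases transferral-freeness as a property of the kernel $L'$ and then inherits it to $L(G,g_0)$, but the computation is the same).

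The difference is in how you establish the reverse inclusion $L^* \subseteq L(G,g_0)$. The paper appeals to Proposition~\ref{FullMax}: once $L(G,g_0)$ is full and $L^*$ is a transferral-free edge-lattice containing it, maximality forces equality. You instead count indices: $\phi$ is surjective so $[L_{\max}^\Part : L^*] = |G|$, and Lemma~\ref{GroupSize} gives $[L_{\max}^\Part : L(G,g_0)] = |\Part| = |G|$, whence equality. Both routes are short and clean; the paper's use of Proposition~\ref{FullMax} is slightly more self-contained (that proposition has an elementary two-line proof), while your index argument leans on the somewhat deeper Lemma~\ref{GroupSize}, which in turn rests on Lemma~\ref{NonParproperties0}. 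Note that Lemma~\ref{GroupSize} requires $k \ge 3$, so your argument inherits that hypothesis --- harmless here since the paper works under $k \ge 3$ throughout, but worth being aware of.
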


\begin{proof}
First we show that $I(G,g_0)$ is full, i.e.\ that for every $(k-1)$-vector $\vb$
there is $h \in G$ such that $\vb + \ub_h \in I(G,g_0)$.
Indeed, we can take $h = g_0 - \sum_{g \in G}v_g g$.
Next let $L'$ be the lattice consisting of all $\ib \in L_{\max}^{\Part}$ with $\sum_{g \in G} i_g g = (k^{-1} \sum_{g \in G} i_g) g_0$.
Note that $L(G,g_0)$ is contained in $L'$.
Furthermore, $L'$ is transferral-free, as for any distinct $g_1,g_2 \in G$
we have $\sum_{g \in G}(\ub_{g_1}-\ub_{g_2})_g g = g_1 - g_2 \ne 0_G = 0g_0$, so $\ub_{g_1} - \ub_{g_2} \notin L'$.
Thus $L(G,g_0)$ is transferral-free, and so is full.
Finally, $L(G,g_0)=L'$ by Proposition~\ref{FullMax}.
\end{proof}

\begin{thm} \label{FullStructure}
Let $k \geq 3$ and suppose $L$ is a full edge-lattice in $\Z^\Part$, where $\Part$ is a partition of a set $V$. 
Then there is an identification of $\Part$ with $G = G(\Part,L)$ and 
some $g_0 \in G$ such that $L = L(G,g_0)$ and 
$R_G(\ib) = \sum_{g \in G} i_g g - (k^{-1} \sum_{g \in G} i_g)g_0$ 
for any $\ib \in L^\Part_{\max}$.
\end{thm}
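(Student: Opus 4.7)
The plan is to construct an explicit bijection $\chi \colon \Part \to G$ and an element $g_0 \in G$, then verify the claimed formula by comparing two group homomorphisms on a small generating set of $L^\Part_{\max}$.

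Fix any $X_0 \in \Part$ and define $\chi(X) = R_G(\ub_X - \ub_{X_0})$; note that $\ub_X - \ub_{X_0} \in L^\Part_{\max}$ since its coordinates sum to $0$. If $\chi(X) = \chi(Y)$ then $\ub_X - \ub_Y \in L$, which by transferral-freeness of $L$ forces $X = Y$. Thus $\chi$ is injective, and combined with Lemma~\ref{GroupSize} (which gives $|\Part| = |G|$) it is bijective, yielding the required identification of $\Part$ with $G$. Set $g_0 := -R_G(k\ub_{X_0})$.

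Let $\psi \colon L^\Part_{\max} \to G$ be the quotient homomorphism $\ib \mapsto R_G(\ib)$, so that $L = \ker \psi$, and let $\sigma \colon L^\Part_{\max} \to \Z$ be the homomorphism $\sigma(\ib) := k^{-1}\sum_X i_X$. Define a second homomorphism $F \colon L^\Part_{\max} \to G$ by $F(\ib) := \sum_X i_X \chi(X) - \sigma(\ib)\, g_0$. The identity
\[ \ib = \sigma(\ib)\cdot k\ub_{X_0} + \sum_{X \in \Part} i_X (\ub_X - \ub_{X_0}) \]
shows that $\{k\ub_{X_0}\} \cup \{\ub_X - \ub_{X_0} : X \in \Part\}$ generates $L^\Part_{\max}$, so it suffices to verify $\psi = F$ on these generators. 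For $\ib = \ub_X - \ub_{X_0}$, we have $\psi(\ib) = \chi(X)$ by definition, while $F(\ib) = \chi(X) - \chi(X_0) = \chi(X)$ since $\sigma(\ib) = 0$ and $\chi(X_0) = 0$. For $\ib = k\ub_{X_0}$, we have $\psi(\ib) = R_G(k\ub_{X_0}) = -g_0$ by the choice of $g_0$, and $F(\ib) = k\chi(X_0) - g_0 = -g_0$. Hence $\psi = F$ on all of $L^\Part_{\max}$.

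Under the identification $\chi$, the equation $\psi = F$ reads exactly $R_G(\ib) = \sum_{g \in G} i_g g - (k^{-1}\sum_g i_g) g_0$, the claimed formula. Taking kernels gives $L = \ker \psi = \ker F = L(G, g_0)$, where the last equality is by Lemma~\ref{FullConstructProps}. The argument presents no serious obstacle: the only real content is the correct choice of the twist $g_0$, which is forced by the need to reconcile the normalisation $\chi(X_0) = 0$ with the residue of $k\ub_{X_0}$; once this is in place, everything else reduces to the trivial two-generator verification above.
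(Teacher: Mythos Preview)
Your proof is correct and follows essentially the same approach as the paper: the same identification $X \mapsto R_G(\ub_X - \ub_{X_0})$ and the same choice $g_0 = -R_G(k\ub_{X_0})$, and the same underlying identity $\ib = \sigma(\ib)\cdot k\ub_{X_0} + \sum_X i_X(\ub_X - \ub_{X_0})$. The one organisational difference is that you establish the residue formula $\psi = F$ first (by checking on the two-element generating set) and then read off $L = L(G,g_0)$ by taking kernels and invoking Lemma~\ref{FullConstructProps}; the paper instead first shows $L(G,g_0) \subseteq L$, upgrades this to equality via Proposition~\ref{FullMax}, and only then deduces the residue formula. Your ordering is marginally cleaner in that it bypasses Proposition~\ref{FullMax} entirely.
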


\begin{proof}
We fix an arbitrary part $X^0 \in \Part$ and identify $X^0$ with the identity $0_G \in G$.
Next we identify each $X \in \Part$ with $R_G(\ub_X-\ub_{X^0})$.
Note that for distinct parts $X,X'$ of $\Part$ we have $R_G(\ub_X-\ub_{X^0}) \ne R_G(\ub_{X'}-\ub_{X^0})$,
otherwise we would have $\ub_{X}-\ub_{X'} \in L$, contradicting the fact that $L$ is transferral-free. 
Furthermore, the identification is bijective by Lemma~\ref{GroupSize}.
 
Let $g_0 = - k \ub_{X^0} + L \in G$. 
Consider any $\ib \in L_{\max}^{\Part}$ and write $r = k^{-1} \sum_{g \in G} i_g  \in \mb{Z}$.
Then $\ib - rk \ub_{X^0} = \sum_{X \in \Part} i_X (\ub_X - \ub_{X^0}) \in \sum_{g \in G} i_g g$. 
Now if 
$\sum_{g \in G} i_g g = rg_0$ 
then $\ib \in L$. This shows that $L(G,g_0) \sub L$.
Furthermore, $L(G,g_0)$ is full by Lemma~\ref{FullConstructProps} 
and $L$ is transferral-free, so $L=L(G,g_0)$ by Proposition~\ref{FullMax}. Finally, we saw above that $R_G(\ib - rk\ub_{X^0}) = \sum_{g \in G} i_g g$. Since $R_G(-k\ub_{X^0}) = g_0$, we have $R_G(\ib) = \sum_{g \in G} i_g g - rg_0$.
\end{proof}

\begin{rem} \label{Translation}
The identification of $\Part$ with $G = G(\Part,L)$ 
in the proof of Theorem~\ref{FullStructure} is determined up to translation by $G$,
and the element $g_0$ is determined up to translation by $kG$. 
To see this, consider two identifications as in Theorem~\ref{FullStructure},
say $\pi^i: \Part \to G$ defined by $\pi^i(X) = R_G(\ub_X-\ub_{X^i})$ 
for some $X^i \in \Part$ for $i=0,1$.
Then for any $X \in \Part$ we have $\pi^1(X)-\pi^0(X) = g'$,
where $g' = R_G(\ub_X-\ub_{X^1}) - R_G(\ub_X-\ub_{X^0}) = R_G(\ub_{X^0}-\ub_{X^1}) \in G$ 
is independent of $X$.
Now set $g_0 = - k \ub_{X^0} + L \in G$ and $g_1 = - k \ub_{X^1} + L \in G$; then $g_1 - g_0 = kR_G(\ub_{X^0} - \ub_{X^1}) = kg'$, as claimed.
\end{rem}

\subsection{Solubility} 
Recall that a full pair $(\Part, L)$ for a $k$-graph $H$ is soluble 
if there exists a matching $M$ in $H$ of size at most $|\Part| - 1$ 
such that $\ib_\Part(V(H) \sm V(M)) \in L$.
Here we show that omitting the size condition on $M$ 
gives an equivalent condition.

\begin{lemma} \label{EquivSol}
Let $(\Part, L)$ be a full pair for a $k$-graph $H$, where $k \geq 3$.
Then $(\Part, L)$ is soluble if and only if there exists a matching $M$ in $H$ 
such that $\ib_\Part(V(H) \sm V(M)) \in L$.
\end{lemma}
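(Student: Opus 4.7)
The ``only if'' direction is immediate from the definition of solubility, since a solution $M$ is in particular a matching with $\ib_\Part(V(H) \sm V(M)) \in L$. The substance of the lemma is the ``if'' direction, and the strategy is to start with an arbitrary matching $M$ witnessing $\ib_\Part(V(H) \sm V(M)) \in L$ and repeatedly shrink it until $|M| \le |\Part|-1$.

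Write $d = |\Part|$ and let $G = G(\Part, L) = L_{\max}^{\Part}/L$ be the coset group. By Lemma~\ref{GroupSize} we have $|G|=d$ (this is where we use $k \ge 3$ and fullness of $L$). The plan is the following pigeonhole reduction. Suppose $|M| \ge d$ and enumerate $M = \{e_1, \dots, e_m\}$. Consider the $m+1 \ge d+1$ partial sums $s_j = \sum_{i \le j} R_G(\ib_\Part(e_i)) \in G$ for $j=0, 1, \dots, m$. Since $|G|=d$, by pigeonhole there exist $0 \le j_1 < j_2 \le m$ with $s_{j_1} = s_{j_2}$. Let $S = \{e_{j_1+1}, \dots, e_{j_2}\} \sub M$; then $S \ne \es$ and
\[
\sum_{e \in S} \ib_\Part(e) \in L,
\]
because its residue in $G$ is $s_{j_2} - s_{j_1} = 0$.

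Now set $M' = M \sm S$. This is a sub-matching of $M$, hence still a matching in $H$, and
\[
\ib_\Part(V(H) \sm V(M')) = \ib_\Part(V(H) \sm V(M)) + \sum_{e \in S} \ib_\Part(e) \in L + L = L.
\]
Thus $M'$ is again a matching with the required property, and $|M'| = |M| - |S| < |M|$. Iterating this reduction as long as the current matching has size at least $d$ yields, after finitely many steps, a matching of size at most $d-1 = |\Part|-1$ that is a solution for $(\Part, L)$.

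There is no real obstacle here: the only non-trivial ingredient is Lemma~\ref{GroupSize}, which pins down $|G|=|\Part|$ and supplies the precise pigeonhole bound. The argument is essentially the standard one showing that every sequence of length $|G|$ in a finite abelian group has a non-empty zero-sum subsequence, applied to the residues of the edge index vectors.
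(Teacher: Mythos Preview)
Your proof is correct and follows essentially the same approach as the paper: both use Lemma~\ref{GroupSize} to get $|G(\Part,L)|=|\Part|$ and then apply the standard pigeonhole argument on partial sums of residues to extract a small submatching with the same total residue. The paper packages the pigeonhole step as a separate proposition (Proposition~\ref{abeliangroup}) and invokes it once rather than iterating, but the argument is the same.
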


The proof uses the following application of the pigeonhole principle.

\begin{prop} \label{abeliangroup}
Let $G =(X, +)$ be an abelian group of order $m$, and suppose that elements $x_i \in X$ for $i \in [r]$ are such that $\sum_{i \in [r]} x_i = x'$. Then $\sum_{i \in I} x_i = x'$ for some $I \subseteq [r]$ with $|I| \leq m-1$. 
\end{prop}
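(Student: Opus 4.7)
The plan is to run the classical partial sums / pigeonhole argument, reducing the number of summands whenever it exceeds $m-1$.

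First I would handle the trivial case $r \le m-1$ by simply taking $I = [r]$, which satisfies $\sum_{i \in I} x_i = x'$ by hypothesis. So we may assume $r \ge m$. Define the partial sums $s_0 = 0_G$ and $s_j = \sum_{i=1}^{j} x_i$ for $j \in [r]$. This gives $r+1 \ge m+1$ elements of $G$, so by the pigeonhole principle there exist integers $0 \le a < b \le r$ with $s_a = s_b$, which means $\sum_{i=a+1}^{b} x_i = 0_G$.

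Next I would use this to shrink the index set. Setting $J = [r] \setminus \{a+1, \dots, b\}$, we have
\[\sum_{i \in J} x_i = \sum_{i \in [r]} x_i - \sum_{i=a+1}^{b} x_i = x' - 0_G = x',\]
with $|J| = r - (b-a) < r$. Applying this reduction iteratively (or, equivalently, arguing by strong induction on $r$) we arrive at some $I \subseteq [r]$ with $|I| \le m-1$ and $\sum_{i \in I} x_i = x'$, as required.

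There is no real obstacle here: the only thing to check is that the partial sum reduction genuinely terminates with a set of size at most $m-1$, which it does because each reduction step strictly decreases the number of summands, and we can stop as soon as the count drops below $m$. The argument uses nothing about $G$ beyond its abelian group structure and its cardinality, so the commutativity is needed only implicitly in being able to freely drop any subset of summands without reordering.
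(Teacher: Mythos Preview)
Your proof is correct and follows essentially the same argument as the paper: both reduce to showing that when $r \ge m$ the $r+1$ partial sums $s_0,\dots,s_r$ must collide by pigeonhole, yielding a consecutive block summing to $0_G$ that can be deleted without changing the total. The only cosmetic difference is that the paper phrases the reduction as a single step (``if $r>m-1$ we can shrink''), while you spell out the iteration explicitly.
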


\begin{proof}
It suffices to show that if $r > m-1$ then $\sum_{i \in [r]} x_i = \sum_{i \in I} x_i $ for some $I \subseteq [r]$ with $|I| < r$. To see this, note that there are $r+1 > m$ partial sums $\sum_{i \in [j]} x_i$ for $0 \leq j \leq r$, so by the pigeonhole principle some two must be equal, that is, there exist $j_1 < j_2$ so that $\sum_{i \in [j_1]} x_i = \sum_{i \in [j_2]} x_i$. Then 
$$\sum_{i \in [r]} x_i = 
\sum_{i \in [r] \sm \{j_1+1, \dots, j_2\}} x_i + \sum_{i \in [j_2]} x_i - \sum_{i \in [j_1]} x_i = 
\sum_{i \in [r] \sm \{j_1+1, \dots, j_2\}} x_i,$$
as required.
\end{proof}

\begin{proof}[Proof of Lemma~\ref{EquivSol}]
If $(\Part, L)$ is soluble then such a matching $M$ exists by definition.
Conversely, suppose such a matching $M$ exists.
Write $G = G(\Part, L)$ and note that $|G| = |\Part|$ by Lemma~\ref{GroupSize}.
Since $\ib_\Part(V(H) \sm V(M)) \in L$ we have $\sum_{e \in M} R_G(e) = R_G(V(H))$.
By Proposition~\ref{abeliangroup} there exists a submatching $M'$ of $M$ 
of size at most $|G| - 1 = |\Part|-1$ such that $\sum_{e \in M'} R_G(e) = R_G(V(H))$.
Hence $R_G(V(H) \sm V(M')) = 0$, i.e.\ $\ib_\Part(V(H) \sm V(M')) \in L$.
\end{proof}

\subsection{Partite fullness}
In the partite form of the key lemma we need the following partite form of fullness,
in which we consider index vectors of sets that are partite with respect to some fixed partition.

\begin{defin}
Let $\Part'$ be a partition of a set $V$ into $k$ parts and $\Part$ be a refinement of $\Part'$.
\begin{enumerate}[(i)]
\item We say that a set $I$ of $k$-vectors with respect to $\Part$ is \emph{full} with respect to $\Part'$ 
if $(\ib \mid \Part') = \1$ for every $\ib \in I$, and for every $X \in \Part'$
and $(k-1)$-vector $\vb$ with respect to $\Part$ such that $(\vb \mid \Part') = \1 - \ub_X$,
there is some $Y \in \Part$ with $Y \sub X$ such that $\vb + \ub_Y \in I$.
\item We write $L^{\Part\Part'}_{\max}$ for the lattice of vectors $\ib \in \Z^\Part$ 
such that $(\ib \mid \Part')$ is a multiple of $\1$.
\item We say that a lattice $L \sub L^{\Part\Part'}_{\max}$ is \emph{full} with respect to $\Part'$
if it is transferral-free and contains a set of $k$-vectors that is full with respect to $\Part'$.
\end{enumerate}
\end{defin}

The next proposition records some properties of partite fullness.
We just give the proofs of (i) and (ii),
as the proofs of (iii-v) are very similar to those of
Lemmas~\ref{NonParproperties0} and~\ref{GroupSize}.

\begin{prop} \label{properties0} 
Let $k \ge 3$, let $\Part'$ be a partition of a set $V$ into $k$ parts, $\Part$ be a refinement of $\Part'$,
and $L \sub L^{\Part\Part'}_{\max}$ be a full lattice with respect to $\Part'$.
Let $I \sub L$ be a set of $k$-vectors that is full with respect to $\Part'$
and let $L'$ be the lattice generated by $I$.
Then the following properties hold for some integer~$r$.
\begin{enumerate}[(i)]
\item\label{properties:parts} Each part of $\Part'$ is refined into exactly $r$ parts by $\Part$.
\item\label{properties:indexing} For every part $X$ of $\Part$, there are exactly $r^{k-2}$ vectors $\ib \in I$ such that $i_X = 1$.
In particular, $|I|=r^{k-1}$.
\item\label{properties:index} For any $\ib \in  L^{\Part\Part'}_{\max}$ and $X\in \Part$, 
there is $X' \in \Part$ such that $\ib - \ub_X + \ub_{X'} \in L'$.
\item\label{parGroupSize} $|L^{\Part\Part'}_{\max}/L| = r$.
\item $L=L'$ is an edge-lattice.
\end{enumerate}
\end{prop}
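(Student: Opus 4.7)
The plan is to exploit the interplay between fullness of $I$ and transferral-freeness of $L$ to pin down the combinatorial structure of $I$, and then read off the numerical conclusions. Label the parts of $\Part'$ as $X_1,\dots,X_k$ and let $r_j$ be the number of parts of $\Part$ inside $X_j$. The key step is a uniqueness complement to fullness: for every $j \in [k]$ and every $(k-1)$-vector $\vb$ with $(\vb \mid \Part') = \1 - \ub_{X_j}$, there is \emph{exactly one} part $Y \in \Part$ contained in $X_j$ with $\vb + \ub_Y \in I$. Existence is fullness; for uniqueness, any two such $Y, Y'$ would give $\ub_Y - \ub_{Y'} = (\vb + \ub_Y) - (\vb + \ub_{Y'}) \in L$, forcing $Y = Y'$ by transferral-freeness. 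Grouping the elements of $I$ by their restriction to the coordinates of $\Part$ outside $X_j$ then shows $|I| = \prod_{j' \neq j} r_{j'}$. As this identity holds for every $j$ (and each $r_{j'} \ge 1$), the integers $r_j$ must all equal a common value $r$, which proves~(i) and yields $|I| = r^{k-1}$.

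For the rest of~(ii), I would pass to the coset group $G := L^{\Part\Part'}_{\max}/L$. Fix a reference part $Y_j^0 \sub X_j$ inside each $X_j$ and set $\phi_j(Y) := R_G(\ub_Y - \ub_{Y_j^0}) \in G$ for every $Y \in \Part$ with $Y \sub X_j$; this is well-defined since $\ub_Y - \ub_{Y_j^0} \in L^{\Part\Part'}_{\max}$, and injective because any coincidence $\phi_j(Y) = \phi_j(Y')$ would produce a transferral in $L$. Hence each $S_j := \phi_j(\{Y \in \Part : Y \sub X_j\})$ has size~$r$. Writing an arbitrary $\ib = \sum_j \ub_{Y_j} \in I$ as $\sum_j \ub_{Y_j^0} + \sum_j (\ub_{Y_j} - \ub_{Y_j^0})$ and taking residues in $G$, we obtain $\sum_j \phi_j(Y_j) = g_0^\ast$, where $g_0^\ast := -R_G\bigl(\sum_j \ub_{Y_j^0}\bigr) \in G$ does not depend on $\ib$. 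In these terms the existence/uniqueness observation translates to: for every $j$ and every $(g_{j'})_{j' \neq j} \in \prod_{j' \neq j} S_{j'}$, the element $g_0^\ast - \sum_{j' \neq j} g_{j'}$ is precisely the unique element of $S_j$ completing the sum to $g_0^\ast$.

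Given $Y \sub X_j$ with $\phi_j(Y) = g_j$, the number of $\ib \in I$ with $i_Y = 1$ then equals the number of tuples $(g_{j'})_{j' \neq j} \in \prod_{j' \neq j} S_{j'}$ with $\sum_{j' \neq j} g_{j'} = g_0^\ast - g_j$. To count these, pick any $j'' \neq j$, let the remaining $k-2$ coordinates range freely in their $S_{j'}$, and solve for $g_{j''}$; by the property in the previous paragraph (applied with index $j''$), this $g_{j''}$ automatically lies in $S_{j''}$. Hence the count is exactly $r^{k-2}$, completing~(ii). I expect the main obstacle to be conceptual rather than technical: one must recognise that transferral-freeness forces the sets $S_1,\dots,S_k$ to be mutual translates inside $G$. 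Once this viewpoint is adopted the counting is essentially automatic, and the remaining parts~(iii)--(v) will follow by mimicking Lemmas~\ref{NonParproperties0} and~\ref{GroupSize} in the partite lattice $L^{\Part\Part'}_{\max}$, as the authors indicate.
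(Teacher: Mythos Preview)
Your proof is correct. For (i) you compute $|I| = \prod_{j'\ne j} r_{j'}$ for each $j$ and equate, whereas the paper builds an explicit bijection between the parts of $\Part$ inside $X$ and those inside $X'$ (via the unique completion $Y\mapsto Y'$ of a fixed $(k-2)$-vector). These are equivalent elementary arguments.

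For (ii) your route is genuinely more elaborate than the paper's. The paper stays at the level of $k$-vectors: having fixed $Y\subseteq X$ and some other $X'\in\Part'$, it simply observes that the map sending a $(k-2)$-vector $\vb$ with $(\vb\mid\Part')=\1-\ub_X-\ub_{X'}$ to the unique $\ib=\vb+\ub_Y+\ub_{Y'}\in I$ is a bijection onto $\{\ib\in I:i_Y=1\}$, giving the count $r^{k-2}$ directly from the same uniqueness-of-completion fact you already established. You instead pass to the coset group $G=L^{\Part\Part'}_{\max}/L$, encode parts as residues, and count solutions of $\sum_{j'\ne j}g_{j'}=g_0^\ast-g_j$ in $\prod_{j'\ne j}S_{j'}$. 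This is valid and your closure step (that the forced $g_{j''}$ lands in $S_{j''}$) is exactly the uniqueness property applied at index $j''$, so nothing is circular. The payoff of your approach is that it anticipates the group-theoretic description of full lattices developed later (Theorem~\ref{FullStructure}); the cost is that it imports machinery not needed for the bare count. Either way, (iii)--(v) are correctly deferred to the non-partite arguments in Lemmas~\ref{NonParproperties0} and~\ref{GroupSize}, as the paper does.
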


\begin{proof}[Proof of (i) and (ii).]
Fix $X \in \Part'$ and let $r$ be the number of parts into which $\Part$ refines $X$. 
For any other $X' \in \Part'$, we will construct a bijection between 
the parts of $\Part$ contained in $X$ and those contained in $X'$.
Let $\vb$ be a non-negative index vector with respect to $\Part$ such that $(\vb \mid \Part') = \1 - \ub_{X} - \ub_{X'}$.
For each $Y \in \Part$ which is contained in $X$, we apply the property that $I$ is full to the vector $\vb + \ub_Y$ 
to obtain a part $Y' \in \Part$ which is contained in $X'$, such that $\vb + \ub_Y + \ub_{Y'} \in I$.
Now observe that since $L$ is transferral-free, $Y'$ must be unique.
Thus $Y \mapsto Y'$ is a bijection, so (i) holds.
For (ii), we observe that the number of such $(k-2)$-vectors $\vb$ is $r^{k-2}$ 
and there is a one-to-one correspondence between them and vectors $\ib \in I$ such that $i_Y = 1$.
By considering the $r$ parts of $\Part$ contained in $X$ we obtain $|I|=r^{k-1}$.
\end{proof}
 
Finally, we require the following consequence of Farkas' Lemma.

\begin{prop} \label{applyfarkas}
Let $\Part'$ partition a set $V$ into parts $V_1, \dots, V_k$ each of size $n$, 
and let $\Part$ be a partition refining $\Part'$ into parts of size at least $n/k$. 
Suppose $I$ is a set of $k$-vectors with respect to $\Part$ which is full with respect to $\Part'$.
Then $\ib_\Part(V) \in PC(I)$. 
\end{prop}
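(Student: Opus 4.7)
The plan is to argue by contradiction using Farkas' Lemma (Theorem~\ref{farkas}). Suppose $\ib_\Part(V) \notin PC(I)$; then there is a functional $\ab=(a_Y)_{Y\in\Part}\in\R^\Part$ with $\ab\cdot\ib\ge 0$ for every $\ib\in I$ and $\ab\cdot\ib_\Part(V)=\sum_{Y\in\Part}a_Y|Y|<0$. For each $X\in\Part'$ I will write $\mathcal{P}_X=\{Y\in\Part:Y\sub X\}$ and set $m_X=\min_{Y\in\mathcal{P}_X}a_Y$, $M_X=\max_{Y\in\mathcal{P}_X}a_Y$, $t_X=M_X-m_X$, and $S=\sum_{X\in\Part'}m_X$. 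Recall that every $\ib\in I$ has $(\ib\mid\Part')=\1$, so each such $\ib$ selects exactly one part from each $\mathcal{P}_X$.

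The next step is to extract information from partite fullness. Fix $X_0\in\Part'$ and choose $Y^*_X\in\mathcal{P}_X$ attaining $m_X$ for each $X\ne X_0$. Fullness supplies some $Y\in\mathcal{P}_{X_0}$ with $\sum_{X\ne X_0}\ub_{Y^*_X}+\ub_Y\in I$; pairing with $\ab$ gives $\sum_{X\ne X_0}m_X+a_Y\ge 0$, and since $a_Y\le M_{X_0}$ this rearranges to $t_{X_0}\ge -S$. Because $t_X\ge 0$ holds trivially, this yields $\sum_{X\in\Part'}t_X\ge -kS$ regardless of the sign of $S$.

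Finally I will combine these bounds with the size hypothesis $|Y|\ge n/k$. Splitting by $X\in\Part'$ and using $\sum_{Y\in\mathcal{P}_X}|Y|=n$, the key estimate is $\sum_{Y\in\mathcal{P}_X}(a_Y-m_X)|Y|\ge t_X\cdot n/k$, which holds because the term at a maximizer of $a_Y$ alone contributes at least $t_X\cdot(n/k)$ while every other term is nonnegative. Summing over $X$ then gives
\[
\sum_{Y\in\Part}a_Y|Y|\ \ge\ nS+\tfrac{n}{k}\sum_{X\in\Part'}t_X\ \ge\ nS-nS\ =\ 0,
\]
contradicting $\sum_Y a_Y|Y|<0$. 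The only point requiring real care is identifying the right notion of spread $t_X$ within each class of $\Part'$; once this is in place, fullness and the slack provided by $|Y|\ge n/k$ interlock cleanly to close the argument.
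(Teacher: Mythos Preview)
Your proof is correct and follows essentially the same approach as the paper: both argue by contradiction via Farkas' Lemma, lower-bound $\ab\cdot\ib_\Part(V)$ by $n\sum_X m_X + \tfrac{n}{k}\sum_X t_X$ using the part-size hypothesis, and close the gap with fullness. The only cosmetic difference is that the paper applies fullness once (at the part $X_0$ with minimal spread $t_{X_0}$, so that $t_{X_0}\le\tfrac{1}{k}\sum_X t_X$) to directly compare with $n\,\ab\cdot\ib$ for a single $\ib\in I$, whereas you apply fullness at every $X_0$ to obtain $t_{X_0}\ge -S$ and then sum; the two routes yield the same inequality.
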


\begin{proof}
Suppose for a contradiction that $\ib_\Part(V) \notin PC(I)$. Then by Theorem~\ref{farkas} we may fix $\ab \in \R^{|\Part|}$ such that $\ab \cdot \ib \ge 0$ for every $\ib \in I$ and $\ab \cdot \ib_\Part(V) < 0$. For each $i \in [k]$, let $X_i^1, \dots, X_i^{b_i}$ be the parts of $\Part$ which are subsets of $V_i$, and let $a_i^1, \dots, a_i^{b_i}$ be the corresponding coordinates of $\ab$, with the labels chosen so that $a_i^1 \leq \dots \leq a_i^{b_i}$. Fix $i \in [k]$ for which $a_i^{b_i} - a_i^1$ is minimised, so in particular $a_i^{b_i} - a_i^1 \leq \frac{1}{k}\sum_{j \in [k]} a_j^{b_j} - a_j^1$. By assumption we may choose $\ib \in I$ such that $\ib = \ub_{X_i^s} + \sum_{j \neq i} \ub_{X_j^1}$ for some $s \in [b_i]$. Then
\begin{align*}
0 &> \ab \cdot \ib_\Part(V) \geq \sum_{j \in [k]} na_j^1 + \frac{n}{k}(a_j^{b_j} - a_j^1) \\
&\geq n \left(\sum_{j \in [k]} a_j^1 + (a_i^{b_i} - a_i^1)\right) \geq n \ab \cdot \ib \geq 0,
\end{align*}
a contradiction. So $\ib_\Part(V) \in PC(I)$.
\end{proof}
 
\section{The key lemmas} \label{KeyLemmaSection}

The following key lemma will be used in the proof of Theorem~\ref{PMNeccSuff}. It provides a simple condition for finding a perfect matching under Setup~\ref{setup} when we have a robustly maximal partition: it suffices that the index of the vertex set is in the robust edge-lattice and every vertex is in many edges with index in the robust edge-lattice.

\begin{lemma} \label{NonPartiteLemma} 
Suppose that $k \geq 3$ and $1/n \ll \eps \ll \mu \ll \mu' \ll c, d \ll \gamma, 1/k$. 
Let $H$ be a $k$-graph on a set $V$ of size $kn$ and $\Part$ be a partition of $V$. 
Suppose that 
\begin{enumerate}[(i)]
\item at most $\eps n^{k-1}$ $(k-1)$-sets $S \subseteq V$ have $d_H(S) < (1+\gamma)n$,
\item $\Part$ is $(c, c, \mu, \mu')$-robustly maximal with respect to $H$,
\item any vertex is  in at least $dn^{k-1}$ edges $e \in H$ with $\ib_\Part(e) \in L_\Part^\mu(H)$, and
\item $\ib_\Part(V) \in L_\Part^\mu(H)$.
\end{enumerate}
Then $H$ contains a perfect matching.
\end{lemma}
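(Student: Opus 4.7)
My plan is to reduce Lemma~\ref{NonPartiteLemma} to a $k$-partite analogue (Lemma~\ref{kPartiteLemma}) and prove that partite statement by induction on a ``codegree parameter'' $\ell$, where the base case $\ell=2$ corresponds to a $(1/2+\gamma)n$ codegree bound and the final case $\ell=k$ corresponds to the $(1/k+\gamma)n$ bound we need. For the reduction, I would take a uniformly random $k$-equipartition $V(H) = V_1 \cup \dots \cup V_k$, set $\Part^* := \Part \wedge (V_1, \dots, V_k)$, and let $H^*$ be the $(V_1,\dots,V_k)$-partite subgraph of $H$. Chernoff/hypergeometric concentration (Lemma~\ref{VaryingChernoff} and Corollary~\ref{SumOfHyper}) ensures that with high probability almost all partite $(k-1)$-sets in $H^*$ have the required degree in the missing class; Lemma~\ref{RobustRandom} preserves the robust maximality of $\Part$ upon passing to $\Part^*$; and Proposition~\ref{similarlattices} lets me transfer hypotheses (iii) and (iv) from $L^\mu_\Part(H)$ to the analogous partite robust edge-lattice on $H^*$. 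A perfect matching in $H^*$ is then a perfect matching in $H$.

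For the partite statement I would induct on $\ell \ge 2$. The base case is direct from Theorem~\ref{HypergraphMatching}: with the stronger $(1/2+\gamma)n$ codegree hypothesis, any alternative refinement it produces would contain, within some vertex class, parts of combined size $>n/2$, forcing the resulting robust edge-lattice to be complete; so the perfect-matching alternative must hold. For the inductive step, applying Theorem~\ref{HypergraphMatching} to the current $k$-partite $k$-graph yields either a perfect matching (and we are done) or a refinement $\Part$ of the $k$-partition with $L := L^\mu_\Part(H)$ incomplete and transferral-free. I would then decompose $H$ into the canonical subgraphs $H_\ib$ indexed by $\ib \in I^\mu_\Part(H)$, where $V(H_\ib) = \bigcup_{i_W = 1} W$. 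Transferral-freeness of $L$ forces almost all edges of $H$ to lie in some $H_\ib$, so the codegree condition surviving inside each $H_\ib$ improves to roughly $(1/(\ell-1)+\gamma')$ proportion of the relevant vertex class, setting up the inductive step.

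The heart of the argument is producing vertex-disjoint $\hat H_\ib \sub H_\ib$ whose vertex sets partition $V(H)$ and that each satisfy the hypotheses for $\ell-1$; the inductive hypothesis then gives a perfect matching in each $\hat H_\ib$, and their union is a perfect matching of $H$. Using Proposition~\ref{RobustExistence}, I would take a robustly maximal refinement $\Qart_\ib$ of $\Part_\ib$ for each $\ib$ and delete a short matching absorbing every vertex that is exceptional for some $H_\ib$. The critical constraint to invoke induction is $\ib_{\Qart_\ib}(V(\hat H_\ib)) \in L^\mu_{\Qart_\ib}(H_\ib)$ for every $\ib$. Since this depends only on intersections of $V(\hat H_\ib)$ with parts of the common refinement $\Qart^\cap$ of the $\Qart_\ib$, I would select those intersection sizes in the three stages outlined in Section~\ref{SketchSection}: first, real-valued targets for the $|V(\hat H_\ib)|$ via Farkas' Lemma (Proposition~\ref{applyfarkas}); second, integer intersection sizes with parts of the join $\Qart^\cup$, where Construction~\ref{NestedConstruct}-style hidden barriers are ruled out by Proposition~\ref{robmaxinverse}; and third, a refinement to intersection sizes with parts of $\Qart^\cap$ via Baranyai's Matrix Rounding (Theorem~\ref{Baranyai}). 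A uniformly random choice of $V(\hat H_\ib)$ subject to these intersection sizes, combined with Corollary~\ref{ApplyAzuma}, Lemma~\ref{RobustRandom}, and a union bound, verifies with high probability that every hypothesis required by the inductive statement holds inside each $\hat H_\ib$.

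The main obstacle is exactly this simultaneous splitting: the robustly maximal refinements $\Qart_\ib$ for different $\ib$ are generally pairwise incompatible, so a single partition of $V(H)$ must simultaneously hit a prescribed residue class in each coset group associated to $L^\mu_{\Qart_\ib}(H_\ib)$ while also preserving robust maximality, the ``every vertex lies in many lattice edges'' condition, and the weakened codegree bound inside each $\hat H_\ib$. It is this juggling act that forces the three-step geometric rounding machinery, the use of Proposition~\ref{robmaxinverse} (to rule out nested barriers that would break induction), and the delicate parameter hierarchy $1/n \ll \eps \ll \mu \ll \mu' \ll c,d \ll \gamma$ in the statement.
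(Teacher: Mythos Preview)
Your overall architecture is correct and matches the paper: reduce to the $k$-partite Lemma~\ref{kPartiteLemma} via a random equipartition, and prove that lemma by induction on $\ell$ using the canonical subgraphs, robustly maximal refinements $\Qart_\ib$, the Farkas/join/Baranyai three-stage size selection, and random allocation. That part of your outline is essentially the paper's proof.

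However, there is a genuine gap in the reduction step. You invoke Lemma~\ref{RobustRandom} to transfer robust maximality from $\Part$ (with respect to $H$) to $\hPart = \Part \wedge (V_1,\dots,V_k)$ (with respect to the partite subgraph $H'$), but that lemma does not apply here. Lemma~\ref{RobustRandom} concerns passing to a random \emph{subset} $S \subseteq V(H)$ and concludes that the \emph{restriction} $\Part[S]$ is robustly maximal with respect to the \emph{induced} subgraph $H[S]$. In the reduction you keep the full vertex set, \emph{refine} the partition, and \emph{delete} all non-partite edges---none of these three changes is covered by Lemma~\ref{RobustRandom}. The difficult part of (F2) is ruling out a strict refinement $\hPart^*$ of $\hPart$ with a transferral-free robust edge-lattice in $H'$; since $H'$ is missing a constant fraction of the edges of $H$, one cannot simply pull such a $\hPart^*$ back to a refinement of $\Part$ that contradicts robust maximality of $\Part$ in $H$. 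The paper handles this with a separate weak-regularity argument (Claims~\ref{NonParClaim0}--\ref{LastClaim}) that it explicitly describes as ``similar in spirit to that of Lemma~\ref{RobustRandom} (but more complicated)'': one regularises $H$, assigns to each cluster a $\Part'$-partite $k$-vector of representative $\hPart^*$-parts, shows that incompatible assignments are rare, and from the surviving assignments builds a genuine refinement $\Part^*$ of $\Part$ in $H$ whose robust edge-lattice would be transferral-free, contradicting the hypothesis on $\Part$.

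A smaller but related point: Proposition~\ref{similarlattices} does not transfer (iii) and (iv) either, since $\hPart$ is not $\alpha$-close to $\Part$ and $H'$ is not $\alpha$-close to $H$ for small $\alpha$. The paper instead first shows $L^\mu_{\hPart}(H')$ is full with respect to $\Part'$ (via Proposition~\ref{properties}) and then uses Proposition~\ref{properties0}(\ref{properties:index}) together with transferral-freeness of $L^\mu_\Part(H)$ to deduce that any $\ib$ with $(\ib \mid \Part') \in \Z\cdot\1$ and $(\ib \mid \Part) \in L^\mu_\Part(H)$ lies in $L^\mu_{\hPart}(H')$; this yields (F4), and (F3) comes from Corollary~\ref{ApplyAzuma} rather than Proposition~\ref{similarlattices}.
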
 

We will prove Lemma~\ref{NonPartiteLemma} by taking a random $k$-partition of $H$ and deducing it from the partite key
lemma that we prove in this section. However, showing that the conditions on $H$ transfer to the $k$-partite subgraph
induced by the random partition is technical and non-trivial, and so we defer the proof of Lemma~\ref{NonPartiteLemma}
to Section~\ref{DeferredProofsSection}. In this section, after some preparatory results in the first subsection,
we prove the partite key lemma (Lemma~\ref{kPartiteLemma}) in the second subsection.

\subsection{Preliminaries.} The following result forms the base case of the partite key lemma (Lemma~\ref{kPartiteLemma}). This is the case where our $k$-graph $H$ is far from any divisibility barrier. Note that here any edge $e \in H$ must have $\ib_\Part(e) = \1$. So condition (iv) is trivial, and condition (iii) simply states that every vertex lies in at least $dn^{k-1}$ edges. However, we state the lemma in this form for ease of comparison to the full version of Lemma~\ref{kPartiteLemma}.

\begin{lemma} \label{KeyLemmaBaseCase} 
Suppose that $k \ge 3$ and $1/n \ll \eps \ll \mu \ll \mu' \ll c, d \ll \gamma, 1/k$. 
Let $\Part$ partition a set $V$ into vertex classes $V_1, \dots, V_k$ of size $n$
and let $H$ be a $\Part$-partite $k$-graph on $V$.
Suppose that 
\begin{enumerate}[(i)]
\item at most $\eps n^{k-1}$ $\Part$-partite $(k-1)$-sets $S \subseteq V$ have $d_H(S) < (1/k + \gamma)n$,
\item $\Part$ is $(c, c, \mu, \mu')$-robustly maximal with respect to $H$,
\item  any vertex is  in at least $dn^{k-1}$ edges $e \in H$ with $\ib_\Part(e) \in L_\Part^\mu(H)$, and
\item $\ib_\Part(V) \in L_\Part^\mu(H)$.
\end{enumerate}
Then $H$ contains a perfect matching.
\end{lemma}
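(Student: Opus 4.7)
The plan is to apply Theorem~\ref{HypergraphMatching} to a suitable $k$-complex built from $H$, and use hypothesis~(ii) to rule out the divisibility-barrier outcome. As a preliminary observation, since $H$ is $\Part$-partite every edge $e$ satisfies $\ib_\Part(e)=\1$, so (granted sufficient density, which follows from (i)) we have $L_\Part^\mu(H)=\Z\1$; this is automatically transferral-free and contains $\ib_\Part(V)=n\1$, so hypothesis~(iv) is automatic and (iii) reduces to a minimum vertex-degree condition. The content of the lemma therefore lies in (i) and (ii).

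First I would clean up the bad $(k-1)$-sets by a cascading argument. Let $B_{k-1}$ consist of the partite $(k-1)$-sets with codegree $<(1/k+\gamma)n$, so $|B_{k-1}| \le \eps n^{k-1}$ by (i). Recursively define $B_j$ for $j=k-2,\dots,1$ to be the partite $j$-sets contained in more than $\eps^{(k-1-j)/k}n$ members of $B_{j+1}$; a double-counting argument gives $|B_j| \le C_k \eps^{(k-j)/k} n^j$ for a constant $C_k$ depending only on $k$. Let $B\subseteq V$ consist of $B_1$ together with all vertices appearing in some set of some $B_j$; then $|B|=O(\eps^{1/k})n$. Using hypothesis~(iii), greedily extract a $\Part$-partite matching $M_0\sub H$ covering $B$, with $|M_0|\le|B|/k$, and set $V':=V\sm V(M_0)$ and $n':=n-|M_0|$. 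Build a $\Part$-partite $k$-complex $J$ on $V'$ by letting $J_j$ be all partite $j$-sets in $V'$ for $j\le k-2$, letting $J_{k-1}$ consist of the partite $(k-1)$-sets $S\sub V'$ with $d_{H[V']}(S)\ge (1/k+2\gamma/3)n'$, and letting $J_k$ consist of the edges of $H[V']$ all of whose $(k-1)$-subsets lie in $J_{k-1}$. The cascading cleanup ensures that every partite $j$-set $S\sub V'$ with $j\le k-2$ has all but $\eps^{1/k}n$ extensions avoiding $B_{k-1}$, which yields the degree-sequence bound $\delta^*(J)\ge \bigl(n',\tfrac{k-1}{k}n'+\tfrac{\gamma}{2}n',\dots,\tfrac{1}{k}n'+\tfrac{\gamma}{2}n'\bigr)$.

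Now apply Theorem~\ref{HypergraphMatching} to $J$ (with $\gamma/2$ in place of $\gamma$). If $J_k$ has a perfect matching $M_1$, then $M_0\cup M_1$ is a perfect matching of $H$ and we are done. Otherwise we obtain a partition $\Part^*$ of $V'$ refining $\Part|_{V'}$ into parts of size at least $(1/k+\gamma/2-\mu)n'$ such that $L_{\Part^*}^\mu(J_k)$ is incomplete with respect to $\Part|_{V'}$ and transferral-free; I claim this contradicts~(ii). First, $\Part^*$ must strictly refine $\Part|_{V'}$: otherwise $L_{\Part^*}^\mu(J_k)\sub\Z\1$ would contain $\1$ (since $|J_k|\ge\mu|V'|^k$ by (i)) and hence be complete with respect to $\Part|_{V'}$. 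Second, setting $\alpha:=|M_0|/n$, Lemma~\ref{RobustInherit} yields that $\Part|_{V'}$ is $(c-\alpha,c+3\alpha,\mu+3\alpha,\mu'-2\alpha)$-robustly maximal with respect to $H[V']$. Since $\Part^*$ is a strict refinement of $\Part|_{V'}$ with parts of size at least $(c+3\alpha)|V'|$, robust maximality forces $L_{\Part^*}^{\mu'-2\alpha}(H[V'])$ to contain a transferral. But $|H[V']\sm J_k|\le O(\eps)n^k$ (since each such edge contains some set in $B_{k-1}$), so a direct edge-counting argument (cf.\ Proposition~\ref{similarlattices}) gives $L_{\Part^*}^{\mu'-2\alpha}(H[V'])\sub L_{\Part^*}^\mu(J_k)$, contradicting the transferral-freeness of the latter.

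The hard part will be the bookkeeping in the cleanup step: one must choose the nested thresholds $\eps^{(k-1-j)/k}n$ carefully so that $|B|=o(n)$ while the degree-sequence bound for $J$ holds at every level of the complex. The conceptual core, though, is the final step, where hypothesis~(ii) is invoked precisely to exclude the divisibility-barrier output of Theorem~\ref{HypergraphMatching}.
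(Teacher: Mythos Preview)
Your overall strategy --- build a $k$-complex from $H$, apply Theorem~\ref{HypergraphMatching}, and use robust maximality (via Lemma~\ref{RobustInherit}) to exclude the divisibility-barrier outcome --- is exactly the paper's approach, and your final contradiction argument is correct. Indeed your route to the contradiction is slightly more direct than the paper's: the paper invokes Proposition~\ref{robmaxinverse} to show that \emph{every} refinement of $\Part[V']$ with large parts yields a complete lattice for $J'_k$, whereas you simply pull a transferral from $L_{\Part^*}^{\mu'-2\alpha}(H[V'])$ across to $L_{\Part^*}^{\mu}(J_k)$ using $|H[V']\sm J_k|\le O(\eps)n^k$. Both work.

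However, there is a genuine gap in your construction of $J$, and it is not merely bookkeeping. You set $J_j$ to be \emph{all} partite $j$-sets in $V'$ for $j\le k-2$. For the degree-sequence bound at level $k-2$ you need that every partite $(k-2)$-set $e\sub V'$ has few extensions in $B_{k-1}$, i.e.\ $e\notin B_{k-2}$. Your removal of $B$ does not ensure this: if $B=B_1$ only, a $(k-2)$-set of good vertices may still lie in $B_{k-2}$ and have \emph{all} of its extensions bad, so $\delta^*_{k-2}(J)$ could be zero. If instead $B$ contains ``all vertices appearing in some set of some $B_j$'' as you write, then already the vertices appearing in $B_{k-1}$ can number up to $(k-1)\eps n^{k-1}\gg n$, so $|B|=O(\eps^{1/k})n$ is false. (Your recursive bound $|B_j|\le C_k\eps^{(k-j)/k}n^j$ is also off: with threshold $\eps^{(k-1-j)/k}n$ the double-counting gives $|B_j|\le (j+1)|B_{j+1}|/(\eps^{(k-1-j)/k}n)$, and the $\eps$-powers cancel rather than accumulate.)

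The fix is exactly what the paper does in Lemma~\ref{PrunePartiteComplex}: rather than removing vertices to kill bad sets at every level (which is impossible), build $J_j$ \emph{recursively} for $j=1,\dots,k-1$, admitting a good $j$-set only when all its $(j-1)$-subsets lie in $J_{j-1}$. With a single constant threshold $\beta n$ (where $\eps\ll\beta\ll\mu$) at each level, the number of bad $j$-sets stays at $O(\eps\beta^{-(k-1-j)})n^j$, and the recursive definition propagates the degree bounds down the complex. Only then does one remove (via a small matching, using (iii)) the $O(\sqrt{\eps})n$ vertices missing from $J_1$, and restrict $J$ to the remaining vertices.
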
 

The proof requires the following lemma that will enable us to apply Theorem~\ref{HypergraphMatching}.

\begin{lemma} \label{PrunePartiteComplex} 
Suppose that $1/n \ll \eps \ll \alpha \ll 1/k$.
Let $\Part$ partition a vertex set $V$ into parts $V_1, \ldots, V_k$ each of size $n$.
Suppose $H$ is a $\Part$-partite $k$-graph on $V$ such that at most $\eps n^{k-1}$ 
$\Part$-partite $(k-1)$-sets $S \subseteq V$ have $d_H(S)<D$. 
Then there exists a $k$-complex $J$ on $V$ such that $J_k \subseteq H$ and 
$\delta^*(J) \geq ((1 - \sqrt{\eps})n, (1 - \alpha)n, \ldots, (1 - \alpha)n, D - \alpha n)$.
\end{lemma}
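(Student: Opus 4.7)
The plan is to construct $J$ by top-down pruning. First set $B_{k-1}$ to be the collection of $\Part$-partite $(k-1)$-sets $S$ with $d_H(S) < D$; the hypothesis gives $|B_{k-1}| \le \eps n^{k-1}$. For $j = k-2, k-3, \ldots, 0$, recursively define $B_j$ to consist of the $\Part$-partite $j$-sets $T$ such that for some vertex class $V_i$ disjoint from $T$, at least $\alpha n / 2^k$ vertices $v \in V_i$ satisfy $T \cup \{v\} \in B_{j+1}$. Then define $J$ as the family of all $\Part$-partite sets $T$ of size at most $k$ such that (a) $T' \notin B_{|T'|}$ for every $T' \sub T$ with $|T'| \le k-1$, and (b) if $|T| = k$ then $T \in H$. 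By construction $J$ is downward closed, hence a $k$-complex, and $J_k \sub H$.

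Next I would bound the $|B_j|$ by induction. Each $T \in B_j$ produces at least $\alpha n/2^k$ pairs $(T, T \cup \{v\})$ with $T \cup \{v\} \in B_{j+1}$, while each $(j+1)$-set $T^* \in B_{j+1}$ arises from at most $j+1 \le k$ such pairs (one per choice of vertex to delete). Thus $|B_j| \cdot \alpha n/2^k \le k |B_{j+1}|$, and iterating yields $|B_j| \le (k 2^k/\alpha)^{k-1-j}\eps n^j$. Since $\eps \ll \alpha \ll 1/k$, this is much less than $\alpha n^j/2^k$ for $j \ge 1$; in particular $B_0 = \emptyset$ and $|B_1| \le \sqrt{\eps}\, n$.

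Finally I would verify the partite degrees. For $\delta^*_0(J)$, the number of $v \in V_i$ with $\{v\} \in J_1$ equals $|V_i| - |B_1 \cap V_i| \ge (1-\sqrt{\eps})n$. For $1 \le j \le k-2$, fix $T \in J_j$ and a vertex class $V_i$ disjoint from $T$. A vertex $v \in V_i$ fails $T \cup \{v\} \in J_{j+1}$ only if $T'' \cup \{v\} \in B_{|T''|+1}$ for some $T'' \sub T$; but each such $T''$ satisfies $T'' \notin B_{|T''|}$ (as $T \in J$), so by the defining property of $B_{|T''|}$ there are fewer than $\alpha n/2^k$ such $v$. Summing over the $2^j \le 2^{k-2}$ subsets of $T$ excludes at most $\alpha n/4$ values of $v$, giving $\delta^*_j(J) \ge (1-\alpha)n$.

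The slightly more delicate case is $j = k-1$: given $T \in J_{k-1}$ we have $d_H(T) \ge D$ since $T \notin B_{k-1}$, and among the $\ge D$ vertices $v$ with $T \cup \{v\} \in H$, the same counting applied to proper subsets $T'' \subsetneq T$ (so that $|T''|+1 \le k-1$ and the cascade definition of $B_{|T''|+1}$ applies) excludes at most $2^{k-1} \cdot \alpha n/2^k = \alpha n/2$ further $v$'s, yielding $\delta^*_{k-1}(J) \ge D - \alpha n$. The main bookkeeping obstacle is calibrating the pruning threshold $\alpha n/2^k$ so that the $2^{k-2}$ and $2^{k-1}$ factors from summing over subsets of $T$ are absorbed, and checking that the cascade constant $(k 2^k/\alpha)^{k-1}$ is still swamped by the hierarchy $\eps \ll \alpha$ when we need $|B_1| \le \sqrt{\eps}\, n$ and $B_0 = \emptyset$; beyond this, the argument is a direct double-counting cascade.
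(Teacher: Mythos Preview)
Your proof is correct and follows essentially the same top-down pruning strategy as the paper: both define ``bad'' $j$-sets recursively (as those with too many bad extensions into some part), take $J$ to be the complex of sets all of whose subsets are good, and bound $|B_j|$ by a cascading double-count. The constructions coincide up to the choice of threshold constant ($\alpha n/2^k$ versus the paper's $\alpha n/k^k$), which is immaterial given the hierarchy $\eps \ll \alpha \ll 1/k$.

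The only notable difference is in verifying the partite degree bounds. You give a direct argument: for $T \in J_j$ and a target class $V_i$, a vertex $v$ is excluded only if $T'' \cup \{v\} \in B_{|T''|+1}$ for some $T'' \subseteq T$, and summing the at most $\alpha n/2^k$ excluded vertices over the $2^j$ subsets of $T$ gives the bound. The paper instead proves $\delta^*_i(J) \ge (1-k^i\beta)n$ by induction on $i$ via a contradiction argument, introducing auxiliary families $F$, $F'$ of would-be extensions and their offending $(i+1)$-subsets. Your direct union bound over subsets is cleaner and avoids the induction entirely; the paper's route is slightly more involved but yields essentially the same bounds.
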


\begin{proof} 
Let $\beta = \alpha/k^k$. 
Call a $\Part$-partite $(k-1)$-set $A \subseteq V$ \emph{bad} if $d_H(A) < D$, and \emph{good} otherwise. 
We define bad $\Part$-partite sets $A$ of size $i$ recursively for $i = k-2,k-3,\dots,0$ by saying
that $A$ is \emph{bad} if it there are more than $\beta n$ vertices $x$ in some part of $\Part$ such that $A \cup \{x\}$ is bad.
If a $\Part$-partite set $A$ is not bad we say it is \emph{good}. 
We claim that for any $0 \leq i \leq k-1$ the number of bad $i$-sets is at most $\sqrt{\eps}\binom{n}{i}$.
To see this, we show by induction on $i$ that the number of bad $(k-i)$-sets is at most $\eps (k/\beta)^i \binom{n}{k-i}$.
The base case $i=1$ holds by assumption on $H$, since $\beta \ll 1/k$. The induction step follows since there are at least $\beta n/k$ times as many bad $(k-i)$-sets as bad $(k-i-1)$-sets. Thus the claim holds.

We define the $k$-complex $J$ as follows. We take $J_0 = \{\emptyset\}$, which is good by the above claim, and
define $J_i$ recursively for $1 \leq i < k$ as the family of good $i$-sets $A$ 
such that every $(i-1)$-subset of $A$ is an element of $J_{i-1}$. Since the number of bad singletons (i.e. bad 1-sets) is at most $\sqrt{\eps}n$, we then have $\delta^*_0(J) \geq (1 - \sqrt{\eps})n$. 
For the remaining inequalities on $\delta^*(J)$ we now prove by induction that $\delta_i^*(J) \geq (1 - k^i\beta)n \geq (1 - \alpha)n$ for each $0 \leq i \leq k-2$. 
The base case $i=0$ is already done. 
For the induction step, suppose that $\delta_i^*(J) \geq (1 - k^i\beta)n$ for some $0 \leq i \leq k-3$. 
Suppose for a contradiction that there exists $e \in J_{i+1}$ such that $|J_{i+2}(e) \cap V_j| < (1 - k^{i+1}\beta)n$ for some $j \in [k]$ such that $e$ is disjoint from $V_j$.

Let $F$ be the set of good sets $e^+ = e \cup \{w\}$ such that $w \in V_j$ and $e^+ \notin J_{i+2}$.
Then $|F| \ge (k^{i+1}-1) \beta n$, as by choice of $e$ there are at least $k^{i+1} \beta n$ choices of $w$ 
such that $e^+ \notin J_{i+2}$, and at most $\beta n$ choices make $e^+$ bad, since $e \in J_{i+1}$ is good.
Next note that any $e^+ \in F$ contains some $(i+1)$-set $e^* \notin J_{i+1}$, 
otherwise we would have added $e^+$ to $J_{i+2}$. 
Let $F'$ be the set of such $(i+1)$-sets $e^*$ for all $e^+ \in F$ 
(choosing arbitrarily if there is more than one choice for some $e^+$).
Then $|F'|=|F|$, as each $e^+$ is determined by its choice of $e^*$ (we have $e^+ = e \cup e^*$).

Note that each $e^* \in F'$ intersects $e$ in $i$ vertices, so there is some $i$-set $e^- \sub e$ which is contained in at least $|F'|/(i+1) > k^i \beta n$ sets of $F'$. Then $|J_{i+1}(e^-) \cap V_j| < (1 - k^i\beta)n$, as $F'$ is disjoint from $J_{i+1}$.
But since $e \in J_{i+1}$ we have $e^- \in J_i$, and so $\delta_i^*(J) < (1 - k^i\beta)n$.
This contradiction establishes the induction step.

Now we define $J_k$ to be the set of all edges $e \in H$ such that $e^- \in J_{k-1}$ for every $e^- \subseteq e$ of size $k-1$. 
It remains to show that $\delta^*_{k-1}(J) \geq D - \alpha n$, which we do similarly to the induction step.
Suppose there exists $e \in J_{k-1}$ such that $|J_k(e)| < D - \alpha n$ and let $F$ be the set of edges $e^+ \in H \sm J_{k}$ such that $e \subseteq e^+$.
Since $e$ is good, we must have $|F| \geq \alpha n$.
Next note that any $e^+ \in F$ contains some $(k-1)$-set $e^* \notin J_{k-1}$, otherwise we would have added $e^+$ to $J_{k}$. 
Again let $F'$ be the set of such $(k-1)$-sets $e^*$ for all $e^+ \in F$.
Then $|F'|=|F|$ (as before, each $e^+$ is determined by its choice of $e^*$).

Note that each $e^* \in F'$ intersects $e$ in $k-2$ vertices, so there is some $(k-2)$-set $e^- \sub e$ which is contained in at least $|F'|/(k-1) > k^{k-1} \beta n$ sets of $F'$. 
Then $|J_{k-1}(e^-)| < (1 - k^{k-1} \beta)n$, as $F'$ is disjoint from $J_{k-1}$.
But since $e \in J_{k-1}$ we have $e^- \in J_{k-2}$, and so $\delta_{k-2}^*(J) < (1 - k^{k-1} \beta)n$.
This contradiction completes the proof.
\end{proof}

\begin{proof}[Proof of Lemma~\ref{KeyLemmaBaseCase}.]
Introduce a new constant $\alpha$ with $\eps \ll \alpha \ll \mu$.
We apply Lemma~\ref{PrunePartiteComplex} with $D=(1/k + \gamma)n$ 
to obtain a $\Part$-partite $k$-complex $J$ on $V$ with $J_k \subseteq H$ and 
\[\delta^*(J) \geq \left((1 - \sqrt{\eps})n, (1 - \alpha)n, \ldots, (1 - \alpha)n, (1/k + \gamma -\alpha)n\right).\] 
Next we choose a matching $M$ in $H$ of size at most $k \sqrt{\eps} n$ which includes all of the
at most $k\sqrt{\eps} n$ vertices $x \in V$ for which $\{x\}$ is not an edge of $J$;
we can construct $M$ greedily by the vertex degree assumption.
We write $V' = V \sm V(M)$, $n'=n-|M|$, and $J' = J[V']$ 
and verify that $J'$ satisfies the conditions of Theorem~\ref{HypergraphMatching} 
with $n'$, $2\mu$ and $\gamma/2$ in place of $n$, $\mu$ and $\gamma$ respectively. 

The degree sequence condition holds as
\[\delta^*(J') \geq \left(n', (1 - 2\alpha)n', \ldots, (1 - 2\alpha)n', (1/k + \gamma - 2\alpha)n'\right).\]
(Note that only the first and last co-ordinates are close to being tight).
Furthermore, $\Part[V']$ is $(c/2, 2c, 2\mu, \mu'/2)$-robustly maximal with respect to $J'_k$ by Lemma~\ref{RobustInherit}.
Now suppose that $\Part'$ is a partition of $V'$ into parts of size 
at least $\delta^*_{k-1}(J') - 2\mu n' \geq n'/k \geq 2c|V'|$ which refines $\Part[V']$. 
Since $J'_k$ is $\Part$-partite every edge $e \in J'_k$ has $\ib_{\Part}(e) = \1$; 
in particular we have $\1 \in L_{\Part[V']}^{\mu'/2}(J'_k)$. 
So by Proposition~\ref{robmaxinverse} we have $\ib \in L^{2\mu}_{\Part'}(J'_k)$ 
for any index vector $\ib$ with respect to $\Part'$ such that $(\ib \mid \Part) = \1$,
i.e.\ $L_{\Part'}^{2\mu}(J'_k)$ is complete with respect to $\Part[V']$.

Now $J'_k$ has a perfect matching by Theorem~\ref{HypergraphMatching}.
Together with $M$, we have a perfect matching in $H$.
\end{proof}

Next we recall from the proof outline the form of our inductive approach,
which uses the following `canonical' induced subgraphs.

\begin{defin} \label{canonical} (Canonical subgraphs)
Suppose that $H$ is a $k$-graph on $V$, $\Part$ is a partition of $V$, and $\mu > 0$. For each $\ib \in I^\mu_\Part(H)$, we define $H_\ib$ to be the induced subgraph of $H$ on the
union of the parts $W \in \Part$ such that $i_W > 0$, and $\Part_\ib$ to be the restriction of $\Part$ to $V(H_\ib)$. 
\end{defin}

Note that $H_\ib$ contains all the edges of $H$ of index $\ib$ and is $\Part_\ib$-partite whenever $H$ is $\Part$-partite. 
In the following proposition we establish various properties of $H$ and these subgraphs $H_\ib$.
In (i), we show that the robust edge-lattice of $H$ is full, so the properties of Proposition~\ref{properties0} also hold.
In (ii), we show that $H_\ib$ inherits a similar codegree condition to that of $H$ (with slightly weaker constants), 
which is essential for the induction. 
Note that a minimum codegree condition would not be inherited, so this is one reason why for much of this paper 
we work with a condition on the codegree of most $(k-1)$-sets (we also need this condition for Lemma~\ref{FindPMLemma}).
The next two parts are useful boosting properties for the edge detection parameter and the part sizes. We will apply these with $d=1/k$ and $c \ll 1/k$, so (iii) shows that the part sizes of $\Part$ are in fact much larger than our original assumption, and (iv) shows that the robust edge-lattice is unchanged for a wide range of $\mu$.
Finally, recall from our proof outline that we choose matching edges to cover bad vertices;
(v) will show that there are few bad vertices.

\begin{prop} \label{properties}
Suppose that $1/n \ll \mu, \eps \ll \psi, d, c, 1/k$. Let $\Part'$ partition a set $V$ into $k$ parts $V_1, \dots, V_k$,
where $cn \leq |V_i| \leq n$ for each~$i \in [k]$. Let $H$ be a $\Part'$-partite $k$-graph on $V$ 
and $\Part$ be a partition of $V$ which refines $\Part'$. Suppose that
\begin{enumerate}[({$\alpha$}1)] 
\item at most $\eps n^{k-1}$ $\Part'$-partite $(k-1)$-sets $S \subseteq V$ have $d_H(S) < dn$, and
\item $\Part$ has parts each of size at least $cn$ and $L^\mu_\Part(H)$ is transferral-free. 
\end{enumerate}
Then we have the following properties.
\begin{enumerate}[(i)]
\item\label{properties:full} $I_\Part^\mu(H)$ is full with respect to $\Part'$.
\item\label{properties:codegree} For any $\ib \in I^\mu_\Part(H)$ at most $\psi n^{k-1}$ 
$\Part_\ib$-partite $(k-1)$-sets $S \subseteq V(H_\ib)$ have $|H_\ib(S)| < (d - \psi) n$. 
\item\label{properties:final} Each part of $\Part$ has size at least $(d - \psi) n$.
\item\label{properties:lattice} $I^{dc^{k-1}/2k^k}_\Part(H) = I^\mu_\Part(H)$.
\item\label{properties:vdegree} For any $\ib \in I^\mu_\Part(H)$ at most $\psi n$ vertices of $V(H_\ib)$ lie in fewer than $c^{k-2}dn^{k-1}/2$ edges of $H_\ib$.
\end{enumerate}
\end{prop}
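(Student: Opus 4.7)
The plan is to prove the five parts roughly in order, using transferral-freeness of $L^\mu_\Part(H)$ from ($\alpha$2) together with condition ($\alpha$1) as the two workhorses throughout. The latter guarantees that almost every $\Part'$-partite $(k-1)$-set $S$ extends into its missing $\Part'$-class via at least $dn$ vertices, while the former forces, given an index $\ib \in I^\mu_\Part(H)$, that among all ``transferral perturbations'' of $\ib$ only $\ib$ itself can lie in $I^\mu_\Part(H)$; the others contribute at most $\mu n^k$ edges each.

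For~(i), I would fix $X \in \Part'$ and a non-negative $(k-1)$-vector $\vb$ with $(\vb \mid \Part') = \1 - \ub_X$, count $\ge (cn)^{k-1}/(k-1)!$ $\Part$-partite $(k-1)$-sets of index $\vb$ using ($\alpha$2), discard the at most $\eps n^{k-1}$ with small codegree, and observe that each of the remainder yields $\ge dn$ extensions into $X$; pigeonholing over the $\le 1/c$ parts of $\Part$ inside $X$ then gives some $Y$ with at least $c^k dn^k/(2(k-1)!) \ge \mu n^k$ edges of index $\vb + \ub_Y$. Part~(iv) is essentially a strengthened version of the same count: since $(\ib \mid \Part') = \1$ each $X \in \Part'$ contains a unique $Y$ with $i_Y = 1$, and counting pairs $(S,v)$ with $S$ of index $\vb = \ib - \ub_Y$ and $v \in Y$ extending $S$ in $H$, the good-$S$ contribution is $\ge (cn)^{k-1}dn/(2(k-1)!)$ while transferral-freeness caps the wrong extensions into $X \setminus Y$ by $(1/c)\mu n^k$; since $i_Y = 1$ each edge of index $\ib$ corresponds to exactly one such pair, yielding the required $\ge c^{k-1}dn^k/(2k^k)$ edges.

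For~(ii), given $\ib \in I^\mu_\Part(H)$ and a $\Part_\ib$-partite $(k-1)$-set $S \sub V(H_\ib)$ missing a part $W^* \in \Part_\ib$ inside $X \in \Part'$, I would split potential failures into those with $d_H(S) < dn$ (at most $\eps n^{k-1}$ by ($\alpha$1)) and those with $|H(S) \sm H_\ib(S)| \ge \psi n/2$. In the second case each extension into $X \sm W^*$ yields an edge of index $\vb + \ub_{W'}$ (with $\vb = \ib - \ub_{W^*}$ and $W' \subsetneq X$, $W' \ne W^*$), which by transferral-freeness lies outside $I^\mu_\Part(H)$; this bounds the total wrong edges by $O(\mu n^k / c)$ and hence the second case by $O(\mu/(c\psi)) n^{k-1} \le \psi n^{k-1}/2$. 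Part~(v) then follows by rerunning this argument with $\psi$ replaced by $\psi' = \psi c^{k-2}/4$ (valid in the hierarchy) to bound bad $(k-1)$-sets by $\psi' n^{k-1}$; the double-counting identity $(k-1)|H_\ib(\{v\})| = \sum_{S \ni v,\,\Part_\ib\text{-partite}} |H_\ib(S)|$ then shows that at most $\psi n$ vertices have more than a quarter of their $\ge (k-1)(cn)^{k-2}$ containing partite $(k-1)$-supersets bad, and for the remaining vertices averaging yields $|H_\ib(\{v\})| \ge c^{k-2}dn^{k-1}/2$.

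For~(iii), once~(i) is proved, $I^\mu_\Part(H)$ is a full set of $k$-vectors generating the transferral-free (hence by Proposition~\ref{FullMax} full) lattice $L^\mu_\Part(H)$, so Proposition~\ref{properties0}(ii) gives, for each $W \in \Part$, some $\ib \in I^\mu_\Part(H)$ with $i_W = 1$. The number of $\Part_\ib$-partite $(k-1)$-sets missing $W$ is at least $(cn)^{k-1}$; we may assume $\psi \le c^{k-1}/2$ without loss of generality (otherwise $(d-\psi)n \le cn$ and the claim is implied by~($\alpha$2)), so this exceeds the bad-set count in~(ii), producing some $S$ with $|H_\ib(S)| \ge (d-\psi)n$, all of whose $H_\ib$-extensions lie in $W$, forcing $|W| \ge (d-\psi)n$. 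The main obstacle is~(ii): it requires the delicate interplay between~($\alpha$1) and transferral-freeness of $L^\mu_\Part(H)$ together with careful tracking of the constants in the bound on wrong-index edges. Once~(ii) is established, the remaining parts follow by applying it at suitable parameter scales combined with Proposition~\ref{properties0}.
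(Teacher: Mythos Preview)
Your approach is essentially the paper's: prove (i) by counting $(k-1)$-sets of a given index and pigeonholing, prove (ii) by splitting bad $(k-1)$-sets into those with low $H$-degree (bounded by $(\alpha 1)$) and those with many ``wrong-index'' extensions (bounded by transferral-freeness), and derive (iii)--(v) from these counts. A few quantitative points need tightening. First, the $(k-1)!$ factors are spurious: since $\vb$ is a $\{0,1\}$-vector the relevant parts are distinct, so the number of $(k-1)$-sets of index $\vb$ is simply $\prod_j |X_j| \ge (cn)^{k-1}$. This matters for (iv), where the actual target is $(dc^{k-1}/2k^k)|V(H)|^k$ edges, and since $|V(H)|$ can be as large as $kn$ this can be as large as $dc^{k-1}n^k/2$; your stated bound $c^{k-1}dn^k/(2k^k)$ falls short, but dropping the $(k-1)!$ recovers $\approx c^{k-1}dn^k$, which suffices. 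Second, the parenthetical in (iii) (``otherwise $(d-\psi)n \le cn$'') does not follow from $\psi > c^{k-1}/2$; the clean fix, which the paper adopts, is to introduce an intermediate $\theta$ with $\mu,\eps \ll \theta \ll \psi,c,d$, prove (ii) in the stronger form with $\theta$ in place of $\psi$, and then read off (iii)--(v) from the $\theta$-version without further rerunning. Finally, Proposition~\ref{FullMax} is the non-partite statement; you only need Proposition~\ref{properties0} (partite fullness), whose hypotheses are met once (i) and transferral-freeness are in hand.
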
 

\begin{proof}
Let $\theta$ satisfy $\mu, \eps \ll \theta \ll \psi, d, c, 1/k$.
For (i), let $\vb$ be a non-negative index vector with respect to $\Part$ such that $(\vb \mid \Part') = \1 - \ub_{V_k}$.
Then it suffices to show that there is some part $X_k$ of $\Part$ with $X_k \subseteq V_k$ 
such that $H$ contains at least $\mu |V|^k$ edges of index $\vb + \ub_{X_k}$.
To see this, write $\vb = \sum_{j \in [k-1]} \ub_{X_j}$ for parts $X_j \subseteq V_j$ of $\Part$ 
and note that the number of $(k-1)$-sets $S = \{x_1, \dots, x_{k-1}\}$ with $x_j \in X_j$ for each $j \in [k-1]$
is at least $\prod_{j \in [k-1]} |X_j| \geq (cn)^{k-1}$.
Hence at least $(c^{k-1} - \eps)n^{k-1}$ such $(k-1)$-sets have $d_H(S) \geq dn$, 
and so at least $(c^{k-1} - \eps)dn^k$ edges of $H$ contain one vertex from each of $X_1, \dots, X_{k-1}$. 
Since $\Part$ refines $V_k$ into at most $1/c$ parts, some part $X_k \subseteq V_k$ of $\Part$ must be as required.

For (ii), fix $\ib \in I^\mu_\Part(H)$, and for $j \in [k]$ let $W_j \sub V_j$ be the part of $\Part$ for which $i_{W_j} = 1$ 
(so $V(H_\ib) = \bigcup_{j \in [k]} W_j$). Let $\mc{S}$ be the family of sets $S = \{x_1, \dots, x_{k-1}\}$ 
with $x_i \in W_i$ for each $i \in [k-1]$. Also define $\mc{S}_1$ to consist of those $S \in \mc{S}$ with $d_H(S) \geq dn$ but $d_{H_\ib}(S) < (d - \theta) n$, and $\mc{S}_2$ to consist of those $S \in \mc{S}$ with $d_H(S) < dn$. Then as above we have $|\mc{S}| \geq (cn)^{k-1}$, whilst $|\mc{S}_2| < \eps n^{k-1}$ by ($\alpha 1$). To bound $|\mc{S}_1|$, note that each $S \in \mc{S}_1$ is contained in at least $\theta n$ edges $e \in H$ with $\ib_\Part(e)$ equal to some $\ib' \ne \ib$,
and since $\ib'-\ib$ is a transferral for any such $\ib'$ we have $\ib' \notin I^\mu_\Part(H)$.
Now there are at most $c^{-k} \mu (kn)^k$ such edges of $H$, as each part $V_j$ of $\Part'$ is refined into 
at most $1/c$ parts of $\Part$, so there are at most $c^{-k}$ possible values of $\ib_\Part(e)$ for an edge $e$ of $H$.
We conclude that $|\mc{S}_1| \le c^{-k} \mu (kn)^k / \theta n \leq \theta n^{k-1}/(k+1)$, so $\mc{T} = \mc{S}_1 \cup \mc{S}_2$ has
size $|\mc{T}| \leq \theta n^{k-1}/k < |\mc{S}|$. Since any $S \in \mc{S} \sm \mc{T}$ has $d_{H_\ib}(S) \geq (d - \theta) n$, 
by symmetry this proves a stronger form of (ii), with $\theta$ in place of $\psi$. 

For (iii), consider any $Y \in \Part$ and fix $\ib \in I_\Part^\mu(H)$ such that $i_Y = 1$;
this exists by (i) and Proposition~\ref{properties0}(\ref{properties:indexing}).
Without loss of generality $Y \sub V_k$. Then with notation as in (ii), for any $S \in \mc{S} \sm \mc{T}$
we have $|Y| \ge d_{H_\ib}(S) \geq (d - \theta) n$.
The same argument implies that $|H_\ib| \geq |\mc{S} \sm \mc{T}| (d-\theta)n \geq c^{k-1}d|V(H)|^k/2k^k$, 
and so we have (iv).

Finally, note that the number of vertices in $W_1$ that belong to at least $(cn)^{k-2}/3$ sets $S \in \mc{T}$
is at most $|\mc{T}| \cdot 3/(cn)^{k-2} \le \psi n/k$. Since $\Part$ has parts each of size at least $cn$,
any other vertex of $W_1$ lies in at least $2(cn)^{k-2}/3 \cdot (d-\theta)n \geq dc^{k-2}n^{k-1}/2$ edges of $H_\ib$.
The same argument applies to any part $W_j$ with $j \in [k]$, so this proves (v).
\end{proof}

\subsection{The partite key lemma} In the remainder of this section we prove the following partite form of the key lemma.

\begin{lemma} \label{kPartiteLemma} 
Let $k$ and $\ell$ be integers with $k \geq \ell \geq 2$ and $k \geq 3$, 
and suppose that $1/n \ll \eps \ll \mu \ll \mu' \ll c, d \ll \gamma, 1/k$. 
Let $\Part'$ partition a set $V$ into vertex classes $V_1, \dots, V_k$ of size $n$, 
let $H$ be a $\Part'$-partite $k$-graph on $V$ and let $\Part$ be a refinement of $\Part'$. 
Suppose that 
\begin{enumerate}[({$\beta$}1)]
\item at most $\eps n^{k-1}$ $\Part'$-partite $(k-1)$-sets $S \subseteq V$ have $d(S) < (1/\ell + \gamma)n$,
\item $\Part$ is $(c, c, \mu, \mu')$-robustly maximal with respect to $H$,
\item any vertex is in at least $dn^{k-1}$ edges $e \in H$ with $\ib_\Part(e) \in L_\Part^\mu(H)$, and
\item $\ib_\Part(V) \in L_\Part^\mu(H)$.
\end{enumerate}
Then $H$ contains a perfect matching.
\end{lemma}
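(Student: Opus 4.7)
My plan is to prove Lemma~\ref{kPartiteLemma} by induction on $\ell$, with base case $\ell=2$. For the base case, the strong codegree condition $(1/\ell+\gamma)n = (1/2+\gamma)n$ means that after applying Lemma~\ref{PrunePartiteComplex} and greedily removing a small matching to restore $\delta_0^*(J)=|V(J)|$, an application of Theorem~\ref{HypergraphMatching} must yield a perfect matching: the alternative outcome produces a refinement $\Part^*$ of $\Part'$ with parts of size at least $(1/2+\gamma-2\mu)n>n/2$, which forces $\Part^*=\Part'$, and then incompleteness of $L^\mu_{\Part^*}(J_k)$ with respect to $\Part'$ is contradicted by $\1\in L^\mu_{\Part^*}(J_k)$, since $J_k$ has many edges of index $\1$.

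For the induction step, assume the result for $\ell-1$. I set $I:=I_\Part^\mu(H)$ and invoke Proposition~\ref{properties} with $d=1/\ell+\gamma$: this gives that $I$ is full with respect to $\Part'$, that each canonical subgraph $H_\ib$ (Definition~\ref{canonical}) inherits a codegree condition with threshold $(d-\psi)n$ on $\Part_\ib$-partite $(k-1)$-sets, that each part of $\Part$ has size at least $(d-\psi)n$, and that each $H_\ib$ has at most $\psi n$ low-degree vertices. Next I choose, via Proposition~\ref{RobustExistence}, a robustly maximal refinement $\Qart_\ib$ of $\Part_\ib$ with respect to $H_\ib$ for each $\ib\in I$; this will serve as the canonical partition in the recursive call. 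I then greedily remove a small matching $M_0$ from $H$ covering every low-degree vertex, noting via Lemma~\ref{RobustInherit} that each $\Qart_\ib$ remains robustly maximal in $H_\ib\sm V(M_0)$ with slightly weaker parameters.

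The crux of the proof is to split $V(H)\sm V(M_0)$ into vertex sets $V(\hat H_\ib)\sub V(H_\ib)\sm V(M_0)$ (for $\ib\in I$) so that: the parts of $\Part'_\ib \cap V(\hat H_\ib)$ all have a common size $n_\ib$; every vertex of $V(\hat H_\ib)$ lies in many edges of $\hat H_\ib$ of index in $L^\mu_{\Qart_\ib}(\hat H_\ib)$; and crucially the lattice condition $\ib_{\Qart_\ib}(V(\hat H_\ib))\in L^\mu_{\Qart_\ib}(H_\ib)$ holds. Since this last condition depends only on the intersection sizes of $V(\hat H_\ib)$ with the parts of the meet $\Qart^\cap$ of $\{\Qart_\ib\}_{\ib\in I}$, I would fix these sizes in three stages: first, using fullness of $I$ and Farkas' Lemma (Theorem~\ref{farkas}, via Proposition~\ref{applyfarkas}), choose non-negative weights $\rho_\ib$ giving rough targets for each $|V(\hat H_\ib)|$; second, refine these to vertex counts on parts of the join $\Qart^\cup$ of $\{\Qart_\ib\}_{\ib\in I}$, using Proposition~\ref{robmaxinverse} to lift the lattice condition from $\Part_\ib$ to $\Qart_\ib$ and thereby defeat the obstruction of Construction~\ref{NestedConstruct}; third, apply Baranyai's Matrix Rounding Theorem (Theorem~\ref{Baranyai}) to round to integer intersection sizes on parts of $\Qart^\cap$ while preserving all marginal sums. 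I expect this three-stage selection to be the main obstacle of the proof, since all divisibility, partiteness, and degree constraints must be satisfied for every $\ib\in I$ simultaneously despite the partitions $\Qart_\ib$ generally disagreeing.

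Having fixed the intersection sizes, I pick each $V(\hat H_\ib)$ uniformly at random subject to these constraints. Applying Corollary~\ref{SumOfHyper} to concentrate partite $(k-1)$-degrees, Lemma~\ref{RobustRandom} to preserve robust maximality of $\Qart_\ib$ inside $V(\hat H_\ib)$, and a union bound over $\ib\in I$, I would show that with positive probability each $\hat H_\ib$ satisfies all four hypotheses of Lemma~\ref{kPartiteLemma} with $\ell-1$ in place of $\ell$. Applying the inductive hypothesis yields perfect matchings $M_\ib$ of the $\hat H_\ib$, and $M_0\cup\bigcup_{\ib\in I}M_\ib$ is the desired perfect matching of~$H$.
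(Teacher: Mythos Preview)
Your plan is correct and follows essentially the same route as the paper's proof: induction on $\ell$, base case via Theorem~\ref{HypergraphMatching}, canonical subgraphs $H_\ib$ with robustly maximal refinements $\Qart_\ib$, a three-stage size selection (Farkas/$\rho_\ib$, then $\Qart^\cup$, then $\Qart^\cap$ via Baranyai), and a random split verified by concentration and Lemma~\ref{RobustRandom}. Two technical points you have underspecified: in the third stage, Baranyai's rounding alone does not force $(\nb^\cap_\ib\mid\Qart_\ib)\in L_\ib$, so the paper follows it with a sequence of ``swaps'' along paths in the auxiliary graph defining $\Qart^\cup$ to adjust each $(\nb^\cap_\ib\mid\Qart_\ib)$ onto $L_\ib$ without disturbing the other $\ib'$; and the robust-maximality parameters for the $\Qart_\ib$ must be synchronised across all $\ib\in I$, which the paper achieves by a nested application of Proposition~\ref{RobustExistence}.
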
 

We remark that the case $\ell = k$ implies the cases $\ell = 2,\ldots,k-1$, and hence we set $\ell = k$ in all later applications of the lemma. The other cases are only required for the proof of the lemma itself.
 
\begin{proof} We fix $k$ and proceed by induction on $\ell$. Let $s = \lfloor 1/c \rfloor$; then we use the following hierarchy of constants.
\begin{align*}
&1/n \ll 1/D \ll \eps \ll \mu \ll \mu'_0 \ll \mu'_1 \ll \dots \ll \mu'_{s^{k^k}} \ll \mu' \\
&\ll c, d \ll \eta, 1/K \ll \gamma, 1/k.
\end{align*} 
In several places during the proof we will delete small matchings from $H$;
for convenient notation we let $\Part'$ and $\Part$ also denote
the restrictions of $\Part'$ and $\Part$ to the undeleted vertices.
\medskip 

{\noindent \bf Step 1: The case $\Part = \Part'$.} 
In this case we have a perfect matching by Lemma~\ref{KeyLemmaBaseCase}.
In particular, this gives the base case $\ell = 2$ of the induction, 
as by Proposition~\ref{properties}(\ref{properties:final}) each part of $\Part$ has size at least $n/\ell + \gamma n/2 > n/2$,
so we must have $\Part = \Part'$. \medskip

{\noindent \bf Step 2: The canonical subgraphs.}
For the rest of the proof we assume that $\Part \neq \Part'$, that $3 \leq \ell \leq k$ and that the lemma holds with $\ell-1$ in place of $\ell$. We write 
\[I = I_\Part^\mu(H) \ \ \text{ and } \ \ L = L_\Part^\mu(H).\]
Our strategy will be to split $H$ up randomly into a number of vertex-disjoint $k$-partite subgraphs, each of which satisfies the conditions of the lemma (with weaker constants) when $\ell$ is replaced by $\ell-1$. The inductive hypothesis will then imply the existence of a perfect matching in each subhypergraph, and taking the union of these matchings gives a perfect matching in $H$.
In this step we lay the groundwork by analysing the canonical subgraphs from which the random subgraphs will be chosen;
the analogues of properties $(\beta 1)$, $(\beta 2)$ and $(\beta 3)$ for these subgraphs will follow from this analysis.

We first observe that $H$, $\Part'$ and $\Part$ meet the conditions of Proposition~\ref{properties} 
with $1/\ell+\gamma$ in place of $d$. 
Therefore $I$ is full with respect to $\Part'$. We can therefore apply Proposition~\ref{properties0} to obtain an integer $r$ such that $|I|=r^{k-1}$, each part of $\Part'$ is partitioned into exactly $r$ parts by $\Part$, and for any part $X$ of $\Part$ there are exactly $r^{k-2}$ vectors $\ib \in I$ such that $i_X =1$.
Also, by Proposition~\ref{properties}(\ref{properties:final}), every part of $\Part$ has size at least $(1/\ell + \gamma/2)n$. 
Since $\Part \neq \Part'$ this implies $2 \leq r < \ell$. 

Next we recall Definition~\ref{canonical}: for each $\ib \in I$, the canonical subgraph $H_\ib$ is the induced $k$-graph on the vertex set $\bigcup_{X \in \Part: i_X = 1} X$ whose edge set consists of all edges of $H$ with index $\ib$, and $\Part_\ib$ is the restriction of $\Part$ to $V(H_\ib)$ (so $H_\ib$ is $\Part_\ib$-partite). The canonical subgraphs $H_\ib$ will act as prototypes for the $k$-graphs on which we use the inductive hypothesis,
which will be induced subgraphs of the canonical subgraphs on randomly chosen sets of vertices. Note that by Proposition~\ref{properties}(\ref{properties:codegree}), at most $\mu_0' n^{k-1}$ $\Part_\ib$-partite $(k-1)$-sets 
$S \subseteq V(H_\ib)$ have $|H_\ib(S)| < n/\ell + 3\gamma n/4$: this inheritance of the codegree condition of $H$ 
is fundamental to our inductive approach. It also provides the analogue of condition $(\alpha 1)$ when we apply Proposition~\ref{properties} again to each $H_\ib$, which we will do to `boost' the part sizes and edge-detection parameter.

Since $\Part_\ib$ may not be robustly maximal with respect to $H_\ib$, 
we now need to identify robustly maximal partitions for each canonical subgraph. Let $S = s^{k^k}$.
We claim that there exist $i \in [S]$,  
and partitions $\Qart_\ib$ of $V(H_\ib)$ for each $\ib \in I$ such that $\Qart_\ib$ refines $\Part_\ib$ 
and is $(c, c, \mu'_{i-1}, \mu'_i)$-robustly maximal with respect to $H_\ib$.
To see this, we repeatedly apply Proposition~\ref{RobustExistence} to each $\ib \in I$. 
First, we choose some $\ib \in I$ and apply Proposition~\ref{RobustExistence} with 
parameters $\mu'_{\lambda S/s}$ for $0 \leq \lambda \leq s$ to obtain some $b \in [s]$ and a 
partition $\Qart_\ib$ of $V(H_\ib)$ such that $\Qart_\ib$ refines $\Part_\ib$ and 
is $(c, c, \mu'_{(b-1)S/s}, \mu'_{bS/s})$-robustly maximal with respect to $H_\ib$. We then 
repeat the following step. After $j$ steps, we will have chosen $j$ of these partitions to be
$(c, c, \mu'_{(b-1)S/s^j}, \mu'_{bS/s^j})$-robustly maximal for some $b \in [s^j]$. We then 
choose some $\ib$ we have not yet considered and apply Proposition~\ref{RobustExistence} with 
parameters $\mu'_{\lambda S/s^{j+1}}$ for $(b-1)s \leq \lambda \leq bs$ to obtain 
some $b' \in [s^{j+1}]$ and a partition $\Qart_\ib$ of $V(H_\ib)$ such that $\Qart_\ib$ 
refines $\Part_\ib$ and is $(c, c, \mu'_{(b'-1)S/s^{j+1}}, \mu'_{b'S/s^{j+1}})$-robustly 
maximal with respect to $H_\ib$.
Since $|I| \le k^k$ we can repeat this process for every $\ib$, which proves that the claim holds.

For simplicity of notation we now relabel $\mu'_{i-1}$ as $\mu_0$ and $\mu'_i$ as $\mu_6$, and introduce new constants $\mu_1, \dots, \mu_5$ and $\mu''$ such that 
$$\mu \ll \mu_0 \ll \mu_1 \ll \dots \ll \mu_5 \ll \mu_6 \ll \mu'' \ll \mu'.$$
For each $\ib \in I$, we write
\[ I_\ib := I_{\Qart_\ib}^{\mu_0}(H_\ib) \ \ \text{ and } \ \ L_\ib := L_{\Qart_\ib}^{\mu_0}(H_\ib).\] 
We will apply Proposition~\ref{properties} to $H_\ib$ with $\Part_\ib$ in place of $\Part'$, $\Qart_\ib$ in place of $\Part$, $\mu_0$ in place of both $\mu$ and $\eps$, and $1/\ell + 3\gamma/4$ and $\gamma/12$ in place of $d$ and $\psi$ respectively ($n, c$ and $k$ are unchanged). 
Indeed, condition $(\alpha 1)$ of Proposition~\ref{properties} holds by our earlier observation that at most $\mu_0' n^{k-1}$ $\Part_\ib$-partite $(k-1)$-sets 
$S \subseteq V(H_\ib)$ have $|H_\ib(S)| < n/\ell + 3\gamma n/4$, whilst condition $(\alpha 2)$ holds since $\Qart_\ib$ is $(c, c, \mu_0, \mu_6)$-robustly maximal with respect to $H_\ib$. 
So by Proposition~\ref{properties}(\ref{properties:full}) and~(\ref{properties:final}) we deduce that $I_\ib$ is full with respect to $\Part_\ib$, and
every part of $\Qart_\ib$ has size at least $n/\ell + 2\gamma n/3$.
The latter conclusion implies that $\Qart_\ib$ is in fact $(1/\ell+2\gamma/3, c, \mu_0, \mu_6)$-robustly maximal, and also that we can apply Proposition~\ref{properties} again with $1/\ell$ in place of $c$ (and all other variables as before); Proposition~\ref{properties}(\ref{properties:lattice}) then implies that 
\[I_{\Qart_\ib}^{1/2(k\ell)^k}(H_\ib) = I_\ib.\] 
Indeed, we have the boosting property that $I_{\Qart_\ib}^{\mu_0}(H_\ib)$ and $L_{\Qart_\ib}^{\mu_0}(H_\ib)$ 
are essentially independent of the edge-detection parameter, in that they
remain unchanged when $\mu_0$ is replaced by any constant between $\mu_0$ and $1/2(k\ell)^k$.

Finally, say that a vertex of $H_{\ib}$ is \emph{bad for $\ib$} 
if it lies in fewer than $dn^{k-1}$ edges $e \in H_\ib$ with $\ib_{\Qart_\ib}(e) \in I_\ib$. 
We say that a vertex is \emph{bad} if it is bad for some $\ib \in I$.
Since $|I| = r^{k-1}$, by Proposition~\ref{properties}(\ref{properties:vdegree}) (with $\mu_1/r^{k-1}$ in place of $\psi$),
the number of bad vertices is at most $\mu_1 n$.
Using assumption $(\beta 3)$, i.e.\ that every vertex is contained in at least $dn^{k-1}$
edges $e \in H$ with $\ib_\Part(e) \in L$, we may greedily choose a matching $M$ of such
edges with size at most $\mu_1 n$ which covers all of the bad vertices.
We now restrict attention to $V' := V \sm V(M)$.

\medskip

{\noindent \bf Step 3: The meet and join.}
The principal difficulty we must overcome in order to apply the inductive hypothesis 
is to ensure that an analogue of $(\beta 4)$ holds, that is, we require subgraphs 
in which the index vector of the entire vertex set lies in the appropriate robust edge-lattice.
In this step we introduce two additional partitions of $V$ that will be used to achieve this.

We begin by noting that for the subgraph chosen in $H_\ib$, the index vector of the vertex set 
with respect to $\Qart_\ib$ simply lists the number of vertices chosen from each part of $\Qart_\ib$.
Hence if two vertices $x$ and $y$ are contained in the same part of $\Qart_\ib$ for \emph{every} $\Qart_\ib$ 
such that $x, y \in V(H_\ib)$, then they are interchangeable for the current purpose.
With this in mind, we first let $\Qart^\cap$ be the `meet' of the partitions $\Qart_\ib$ for $\ib \in I$:
we say that two vertices in the same part $W$ of $\Part$ are in the same part of $\Qart^\cap$
if and only if they lie in the same part of $\Qart_\ib$ for every $\ib \in I$ with $i_W=1$.
This is not our final definition, as we will also require that every part of $\Qart^\cap$ has size at least $D$.

To achieve this, note that $\Part$ has at most $k^2$ parts, and each part $X$ of $\Part$ 
is partitioned into at most $k$ parts by each $\Qart_\ib$ such that $i_X=1$,
so the number of parts of $\Qart^\cap$ is at most $k^2 \cdot k^{|I|} = k^{2 + r^{k-1}} < K$.
Thus there is some $z \in [K + 1]$ such that no part of $\Qart^\cap$ 
has size between $(z-1)(kK)^{z-1}D$ and $z(kK)^{z}D$.
Let $B$ be the union of all parts of size at most $z(kK)^zD$.
Then $|B| \le K \cdot (z-1)(kK)^{z-1}D$. 
Similarly to the end of step 2, we greedily choose a matching $M'$ in $H$ 
of size at most $|B|$ that covers every vertex of $B$ 
so that every edge $e \in M'$ has $\ib_\Part(e) \in L$.   
We will now restrict attention to $V' \sm V(M')$.

Next we introduce the `join' $\Qart^\cup$ of the partitions $\Qart_\ib$, 
which will allow us to avoid the problem of `hidden' approximate divisibility barriers
(recall Construction~\ref{NestedConstruct}).
Let $G$ be the graph whose vertices are the parts of $\Qart^\cap$ not contained in $B$,
where $Z, Z' \in \Qart^\cap$ are adjacent in $G$ if they are contained in the same part of some $\Qart_\ib$. 
Then the parts of $\Qart^{\cup}$ are formed by taking the union of the parts of $\Qart^\cap$ in each component of $G$.

As an aid to memory, in the remainder of the proof we tend to use the consistent notation 
$W$, $X$, $Y$, $Z$ for general parts of $\Part$, $\Qart^\cup$, $\Qart_\ib$ (for $\ib \in I$), $\Qart^\cap$ respectively,
i.e.\ earlier letters in the alphabet denote (potentially) larger parts.

\medskip

{\noindent \bf Step 4: Restricting to good vertices.}
We now delete all of the at most $2k\mu_1n$ vertices covered by $M \cup M'$. 
To avoid introducing more complicated notation, all of our notation
is to be now understood as referring to the undeleted vertices,
e.g.\ $V$ now refers to $V \sm (M \cup M')$. However, all variables (including $n$) remain unchanged in value.
Note that any part $Z$ of $\Qart^\cap$ had size at least $z(kK)^{z}D$
before deleting the vertices covered by $M'$ (otherwise we would have deleted it).
Since $|M'| \leq |B| \le K \cdot (z-1)(kK)^{z-1}D$, we now have
\begin{align} \label{formerA2}
|Z| &\geq z(kK)^{z}D - k|M'| \geq D.
\end{align}

Since we have deleted a number of vertices from $H$, we now show that the properties of $H$ from the statement of the
lemma and from step 2 are preserved (albeit with weaker constants) after these vertex deletions.
Note that (A1--A3) in the following claim are the respective analogues of $(\beta 1$--$\beta 3)$
for $H_\ib$ with respect to $\Part_\ib$ and $\Qart_\ib$.

\begin{claim} The following hold for every $\ib \in I$.
\begin{enumerate}[({A}1)]
\item At most $\mu_0 n^{k-1}$ $\Part_\ib$-partite $(k-1)$-sets $S \subseteq V(H_\ib)$ 
have $|H_\ib(S)| < (1/\ell + \gamma/2)n$.
\item $\Qart_\ib$ is $(1/\ell, 2c, \mu_2, \mu_5)$-robustly maximal with respect to $H_\ib$ for each $\ib \in I$.
\item Every vertex of $H_\ib$ lies in at least $\tfrac{1}{2}dn^{k-1}$ edges $e \in H_\ib$ with $\ib_{\Qart_\ib}(e) \in I_\ib$.
\item $\ib_\Part(V) \in L$.
\item Every part of $\Part$, $\Qart^{\cup}$ and each $\Qart_\ib$ has size at least $(1/\ell + \gamma/2)n$.
\item $I_{\Qart_\ib}^{\mu'}(H_\ib) = I_{\Qart_\ib}^{\mu_1}(H_\ib) = I_\ib$.
\item $\Part$ is $(1/\ell, 2c, \mu_2, \mu'')$-robustly maximal with respect to $H$, and $I = I_\Part^{\mu'}(H)$.
\end{enumerate}
\end{claim}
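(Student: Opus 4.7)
The plan is to verify the seven properties (A1)--(A7) as small-perturbation consequences of analogous properties already established in Step~2, since only $|V(M \cup M')| \leq k|M| + k|M'| \leq k\mu_1 n + k \cdot K(z-1)(kK)^{z-1}D \leq 2k\mu_1 n$ vertices have been deleted (using that $D$, $K$, and $z$ are constants independent of $n$). The three transfer tools are Proposition~\ref{similarlattices} (to move robust edge-lattices between $H$ and its restriction), Lemma~\ref{RobustInherit} (to transfer robust maximality), and the boosting property established in Step~2 via Proposition~\ref{properties}(\ref{properties:lattice}), which says that $I$ and each $I_\ib$ are actually determined at detection thresholds far higher than $\mu_0$ or $\mu$.

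For the ``easy'' items (A1), (A3), (A4), (A5) I would argue by direct counting. The codegree of any $(k-1)$-set drops by at most $2k\mu_1 n \ll \gamma n/4$ under the vertex deletions, so the old bound from Proposition~\ref{properties}(\ref{properties:codegree}) with $1/\ell + 3\gamma/4$ transfers to give (A1) with $1/\ell + \gamma/2$. For (A3), any vertex surviving in $H_\ib$ was not bad for $\ib$, hence originally had at least $dn^{k-1}$ edges with index in $I_\ib$, and loses at most $2k\mu_1 n \cdot n^{k-2} \ll dn^{k-1}/2$ of them. For (A4), $\ib_\Part(V) \in L$ originally by assumption $(\beta 4)$, every edge of $M \cup M'$ was chosen to have $\ib_\Part(e) \in L$, and so by linearity the new $\ib_\Part(V)$ lies in $L$ (and $L$ is stable, as handled in (A7) below). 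For (A5), the parts of $\Part$ and of each $\Qart_\ib$ had size at least $(1/\ell + 2\gamma/3)n$ by applying Proposition~\ref{properties}(\ref{properties:final}) to $H$ and to each $H_\ib$; the same bound passes to each part of $\Qart^\cup$ because any $Y \in \Qart_\ib$ is a union of parts of $\Qart^\cap$ that are pairwise $G$-adjacent, hence lies in a single part of $\Qart^\cup$.

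For the ``harder'' items (A2), (A6), (A7), I would apply Lemma~\ref{RobustInherit} with $\alpha = 2k\mu_1$ to the sharpened statements that $\Qart_\ib$ is $(1/\ell + 2\gamma/3, c, \mu_0, \mu_6)$-robustly maximal with respect to $H_\ib$ and that $\Part$ is $(1/\ell + 2\gamma/3, c, \mu, \mu')$-robustly maximal with respect to $H$. The four parameter shifts all land inside the target ranges: $1/\ell + 2\gamma/3 - \alpha \geq 1/\ell$, $c + 3\alpha \leq 2c$, $\mu_0 + 3\alpha \leq \mu_2$ (and $\mu + 3\alpha \leq \mu_2$), and $\mu_6 - 2\alpha \geq \mu_5$ (and $\mu' - 2\alpha \geq \mu''$). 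For the lattice equalities in (A6) and (A7), I would sandwich: Proposition~\ref{similarlattices} applied to the large-threshold generating sets $I_\ib = I_{\Qart_\ib}^{1/2(k\ell)^k}(H_\ib^{\mathrm{old}})$ and $I = I_\Part^{\mu'}(H^{\mathrm{old}})$ pushes these into $I_{\Qart_\ib}^{\mu'}(H_\ib^{\mathrm{new}})$ and $I_\Part^{\mu'}(H^{\mathrm{new}})$ respectively, while pulling any index from the new graph at threshold $\mu_1$ (resp.~$\mu'$) back through Proposition~\ref{similarlattices} drops the threshold only to $\mu_1 - 3\alpha > \mu_0$ (resp.~$\mu' - 3\alpha > \mu$), still large enough to lie in the original $I_\ib$ (resp.~$I$).

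The main obstacle is the tension that $\alpha = \Theta(\mu_1)$ is much larger than the small parameters $\mu$ and $\mu_0$, so a naive application of Proposition~\ref{similarlattices} would render the threshold negative and destroy the lattice information. The resolution is exactly the boosting property recorded in Step~2: $I$ and the $I_\ib$ are in fact realised at constant (or $\mu'$-scale) detection levels, leaving ample room to absorb the $O(\mu_1)$ loss from Proposition~\ref{similarlattices} in either direction. This interplay between the tall hierarchy $\mu \ll \mu_0 \ll \mu_1 \ll \cdots \ll \mu_6 \ll \mu'' \ll \mu'$ and the boosting is the technical core of the claim; once it is in hand, all seven statements follow by routine verification that the parameter shifts fit inside the declared tolerances.
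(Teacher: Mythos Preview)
Your approach is essentially the same as the paper's: both establish (A1)--(A7) as small-perturbation consequences of the Step~2 properties, using the bound $|V(M \cup M')| \le 2k\mu_1 n$, Lemma~\ref{RobustInherit} for (A2) and the first half of (A7), and the boosting from Proposition~\ref{properties}(\ref{properties:lattice}) for (A6) and the second half of (A7).

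There is, however, one arithmetic slip in your treatment of (A6). You claim that pulling an index vector from the new $H_\ib$ at threshold $\mu_1$ back through Proposition~\ref{similarlattices} lands at threshold $\mu_1 - 3\alpha > \mu_0$. But with $\alpha = 2k\mu_1$ this gives $\mu_1 - 6k\mu_1 < 0$, so the inequality fails and Proposition~\ref{similarlattices} yields nothing. The paper avoids this by arguing directly rather than via Proposition~\ref{similarlattices}: if $\vb \notin I_\ib$ then the \emph{old} $H_\ib$ had fewer than $\mu_0(kn)^k < \mu_1 n^k$ edges of index $\vb$, and deletion can only decrease this count, so $\vb \notin I_{\Qart_\ib}^{\mu_1}(H_\ib^{\text{new}})$. (Your parallel claim $\mu' - 3\alpha > \mu$ for (A7) is fine, since there $\mu_1 \ll \mu'$.) Once this direction of (A6) is patched in this way, your argument goes through and matches the paper's.
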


For (A1), recall from step 2 that it was true with $3\gamma/4$ in place of $\gamma/2$.
Lemma~\ref{RobustInherit} implies (A2), since $\Qart_\ib$ was $(1/\ell+2\gamma/3, c, \mu_0, \mu_6)$-robustly maximal 
with respect to $H_\ib$ before any vertices were deleted. The first part of (A7) follows by the same argument.
For (A3), recall that $M$ covered all vertices which were  in fewer than $dn^{k-1}$ 
edges of $H_\ib$ with $\ib \in I_\ib$ and at most $2k\mu_1n \leq dn/2$ vertices were deleted.
For (A4), note that it was true before any vertices were deleted by $(\beta 4)$, 
and $\ib_\Part(V(M \cup M')) \in L$ since $\ib_\Part(e) \in L$ for every $e \in M \cup M'$.

For (A5), recall that each part of any $\Qart_\ib$ had size at least $(1/\ell + 2\gamma/3)n$ 
before the deletions, and at most $2k\mu_1n \leq \gamma n/6$ vertices were deleted. 
The bounds for $\Qart^\cup$ and $\Part$ follow,
since every part of $\Qart^\cup$ contains a part of $\Qart_\ib$ for some $\ib \in I$, 
and $\Qart^\cup$ refines $\Part$.
For (A6), we observed before any deletions that $I_{\Qart_\ib}^{1/2(k\ell)^k}(H_\ib) = I_{\Qart_\ib}^{\mu_0}(H_\ib) = I_\ib$, and $|V(H_\ib)|$ had size at least $n$ both before and after the deletions. 
So if $\vb \in I_\ib$ then there were at least $n^k/2(k\ell)^k$ edges of $H_\ib$ of index $\vb$, 
at most $2k\mu_1 n^k$ of which were deleted, so $\vb \in I_{\Qart_\ib}^{\mu'}(H_\ib)$.
On the other hand, if $\vb \notin I_\ib$ then there were at most $\mu_0(kn)^k < \mu_1 n^k$ 
edges of $H_\ib$ of index $\vb$, so $\vb \notin I_{\Qart_\ib}^{\mu_1}(H_\ib)$. Since $I_{\Qart_\ib}^{\mu'}(H_\ib) \subseteq I_{\Qart_\ib}^{\mu_1}(H_\ib)$, this proves (A6). The same argument proves the second part of (A7), completing the proof of the claim.

\medskip

{\noindent \bf Step 5: Choosing sizes.}
In this step we determine how many vertices each of our final random subgraphs
should choose from each part of $\Qart^\cap$ to ensure that the analogue of $(\beta 4)$ holds.
We accomplish this in three stages.
Firstly, we choose rough targets for the number of vertices to be contained in each subgraph.
Secondly, we determine how many vertices are to be chosen from each part of $\Qart^\cup$.
Thirdly, we determine how many vertices are to be chosen from each part of $\Qart^\cap$.

\begin{claim} \label{rhoclaim} There exist integers $\rho_\ib$ for each $\ib \in I$ which satisfy
\begin{enumerate}[({B}1)]
\item $\sum_{\ib \in I} \rho_\ib \ib = \ib_\Part(V)$,
\item $\rho_\ib \geq \eta n$ for each $\ib \in I$.
\end{enumerate}
\end{claim}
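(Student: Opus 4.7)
My plan is to first produce a real non-negative combination $\ib_\Part(V) = \sum_\ib \sigma_\ib \ib$ with every $\sigma_\ib \ge 2\eta n$, and then round to integers and correct the rounding error via Proposition~\ref{vector_as_small_sum}.

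For the real step, recall from Proposition~\ref{properties0}(\ref{properties:parts})--(\ref{properties:indexing}) that $\Part$ refines each part of $\Part'$ into exactly $r$ parts and $\sum_{\ib \in I} \ib = r^{k-2}\1$. Since the matching edges deleted in Step~4 are $\Part'$-partite, all parts of $\Part'$ still have a common size, call it $n'$. Set $c := \lceil \eta n r^{k-1} \rceil$ and let $V'' \sub V$ be obtained by removing $c$ vertices from each part of $\Part$. By (A5) every part of $\Part$ has size at least $(1/\ell + \gamma/2)n$, so since $\eta \ll \gamma$, the partition $\Part'[V'']$ still has all parts of a common size $n''$, and each part of $\Part[V'']$ has size at least $n''/k$. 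Proposition~\ref{applyfarkas} therefore gives $\ib_\Part(V'') = \sum_\ib \tau_\ib \ib$ with each $\tau_\ib \ge 0$. Using $\ib_\Part(V) - \ib_\Part(V'') = c \cdot \1 = (c/r^{k-2}) \sum_\ib \ib$, I obtain $\ib_\Part(V) = \sum_\ib \sigma_\ib \ib$ where $\sigma_\ib := \tau_\ib + c/r^{k-2} \ge \eta n r \ge 2\eta n$. Here I use that $r \ge 2$, which holds because $\Part \ne \Part'$ has been assumed.

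For the integer step, set $\rho_\ib^0 := \lfloor \sigma_\ib \rfloor$ and let $\xb := \ib_\Part(V) - \sum_\ib \rho_\ib^0 \ib$. Then $\xb$ is an integer vector whose coordinates are bounded in absolute value by a constant depending only on $k$, and since $\ib_\Part(V) \in L$ by (A4), we have $\xb \in L$. Proposition~\ref{vector_as_small_sum}, applied with $X = I$ (which generates $L$), produces integers $\alpha_\ib$ with $|\alpha_\ib| \le s$ for some $s = s(k)$ such that $\xb = \sum_\ib \alpha_\ib \ib$. Setting $\rho_\ib := \rho_\ib^0 + \alpha_\ib$ gives $\sum_\ib \rho_\ib \ib = \ib_\Part(V)$, establishing (B1), while $\rho_\ib \ge 2\eta n - 1 - s \ge \eta n$ holds for $n$ sufficiently large by the constant hierarchy, establishing (B2). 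The only technical point is verifying the hypotheses of Proposition~\ref{applyfarkas} for $V''$, which is a routine check from (A5) and $\eta \ll \gamma$; conceptually, the identity $\sum_\ib \ib = r^{k-2}\1$ together with cone membership provides $\Omega(n)$ slack in every coefficient of the real solution, which is far more than enough to absorb the $O_k(1)$ rounding errors.
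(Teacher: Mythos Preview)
Your proof is correct and follows essentially the same approach as the paper's own argument: reserve some vertices from each part of $\Part$ to create slack, apply Proposition~\ref{applyfarkas} to the remaining vertices to get a non-negative real solution, then round down and repair the $O_k(1)$ error via Proposition~\ref{vector_as_small_sum}. The only cosmetic difference is that the paper phrases the reservation as setting aside $N := \lceil 2\eta n\rceil$ vertices of each part $X$ for each $\ib$ with $i_X=1$ (so that the reserved contribution is exactly $\sum_\ib N\ib$), whereas you remove $c$ vertices from each part and invoke the identity $\sum_{\ib\in I}\ib = r^{k-2}\1$ to distribute $c\cdot\1$ evenly among the $\ib$; these are equivalent since both amount to subtracting a constant multiple of $\1$ and adding back $c/r^{k-2}$ (respectively $N$) to each coefficient.
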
 

To prove Claim~\ref{rhoclaim}, for each $X \in \Part$ we start by reserving $N := \lceil 2\eta n \rceil$ vertices of $X$
for each $\ib \in I$ with $i_X=1$. Recall that for any part $X$ of $\Part$ there are exactly $r^{k-2}$ vectors $\ib \in I$ such that $i_X =1$, so exactly $r^{k-2}N$ vertices are reserved from each part of $\Part$. 
Let $V'$ be the set of unreserved vertices and write $V'_j = V' \cap V_j$; 
recalling that each $V_j$ now has size $n - |M \cup M'|$ after the deletions, 
each $V'_j$ has size $n' := n - |M \cup M'| - r^{k-2}N$.
Also, by (A5) at least $n'/k$ vertices remain unreserved in each part of $\Part$. 
Since $I$ is full with respect to $\Part'$, by Proposition~\ref{applyfarkas} we deduce that $\ib_\Part(V') \in PC(I)$. 
That is, we may fix reals $\lambda_\ib \geq 0$ for each $\ib \in I$ 
so that $\sum_{\ib \in I} \lambda_\ib \ib = \ib_\Part(V')$. 

Let $a_\ib = \lfloor \lambda_\ib \rfloor$ for each $\ib \in I$. 
Then $\vb = \ib_\Part(V') - \sum_{\ib \in I} a_\ib \ib$ is a sum of $|I| \le r^{k-1}$
vectors of length at most $k$, so $\vb \in B(\0, kr^{k-1})$. 
We also have $\vb \in L$, since $\ib_\Part(V')$ was obtained by subtracting some vectors in $I$
from $\ib_\Part(V)$, and $\ib_\Part(V) \in L$ by (A4). 
So by Proposition~\ref{vector_as_small_sum} we may choose integers $b_\ib$ with $|b_\ib| \leq K$ for each $\ib \in I$ so that
$\sum_{\ib \in I} b_\ib \ib = \vb$. 

Let $\rho_\ib = a_\ib + b_\ib + N$ for each $\ib \in I$. 
Then (B2) holds as $a_\ib \geq 0$ and $|b_\ib| \leq K$ for each $\ib \in I$, and (B1) holds as
$$\sum_{\ib \in I} \rho_\ib \ib = \sum_{\ib \in I} a_\ib \ib + \sum_{\ib \in I} b_\ib \ib + \sum_{\ib \in I} N\ib 
= \ib_\Part(V') + \sum_{\ib \in I} N\ib = \ib_\Part(V). $$
This completes the proof of Claim~\ref{rhoclaim}.
\medskip

The vector $\nb^\cup_\ib$ with respect to $\Qart^{\cup}$ obtained in the next claim 
determines how many vertices the random subgraph for $\ib$ will take from each part of $\Qart^{\cup}$. 
Note that (C1) ensures that the sizes are correct to be a partition of $V$,
(C2) that there is no divisibility obstruction to our later choice of $\Qart^\cap$, and
(C3) that the sizes are in roughly equal proportion from each part of $\Qart^\cup$ 
which is a subset of a part of $\Part_\ib$, and zero for any other part of $\Qart^\cup$.

The proof of the claim proceeds through three stages. 
In the first stage, we use the constants $\rho_\ib$ and the part sizes of each $\Qart_\ib$ to determine provisional values for each $\nb_\ib^\cup$ which satisfy (C1) and (C3).
In the second stage we `snap' each $\nb_\ib^\cup$ onto a nearby lattice point so that (C2) is satisfied, preserving (C3) (although (C1) may no longer hold). 
In the final stage we make further adjustments to restore (C1) while preserving (C2) and (C3).

\begin{claim} \label{cupclaim}
There exist vectors $\nb^\cup_\ib$ with respect to $\Qart^{\cup}$ for $\ib \in I$ which satisfy 
\begin{enumerate}[({C}1)]
\item $\sum_{\ib \in I} \nb^\cup_\ib = \ib_{\Qart^{\cup}}(V)$,
\item $\nb^\cup_\ib \in L_{\Qart^{\cup}}^{\mu_6}(H_\ib)$ for every $\ib \in I$, and
\item For any $\ib \in I$, any part $W$ of $\Part$ and any part $X \subseteq W$ of $\Qart^{\cup}$ we have
$$ (n^\cup_\ib)_X = 
\begin{cases}
\rho_\ib \tfrac{|X|}{|W|} \pm K &\textrm{ if }~i_W = 1, \\ 
0 &\textrm{ otherwise.}
\end{cases}$$
\end{enumerate}
\end{claim}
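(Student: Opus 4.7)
I would follow the three-stage strategy indicated in the paragraph preceding the claim. For Stage~1, I set the real-valued provisional vectors $\tilde{\mathbf{m}}_\ib \in \R^{\Qart^\cup}$ by $(\tilde m_\ib)_X := \rho_\ib|X|/|W|$ whenever $X \in \Qart^\cup$ satisfies $X \subseteq W$ for some $W \in \Part$ with $i_W = 1$, and zero otherwise. Then (C3) holds exactly, and property~(B1) yields $\sum_{\ib:\,i_W=1}(\tilde m_\ib)_X = (|X|/|W|) \sum_{\ib:\,i_W=1}\rho_\ib = |X|$, so $\sum_{\ib}\tilde{\mathbf{m}}_\ib = \ib_{\Qart^\cup}(V)$ as a real vector equation.

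For Stage~2, I would round each $\tilde{\mathbf{m}}_\ib$ independently into an integer vector $\hat{\mathbf{n}}_\ib \in L_{\Qart^\cup}^{\mu_6}(H_\ib)$ in two sub-steps. First, apply Baranyai's Matrix Rounding Theorem (Theorem~\ref{Baranyai}) to the matrix with rows indexed by $\{\ib \in I : i_W = 1\}$ and columns by $\{X \in \Qart^\cup : X \subseteq W\}$, separately for each $W \in \Part$; this produces an integer vector $\bar{\mathbf{m}}_\ib$ with unchanged integer row sums $\rho_\ib$ and column sums $|X|$, each entry within one of the corresponding entry of $\tilde{\mathbf{m}}_\ib$. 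Second, correct the coset of $\bar{\mathbf{m}}_\ib$ in the quotient group $G_\ib := L_{\max}^{\Qart^\cup[V(H_\ib)],\, \Part_\ib} / L_{\Qart^\cup}^{\mu_6}(H_\ib)$ by subtracting a short representative. The key point is that $|G_\ib|$ is bounded by a constant independent of $n$: since $\Qart_\ib$ refines $\Qart^\cup[V(H_\ib)]$, property~(A6) gives $L_{\Qart_\ib}^{\mu_6}(H_\ib) = L_\ib$, Proposition~\ref{properties}(i) shows $L_\ib$ is full with respect to $\Part_\ib$, and Proposition~\ref{properties0}(iv) together with the projection map of Proposition~\ref{InverseRestriction}(i) bounds $|G_\ib|$ by the analogous $\Qart_\ib$-quotient of size $|\Qart_\ib|/k \leq k$. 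The resulting $\hat{\mathbf{n}}_\ib$ therefore satisfies (C2) and (C3) (with $K$ chosen large enough), while (C1) may no longer hold.

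For Stage~3, let $\mathbf{e} := \ib_{\Qart^\cup}(V) - \sum_{\ib} \hat{\mathbf{n}}_\ib$. Then $\|\mathbf{e}\|_\infty$ is bounded by a constant, and $\sum_{X \subseteq W} e_X = 0$ for each $W \in \Part$, since Baranyai preserves column sums and the Stage~2 coset correction lies in $L_{\max}^{\Qart^\cup[V(H_\ib)],\, \Part_\ib}$, whose projection to $\Part_\ib$ is a multiple of $\mathbf{1}$. I would select corrections $\mathbf{a}_\ib \in L_{\Qart^\cup}^{\mu_6}(H_\ib)$, supported on the parts $X \subseteq W$ with $i_W = 1$ and bounded in $\ell^\infty$, with $\sum_\ib \mathbf{a}_\ib = \mathbf{e}$; setting $\mathbf{n}^\cup_\ib := \hat{\mathbf{n}}_\ib + \mathbf{a}_\ib$ then yields (C1)-(C3). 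To construct such corrections, I would use the join structure: any two parts $Z_1, Z_2$ of $\Qart^\cap$ in the same part of $\Qart^\cup$ are linked by a chain $Z_1 = Z'_0, \ldots, Z'_s = Z_2$ with each consecutive pair lying in a common part of some $\Qart_{\ib_j}$; mass can then be shifted from $Z_1$ to $Z_2$ through the $\hat{\mathbf{n}}_{\ib_j}$'s by bounded-size lattice moves inside each $L_{\Qart^\cup}^{\mu_6}(H_{\ib_j})$, obtained from fullness of $L_{\ib_j}$ via Lemma~\ref{NonParproperties0}.

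The hard part will be Stage~3: arranging that each $\mathbf{a}_\ib$ simultaneously lies in its own lattice $L_{\Qart^\cup}^{\mu_6}(H_\ib)$ while keeping its support inside the parts permitted by (C3). This is exactly the reason for working with the join $\Qart^\cup$ rather than the finer meet $\Qart^\cap$: bundling together the parts of $\Qart^\cap$ that coexist in some $\Qart_\ib$-part is precisely what provides the routes for the required bounded-size transfers without creating fresh lattice obstructions in any of the $H_\ib$.
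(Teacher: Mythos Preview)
Your Stages~1 and~2 are reasonable (and close in spirit to what the paper does, though the paper works first at the $\Qart_\ib$ level and then projects to $\Qart^\cup$, which makes the coset correction cleaner via Proposition~\ref{properties0}(\ref{properties:index})). The genuine problem is Stage~3.

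You propose to correct the error $\mathbf{e} = \ib_{\Qart^\cup}(V) - \sum_\ib \hat{\mathbf{n}}_\ib$ by shifting mass along chains $Z'_0,\dots,Z'_s$ of $\Qart^\cap$-parts lying in a common part of $\Qart^\cup$. But such a shift is invisible at the $\Qart^\cup$ level: if $Z_1$ and $Z_2$ lie in the same part $X \in \Qart^\cup$, any transfer between them leaves every $\Qart^\cup$-coordinate unchanged. What you actually need is to move mass \emph{between distinct parts} $X, X'$ of $\Qart^\cup$ lying in the same part $W \in \Part$. By the very definition of $\Qart^\cup$ as the join, no part of any $\Qart_\ib$ straddles two distinct $\Qart^\cup$-parts, so there is no chain connecting them. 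Your mechanism therefore cannot alter $\mathbf{e}$ at all. (The chain argument you describe is exactly what is used in the \emph{next} claim, Claim~\ref{capclaim}, to refine from $\Qart^\cup$ down to $\Qart^\cap$; it is not applicable here.)

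The paper's Stage~3 uses a completely different idea. The key observation, which you have not invoked, is that the robust maximality of $\Part$ (property (A7)) together with Proposition~\ref{robmaxinverse} forces $\ib_{\Qart^\cup}(V) \in L_{\Qart^\cup}^{\mu_6}(H)$. Since each $\hat{\mathbf{n}}_\ib \in L_{\Qart^\cup}^{\mu_6}(H_\ib) \subseteq L_{\Qart^\cup}^{\mu_6}(H)$, the error $\mathbf{e}$ is a bounded vector in $L_{\Qart^\cup}^{\mu_6}(H)$. One then applies Proposition~\ref{vector_as_small_sum} to write $\mathbf{e}$ as a short integer combination of vectors $\ib' \in I_{\Qart^\cup}^{\mu_6}(H)$, and assigns each such $\ib'$ to the unique $\ib \in I$ with $(\ib' \mid \Part) = \ib$; since any edge of $H$ with $\Qart^\cup$-index $\ib'$ is automatically an edge of $H_\ib$, this correction lies in $L_{\Qart^\cup}^{\mu_6}(H_\ib)$ as required. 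Without robust maximality there is no reason for $\mathbf{e}$ to lie in the lattice generated by the individual $L_{\Qart^\cup}^{\mu_6}(H_\ib)$'s, and your plan gives no alternative route to this conclusion.
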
 

To prove the claim, for each $\ib \in I$ we choose a vector $\nb_\ib$ with respect to $\Qart_\ib$ by taking $(n_\ib)_Y$
to be either $\lfloor \rho_\ib |Y|/|W|\rfloor$ or $\lceil \rho_\ib |Y|/|W| \rceil$ for each $W \in \Part_\ib$ 
and $Y \in \Qart_\ib$ with $Y \subseteq W$. We make these choices so that $\sum_{Y \subseteq W} (n_\ib)_Y = \rho_\ib$ 
for each $W \in \Part$ with $i_W=1$; this is possible since 
$$\sum_{Y \in \Qart_\ib : Y \subseteq W} \frac{\rho_\ib |Y|}{|W|} = \frac{\rho_\ib |W|}{|W|} = \rho_\ib.$$
Now, for any $\ib \in I$, observe that this requirement implies that $\nb_\ib \in L_{\max}^{\Qart_\ib\Part_\ib}$. So by Proposition~\ref{properties0}(\ref{properties:index}) (with $\Qart_\ib$ and $\Part_\ib$ in place of $\Part$ and $\Part'$ respectively) we may choose parts $Y, Y'$ of
$\Qart_\ib$ so that $\nb^1_\ib := \nb_\ib - \ub_Y + \ub_{Y'} \in L_\ib = L_{\Qart_\ib}^{\mu'}(H_\ib)$, 
where the final equality follows from (A6). 
Let $\Qart^0_\ib$ be the partition of $V$ whose parts are those of $\Qart_\ib$ 
and those of $\Qart^\cup$ that are disjoint from $V(H_\ib)$.
Similarly, let $\nb^0_\ib$ be the vector with respect to $\Qart^0_\ib$ corresponding to $\nb^1_\ib$, 
that is, with additional zero co-ordinates corresponding to the parts of $\Qart^\cup$ that are disjoint from $V(H_\ib)$. 
Then $\nb^0_\ib \in L_{\Qart^0_\ib}^{\mu_6}(H_\ib)$. Finally, let $\nb'_\ib := (\nb^0_\ib \mid \Qart^\cup)$, so $\nb'_\ib \in L_{\Qart^\cup}^{\mu_6}(H_\ib)$ by Proposition~\ref{InverseRestriction}(i).

For any part $W$ of $\Part$ and any part $X \subseteq W$ of $\Qart^{\cup}$,
since by (A5) there are at most $k$ parts $Y \sub X$ of $\Qart_\ib$ we have
\begin{equation} \label{eq:cupclaim}
 (n'_\ib)_X = \sum_{Y \in \Qart_\ib : Y \sub X} \brac{\rho_\ib \tfrac{|Y|}{|W|} \pm 1} = 
\begin{cases}
\rho_\ib \tfrac{|X|}{|W|} \pm 2k &\textrm{ if }~i_W = 1,  \\ 
0 &\textrm{ otherwise.}
\end{cases}
\end{equation}

Now, crucially, we have $\ib_{\Qart^\cup}(V) \in L^{\mu_6}_{\Qart^\cup}(H)$. Indeed, $\Part$ is $(1/\ell, 2c, \mu_6, \mu'')$-robustly maximal with respect to $H$ by (A7), $\Qart^\cup$ is a refinement of $\Part$ with parts of size at least $n/\ell \geq 2c|V(H)|$ by (A5), and $(\ib_{\Qart^\cup}(V) \mid \Part) = \ib_\Part(V) \in L = L^{\mu''}_\Part(H)$ by (A4) and (A7). So Proposition~\ref{robmaxinverse} implies that $\ib_{\Qart^\cup}(V) \in L^{\mu_6}_{\Qart^\cup}(H)$, as desired.
Since $\nb'_\ib \in L_{\Qart^\cup}^{\mu_6}(H_\ib) \subseteq L_{\Qart^\cup}^{\mu_6}(H)$ for each $\ib \in I$,
we deduce that 
\[ \vb := \ib_{\Qart^\cup}(V) - \sum_{\ib \in I} \nb'_\ib \in L^{\mu_6}_{\Qart^\cup}(H).\]
Furthermore, for any $W \in \Part$ by (B1) we have $\sum_{\ib \in I: i_W=1} \rho_\ib = |W|$,
so for any $X \in \Qart^\cup$ with $X \subseteq W$ by (\ref{eq:cupclaim}) we have
\[ v_X = |X| - \sum_{\ib \in I : i_W = 1} \brac{ \rho_\ib \tfrac{|X|}{|W|} \pm 2k} = 0 \pm 2k|I|.\]
Since $|\Qart^\cup| \le k^2$ and $|I| \le r^{k-1}$ we have $\vb \in B(\0, 2k^3r^{k-1})$,
so we may apply Proposition~\ref{vector_as_small_sum} to obtain integers $a_{\ib'}$ 
with $|a_{\ib'}| \leq k^{-k} K$ for each $\ib' \in I^{\mu_6}_{\Qart^\cup}(H)$ such that 
$\sum_{\ib' \in I_{\Qart^\cup}^{\mu_6}(H)} a_{\ib'} \ib' = \vb$.

For each $\ib \in I$, we define $\nb^\cup_\ib := \nb'_\ib + \sum a_{\ib'} \ib',$
where the sum is taken over all $\ib' \in I_{\Qart^\cup}^{\mu_6}(H)$ with $(\ib' \mid \Part) = \ib$. 
Then $\nb^\cup_\ib$ is a linear combination of vectors in $L_{\Qart^\cup}^{\mu_6}(H_\ib)$, so (C2) holds.
Also, (C1) holds as
$$\ib_{\Qart^\cup}(V) - \sum_{\ib \in I} \nb^\cup_\ib 
= \ib_{\Qart^\cup}(V) - \sum_{\ib \in I} \nb'_\ib - \vb  = \0.$$
Finally, consider any $\ib \in I$, any part $W$ of $\Part$ with $i_W=1$ and any part $X \subseteq W$ of $\Qart^\cup$. Since there are at most $(k-1)^k$ vectors $\ib' \in I_{\Qart^\cup}^{\mu_6}(H)$ with $i'_X = 1$, by definition of $\nb^\cup_\ib$ we have 
\[|(n^\cup_\ib)_X -(n'_\ib)_X| \leq (k-1)^k \max_{\ib'} |a_{\ib'}| \leq (1-1/k)^k K;\] 
together with (\ref{eq:cupclaim}) we have (C3).
This completes the proof of Claim~\ref{cupclaim}. \medskip

In the final claim of this step we determine the required part sizes for the random subgraphs of the next step.
For proof, we start by choosing provisional values for $\nb^\cap_\ib$ to satisfy (D1) and (D3).
If (D2) fails then (C2) will imply that it can be remedied by transferrals
between pairs of parts of $\Qart$ that lie in the same part of $\Qart^\cup$.
By definition of $\Qart^\cup$ there is a path in the auxiliary graph $G$ between these parts.
By repeated `swaps' we can effectively move the required adjustment along the edges of the path, 
all the while preserving (D1) and (D3), until (D2) holds.
Throughout the proof, we identify the vectors $\nb^\cap_\ib$ and $\nb^\cup_\ib$
with their restrictions to the parts of $\Qart^{\cap}$ and $\Qart^{\cup}$
that are contained within parts of $\Part_\ib$ (they are zero on all other parts).

\begin{claim} \label{capclaim}
There exist vectors $\nb^\cap_\ib$ with respect to $\Qart^{\cap}$ for $\ib \in I$ which satisfy
\begin{enumerate}[({D}1)]
\item $\sum_{\ib \in I} \nb^\cap_\ib = \ib_{\Qart^{\cap}}(V)$,
\item $(\nb^\cap_\ib \mid \Qart_\ib) \in L_\ib$ for every $\ib \in I$, and
\item For any part $W$ of $\Part$ and any part $Z \subseteq W$ of $\Qart^{\cap}$ we have
$$ (n^\cap_\ib)_Z = 
\begin{cases}
\rho_\ib \frac{|Z|}{|W|} \pm 2K &\textrm{ if }~i_W = 1  \\ 
0 &\textrm{ otherwise.}
\end{cases}$$
\end{enumerate}
\end{claim}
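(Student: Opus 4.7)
The plan is to first produce provisional values $\nb^\cap_\ib$ satisfying (D1), (D3) and the additional compatibility $(\nb^\cap_\ib \mid \Qart^\cup) = \nb^\cup_\ib$, and then to adjust them via ``relay swaps'' along paths in $G$ so that (D2) also holds, while (D1) and (D3) are preserved.

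For the provisional step, I would fix each part $X$ of $\Qart^\cup$ contained in some $W \in \Part$ and apply Baranyai's Matrix Rounding Theorem (Theorem~\ref{Baranyai}) to the real matrix whose rows are indexed by $\{\ib \in I : i_W = 1\}$, whose columns are indexed by $\{Z \in \Qart^\cap : Z \sub X\}$, and whose $(\ib, Z)$-entry is $(n^\cup_\ib)_X \cdot |Z|/|X|$. Using (C1), the column sums are the integers $|Z|$ and the row sums are the integers $(n^\cup_\ib)_X$, so the theorem returns an integer matrix within $\pm 1$ of the original with identical row and column sums. Declaring $(n^\cap_\ib)_Z$ to be the corresponding rounded entry (and $0$ for $Z$ outside $V(H_\ib)$), the column sums give (D1), the row sums give $(\nb^\cap_\ib \mid \Qart^\cup) = \nb^\cup_\ib$, and combining the $\pm 1$ Baranyai error with (C3) yields (D3) with error $K + 1 \leq 2K$.

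For the adjustment, I would set $\mathbf{m}_\ib := (\nb^\cap_\ib \mid \Qart_\ib)$ for each $\ib$. Since any two parts of $\Qart^\cap$ lying inside a single $\Qart_\ib$-part are adjacent in $G$ and hence lie inside a single part of $\Qart^\cup$, the partition $\Qart_\ib$ restricted to $V(H_\ib)$ refines $\Qart^\cup$, and so $(\mathbf{m}_\ib \mid \Qart^\cup) = \nb^\cup_\ib$. By (C2) and Proposition~\ref{InverseRestriction}(ii), using the hierarchy $\mu_0 \ll \mu_6$, there exists $\mathbf{p}_\ib \in L_\ib$ with $(\mathbf{p}_\ib \mid \Qart^\cup) = \nb^\cup_\ib$; one can arrange that $\mathbf{m}_\ib - \mathbf{p}_\ib$ has entries bounded in terms of $k$ via Proposition~\ref{vector_as_small_sum}. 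Since $\mathbf{m}_\ib - \mathbf{p}_\ib$ projects to $\0$ on $\Qart^\cup$, it decomposes as a bounded integer sum of transferrals $\ub_{Y'} - \ub_Y$ with $Y, Y' \in \Qart_\ib$ lying in a common part of $\Qart^\cup$.

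To realise each such transferral without disturbing (D1), (D3) or any $\mathbf{m}_{\ib''}$ with $\ib'' \neq \ib$, I would pick $Z \sub Y$ and $Z' \sub Y'$ and a $G$-path $Z = Z^{(0)}, \ldots, Z^{(t)} = Z'$ in which consecutive vertices $Z^{(i-1)}, Z^{(i)}$ share a common part $Y^{(i)} \in \Qart_{\ib^{(i)}}$ for some $\ib^{(i)} \in I$; at step $i$, perform the four-way swap that decrements $(n^\cap_\ib)_{Z^{(i-1)}}$ and $(n^\cap_{\ib^{(i)}})_{Z^{(i)}}$ while incrementing $(n^\cap_\ib)_{Z^{(i)}}$ and $(n^\cap_{\ib^{(i)}})_{Z^{(i-1)}}$. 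Each step preserves (D1), has no effect on $\mathbf{m}_{\ib^{(i)}}$ (because $Z^{(i-1)}, Z^{(i)} \sub Y^{(i)}$), and no effect on $\mathbf{m}_{\ib''}$ for any $\ib'' \notin \{\ib, \ib^{(i)}\}$, while the changes to $\mathbf{m}_\ib$ telescope along the path to exactly $\ub_{Y'} - \ub_Y$. Performing these relays sequentially for every $\ib \in I$ therefore converts each $\mathbf{m}_\ib$ into $\mathbf{p}_\ib \in L_\ib$, yielding (D2), while each $(n^\cap_\ib)_Z$ is perturbed only $O_k(1)$ times, keeping (D3) within $\pm 2K$. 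The main obstacle is this invariance of $\mathbf{m}_{\ib''}$ for $\ib'' \neq \ib$ under the relay for $\ib$: it rests crucially on the defining property of $G$ that every edge lies inside a single part of some $\Qart_{\ib^{(i)}}$, which is exactly what allows each auxiliary step to be paired with an index that experiences no net effect.
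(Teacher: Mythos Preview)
Your overall strategy --- Baranyai rounding for the provisional values, followed by relay swaps along $G$-paths to push each $(\nb^\cap_\ib \mid \Qart_\ib)$ into $L_\ib$ --- is exactly the paper's approach, and your analysis of the swap operation (preservation of (D1), neutrality for the auxiliary index, telescoping for $\ib$) is correct in outline.

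The one step that does not go through as stated is your bound on the number of transferrals. You assert that ``one can arrange that $\mathbf{m}_\ib - \mathbf{p}_\ib$ has entries bounded in terms of $k$ via Proposition~\ref{vector_as_small_sum}'', but that proposition only applies to vectors already lying in a ball $B(\0,r)$ of bounded radius, whereas the $\mathbf{p}_\ib$ returned by Proposition~\ref{InverseRestriction}(ii) comes with no size control: the entries of $\mathbf{m}_\ib - \mathbf{p}_\ib$ may be of order $n$. Without this bound the total number of swaps could also be of order $n$, destroying (D3). The paper's fix is \emph{not} to target a specific $\mathbf{p}_\ib$. Instead, after writing the (possibly long) difference $\db_\ib := \nb^*_\ib - (\nb^\cap_\ib \mid \Qart_\ib)$ as an arbitrary sequence $\mc{D}_\ib$ of same-$\Qart^\cup$-part transferrals, it uses that $|L^{\Qart_\ib\Part_\ib}_{\max}/L_\ib| \le k-1$ (Proposition~\ref{properties0}(\ref{parGroupSize})) together with the pigeonhole argument of Proposition~\ref{abeliangroup} to reduce $\mc{D}_\ib$ to at most $k-2$ transferrals while still maintaining $(\nb^\cap_\ib \mid \Qart_\ib) + \sum_{\vb \in \mc{D}_\ib} \vb \in L_\ib$. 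This gives at most $r^{k-1}(k-2)|\Qart^\cap| < K$ swaps in total, so (D3) survives with the stated error $\pm 2K$.
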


To prove the claim, we start by choosing provisional values for $\nb^\cap_\ib$,
where for each $\ib \in I$, each part $X$ of $\Qart^{\cup}$ contained within a part of $\Part_\ib$,
and each part $Z \subseteq X$ of $\Qart^\cap$, we take $(n^\cap_\ib)_Z$ to be either 
$\lfloor (n^\cup_\ib)_X |Z|/|X| \rfloor$ or $\lceil (n^\cup_\ib)_X |Z|/|X| \rceil$, 
with choices made so that 
\begin{align}
\label{capclaimeq1} & \sum_{\ib \in I} (n^\cap_\ib)_Z = |Z| \text{ for each } Z \in \Qart^\cap, \text{ and } \\
\label{capclaimeq2} & \sum_{Z \subseteq X} (n^\cap_\ib)_Z = (n^\cup_\ib)_X \text{ for each } X \in \Qart^\cup.
\end{align}
To see that we can make such choices, fix $X \in \Qart^\cup$ and let $A$ be the matrix with
rows indexed by the index vectors $\ib \in I$ such that $X$ is contained within a part of $\Part_\ib$,
and columns indexed by the parts $Z \subseteq X$ of $\Qart^\cap$, 
where the $(\ib, Z)$ entry of $A$ is $(n^\cup_\ib)_X |Z|/|X|$.
Then the sum of column $Z$ is $|Z|$ by (C1),
and the sum of row $\ib$ is $(n^\cup_\ib)_X$.
Thus Theorem~\ref{Baranyai} implies that we can choose such values for $\nb^\cap_\ib$.
Note that Equation (\ref{capclaimeq2}) can be reformulated as 
\[(\nb^\cap_\ib \mid \Qart^\cup) = \nb^\cup_\ib.\]
Also, for any part $W$ of $\Part$ and any part $Z \subseteq W$ of $\Qart^\cap$,
it follows from (C3) that $(n^\cap_\ib)_Z$ is equal to 
$\rho_\ib {|Z|}/{|W|} \pm (K+1) \textrm{ if }~i_W = 1$ and is zero otherwise.

Equation (\ref{capclaimeq1}) implies that the current values for $\nb^\cap_\ib$ satisfy (D1) and (D3); we now modify them to satisfy (D2).
Consider $\ib_1, \ib_2 \in I$, and suppose that $Z$ and $Z'$ are parts of $\Qart^\cap$ which are subsets of distinct parts $Y_1$ and $Y'_1$ respectively of $\Qart_{\ib_1}$, but that $Z$ and $Z'$ are subsets of the same part $Y_2$ of $\Qart_{\ib_2}$.
We define an \emph{$(\ib_1, \ib_2, Z, Z')$-swap} as the operation of increasing $(n_{\ib_1}^\cap)_Z$ and $(n_{\ib_2}^\cap)_{Z'}$ each by one, and decreasing $(n_{\ib_2}^\cap)_Z$ and $(n_{\ib_1}^\cap)_{Z'}$ by one (all other co-ordinates of $\nb_{\ib_1}^\cap$ and $\nb_{\ib_2}^\cap$ remain unchanged, as do all other vectors $\nb^\cap_\ib$).
Clearly $\sum_{\ib \in I} \nb^\cap_\ib$ is unchanged by any $(\ib_1, \ib_2, Z, Z')$-swap, 
as is $(\nb^\cap_\ib \mid \Qart_\ib)$ for any $\ib \neq \ib_1, \ib_2$.
Further, the operation has no effect on $(\nb_{\ib_2}^\cap \mid \Qart_{\ib_2})$, 
since $Z$ and $Z'$ are contained in the same part of $\Qart_{\ib_2}$.
However, $(\nb_{\ib_1}^\cap \mid \Qart_{\ib_1})$ \emph{is} affected; 
specifically, an $(\ib_1, \ib_2, Z, Z')$-swap adds $\ub_{Y_1} - \ub_{Y'_1}$ to $(\nb_{\ib_1}^\cap \mid \Qart_{\ib_1})$.
By performing several sequences of swaps, we shall ensure that (D2) is satisfied for each $\ib \in I$ in turn. 

Fix some $\ib \in I$. Recall from (C2) that $\nb^\cup_\ib \in L_{\Qart^{\cup}}^{\mu_6}(H_\ib)$.
Since $\mu_0 \ll \mu_6$, Proposition~\ref{InverseRestriction}(ii) applied with $\Qart_\ib$ and $\Qart^\cup$ in place of $\Qart$ and $\Part$ respectively implies that 
there exists $\nb_\ib^* \in L_\ib$ such that $(\nb_\ib^* \mid \Qart^\cup) = \nb_\ib^\cup$,
where we identify $\Qart^\cup$ with its restriction to $V(H_\ib)$
and $\nb^\cup_\ib$ with its restriction to parts of $\Qart^\cup[V(H_\ib)]$.
We let $\db_\ib = \nb_\ib^* - (\nb_\ib^\cap \mid \Qart_\ib)$ and 
note that $(\db_\ib \mid \Qart^\cup) = \0$, since $(\nb^\cap_\ib \mid \Qart^\cup) = \nb^\cup_\ib$.
Hence we may write $\db_\ib = \sum_{\vb \in \mathcal{D}_\ib} \vb$ for some sequence $\mathcal{D}_\ib$ of transferrals, 
such that $Y$ and $Y'$ are parts of $\Qart_\ib$ which are contained in the same part of $\Qart^\cup$ 
for each transferral $\ub_Y - \ub_{Y'} \in \mathcal{D}_\ib$.
Then $(\nb_\ib^\cap \mid \Qart_\ib) + \sum_{\vb \in \mathcal{D}_\ib} \vb = \nb_\ib^* \in L_\ib$.
We now reduce $\mathcal{D}_\ib$ to a sequence of at most $k-2$ transferrals,
maintaining the property that $(\nb_\ib^\cap \mid \Qart_\ib) + \sum_{\vb \in \mathcal{D}_\ib} \vb \in L_\ib$.
To see that this is possible, recall that $I_\ib$ is full with respect to $\Part_\ib$,
and by (A5) every part of $\Qart_\ib$ has size at least $(1/\ell + \gamma/2)n$,
so $|L^{\Qart_i\Part_i}_{\max}/L_\ib| \le k-1$ by Proposition~\ref{properties0}~(\ref{properties:parts}) and~(\ref{parGroupSize}).
Thus the required reduction of $\mathcal{D}_\ib$ follows from Proposition~\ref{abeliangroup}.

For each $\ub_Y - \ub_{Y'} \in \mathcal{D}_\ib$, we carry out the following procedure.
Choose parts $Z$ and $Z'$ of $\Qart^\cap$ with $Z \subseteq Y$ and $Z' \subseteq Y'$.
Since $Y$ and $Y'$ are contained in the same part of $\Qart^\cup$, 
by definition of $\Qart^\cup$ there is a path in the auxiliary graph $G$ from $Z$ to $Z'$,
i.e.\ we have $Z = Z_0, \dots, Z_p = Z'$ and $Y_1, \dots, Y_p$ for some $p \leq |\Qart^\cap| \leq k^{2+k^k}$,
such that $Z_j \in \Qart^\cap$ for $0 \le j \le p$, and for each $j \in [p]$
there exists $\ib_j \in I$ such that $Y_j \in \Qart_{\ib_j}$ and $Z_{j-1}, Z_j \subseteq Y_j$.

Note that each $Z_j$ is contained in the same $W \in \Part$ as $Y$ and $Y'$,
and that by the definition of $\mathcal{D}_\ib$ we have $i_W = 1$.
Hence for each $0 \leq j \leq p$ there exists a part $Y^*_j \in \Qart_\ib$ which contains $Z_j$. 
Thus $Y^*_0 = Y$ and $Y^*_p = Y'$.

For each $j \in [p]$ in turn we apply swaps as follows. If $Y^*_{j-1} = Y^*_j$ then there is no swap. 
Otherwise, we perform an $(\ib, \ib_j, Z_{j-1}, Z_j)$-swap; as noted above, 
the only effect of this is to add $\ub_{Y^*_{j-1}} - \ub_{Y^*_j}$ to $(\nb^\cap_\ib \mid \Qart_\ib)$.
So the net effect of performing these swaps for every member of $\mc{D}_\ib$ is to add $\ub_Y - \ub_{Y'}$ to $(\nb^\cap_\ib \mid \Qart_\ib)$. 
By choice of $Y$ and $Y'$, after these modifications we have $(\nb^\cap_\ib \mid \Qart_\ib) \in L_\ib$,
and crucially, $(\nb_{\ib'}^\cap \mid \Qart_{\ib'})$ is unchanged for any $\ib' \neq \ib$ 
and $\sum_{\ib \in I} \nb^\cap_\ib$ is unchanged.

We proceed in this manner for every $\ib \in I$; then the vectors $\nb^\cap_\ib$ obtained 
at the end of this process must satisfy (D2). Since (D1) held before we made any modifications, 
and $\sum_{\ib \in I} \nb^\cap_\ib$ is preserved by each modification, we conclude that (D1) still holds. 
Finally recall that for any $\ib \in I$, any part $W$ of $\Part_\ib$ and any part $Z \subseteq W$ of $\Qart^\cap$
our provisional values satisfied $(n^\cap_\ib)_Z = \rho_\ib {|Z|}/{|W|} \pm (K+1)$.
Recall that $|I| = r^{k-1}$; now (D3) follows, as we performed at most $p \leq k^{2+k^k}$ swaps for each 
$\vb \in \bigcup_{\ib \in I} \mathcal{D}_\ib$, so there were at most $p(k-2)r^{k-1} < K$ swaps in total,
and no swap changed any co-ordinate of $\nb^\cap_\ib$ by more than one.
This completes the proof of Claim~\ref{capclaim}. \medskip

{\noindent \bf Step 6: The random selection.}
In this final step, we now partition $V$ into disjoint sets $T_\ib$ for $\ib \in I$, where for each $Z \in \Qart^\cap$
the number of vertices of $T_\ib$ taken from $Z$ is $(n^\cap_\ib)_Z$. To ensure that this is possible, 
we require that $\sum_{\ib \in I} \nb^\cap_\ib = \ib_{\Qart^\cap}(V)$ 
and that each co-ordinate of each $\nb^\cap_\ib$ is non-negative. 
The first of these conditions holds by (D1).
For the second, observe that for any $\ib \in I$, any part $W$ of $\Part_\ib$ and any part $Z \subseteq W$ of $\Qart^\cap$ 
we have $\rho_\ib \geq \eta n$ by (B2), $|Z| \geq D$ by~(\ref{formerA2}) and $|W| \leq n$. 
So (D3) implies that 
\begin{equation} \label{capsize}
(n^\cap_\ib)_Z \geq \rho_\ib |Z|/|W| - 2K \geq \eta |Z|/2 \ge 0. 
\end{equation}

We choose such a partition uniformly at random. 
That is, for each part $Z$ of $\Qart^\cap$, we choose uniformly at random 
a partition of $Z$ into sets $Z_\ib$ for $\ib \in I$ so that $|Z_\ib| = (n^\cap_\ib)_Z$ for each $\ib \in I$. 
Then for each $\ib\in I$ we let
\[ T_\ib := \bigcup_{Z \in \Qart^\cap} Z_\ib, \ \
\hat{H}_\ib = H[T_\ib], \ \ \hat{\Part}_\ib = \Part_\ib[T_\ib] \ \ \text{ and } \ \ \hQart_\ib = \Qart_\ib[T_\ib].\] 

Note that $T_\ib \subseteq V(H_\ib)$, as by (D3) $\nb^\cap_\ib$ is zero on parts of $\Qart^\cap$ not contained in $V(H_\ib)$,
so $\hat{H}_\ib$ is a $\hat{\Part}_\ib$-partite $k$-graph on the vertex set $T_\ib$. 
Also, since $(\nb^\cap_\ib \mid \Qart_\ib) \in L_\ib$ by (D2), and every $\ib' \in I_\ib$ has $(\ib' \mid \Part) = \ib$, 
we have $(\nb^\cap_\ib \mid \Part) = t_\ib \ib$ for some integers $t_\ib$. 
For any $W \in \Part_\ib$ with $i_W = 1$ we have 
\begin{equation}\label{eq:t_ib}
t_\ib = ( \nb^\cap_\ib \mid \Part)_W = \sum_Z (n^\cap_\ib)_Z \stackrel{\textrm{(D3)}}{=} 
\sum_Z \left(\frac{\rho_\ib|Z|}{|W|} \pm 2K\right) = \rho_\ib \pm 2K^2,
\end{equation} 
where both sums are taken over all parts $Z$ of $\Qart^\cap$ with $Z \subseteq W$,
and we recall that $\Qart^\cap$ has at most $K$ parts. 
So in particular we have $\eta n/2 \leq t_\ib \leq n$ for any $\ib \in I$ by (B2).

Let $\eps_* = \mu_2$, $\mu_* = \mu_3$, $\mu_*' = \mu_4$, $c_* = 6 c / \eta$, $d_*=d/4$, and $\gamma_* = \gamma/4$.
 
\begin{claim} \label{choiceclaim}
For any $\ib \in I$ the following properties each hold with high probability. 
\begin{enumerate}[({E}1)]
\item All but at most $\eps_* t_\ib^{k-1}$ $\Part_\ib$-partite $(k-1)$-sets $S \subseteq T_\ib$ 
have $|\hat{H}_\ib(S)| \ge (1/(\ell-1) + \gamma_*)t_\ib$.
\item $\hQart_\ib$ is $(c_*, c_*, \mu_*, \mu'_*)$-robustly maximal with respect to $\hat{H}_\ib$.
\item $L_{\hQart_\ib}^{\mu_*}(\hat{H}_\ib) = L_\ib$, and any vertex of $T_\ib$ is in at least $d_* t_\ib^{k-1}$ edges $e \in \hat{H}_\ib$  
with $\ib_{\hQart_\ib}(e) \in L_{\hQart_\ib}^{\mu_*}(\hat{H}_\ib)$. 
\end{enumerate}
\end{claim}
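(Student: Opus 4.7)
The plan is to verify each of (E1), (E2), (E3) using the standard concentration tools of Section~\ref{subsec:conc} together with the structural properties (A1)--(A7) established in Step~4. Throughout, note that for each part $Z \in \Qart^\cap$ the restriction $T_\ib \cap Z = Z_\ib$ is a uniformly random subset of $Z$ of size $(n^\cap_\ib)_Z$, and these choices are independent across different parts of $\Qart^\cap$. Since there are polynomially many events to consider (at most $n^{k-1}$ partite $(k-1)$-sets and at most $kn$ vertices), union bounds combined with exponential tail estimates will yield the ``with high probability'' statement.

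For (E1), I would fix a $\Part_\ib$-partite $(k-1)$-set $S \sub T_\ib$ with omitted vertex class $W_k \in \Part_\ib$. By (A1), all but at most $\mu_0 n^{k-1}$ such sets (considered in all of $V(H_\ib)$) have $|H_\ib(S)| \ge (1/\ell + \gamma/2)n$. The key numerical observation is that by (A5), since $W_k$ is one of $r \geq 2$ parts of $\Part$ refining some part of $\Part'$ of size $n$, we have $|W_k| \le n - (r-1)(1/\ell + \gamma/2) n \le ((\ell-1)/\ell - \gamma/2)n$. Hence $|H_\ib(S)|/|W_k| \ge 1/(\ell-1) + \Omega(\gamma)$. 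Now $|\hat{H}_\ib(S)| = \sum_{Z \sub W_k, Z \in \Qart^\cap} |H_\ib(S) \cap Z_\ib|$ is a sum of independent hypergeometric random variables, and by (D3) together with (\ref{formerA2}) its mean satisfies
\[\mb{E} |\hat{H}_\ib(S)| = \sum_{Z} |H_\ib(S) \cap Z| \frac{(n^\cap_\ib)_Z}{|Z|} = \frac{t_\ib}{|W_k|} |H_\ib(S)| \pm \tfrac{2K^2 n}{D} \ge (1/(\ell-1) + \gamma_*) t_\ib,\]
with room to spare. Then Corollary~\ref{SumOfHyper} gives $|\hat{H}_\ib(S)| \ge (1/(\ell-1) + \gamma_*)t_\ib$ except with probability $e^{-\Omega(n^c)}$. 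A union bound over all such $S$, together with the trivial bound that at most $\mu_0 n^{k-1} \le \eps_* t_\ib^{k-1}$ sets inherit the exceptional codegree from $H_\ib$, completes (E1).

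For (E2), we invoke Lemma~\ref{RobustRandom} directly. By (A2), $\Qart_\ib$ is $(1/\ell, 2c, \mu_2, \mu_5)$-robustly maximal with respect to $H_\ib$, and $\Qart^\cap[V(H_\ib)]$ is a refinement of $\Qart_\ib$. By~(\ref{capsize}), the selection satisfies $(n^\cap_\ib)_Z \ge (\eta/2)|Z|$ for every $Z \in \Qart^\cap$ with $Z \sub V(H_\ib)$. Lemma~\ref{RobustRandom} (applied with $\eta/2$ in place of $\eta$) then yields that with high probability $\hQart_\ib$ is $(\eta/(2\ell), 12c/\eta, 2\ell\mu_2, \mu_5^3)$-robustly maximal with respect to $\hat H_\ib$. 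Since $c \ll \eta, 1/k$ and $\mu_2 \ll \mu_3 = \mu_*$, $\mu_4 = \mu_*' \ll \mu_5$, our hierarchy of constants implies $c_* \le \eta/(2\ell)$, $c_* \ge 12c/\eta$, $\mu_* \ge 2\ell\mu_2$ and $\mu_*' \le \mu_5^3$, giving (E2).

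For (E3), I would establish both inclusions of $L_{\hQart_\ib}^{\mu_*}(\hat{H}_\ib) = L_\ib$ at the level of index vectors. For $\vb \in I_\ib$, the boost (A6) says $\vb \in I_{\Qart_\ib}^{\mu'}(H_\ib)$, so at least $\mu' |V(H_\ib)|^k$ edges of $H_\ib$ have index $\vb$; by Corollary~\ref{ApplyAzuma} (applied to the $k$-graph of $\vb$-indexed edges), with high probability at least $(1 - o(1))\mu' (t_\ib/n)^k|V(H_\ib)|^k \ge \mu_* |V(\hat H_\ib)|^k$ such edges survive in $\hat H_\ib$, so $\vb \in I_{\hQart_\ib}^{\mu_*}(\hat H_\ib)$. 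Conversely, if $\vb \notin I_\ib$ then (A6) gives fewer than $\mu_1 |V(H_\ib)|^k \le \mu_1 (kn)^k$ edges of index $\vb$ in $H_\ib$; deterministically, $\hat H_\ib$ has no more, and since $\mu_1 \ll \mu_* (k\eta/2)^k \le \mu_* (|V(\hat H_\ib)|/n)^k$, we conclude $\vb \notin I_{\hQart_\ib}^{\mu_*}(\hat H_\ib)$. The union bound over the constantly many relevant $\vb$ gives $L_{\hQart_\ib}^{\mu_*}(\hat H_\ib) = L_\ib$. Finally, for the vertex degree statement, fix any $v \in T_\ib$ and let $N_v$ count edges of $H_\ib$ containing $v$ with index in $I_\ib$; by (A3), $N_v \ge (d/2) n^{k-1}$. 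Each such edge survives in $\hat H_\ib$ with probability at least $\prod_{j \neq j(v)}(t_\ib/|W_j|) \ge (t_\ib/n)^{k-1}$, and an analogous Azuma-type concentration gives at least $(d/4) t_\ib^{k-1} = d_* t_\ib^{k-1}$ surviving edges with high probability. A union bound over all $v \in T_\ib$ completes (E3).

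The main obstacle is (E1): the improvement from the ratio $1/\ell$ to $1/(\ell-1)$ is not free but comes from the fact that the sampled vertex class inside $V(H_\ib)$ is only a fraction of the original vertex class, and this is exactly where (A5) (guaranteeing that the complementary parts in $\Part$ are not too small) and (D3) (guaranteeing the sample sizes are proportional to the right quantities) have to be combined. Everything else is essentially bookkeeping with the constant hierarchy and standard concentration.
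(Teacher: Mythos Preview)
Your proposal is correct and follows essentially the same approach as the paper: (E2) via Lemma~\ref{RobustRandom} applied with~(A2) and~(\ref{capsize}); (E1) via the key bound $|W| \le (1-1/\ell-\gamma/2)n$ from (A5) combined with (D3)/(\ref{eq:t_ib}) to compute the hypergeometric mean, then Corollary~\ref{SumOfHyper}; and (E3) via (A6) for both inclusions of the lattice together with (A3) and Azuma-type concentration for the vertex degrees. The only slip is in your constant check for (E2), where you assert $c_* \ge 12c/\eta$ while $c_* = 6c/\eta$; the paper simply asserts that Lemma~\ref{RobustRandom} gives (E2) without tracking this factor, and the discrepancy is harmless for the argument.
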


We first recall from (A2) that $\Qart_\ib$ is $(1/\ell, 2c, \mu_2, \mu_5)$-robustly maximal with respect to $H_\ib$,
and from (\ref{capsize}) that $(n^\cap_\ib)_Z \geq \eta |Z|/2$ for any $Z \in \Qart^\cap$ contained in a part of
$\Part_\ib$, so Lemma~\ref{RobustRandom} implies that (E2) holds with high probability.
For (E1), recall from (A1) that all but at most $\mu_0 n^{k-1}$ $\Part_\ib$-partite $(k-1)$-sets $S \subseteq V(H_\ib)$ 
have $|H_\ib(S)| \ge (1/\ell + \gamma/2)n$. So it suffices to show that for any such $S$ we have
$|H_\ib(S) \cap T_\ib| \geq (1/(\ell-1) + \gamma_*)t_\ib$ with high probability. Let $W$ be the part of $\Part$ such that $H_\ib(S) \subseteq W$.
Note that $|W| \le (1 - 1/\ell - \gamma/2)n$ as each part of $\Part'$ is partitioned into $r \ge 2$ parts by $\Part$,
and each part of $\Part$ has size at least $(1/\ell + \gamma/2)n$ by (A5). Now
\begin{align*}
\Exp[|H_\ib(S) \cap T_\ib|] 
&= \sum_{Z \subseteq W} \Exp[|H_\ib(S) \cap Z_\ib|] 
= \sum_{Z \subseteq W} \frac{(n^\cap_\ib)_Z |H_\ib(S) \cap Z|}{|Z|}\\
&\stackrel{\textrm{(D3)}}{\geq} \frac{\rho_\ib |H_\ib(S)|}{|W|}  - 2K^2 \\
&\stackrel{(\ref{eq:t_ib})}{\geq} \frac{t_\ib(1/\ell + \gamma/2)}{1 - 1/\ell - \gamma/2} - 4K^2 \\ 
&\geq (1 + \eps)\left(\frac{1}{\ell-1} + \gamma_*\right)t_\ib.
\end{align*}
Since $|H_\ib(S) \cap T_\ib|$ is a sum of independent hypergeometric random variables,
with high probability $|H_\ib(S) \cap T_\ib| \geq (1/(\ell-1) + \gamma_*)t_\ib$ by Corollary~\ref{SumOfHyper}.

For (E3), recall from (A6) that $L_\ib = L_{\Qart_\ib}^{\mu_1}(H_\ib) = L_{\Qart_\ib}^{\mu'}(H_\ib)$. For any $\ib' \in L_{\hQart_\ib}^{\mu_*}(\hat{H}_\ib)$ there are at least $\mu_* (kt_\ib)^k > \mu_1 |V(H_\ib)|^k$
edges $e \in \hat{H}_\ib \subseteq H_\ib$ with $\ib_{\Qart_\ib}(e) = \ib'$, so we have $L_{\hQart_\ib}^{\mu_*}(\hat{H}_\ib) \sub L_{\Qart_\ib}^{\mu_1}(H_\ib)$. Now suppose that $\ib' \in L_{\Qart_\ib}^{\mu'}(H_\ib)$, 
so there are at least $\mu'|V(H_\ib)|^k \geq \mu' n^k$ edges $e \in H_\ib$ with $\ib_{\Qart_\ib}(e) = \ib'$.
Recall that $\Qart^\cap$ has at most $K$ parts, so at most $K \eps (kn)^k$ of these edges contain a vertex from a part of $\Qart^\cap$ of size at most $\eps n$. 
Fix one of the remaining edges $e = \{x_1,\dots,x_k\}$ and for each $j \in [k]$
let $Z_j$ and $W_j$ be the parts of $\Qart^\cap$ and $\Part$ respectively which contain $x_j$. 
Since $|Z_j| \geq \eps n$ for each $j \in [k]$, the probability that $e \in \hat{H}_\ib$ equals
\begin{equation*}
\prod_{j \in [k]} \frac{(n^\cap_\ib)_{Z_j}}{|Z_j|} 
\stackrel{\textrm{(D3)}}{=} \prod_{j \in [k]} \brac{\frac{\rho_\ib}{|W_j|} \pm \frac{2K}{\eps n}}
\stackrel{(\ref{eq:t_ib})}{>}  \frac{t_\ib^{k}}{n^{k}} - \frac{3kK}{\eps n}.
\end{equation*}
Therefore the expected number of edges $e \in \hat{H}_\ib$ with $\ib_{\Qart_\ib}(e) = \ib'$
is at least $(\mu' n^k-K\eps (kn)^k)((t_\ib/n)^k - 3kK/\eps n) > \mu' t_\ib^k/2$.
By Corollary~\ref{ApplyAzuma}, with high probability there 
are at least $\mu' t_\ib^k/3 \geq \mu_* |V(\hat{H}_\ib)|^k$ such edges, so we have $\ib' \in L_{\hQart_\ib}^{\mu_*}(\hat{H}_\ib)$, proving that $L_{\hQart_\ib}^{\mu_*}(\hat{H}_\ib) = L_\ib$ as claimed.

Now consider any vertex $x \in T_\ib$ and let $E(x)$ denote the number of edges $e \in \hat{H}_\ib$ with $\ib_{\Qart_\ib}(e) \in L_\ib$ which contain $x$. 
To estimate $E(x)$, recall from (A3) that $x$ is in at least 
$\tfrac{1}{2}dn^{k-1}$ edges $e \in H_\ib$ with $\ib_{\Qart_\ib}(e) \in L_\ib$.
The same argument as above shows that $\Exp(E(x)) \geq \tfrac{1}{3}dt_\ib^{k-1}$
and so with high probability $E(x) \geq \tfrac{1}{4}dt_\ib^{k-1}$.
This implies (E3), as $L_{\hQart_\ib}^{\mu_*}(\hat{H}_\ib) = L_\ib$,
so completes the proof of the claim.

\medskip

To summarise, for each $\ib \in I$ we have the following: 
\begin{itemize}
\item $\hat{H}_\ib$ is a $\hat{\Part}_\ib$-partite $k$-graph on $T_\ib$ with parts of size $t_\ib$.
\item At most $\eps_* t_\ib^{k-1}$ $\hat{\Part}_\ib$-partite $(k-1)$-sets $S \subseteq T_\ib$ 
have $|\hat{H}_\ib(S)| < (1/(\ell-1) + \gamma_*)t_\ib$.
\item $\hQart_\ib$ is a partition of $T_\ib$ which refines $\hat{\Part}_\ib$ and is $(c_*, c_*, \mu_*, \mu'_*)$-robustly maximal with respect to $\hat{H}_\ib$. 
\item Any vertex of $T_\ib$ is in at least $d_* t_\ib^{k-1}$ edges $e \in \hat{H}_\ib$  
with $\ib_{\hQart_\ib}(e) \in L_\ib = L_{\hQart_\ib}^{\mu_*}(\hat{H}_\ib)$. 
\item $\ib_{\hQart_\ib}(T_\ib) = (\nb^\cap_\ib \mid \Qart_\ib) \in L_\ib = L_{\hQart_\ib}^{\mu_*}(\hat{H}_\ib)$ by (D2).
\end{itemize}
Since 
$$ 1/t_\ib \ll \eps_* \ll \mu_* \ll \mu'_* \ll c_*, d_* \ll \gamma_*, 1/k,$$
we conclude that $\hat{H}_\ib$ contains a perfect matching $M_\ib$ by our inductive hypothesis. 
Therefore $M \cup M' \cup \bigcup_{\ib \in I} M_\ib$ is a perfect matching in (the original) $H$.
This completes the proof of Lemma~\ref{kPartiteLemma}.
\end{proof}

\section{Proof of the structure theorem} \label{deduceThm}

In this section we use Lemma~\ref{NonPartiteLemma} to prove Theorem~\ref{PMNeccSuff}.
We start by proving an easier version in the first subsection, which is not algorithmic, 
but has a cleaner statement, and will be used in the proof of the full theorem.
In the second subsection we prove a weaker version of the algorithmic result,
which provides some details omitted from the extended abstract of this paper~\cite{KKMabs}.
The third and fourth subsections contain some technical preliminaries for the main result:
some analysis of how full lattices behave under merging of parts,
and some results on subsequence sums in abelian groups.
The final subsection contains the proof of Theorem~\ref{PMNeccSuff}.

\subsection{An easier version.}
In this subsection we prove the following easier version of Theorem~\ref{PMNeccSuff}.

\begin{thm} \label{WeakerPMNS} 
Under Setup~\ref{setup}, $H$ has a perfect matching if and only if every full pair $(\Part, L)$ for $H$ is soluble.
\end{thm}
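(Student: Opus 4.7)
The ``only if'' direction follows directly from the lattice structure. Assume $H$ has a perfect matching $M^*$ and let $(\Part, L)$ be any full pair. By Lemma~\ref{GroupSize} the coset group $G = G(\Part, L) = L_{\max}^\Part / L$ satisfies $|G| = |\Part|$. Since $M^*$ is perfect, $\sum_{e \in M^*} R_G(e) = R_G(V(H))$ in $G$, so Proposition~\ref{abeliangroup} supplies a sub-matching $M \sub M^*$ of size at most $|G|-1 = |\Part|-1$ whose residues still sum to $R_G(V(H))$. Then $\ib_\Part(V(H) \sm V(M)) \in L$, meaning $M$ solves $(\Part, L)$.

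For the ``if'' direction, suppose every full pair of $H$ is soluble; the aim is to exhibit a full pair $(\Part', L)$ whose solution $M$, once removed, leaves a sub-$k$-graph satisfying the hypotheses of Lemma~\ref{NonPartiteLemma}. We begin by producing a canonical partition. Start with the trivial one-part partition $\Part_0 = \{V(H)\}$, for which $L^\mu_{\Part_0}(H) = k\Z$ is automatically transferral-free, and apply Proposition~\ref{RobustExistence} with a suitable hierarchy of constants $\mu \ll \mu' \ll c \ll \gamma$ to refine $\Part_0$ to a partition $\Part$ that is $(c, c, \mu, \mu')$-robustly maximal with respect to $H$. A non-partite analogue of Proposition~\ref{properties}(\ref{properties:full}) and~(\ref{properties:final})---proved in exactly the same way from (codeg)---shows that $I^\mu_\Part(H)$ is full and every part of $\Part$ has size at least $(1/k + \gamma/2)n$, so in particular $|\Part| \le k - 1$. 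Set $L = L^\mu_\Part(H)$; then $L$ is a full edge-lattice, since it is generated by the full set $I^\mu_\Part(H)$ and is transferral-free by the robust maximality of $\Part$.

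If $\Part$ is the trivial partition, then $L = k\Z$, $\ib_\Part(V(H)) = n \in L$ as $k \mid n$, every edge of $H$ has index in $L$ (so condition~(iii) of Lemma~\ref{NonPartiteLemma} follows from (deg)), and the lemma applies to $(H, \Part)$ to yield a perfect matching directly. Otherwise we first arrange that every vertex lies in many edges of index in $L$ by reassigning a small number of vertices. By fullness and transferral-freeness of $L$, every $(k-1)$-vector $\vb$ admits a unique $X^*(\vb) \in \Part$ with $\vb + \ub_{X^*(\vb)} \in L$. For each vertex $v$, averaging over the edges through $v$ (using (deg) and (codeg)) picks out a preferred part $X^*(v) \in \Part$ for which moving $v$ there yields $\Omega(n^{k-1})$ edges of index in $L$ through $v$; moreover a double-counting argument shows that only $o(n)$ vertices are \emph{misplaced} (in too few such edges). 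Reassign each misplaced vertex to its preferred part to obtain $\Part'$; by Lemma~\ref{RobustInherit} and Proposition~\ref{similarlattices}, $\Part'$ is still robustly maximal with slightly weaker constants and $L^{\mu_1}_{\Part'}(H) = L$ for a suitable $\mu_1 \le \mu$. Hence $(\Part', L)$ is a full pair in which every vertex lies in $\Omega(n^{k-1})$ edges of index in $L$.

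By hypothesis $(\Part', L)$ has a solution $M$, a matching in $H$ of size at most $|\Part'| - 1 \le k-2$ with $\ib_{\Part'}(V(H) \sm V(M)) \in L$. Set $H' = H - V(M)$ and $\Part'' = \Part'[V(H')]$. Since $|V(M)| \le k(k-2)$ is constant, all four hypotheses of Lemma~\ref{NonPartiteLemma} persist for $(H', \Part'')$: (i) is immediate, (ii) follows from Lemma~\ref{RobustInherit}, (iii) holds because each vertex loses only $O(n^{k-2})$ edges while retaining $\Omega(n^{k-1})$ edges of index in $L$, and (iv) follows from the defining property of $M$ together with Proposition~\ref{similarlattices}. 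Lemma~\ref{NonPartiteLemma} then produces a perfect matching $M'$ of $H'$, and $M \cup M'$ is the desired perfect matching of $H$. The principal obstacle is the reassignment step: we must simultaneously ensure that misplaced vertices are few, that each has an unambiguously preferred part, and that neither $L$ nor the robust maximality of $\Part$ is disturbed by the moves---all of which rest on the combined strength of (deg), (codeg) and the fullness/transferral-free structure of $L$.
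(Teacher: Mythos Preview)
Your proposal is correct and follows essentially the same approach as the paper: the forward direction is precisely the content of Lemma~\ref{EquivSol} (applied to the perfect matching), while for the backward direction you invoke Proposition~\ref{RobustExistence} to obtain a robustly maximal $\Part$, perform the vertex-reassignment step (this is Lemma~\ref{MoveVertices} in the paper, and the ``non-partite analogue of Proposition~\ref{properties}'' you need is stated as Proposition~\ref{NonParproperties}), solve the resulting full pair, and apply Lemma~\ref{NonPartiteLemma} to the remainder. One minor inaccuracy: after reassignment the paper takes the detection threshold \emph{up} to $4\sqrt{\mu}$ rather than down to some $\mu_1 \le \mu$, relying on the boosting property Proposition~\ref{NonParproperties}(\ref{NonParproperties:lattice}) to close the chain $L^{\mu^{1/3}}_\Part(H) \subseteq L^{4\sqrt{\mu}}_{\Part'}(H) \subseteq L^\mu_\Part(H)$; your direction of inequality would not by itself guarantee $L_{\Part'}^{\mu_1}(H) \subseteq L$ without this boosting.
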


While the statement of this version is more appealing than that of Theorem~\ref{PMNeccSuff},
it is impractical to use directly for our algorithm, as the number of full pairs for $H$ may be exponentially large.
The forward implication of Theorem~\ref{WeakerPMNS} is easy to prove: if $H$ has a perfect matching $M$ then $\ib_\Part(V(H) \sm V(M)) = \0 \in L$, and Lemma~\ref{EquivSol} implies that $(\Part, L)$ is soluble.
In fact the forward implication of Theorem~\ref{PMNeccSuff} now follows immediately; we simply observe from the definition that there is no $C$-certificate for $H$ for any $C \geq 0$.
We now consider the backward implication of Theorem~\ref{WeakerPMNS}.
We need the following non-partite analogue of Proposition~\ref{properties} (we omit the similar proof). 

\begin{prop} \label{NonParproperties} 
Suppose that $1/n \ll \mu, \eps \ll \psi, d, c, 1/k$, and let $V$ be a set of size $n$. 
Let $H$ be a $k$-graph on $V$ in which at most $\eps n^{k-1}$ $(k-1)$-sets $S \sub V$ have $d_H(S) < dn$, and let $\Part$ be a partition of $V$ with parts of size at least $cn$ 
such that $L^\mu_\Part(H)$ is transferral-free. Then the following properties hold. 
\begin{enumerate}[(i)]
\item\label{NonParproperties:full} $I_\Part^\mu(H)$ is full.
\item\label{NonParproperties:codegree} For any $\ib \in I^\mu_\Part(H)$ at most $\psi n^{k-1}$ $(k-1)$-sets $S \subseteq V$ 
with $\ib_\Part(S) = \ib - \ub_X$ for some $X \in \Part$ have $|H_\ib(S)| < (d - \psi) n$. 
\item\label{NonParproperties:final} Each part of $\Part$ has size at least $(d - \psi) n$.
\item\label{NonParproperties:lattice} $I^{dc^{k-1}/2k!}_\Part(H) = I^\mu_\Part(H)$.
\item\label{NonParproperties:vdegree} For any $\ib \in I^\mu_\Part(H)$ at most $\psi n$ vertices of $V(H_\ib)$ lie in fewer than $c^{k-2}dn^{k-1}/2(k-1)!$ edges of $H_\ib$.
\end{enumerate}
\end{prop}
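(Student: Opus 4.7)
The plan is to mirror the proof of Proposition~\ref{properties}, adjusting the counting to the non-partite setting in which $(k-1)$-sets and $k$-sets may contain several vertices in the same part of $\Part$. Introduce an auxiliary constant $\theta$ with $\mu,\eps \ll \theta \ll \psi, d, c, 1/k$.

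For (i), fix a $(k-1)$-vector $\vb$. Using $|W| \ge cn$ and $n$ large, the number of $(k-1)$-sets $S \sub V$ with $\ib_\Part(S) = \vb$ equals $\prod_W \binom{|W|}{v_W} \ge (1-o(1))c^{k-1}n^{k-1}/(k-1)!$ (since $\prod_W v_W! \le (k-1)!$). After removing the $\eps n^{k-1}$ sets with $d_H(S) < dn$, the remainder contribute at least a constant times $n^k$ edges to $H$, each of which has index $\vb + \ub_X$ for some $X \in \Part$; since $|\Part| \le 1/c$, pigeonhole yields some $X$ with $\vb + \ub_X \in I_\Part^\mu(H)$.

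Parts (ii) and (iii) follow the partite case almost verbatim. For (ii), fix $\ib \in I_\Part^\mu(H)$; a $(k-1)$-set $S$ of index $\ib - \ub_X$ with $d_H(S) \ge dn$ and $|H_\ib(S)| < (d-\theta)n$ must extend to $\ge \theta n$ edges of indices $\ib - \ub_X + \ub_Y$ with $Y \ne X$, each of which differs from $\ib$ by the transferral $\ub_Y - \ub_X$ and so lies outside $I_\Part^\mu(H)$ by transferral-freeness. Since the total number of edges with indices outside $I_\Part^\mu(H)$ is at most $c^{-k}\mu n^k$ (bounding the possible $k$-indices by $c^{-k}$), counting each such edge at most $k$ times bounds the bad $S$ of this second kind by $kc^{-k}\mu n^{k-1}/\theta \ll \theta n^{k-1}$, which combined with the $\eps n^{k-1}$ trivially bad sets gives (ii). For (iii), apply (i) with $\vb = (k-1)\ub_Y$ to find $\ib \in I_\Part^\mu(H)$ with $i_Y \ge k-1 \ge 1$; (ii) then produces a $(k-1)$-set $S$ of index $\ib - \ub_Y$ with $|H_\ib(S)| \ge (d-\theta)n$, and since $H_\ib(S) \sub Y$ this forces $|Y| \ge (d-\theta)n$.

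For (iv), fix $\ib \in I_\Part^\mu(H)$ and choose any part $Y$ with $i_Y \ge 1$. Writing $j_W = i_W - \delta_{W,Y}$, the number $N_Y$ of $(k-1)$-sets of index $\ib - \ub_Y$ is $\prod_W\binom{|W|}{j_W} \ge (1-o(1))c^{k-1}n^{k-1}/(k-1)!$; by (ii), all but $\theta n^{k-1}$ extend into $Y$ in $\ge(d-\theta)n$ ways, and since each edge of $H_\ib$ is counted exactly $i_Y$ times (by dropping one of its $i_Y$ vertices in $Y$), we obtain $|H_\ib| \ge (N_Y - \theta n^{k-1})(d-\theta)n/i_Y \ge dc^{k-1}n^k/2k!$. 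For (v), a Markov argument (using (ii) with a sufficiently small constant in place of $\psi$) shows that all but at most $\psi n$ vertices $v \in V(H_\ib)$ avoid the bad $(k-1)$-sets of (ii) almost entirely; for any such good $v$ in a part $W$ with $i_W \ge 1$, choose $W'$ so that $(k-1)$-sets $S \ni v$ of index $\ib - \ub_{W'}$ exist (take $W' = W$ if $i_W \ge 2$, else any $W' \ne W$ with $i_{W'} \ge 1$). The same binomial estimate gives a count $N_{v,W'} \ge (1-o(1))c^{k-2}n^{k-2}/(k-2)!$ of such $S$, almost all of which are good (have $|H_\ib(S)| \ge (d-\theta)n$); each good $S$ contributes $\ge (d-\theta)n$ edges of $H_\ib$ containing $v$ (namely $S \cup \{v''\}$ for valid extensions $v'' \in W'$), and dividing by the per-edge overcount $\le k-1$ yields the required bound $c^{k-2}dn^{k-1}/2(k-1)!$ on edges of $H_\ib$ incident to $v$. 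The principal technical obstacle throughout is the bookkeeping of the multinomial coefficients $\prod_W \binom{|W|}{j_W}$ in (iv) and (v); since $\prod_W j_W! \le (\sum_W j_W)!$ and each $|W| \ge cn$, they are bounded below by the claimed constants up to $1-o(1)$ factor losses.
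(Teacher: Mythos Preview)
Your overall strategy is the correct non-partite adaptation, but there is a recurring slip in how you pass from (ii) to (iii) and (iv).

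The quantity $|H_\ib(S)|$ counts the neighbours of $S$ in the induced subgraph $H_\ib = H\big[\bigcup_{i_W>0} W\big]$; in the partite setting of Proposition~\ref{properties} this automatically equals the number of neighbours of $S$ in one specific part (because $H$ is $\Part'$-partite there), but in the non-partite setting it need not. In your proof of (iii) you assert $H_\ib(S)\subseteq Y$ for $\ib=(k-1)\ub_Y+\ub_X$ obtained from (i). If $X\ne Y$ then $V(H_\ib)=Y\cup X$, and nothing prevents $H_\ib(S)$ from meeting $X$; so (ii) as stated does not give $|Y|\ge (d-\theta)n$. The same issue appears in (iv): you need the number of extensions of $S$ into the \emph{specific} part $Y$ (so that the extended edge has index exactly $\ib$), not just $|H_\ib(S)|$; correspondingly the quantity you should be lower-bounding is the number of edges of index $\ib$, not $|H_\ib|$, since these differ whenever $V(H_\ib)$ contains more than one part.

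The repair is already implicit in your own proof of (ii). If $S$ has index $\ib-\ub_X$, $d_H(S)\ge dn$, and fewer than $(d-\theta)n$ neighbours of $S$ lie in $X$, then at least $\theta n$ neighbours lie in parts $W\ne X$; each resulting edge has index $\ib-\ub_X+\ub_W$, which differs from $\ib$ by the nonzero transferral $\ub_W-\ub_X$ and hence lies outside $I_\Part^\mu(H)$ --- regardless of whether $i_W=0$ or $i_W>0$. The same counting (at most $c^{-k}\mu n^k$ such edges in total, each containing at most $k$ such sets $S$) bounds the number of these $S$ as before. With this sharper form of (ii) --- for all but $\theta n^{k-1}$ sets $S$ of index $\ib-\ub_X$, at least $(d-\theta)n$ neighbours of $S$ lie in $X$ itself --- your arguments for (iii), (iv) and (v) go through exactly as written.
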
 

Next we show that we can modify a robustly maximal partition 
to arrange that every vertex belongs to many edges with index on the robust edge-lattice.
Note that it would not help to use the technique employed in previous sections 
of removing a small matching covering the bad vertices, 
as we will require the full vertex set to remain intact 
in order to use the fact that every full pair is soluble.

\begin{lemma} \label{MoveVertices} Suppose that $k \geq 3$ and $1/n \ll \eps, \mu \ll \mu' \ll c \ll \gamma \ll 1/k$. 
Let~$H$ be a $k$-graph on $n$ vertices such that
at most $\eps n^{k-1}$ $(k-1)$-sets $A \subseteq V(H)$ have $d_H(A) < (1+\gamma)n/k$
and every vertex is in at least $\gamma n^{k-1}$ edges of $H$. Also let $\Part$ be a partition of $V(H)$ that is $(c, c, \mu, \mu')$-robustly maximal with respect to $H$. 
Then there exists a partition $\Part'$ of $V(H)$ with at most $k-1$ parts such that
\begin{enumerate}[(i)]
\item $\Part'$ is $(4c, 2c, 4\sqrt{\mu}, \mu'/2)$-robustly maximal with respect to $H$, 
\item $L_{\Part'}^{6\sqrt{\mu}}(H) = L_{\Part'}^{4\sqrt{\mu}}(H) = L_\Part^\mu(H)$, and 
\item every vertex is in at least $\gamma n^{k-1}/3k$ edges $e \in H$ with $\ib_{\Part'}(e) \in L_{\Part'}^{4\sqrt{\mu}}(H)$.
\end{enumerate}
\end{lemma}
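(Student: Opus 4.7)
The plan is to build $\Part'$ from $\Part$ by reassigning a small set of ``misplaced'' vertices to their optimal parts, while keeping every other vertex in place, so that $\Part'$ has the same parts as $\Part$ but with different vertex assignments.

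First I would set $L := L_\Part^\mu(H)$ and, applying Proposition~\ref{NonParproperties} with $d = (1+\gamma)/k$ and any sufficiently small $\psi < \gamma/k$, observe that $L$ is full (part~(\ref{NonParproperties:full})) and every part of $\Part$ has size strictly greater than $n/k$ (part~(\ref{NonParproperties:final})); in particular $|\Part| \le k-1$ automatically, so no merging is ever needed. By Theorem~\ref{FullStructure} I would then identify $\Part$ with $G := G(\Part,L)$ and fix $g_0 \in G$ so that an edge $e \in H$ has $\ib_\Part(e) \in L$ if and only if $\sum_{u \in e} g_u = g_0$, where $g_u \in G$ denotes the part containing $u$. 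For each $v \in V(H)$ and each $h \in G$, define
\[ f_h(v) := \left| \left\{ S \in \binom{V(H) \sm \{v\}}{k-1} : S \cup \{v\} \in H,\ \sum_{u \in S} g_u = g_0 - h \right\} \right|, \]
so that $f_{g_v}(v)$ counts edges through $v$ of index in $L$, $\sum_{h \in G} f_h(v) = d_H(v) \ge \gamma n^{k-1}$, and by pigeonhole $\max_{h \in G} f_h(v) \ge \gamma n^{k-1}/(k-1)$ for every $v$.

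Next I would call a vertex $v$ \emph{bad} if $f_{g_v}(v) < \gamma n^{k-1}/(2k)$, let $B$ be the set of bad vertices, and form $\Part'$ by reassigning each $v \in B$ to a part $h_v^*$ maximising $f_h(v)$ (evaluated under the original $\Part$). The key counting step is $|B| = O_k(\mu n/\gamma)$: any $\ib \notin L$ admits fewer than $\mu n^k$ edges of that index (otherwise $\ib \in I_\Part^\mu(H) \subseteq L$), there are only $O_k(1)$ possible $k$-vector indices, and each bad $v$ lies in at least $d_H(v) - f_{g_v}(v) \ge \gamma n^{k-1}/2$ edges of index outside $L$, so double-counting gives the bound. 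With $\alpha := O(\mu/\gamma)$ the resulting $\Part$-to-$\Part'$ closeness parameter, a single reassignment perturbs any other vertex's $f$-value by at most $2n^{k-2}$, so every $f$-value computed under $\Part'$ differs from its original value by at most $2|B|n^{k-2} = O(\mu n^{k-1}/\gamma) \ll \gamma n^{k-1}/k$. Conclusion~(iii) then follows directly: each good vertex still satisfies $f_{g_v}(v) \ge \gamma n^{k-1}/(3k)$, and each reassigned $v$ attains $f_{h_v^*}(v) \ge \gamma n^{k-1}/(k-1) - O(\mu n^{k-1}/\gamma) \ge \gamma n^{k-1}/(3k)$ under $\Part'$.

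The main obstacle is conclusion~(ii). The easy containment $L_{\Part'}^{6\sqrt\mu}(H) \subseteq L_{\Part'}^{4\sqrt\mu}(H) \subseteq L_\Part^\mu(H)$ follows from Proposition~\ref{similarlattices}, which yields $I_{\Part'}^{4\sqrt\mu}(H) \subseteq I_\Part^{4\sqrt\mu - 3\alpha}(H) \subseteq I_\Part^\mu(H)$ because $4\sqrt\mu - 3\alpha \ge \mu$ once $\mu$ is small. The reverse inclusion $L_\Part^\mu(H) \subseteq L_{\Part'}^{6\sqrt\mu}(H)$ is the technical heart and cannot be deduced from a direct application of Proposition~\ref{similarlattices}, which gives only $L_\Part^\mu(H) \subseteq L_{\Part'}^{\mu - 2\alpha}(H)$ while $\mu - 2\alpha \ll \sqrt\mu$. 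To bridge this gap I would invoke the boosting part~(\ref{NonParproperties:lattice}) of Proposition~\ref{NonParproperties}, which upgrades $I_\Part^\mu(H)$ to $I_\Part^\rho(H)$ for the constant $\rho := (1+\gamma)c^{k-1}/(2k \cdot k!)$; Proposition~\ref{similarlattices} then yields $I_\Part^\rho(H) \subseteq I_{\Part'}^{\rho - 2\alpha}(H)$, and for $\mu$ sufficiently small $\rho - 2\alpha > 6\sqrt\mu$, so every generator of $L_\Part^\mu(H)$ lies in $I_{\Part'}^{6\sqrt\mu}(H)$. Since $L_\Part^\mu(H)$ is generated by its $k$-vectors (Lemma~\ref{NonParproperties0}(\ref{EdgeLattice})), this completes~(ii). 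Conclusion~(i) is then immediate: the part-size bound $\ge 4cn$ holds because parts of $\Part$ have size at least $(1+\gamma/2)n/k$, $|B| = O(\mu n)$ and $c \ll 1/k$; transferral-freeness of $L_{\Part'}^{4\sqrt\mu}(H)$ is inherited from that of $L_\Part^\mu(H)$ via the equality established in~(ii); and the non-existence of a transferral-free refinement at threshold $\mu'/2$ into parts of size $\ge 2cn$ comes from Lemma~\ref{RobustInherit} applied to the $\alpha$-close pair $(\Part,\Part')$, using $c + 3\alpha \le 2c$ and $\mu'/2 \le \mu' - 2\alpha$.
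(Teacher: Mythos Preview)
Your proposal is correct and follows essentially the same route as the paper's proof: identify the small set $B$ of ``misplaced'' vertices, reassign each to its best part, and then use the boosting statement Proposition~\ref{NonParproperties}(\ref{NonParproperties:lattice}) together with Proposition~\ref{similarlattices} to prove~(ii), and Lemma~\ref{RobustInherit} to prove~(i). The only notable differences are cosmetic: you phrase the reassignment rule via the group identification of Theorem~\ref{FullStructure} (the paper uses the equivalent fullness statement Lemma~\ref{NonParproperties0}(\ref{NonParproperties:index}) directly), and your double-counting gives $|B|=O_k(\mu n/\gamma)$, tighter than the paper's inclusion--exclusion bound $|B|<\sqrt{\mu}\,n/2$; neither change affects the argument.
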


\begin{proof}
First we note by Proposition~\ref{NonParproperties}(iii) that every part of $\Part$
has size at least $n/k + \gamma n - cn > n/k$. So $|\Part| \leq k-1$, and $\Part$ is $(1/k, c, \mu, \mu')$-robustly maximal with respect to $H$.
Next let $B$ be the set of vertices that belong to
fewer than $\gamma n^{k-1}/2$ edges $e \in H$ with $\ib_\Part(e) \in L_\Part^\mu(H)$.
We claim that $|B| < \sqrt{\mu} n/2$. For otherwise, by the inclusion-exclusion principle 
the number of edges $e$ that intersect $B$ and have $\ib_\Part(e) \notin L_\Part^\mu(H)$ is at least
$(\sqrt{\mu} n/2) \cdot \gamma n^{k-1}/2 - \binom{\sqrt{\mu} n/2}{2}\binom{n}{k-2} \geq \sqrt{\mu} \gamma n^k/5$.
But this is a contradiction, as there are fewer than $k^k \mu n^k$ such edges in total, 
so we must have $|B| < \sqrt{\mu} n/2$. 

Now consider any vertex $v \in B$ and let $X$ be the part of $\Part$ containing $v$. 
For each edge $e \in H$ we let $g_X(e)$ be the unique part $X'$
such that $\ib_\Part(e) - \ub_X + \ub_{X'} \in L_\Part^\mu(H)$; this is well-defined as $I_\Part^\mu(H)$ is full
by Proposition~\ref{NonParproperties}(i), so we can apply Lemma~\ref{NonParproperties0}(\ref{NonParproperties:index}).
By the pigeonhole principle there exists $X(v) \in \Part$ such that $g_X(e) = X(v)$ 
for at least $\gamma n^{k-1}/2k$ edges $e \in H$ containing $v$. 
Let $\Part'$ be the partition obtained from $\Part$ by moving $v$ into $X(v)$ for each $v \in B$.
Then $\Part'$ is $\sqrt{\mu}$-close to $\Part$, so by Lemma~\ref{RobustInherit}, 
$\Part'$ is $(4c, 2c, 4\sqrt{\mu}, \mu'/2)$-robustly maximal with respect to $H$. Also, by Proposition~\ref{similarlattices} we have $L^{\mu^{1/3}}_\Part(H) \subseteq L^{6\sqrt{\mu}}_{\Part'}(H) \subseteq L^{4\sqrt{\mu}}_{\Part'}(H) \subseteq L^\mu_\Part(H)$; since $L^{\mu^{1/3}}_\Part(H) = L^\mu_\Part(H)$ by Proposition~\ref{NonParproperties}(iv) we have $L^\mu_\Part(H) = L^{4\sqrt{\mu}}_{\Part'}(H) = L^{6\sqrt{\mu}}_{\Part'}(H)$.
Finally, for any vertex $v \in V(H)$ the number of edges $e \in H$ containing $v$ with $\ib_{\Part'}(e) \in L_{\Part'}^{4\sqrt{\mu}}(H)$
is at least $\gamma n^{k-1}/2k - \sqrt{\mu} n^{k-1} \geq \gamma n^{k-1}/3k$. 
\end{proof}

\begin{proof}[Proof of Theorem~\ref{WeakerPMNS}] 
We have already proved the forward implication, so it remains to prove the backward implication.
Consider $H$ as in Setup~\ref{setup} and suppose that every full pair $(\Part, L)$ is soluble.
Introduce a new constant $c$ with $\eps \ll c \ll \gamma$, fix $s = \lfloor 1/c \rfloor$ and introduce further new constants $\mu_1, \dots, \mu_{s+1}$ such that 
$\eps \ll \mu_1 \ll \dots \ll \mu_{s+1} \ll c$. 
By Proposition~\ref{RobustExistence} there exists $t \in [s]$ and a partition $\Part$ of $V(H)$ 
that is $(c, c, \mu_t, \mu_{t+1})$-robustly maximal with respect to $H$; we write $\mu = \mu_t$ and $\mu' = \mu_{t+1}$.
Applying Lemma~\ref{MoveVertices} we obtain a partition $\Part'$ of $V(H)$ with at most $k-1$ parts 
which is $(4c, 2c, 4\sqrt{\mu}, \mu'/2)$-robustly maximal with respect to $H$, 
such that $L := L^{6\sqrt{\mu}}_{\Part'}(H) = L_{\Part'}^{4\sqrt{\mu}}(H) = L_\Part^\mu(H)$ and every vertex $v \in V(H)$ 
is in at least $\gamma n^{k-1}/3k$ edges $e \in H$ with $\ib_{\Part'}(e) \in L$.
Note that $(\Part', L)$ is a full pair for $H$ by Proposition~\ref{NonParproperties}(\ref{NonParproperties:full}),
so by assumption it has a solution $M$. 
Taking $V' := V \sm V(M)$ we then have $\ib_{\Part'}(V') \in L$. 
Also, by Proposition~\ref{similarlattices} we have 
$L = L^{6\sqrt{\mu}}_{\Part'}(H) \subseteq L_{\Part'[V']}^{5\sqrt{\mu}}(H[V']) \subseteq L_{\Part'}^{4\sqrt{\mu}}(H) = L$, 
so $L = L_{\Part'[V']}^{5\sqrt{\mu}}(H[V'])$, and by 
Lemma~\ref{RobustInherit} $\Part'[V']$ is $(3c, 3c, 5\sqrt{\mu}, \mu'/3)$-robustly 
maximal with respect to $H'$. We may therefore apply 
Lemma~\ref{NonPartiteLemma} to $H \sm V(M)$ with 
$\gamma/2$, $3c$, $5\sqrt{\mu}$, $\mu'/3$, $\gamma/4k$ and $2k^{k-1}\eps$ in place 
of $\gamma$, $c$, $\mu$, $\mu'$, $d$ and $\eps$ respectively to obtain a 
perfect matching in $H[V']$. 
Together with $M$ this gives a perfect matching in $H$. 
\end{proof}

\subsection{A slower algorithm.} 
As a warmup for Theorem~\ref{PMNeccSuff}, and to provide the details for the 
result given in the extended abstract of this paper~\cite{KKMabs},
we first prove a weaker result in which $2k(k-3)$ is replaced by $2k^{k+3}$;
this is sufficient for our algorithmic result, 
but the running time is of course significantly worse. (Actually, in the extended abstract we assumed instead that any full pair $(\Part^*, L^*)$ for which any matching of edges $e \in H$ with $\ib_{\Part^*}(e) \notin L^*$ has size less than $2k^{k+2}$ is soluble, but the proof shows that this gives the desired conclusion).
This weaker result follows from Theorem~\ref{WeakerPMNS} and the following result.

\begin{lemma} \label{AbstractSoluble} 
Suppose that $k \geq 3$ and $H$ is a $k$-graph such that every $2k^{k+3}$-far full pair for $H$ is soluble.
Then every full pair for $H$ is soluble.
\end{lemma}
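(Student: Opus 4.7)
The plan is to establish the contrapositive: given an insoluble full pair $(\Part, L)$ for $H$, I will construct an insoluble $2k^{k+3}$-far full pair $(\Part^*, L^*)$ by collapsing $\Part$ along a suitable subgroup of the coset group $G = L_{\max}^{\Part}/L$. By Theorem~\ref{FullStructure} I identify $\Part$ with $G$ and write $L = L(G, g_0)$ in the form given by Construction~\ref{FullConstruct}; by Lemma~\ref{GroupSize}, $|G| = |\Part| \leq k-1$. Combining insolubility with Lemma~\ref{EquivSol}, we have $\tau := R_G(V(H)) \neq 0$ and no matching $M$ in $H$ satisfies $\sum_{e \in M} R_G(e) = \tau$.

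Set $N := 2k^{k+1}$. For each $g \in G$, let $H_g$ be the subgraph of edges of residue $g$, let $M_g$ be a fixed maximum matching in $H_g$, and define $S := \{g \in G \setminus \{0\} : |M_g| \geq N\}$ and $G^* := \langle S \rangle \leq G$. The key claim is $\tau \notin G^*$. Since $G^*$ is a finite abelian group of order at most $k-1$, every element of $G^*$ can be written as a non-negative integer combination of $S$ with each coefficient at most $|G^*|-1 \leq k-2$ and total length at most $|S|(k-2) \leq (k-2)^2$. If $\tau = \sum_{g \in S} n_g g$ were such a combination, I would build a matching in $H$ of residue sum $\tau$ greedily, picking $n_g$ edges from $M_g$ that avoid previously chosen vertices. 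The running vertex budget is at most $k(k-2)^2 \leq k^3$; each forbidden vertex lies in at most one edge of $M_g$ (since $M_g$ is a matching); and $|M_g| \geq N = 2k^{k+1} \gg k^3$, so the choices exist. The resulting matching contradicts insolubility of $(\Part, L)$, so $\tau \notin G^*$.

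Given $\tau \notin G^*$, let $\pi : G \to G/G^*$ be the quotient map. Identify $\Part^*$ with $G/G^*$ (so that $\Part^*$ merges the parts of $\Part$ according to $G^*$-cosets) and set $L^* := L(G/G^*, \pi(g_0))$; Lemma~\ref{FullConstructProps} shows $L^*$ is full, and $|\Part^*| \leq k-1$, so $(\Part^*, L^*)$ is a full pair. A direct computation using Theorem~\ref{FullStructure} gives $R_{G/G^*}(e) = \pi(R_G(e))$ for every edge $e$, hence $\ib_{\Part^*}(e) \notin L^*$ iff $R_G(e) \in G \setminus G^*$. By the defining property of $G^*$, the bad edges all lie in $\bigcup_{g \notin G^*} H_g$ and so have matching number at most $(|G|-|G^*|)N \leq (k-2)N$; taking the vertex set of any maximum such matching produces a vertex cover of size at most $k(k-2)N \leq 2k^{k+3}$, so $(\Part^*, L^*)$ is $2k^{k+3}$-far. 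Finally, insolubility of $(\Part^*, L^*)$ follows by the same greedy argument: a solution $M$ of size at most $|\Part^*|-1 \leq k-2$ would satisfy $\pi\bigl(\tau - \sum_{e \in M} R_G(e)\bigr) = 0$, i.e.\ $\tau - \sum_{e \in M} R_G(e) \in G^*$, and we could extend $M$ by a small matching drawn from the $M_g$ (the extra vertex budget is $k(k-2) + k(k-2)^2 \leq 2k^3$, still well below $N$) to obtain a matching in $H$ of residue $\tau$, contradicting the insolubility of $(\Part, L)$.

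The main obstacle is the key claim $\tau \notin G^*$: its proof depends on having enough disjoint edges in each $M_g$ for $g \in S$ to realise any prescribed non-negative integer combination over $S$ of bounded length, which in turn dictates the threshold $N$. The value $N = 2k^{k+1}$ is chosen to dominate the greedy budget comfortably not only for the key claim itself but also for the later extension step used to verify insolubility of $(\Part^*, L^*)$; any substantial improvement of the constant $2k^{k+3}$ would require a more delicate subsequence-sum argument (which is the content of Theorem~\ref{PMNeccSuff} proper).
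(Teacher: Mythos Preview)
Your proof is correct and follows essentially the same approach as the paper's: identify the edge residues that admit many disjoint edges, collapse $\Part$ along the subgroup they generate to obtain a far full pair, and use solubility there together with a greedy extension to solve the original pair. The only real difference is cosmetic---you work in the coset group via Theorem~\ref{FullStructure} and argue by contrapositive, whereas the paper stays at the level of index vectors and lattices and argues directly---but the key steps are in one-to-one correspondence.
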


\begin{proof}
Consider any full pair $(\Part,L)$ for $H$. Let $I$ be a full set in $L$, and let $S$ be the set of $k$-vectors $\ib$ such that
$H$ contains at least $2k^2$ disjoint edges $e \in H$ with $\ib_\Part(e) = \ib$.
Let $L'$ be the lattice generated by $I \cup S$ (so $L \subseteq L'$).
Consider the relation $\sim$ on $\Part$ defined by $X\sim Y$ if and only if $\ub_X - \ub_Y \in L'$; 
it is clear that $\sim$ is an equivalence relation.
Let $\Part^*$ be the partition of $V$ formed by taking unions of equivalence classes under $\sim$ 
and let $L^* = (L' \mid \Part^*)$. 

We claim that $(\Part^*,L^*)$ is a $2k^{k+3}$-far full pair for $H$.
To see this, we first show that $L^*$ is transferral-free. 
Suppose for a contradiction that $\ub_W - \ub_Z \in L^*$ for some distinct parts $W, Z \in \Part^*$.
By definition there exists $\vb \in L'$ such that $(\vb \mid \Part^*) = \ub_W - \ub_Z$.
Consider such $\vb$ that minimises $\sum_{X \in \Part} |v_X|$. Then we cannot have $v_X>0$ and $v_Y<0$
for distinct parts $X,Y \in \Part$ that are contained in the same part of $\Part^*$.
This implies that $\vb = \ub_X - \ub_Y$ for some $X,Y \in \Part$ with $X \sub W$ and $Y \sub Z$,
so $X \sim Y$, which contradicts the definition of $\Part^*$, so $L^*$ is transferral-free.
Next, the projections of vectors in $I$ show that $L^*$ is full.
Now, since there are at most $k^k$ possible values of $\ib_\Part(e)$ for an edge $e \in H$, by choice of $S$,  
any matching of edges $e \in H$ such that $\ib_{\Part^*}(e) \notin L^*$ has size less than $2k^{k+2}$. 
Therefore $(\Part^*,L^*)$ is a $2k^{k+3}$-far full pair for $H$, as claimed.

By assumption $(\Part^*,L^*)$ is soluble, so $H$ has a matching $M$ of size at most $k-2$ 
such that $\ib_{\Part^*}(V(H) \sm V(M)) \in L^*$. 
We claim that in fact $\ib := \ib_{\Part}(V(H) \sm V(M)) \in L'$. 
To see this, we apply Lemma~\ref{NonParproperties0}(\ref{NonParproperties:index})
to get $\ib - \ub_X + \ub_{X'} \in L$ for some parts $X,X'$ of $\Part$.
It follows that $\ib_{\Part^*}(V(H) \sm V(M)) - (\ub_X - \ub_{X'} \mid \Part^*) \in L^*$,
so $(\ub_X - \ub_{X'} \mid \Part^*) \in L^*$. 
Since $L^*$ is transferral-free, $(\ub_X - \ub_{X'} \mid \Part^*) = \0$, 
so $X$ and $X'$ are contained in the same part of $\Part^*$.
This implies $\ub_X - \ub_{X'} \in L'$, so $\ib \in L'$, as claimed.

Next let $G=L'/L$ and note that $|G| \le |G(\Part,L)| \le k-1$ by Lemma~\ref{GroupSize}.
There exists $r \in \N$ such that we can write $\ib = \vb + \sum_{j \in [r]} \ib_j$ for some $\vb \in L$, 
where either $\ib_j \in S$ or $-\ib_j \in S$ for $j \in [r]$.
Then $\ib + L = \sum_{j \in [r]} (\ib_j+L) \in G$, where without loss of generality $\ib_j \in S$ for each $j \in [r]$,
as for any $\ib' \in -S$ we can replace $\ib' + L$ by $|G| - 1$ copies of $-\ib' + L$.
By Proposition~\ref{abeliangroup}, without loss of generality $r \le k-2$.
Now by definition of $S$, we can greedily extend $M$ to a matching $M'$
where for each $j \in [r]$ we add an edge $e$ with $\ib_\Part(e) = \ib_j$.
Then $\ib_{\Part}(V(H) \sm V(M')) \in L$, so $(\Part,L)$ is soluble by Lemma~\ref{EquivSol}.
\end{proof}

\subsection{Merging.}
In Lemma~\ref{ImplySoluble} we will improve Lemma~\ref{AbstractSoluble}
by replacing $2k^{k+3}$ by $2k(k-3)$.
This will require a more careful analysis 
of the lattices that can be obtained by merging parts.

\begin{defin} \label{DefMerge}
Suppose $(\Part, L)$ is a full pair for a $k$-graph $H$ and write $G = G(\Part, L)$.
Fix an identification of $\Part$ with $G$ and an element $g_0 \in G$ so that $L = L(G,g_0)$ (using Theorem~\ref{FullStructure}).
For any subgroup $K$ of $G$, let $\Part_K$ be the partition obtained from $\Part$
by merging any two parts identified with elements in the same coset of $K$.
Let $L_K = (L \mid \Part_K)$.
\end{defin}

For example $\Part_0 = \Part$, $L_0=L$, $\Part_G = \{V(H)\}$ and $L_G=k\Z$.

\begin{lemma} \label{LemmaMerge}
Under the setup of Definition~\ref{DefMerge}, 
$\Part_K$ is well-defined, $(\Part_K,L_K)$ is a full pair, and 
\[L_K = L(G/K,g_0+K).\]
Furthermore, for $\ib \in L^\Part_{\max}$ we have $(\ib \mid \Part_K) \in L_K$
if and only if $R_G(\ib) \in K$. 
\end{lemma}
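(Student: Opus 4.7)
My plan is to interpret everything through the explicit description of full lattices in Construction~\ref{FullConstruct} and Theorem~\ref{FullStructure}, reducing the lemma to a calculation about $G$ modulo $K$.

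First I would verify that $\Part_K$ is well-defined. By Remark~\ref{Translation}, any two valid identifications of $\Part$ with $G$ differ by a translation $X \mapsto X + g'$ for some $g' \in G$, and $(h_1+g')-(h_2+g') = h_1-h_2$, so whether two elements lie in a common coset of $K$ is independent of the chosen identification (and $g_0$ affects $L$ but not $\Part_K$). Next I would show $L_K = L(G/K, g_0+K)$. For the inclusion $\subseteq$: any generator of $L_K$ has the form $(\ib \mid \Part_K)$ with $\ib \in L$. By the characterization in Lemma~\ref{FullConstructProps}, $\sum_{g \in G} i_g g = r g_0$ in $G$ where $r = k^{-1}\sum_g i_g$; reducing this identity modulo $K$ yields $\sum_{h+K}(\ib \mid \Part_K)_{h+K}(h+K) = r(g_0+K)$, which by Lemma~\ref{FullConstructProps} again places $(\ib \mid \Part_K)$ in $L(G/K,g_0+K)$. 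For $\supseteq$: given $\ib' \in L(G/K,g_0+K)$, lift arbitrarily to $\ib \in \Z^\Part$ with $(\ib \mid \Part_K) = \ib'$; then $\sum_g i_g g$ equals $rg_0$ only modulo $K$, i.e.\ their difference is some $k' \in K$. A transferral $\ub_{g_1}-\ub_{g_2}$ with $g_1 \equiv g_2 \pmod K$ changes $\sum_g i_g g$ by the arbitrary $K$-element $g_2-g_1$ while leaving $(\ib \mid \Part_K)$ unchanged, so we can subtract a suitable $\Z$-combination of such transferrals to kill $k'$ and land in $L$.

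With this identification in hand, Lemma~\ref{FullConstructProps} immediately gives that $L_K = L(G/K,g_0+K)$ is full; since $(\Part, L)$ is a full pair we have $|\Part| \le k-1$, and $|\Part_K| = |G/K| \le |G| = |\Part|$ by Lemma~\ref{GroupSize}, so $(\Part_K, L_K)$ is a full pair. For the final assertion, Theorem~\ref{FullStructure} gives $R_G(\ib) = \sum_g i_g g - r g_0 \in G$ for $\ib \in L_{\max}^\Part$. By the characterization of $L(G/K,g_0+K)$ in Lemma~\ref{FullConstructProps}, $(\ib \mid \Part_K) \in L_K$ precisely when $\sum_g i_g(g+K) = r(g_0+K)$, equivalently $R_G(\ib) \in K$.

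The main obstacle I anticipate is the $\supseteq$ step in identifying $L_K$: after lifting $\ib'$ to $\ib$, one must justify that the residual element $k' \in K$ can be absorbed via within-coset transferrals. This amounts to observing that the differences $g_2 - g_1$ with $g_1,g_2$ in a common $K$-coset generate all of $K$ (trivially, by taking $g_1 = 0_G$ and $g_2$ ranging over $K$), combined with the remark that these transferrals lie in $L$ only after projecting to $\Part_K$; the verification must be phrased so that $\ib$ itself is adjusted to lie in $L$ while its projection is preserved.
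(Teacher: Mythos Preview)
Your proposal is correct and follows essentially the same route as the paper: both use the explicit description of full lattices from Lemma~\ref{FullConstructProps} and Theorem~\ref{FullStructure}, verify $L_K \subseteq L(G/K,g_0+K)$ by reducing the defining identity modulo $K$, and handle the reverse inclusion by lifting and then correcting via a within-coset transferral. The paper's $\supseteq$ argument is marginally cleaner---it lifts $\jb$ using a fixed choice of coset representatives and then applies the single explicit transferral $\ub_{g'}-\ub_{0_G}$ (with $g'\in K$) rather than an unspecified $\Z$-combination---which also dissolves the concern you flag at the end: one transferral between two elements of $K$ itself already suffices, and it manifestly preserves the projection to $\Part_K$.
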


\begin{proof}
To see that $\Part_K$ is well-defined, recall from Remark~\ref{Translation}
that the identification $\pi: \Part \to G$ is determined up to translation by $G$;
we merge $X$ and $Y$ if and only if $\pi(X)-\pi(Y) \in K$,
and this property is invariant under any translation of $\pi$.
Note that there is an induced identification of $\Part_K$ with $G/K$.
Now we show that $L_K = L(G/K,g_0+K)$;
then $(\Part_K,L_K)$ is a full pair by Lemma~\ref{FullConstructProps}.

Recall from Lemma~\ref{FullConstructProps} that $L(G,g_0)$ is
the set of index vectors $\ib \in L_{\max}^{\Part}$ with
$\sum_{g \in G} i_g g = (k^{-1} \sum_{g \in G} i_g) g_0$.
For any such $\ib$ we have 
\begin{align*}
\sum_{h \in G/K} (\ib \mid \Part_K)_h h 
& = \sum_{h \in G/K} \left(\sum_{g \in h} i_g\right) h  = \sum_{g \in G} i_g (g+K) \\
&  = \left(k^{-1} \sum_{g \in G} i_g\right) (g_0+K) \\
&  = \left(k^{-1} \sum_{h \in G/K} (\ib \mid \Part_K)_h \right) (g_0+K).
\end{align*}
Thus $(\ib \mid \Part_K) \in L(G/K,g_0+K)$. Since $R_G(\ib)$ = $\sum_{g \in G} i_g g - (k^{-1} \sum_{g \in G} i_g) g_0$ for any $\ib \in L_{\max}^\Part$, this calculation
also establishes the `Furthermore' statement.

Conversely, consider any $\jb \in L(G/K,g_0+K)$,
so that $\sum_{h \in G/K} j_h h = (k^{-1} \sum_{h \in G/K} j_h) (g_0+K)$.
For each $h \in G/K$ fix a representative $\hat{h} \in h$
and consider the index vector $\ib = \sum_{h \in G/K} j_h \ub_{\hat{h}}$ 
with respect to $\Part$. Then $(\ib \mid \Part_K) = \jb$ and 
\begin{align*} 
\sum_{g \in G} i_g g & = \sum_{h \in G/K} j_h \hat{h}
\in \left(k^{-1} \sum_{h \in G/K} j_h\right) (g_0+K) \\
& = \left(k^{-1} \sum_{g \in G} i_g\right) (g_0+K).
\end{align*}
Let $g' = \sum_{g \in G} i_g g - (k^{-1} \sum_{g \in G} i_g)g_0$, so $g' \in K$, and
let $\ib' = \ib - \ub_{g'} + \ub_{0_G}$. 
Then
\[\sum_{g \in G} i'_g g = -g' + \sum_{g \in G} i_g g 
= (k^{-1} \sum_{g \in G} i_g)g_0 = (k^{-1} \sum_{g \in G} i'_g)g_0,\]
so $\ib' \in L(G,g_0)$, and $(\ib' \mid \Part_K) = (\ib \mid \Part_K) = \jb$.
\end{proof}

\subsection{Subsequence sums in finite abelian groups.}
Under the setup of Definition~\ref{DefMerge}, given a matching $M$ in $H$, 
we define $\Sart_G(M)$ to be the set of residues of submatchings of $M$;
that is, \[\Sart_G(M) = \{R_G(V(M')) \mid M' \subseteq M\}.\]
Note that $\ib_\Part(V(H) \sm V(M')) \in L$ if and only if $R_G(V(H)) = R_G(V(M'))$,
and if $R_G(V(H)) \in \Sart_G(M)$ then $(\Part, L)$ is soluble by Lemma~\ref{EquivSol}.
Thus we have the following result.

\begin{prop} \label{CompleteSums}
Under the setup of Definition~\ref{DefMerge}, 
if there is some matching $M$ such that $\Sart_G(M) = G$ then $(\Part, L)$ is soluble.
\end{prop}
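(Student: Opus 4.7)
The plan is to observe that this proposition is essentially an immediate consequence of the observation stated in the paragraph preceding it, together with Lemma~\ref{EquivSol}. Since $H$ is a $k$-graph on $n$ vertices with $k \mid n$, the whole vertex set $V(H)$ has size divisible by $k$, so its residue $R_G(V(H)) \in G$ is well-defined.

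The key identity to check is that for any matching $M'$ in $H$, the residue is additive in the sense that
\[ R_G(V(H) \sm V(M')) = R_G(V(H)) - R_G(V(M')). \]
This holds because $V(M') \sub V(H)$, so $\ib_\Part(V(H) \sm V(M')) = \ib_\Part(V(H)) - \ib_\Part(V(M'))$ as vectors in $L_{\max}^\Part$, and $R_G : L_{\max}^\Part \to G$ is the quotient homomorphism. In particular, the left-hand side is zero in $G$ (equivalently, $\ib_\Part(V(H) \sm V(M')) \in L$) if and only if $R_G(V(M')) = R_G(V(H))$.

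Now I apply the hypothesis: since $\Sart_G(M) = G$, the element $R_G(V(H)) \in G$ lies in $\Sart_G(M)$, so by definition there exists a submatching $M' \sub M$ with $R_G(V(M')) = R_G(V(H))$. By the identity above, $\ib_\Part(V(H) \sm V(M')) \in L$. Since $M'$ is a matching in $H$ with this property, Lemma~\ref{EquivSol} immediately yields that $(\Part, L)$ is soluble.

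There is no real obstacle here: the proposition packages a trivial consequence of the definition of $\Sart_G(M)$ so that it can be cleanly invoked later, with the work of producing a submatching with specified residue deferred to the caller. The substantive content is in Lemma~\ref{EquivSol}, which removes the size restriction $|M'| \le |\Part|-1$ from the definition of solubility.
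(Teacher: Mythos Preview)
Your proof is correct and follows exactly the same approach as the paper, which states the key observation (that $\ib_\Part(V(H) \sm V(M')) \in L$ iff $R_G(V(H)) = R_G(V(M'))$, so $R_G(V(H)) \in \Sart_G(M)$ implies solubility via Lemma~\ref{EquivSol}) in the paragraph immediately preceding the proposition and then records the proposition without a separate proof environment. Your write-up simply makes the additivity of $R_G$ and the invocation of Lemma~\ref{EquivSol} more explicit.
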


To analyse $\Sart_G(M)$ we require some lemmas on subsequence sums in abelian groups.
Roughly speaking, we can obtain sufficient lower bounds provided 
that $|\Sart_G(M)|$ increases by at least $1$ when we add an edge to $M$.
Our first lemma gives structural information when this is not the case.
First we introduce some notation for sequences.

\emph{For the remainder of this subsection we fix any finite abelian group $G$.}

\begin{defin}
Let $a$ be a sequence in $G$.
\begin{enumerate}[(i)]
\item We write $\Sart(a)$ for the set of sums of all subsequences of $a$. 
\item For any subgroup $K$ of $G$ we write $a+K$
for the sequence of $K$-cosets of terms in $a$.
\item For $j \ge 1$ we write $a_j$ for the $j$th term of $a$ (when it exists).
\item For $j \ge 0$ we write $a^j = (a_1,\dots,a_j)$.
\item We write $a \circ b$ for the concatenation of two sequences $a$ and $b$.
\end{enumerate}
\end{defin}

\begin{lemma} \label{UnionOfCosets} 
Suppose $a$ is a sequence in  $G$,
and $x \in G$ is such that $\Sart(a \circ (x)) = \Sart(a)$.
Then $\Sart(a)$ is a union of cosets of $\bgen{x}$.
\end{lemma}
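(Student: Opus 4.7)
The plan is to show that $\Sart(a)$ is closed under translation by $x$, and then to use finiteness of $G$ to upgrade this to closure under translation by any element of the cyclic subgroup $\bgen{x}$.

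First I would observe the following consequence of the hypothesis: for every $s \in \Sart(a)$, choose a subsequence $b$ of $a$ summing to $s$; then the subsequence $b \circ (x)$ of $a \circ (x)$ sums to $s + x$, so $s + x \in \Sart(a \circ (x)) = \Sart(a)$. Hence $\Sart(a) + x \subseteq \Sart(a)$, and since $\Sart(a)$ is finite we even have $\Sart(a) + x = \Sart(a)$.

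Next, a trivial induction on $j \ge 0$ gives $\Sart(a) + jx = \Sart(a)$. Since $G$ is finite, $x$ has some finite order $m$, so $-x = (m-1)x$ and therefore $\Sart(a) + gx = \Sart(a)$ for every integer $g$. Equivalently, $\Sart(a)$ is invariant under translation by every element of the cyclic subgroup $\bgen{x}$, which is exactly the statement that $\Sart(a)$ is a union of cosets of $\bgen{x}$.

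There is essentially no obstacle here; the only mildly delicate point is to note that closure under $+x$ really does imply closure under $-x$, for which we use that $G$ (and hence $\bgen{x}$) is finite. The argument does not require any structural information about $a$ beyond what the hypothesis $\Sart(a \circ (x)) = \Sart(a)$ directly provides.
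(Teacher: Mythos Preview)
Your proof is correct and takes essentially the same approach as the paper: both arguments observe that $s\in\Sart(a)$ implies $s+x\in\Sart(a\circ(x))=\Sart(a)$, and then iterate to obtain $s+rx\in\Sart(a)$ for all $r\ge 0$, which (using finiteness of $G$) gives closure under translation by $\bgen{x}$.
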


\begin{proof} 
Consider $y \in \Sart(a)$. It suffices to show that $y+rx \in \Sart(a)$ for every $r \in \N$.
We prove this by induction on $r$. The base case $r=0$ is given. For the induction step,
if $y+rx \in \Sart(a)$ then $y+(r+1)x = (y+rx)+x \in \Sart(a\circ (x)) = \Sart(a)$. 
\end{proof}

\begin{rem} \label{Prime}
Lemma~\ref{ImplySoluble} follows directly from Lemma~\ref{UnionOfCosets} 
for full pairs $(\Part, L)$ such that $|G(\Part, L)|=p$ is prime.
Indeed, consider a greedy construction of a matching $M$, where at each step we add
an edge that increases the size of $\Sart_G(M)$, if possible.
The construction terminates in at most $|G|-1$ steps,
because either $\Sart_G(M)=G$ or no edge increases the size of $\Sart_G(M)$.
If $\Sart_G(M)=G$ then $(\Part, L)$ is soluble by Proposition~\ref{CompleteSums}.
On the other hand, if $\Sart_G(M) \ne G$ we claim that 
for any edge $e$ disjoint from $M$ we have $R_G(e)=0_G$, i.e.\ $\ib_\Part(e) \in L$.
For otherwise, since $p$ is prime we have $\bgen{R_G(e)}=G$,
so by Lemma~\ref{UnionOfCosets} $\Sart_G(M)$ is a union of cosets of $G$, 
i.e.\ $\Sart_G(M)=G$, which contradicts our assumption.
Note also that $|M| \le |G|-2 \le k-3$, so $|V(M)| \le k(k-3)$.
Thus $(\Part, L)$ is a $k(k-3)$-far full pair,
so is soluble by the assumption of Lemma~\ref{ImplySoluble}.
\end{rem}

Our next lemma gives a monotonicity property that will allow us to extend matchings,
while maintaining the coset structure for subsequence sums.

\begin{lemma} \label{MonotoneGroup} 
Suppose $a$ is a sequence in  $G$
and $\Sart(a)$ is a union of cosets of some subgroup $K$.
Then $\Sart(a \circ b)$ is a union of cosets of $K$ for any sequence $b$ in $G$.
\end{lemma}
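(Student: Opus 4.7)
The plan is to prove this directly from the definition of subsequence sums, using the decomposition of subsequences of $a \circ b$ into subsequences of $a$ and of $b$. No induction should be needed: the result reduces essentially to the observation that $\Sart(a \circ b) = \Sart(a) + \Sart(b)$, combined with the assumed coset structure of $\Sart(a)$.

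First I would pick an arbitrary $y \in \Sart(a \circ b)$ and write it as $y = s_a + s_b$, where $s_a$ is the sum of some subsequence of $a$ and $s_b$ is the sum of some subsequence of $b$ (these subsequences can be read off from a subsequence of $a \circ b$ whose sum is $y$). Then for any $k \in K$, since $s_a \in \Sart(a)$ and $\Sart(a)$ is by hypothesis a union of cosets of $K$, we have $s_a + k \in \Sart(a)$. Hence there is a subsequence of $a$ of sum $s_a + k$, and concatenating with the subsequence of $b$ of sum $s_b$ gives a subsequence of $a \circ b$ of sum $(s_a + k) + s_b = y + k$. Therefore $y + k \in \Sart(a \circ b)$.

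This shows $\Sart(a \circ b) + K \subseteq \Sart(a \circ b)$, equivalently that $\Sart(a \circ b)$ is a union of cosets of $K$. The only thing to verify carefully is that a subsequence of $a \circ b$ really does split cleanly into a subsequence of $a$ and a subsequence of $b$ — this is immediate from the definition of concatenation, since each term of $a \circ b$ comes from exactly one of the two sequences, and a subsequence just selects some of the terms. There is no significant obstacle here; the lemma is a direct consequence of the fact that the set of sums of a concatenation is the sumset of the sets of sums, together with the closure property $\Sart(a) + K = \Sart(a)$ given by the hypothesis.
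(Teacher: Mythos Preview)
Your proof is correct and is essentially identical to the paper's own proof: both pick $y \in \Sart(a \circ b)$, split it as a sum of an element of $\Sart(a)$ and an element of $\Sart(b)$, and then use the hypothesis $\Sart(a)+K=\Sart(a)$ to absorb any $k \in K$ into the first summand.
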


\begin{proof} 
Consider $y \in \Sart(a \circ b)$, and write $y = y_1 + y_2$, 
such that $y_1 \in \Sart(a)$ and $y_2 \in \Sart(b)$. 
Let $z \in y + K$, and write $z = y + h$ for $h \in K$.
Then $y_1 + h \in \Sart(a)$, and hence $z = (y_1 + h) + y_2 \in \Sart(a \circ b)$. 
\end{proof}

We deduce a property of sequences with the following minimality property.

\begin{defin}
Suppose $a$ is a sequence in $G$. We say that $a$ is \emph{minimal}
if $\Sart(a') \ne \Sart(a)$ for any proper subsequence $a'$ of $a$.
\end{defin}

\begin{lemma} \label{MinSequence}
Suppose $a$ is a minimal sequence in $G$ and $K$ is a subgroup of $G$.
Then $a$ has at most $|K|-1$ elements in $K$.
\end{lemma}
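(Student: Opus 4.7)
The plan is a proof by contradiction. Suppose $a$ is minimal but contains at least $|K|$ elements in $K$; let $b = (b_1, \ldots, b_m)$ with $m \ge |K|$ be the subsequence of $a$ consisting of its $K$-elements in order, and let $c$ be the subsequence of the remaining elements. Any subsequence of $a$ decomposes uniquely into a subsequence of $b$ together with a subsequence of $c$, and summing is commutative, so $\Sart(a) = \Sart(b) + \Sart(c)$ and $\Sart(a \setminus b_j) = \Sart(b \setminus b_j) + \Sart(c)$ as sumsets in $G$ for every $j \in [m]$. Therefore it suffices to find $j \in [m]$ such that $\Sart(b \setminus b_j) = \Sart(b)$: this would force $\Sart(a \setminus b_j) = \Sart(a)$, contradicting the minimality of $a$.

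To find such a $j$, I would examine the nondecreasing chain of subsets of $K$,
\[
\{0\} = \Sart(b^0) \subseteq \Sart(b^1) \subseteq \cdots \subseteq \Sart(b^m) \subseteq K.
\]
Since the chain starts at a singleton and has $m+1 \ge |K|+1$ entries, all of size at most $|K|$, it cannot be strictly increasing throughout, so two consecutive entries must coincide: $\Sart(b^j) = \Sart(b^{j-1} \circ (b_j)) = \Sart(b^{j-1})$ for some $j \in [m]$. Lemma~\ref{UnionOfCosets} then shows that $\Sart(b^{j-1})$ is a union of cosets of $\bgen{b_j}$, and Lemma~\ref{MonotoneGroup}, applied to $b^{j-1}$ and the suffix $(b_{j+1}, \ldots, b_m)$, extends this property to $b \setminus b_j = b^{j-1} \circ (b_{j+1}, \ldots, b_m)$. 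Thus $\Sart(b \setminus b_j)$ is a union of cosets of $\bgen{b_j}$; in particular it is closed under adding $b_j$, which gives
\[
\Sart(b) = \Sart(b \setminus b_j) \cup (b_j + \Sart(b \setminus b_j)) = \Sart(b \setminus b_j),
\]
as needed.

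The key step is choosing what to apply pigeonhole to: at the level of partial \emph{sums} we would only obtain a zero-sum subsequence, which is not immediately useful here; tracking instead the chain of partial \emph{sumsets} inside the bounded subset lattice of $K$ is exactly what allows the two preceding lemmas to conclude. The remaining steps are routine, and I do not anticipate any real obstacle.
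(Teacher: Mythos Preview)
Your proof is correct and follows essentially the same approach as the paper: split $a$ into its $K$-part $b$ and the rest $c$, use pigeonhole on the chain $\Sart(b^0) \subseteq \cdots \subseteq \Sart(b^m) \subseteq K$ to find a stationary step, then apply Lemmas~\ref{UnionOfCosets} and~\ref{MonotoneGroup} to show that deleting the corresponding $b_j$ leaves $\Sart(a)$ unchanged, contradicting minimality. The only cosmetic difference is that you phrase the final step via the sumset identity $\Sart(a) = \Sart(b) + \Sart(c)$, whereas the paper argues elementwise.
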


\begin{proof} 
Let $b$ and $c$ be the subsequences of $a$ consisting of its elements 
that are in $K$ and not in $K$ respectively.
It suffices to prove that $|\Sart(b^{j+1})|>|\Sart(b^j)|$ for $j \ge 0$,
as then $b$ has length at most $|\Sart(b)| - 1 \leq |K| - 1$.
Suppose for a contradiction that $|\Sart(b^{j+1})|=|\Sart(b^j)|$ for some $j \ge 0$.
Let $a'$ and $b'$ be obtained from $a$ and $b$ by deleting $b_{j+1}$.
We will show that $\Sart(a)=\Sart(a')$, contradicting minimality of $a$.
Consider any $y \in \Sart(a)$. We claim that $y \in \Sart(a')$.
We can assume $y$ is a subsequence sum using $b_{j+1}$.
Then $y = b_{j+1} + y_1 + y_2$ with $y_1 \in \Sart(b')$ and $y_2 \in \Sart(c)$.
Note that $\Sart(b^j)$ is a union of cosets of $\bgen{b_{j+1}}$ by Lemma~\ref{UnionOfCosets},
and the same is true of $\Sart(b')$ by Lemma~\ref{MonotoneGroup}.
Thus $b_{j+1} + y_1 \in \Sart(b')$, so $y \in \Sart(a')$, as required.
\end{proof}

Now we associate the following key subgroup with a sequence
(Lemma~\ref{UniqueMaximal} will show that it is well-defined).

\begin{defin} \label{Key}
Suppose $a$ is a sequence in  $G$. The \emph{key subgroup} $K(a)$ for $a$
is the unique subgroup $K$ such that $\Sart(a)$ is a union of cosets of $K$ 
and $K$ is maximal with this property.
\end{defin}

Note that the key subgroup always exists since every subset of $G$ is a union of cosets of $\{0\}$.
The following lemma shows that it is unique, and derives some of its properties.

\begin{lemma} \label{UniqueMaximal} 
Suppose $a$ is a sequence in $G$.
Then the key subgroup $K(a)$ is well-defined, and
\begin{enumerate}[(i)] 
\item if $|G|>1$ and $a$ has at least $|G|-1$ nonzero entries then $|K(a)|>1$,
\item $K(a+K(a))$ is the trivial subgroup of $G/K(a)$,
\item if $|G/K(a)|>1$ then $a$ has at most $|G/K(a)|-2$ elements not in $K(a)$.
\end{enumerate}
\end{lemma}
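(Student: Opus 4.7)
The plan is to address the four assertions (well-definedness plus (i)–(iii)) in turn, drawing on Lemmas~\ref{UnionOfCosets}, \ref{MonotoneGroup}, and \ref{MinSequence}. For well-definedness, I would show that the family of subgroups $K$ for which $\Sart(a)$ is a union of $K$-cosets is closed under taking sums: if $\Sart(a)$ is a union of $K_1$-cosets and of $K_2$-cosets, then for any $s \in \Sart(a)$ we have $s + K_1 \sub \Sart(a)$, so $s + K_1 + K_2 \sub \Sart(a) + K_2 = \Sart(a)$, showing closure under translation by $K_1 + K_2 = \bgen{K_1, K_2}$. Hence this family has a unique maximal element $K(a)$.

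For (i), let $b$ be the subsequence of nonzero terms of $a$, so $\Sart(a) = \Sart(b)$ and $|b| \ge |G|-1$. Consider the chain $\{0\} = \Sart(b^0) \sub \Sart(b^1) \sub \dots \sub \Sart(b^{|b|}) = \Sart(b)$. If every inclusion is strict then $|\Sart(b)| \ge |b| + 1 \ge |G|$, forcing $\Sart(b) = G$, which is a union of $G$-cosets; since $|G|>1$, this gives $|K(a)| > 1$. Otherwise some step has $\Sart(b^j) = \Sart(b^{j-1})$, and Lemma~\ref{UnionOfCosets} applied with $x = b_j \ne 0$ shows $\Sart(b^{j-1})$ is a union of $\bgen{b_j}$-cosets, which then transfers to $\Sart(b)$ by Lemma~\ref{MonotoneGroup}. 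Either way, $K(a)$ contains a nontrivial cyclic subgroup.

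For (ii), set $K = K(a)$, let $\pi \colon G \to G/K$ be the quotient, and write $\bar a = a + K$ for the sequence of cosets. Let $\bar K' = K(\bar a)$ and $K' = \pi^{-1}(\bar K')$, so $K \sub K' \sub G$. The key step is to show $\Sart(a)$ is a union of $K'$-cosets, for then maximality of $K$ forces $K' = K$ and hence $\bar K'$ is trivial. To do this, fix $s \in \Sart(a)$ and $k' \in K'$. Since $\pi(s) \in \Sart(\bar a)$ and $\Sart(\bar a)$ is a union of $\bar K'$-cosets, $\pi(s + k') = \pi(s) + \pi(k') \in \Sart(\bar a)$, so there is a subsequence of $a$ summing to some $s' \in G$ with $\pi(s') = \pi(s + k')$; then $s' = s + k' + t$ with $t \in K$, and since $\Sart(a)$ is closed under translation by $K$, we conclude $s + k' = s' - t \in \Sart(a)$.

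Part (iii) then follows by applying (i) contrapositively inside the quotient $G/K(a)$: since by (ii) the key subgroup of $\bar a$ in $G/K(a)$ is trivial while $|G/K(a)| > 1$, (i) forces $\bar a$ to have fewer than $|G/K(a)| - 1$ nonzero entries, which is exactly the assertion that $a$ has at most $|G/K(a)| - 2$ elements outside $K(a)$. The main obstacle is part (ii), where one must carefully lift the $\bar K'$-coset closure of $\Sart(\bar a)$ back to $K'$-closure of $\Sart(a)$ using the additional $K$-coset closure already guaranteed; the other parts are essentially bookkeeping given the earlier lemmas.
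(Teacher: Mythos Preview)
Your proposal is correct and follows essentially the same approach as the paper: closure of the coset-union property under sums for well-definedness, the chain argument via Lemmas~\ref{UnionOfCosets} and~\ref{MonotoneGroup} for (i), pulling back the key subgroup in the quotient via the preimage $K' = \pi^{-1}(\bar K')$ for (ii), and then deducing (iii) from (i) and (ii) in the quotient. The only cosmetic differences are that the paper phrases (ii) as a proof by contradiction and works directly with $a$ in (i) rather than first passing to the nonzero subsequence; also, you never actually need Lemma~\ref{MinSequence} here, despite listing it.
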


\begin{proof} 
To show that $K(a)$ is well-defined, 
it suffices to show that if $\Sart(a)$ is a union of cosets of $K$ and of $K'$, 
then $\Sart(a)$ is also a union of cosets of $K+K'$.
To see this, let $y$ be any element of $\Sart(a)$ and consider $z \in y + K+K'$.
Write $z = y + h + h'$, where $h \in K$ and $h' \in K'$.
Now $y + h \in \Sart(a)$ as $\Sart(a)$ is a union of cosets of $K$,  
so $z = (y+h) + h' \in \Sart(a)$ as $\Sart(a)$ is a union of cosets of $K'$.

To prove (i), we argue similarly to Remark~\ref{Prime}.
Either $|\Sart(a)| = |G|$ (so $K(a) = G$), or there exists $j \ge 0$ such that $a_{j+1} \neq 0$ and $\Sart(a^j) = \Sart(a^{j+1})$. In the latter case Lemma~\ref{UnionOfCosets} implies that $\Sart(a^j)$ is a union of cosets of $\bgen{a_{j+1}}$, 
and by Lemma~\ref{MonotoneGroup} the same holds for $\Sart(a)$. 

For (ii), suppose for a contradiction that $K'=K(a+K(a))$ 
is a non-trivial subgroup of $G/K(a)$.
Then $\{x + K(a) \mid x \in \Sart(a)\} = \bigcup_{t \in T} (t+K')$ for some $T \sub G/K(a)$.
Let $K^*$ be the set of $g \in G$ such that
$g$ is contained in some coset $z+K(a)$ with $z \in K'$.
Then $K^*$ is a subgroup of $G$ that strictly contains $K(a)$.
Now fix representatives $\hat{t} \in t+K(a)$ for $t \in T$
and note that $\Sart(a) = \bigcup_{t \in T} (\hat{t}+K^*)$,
which contradicts the definition of $K(a)$. 

To show (iii), suppose for a contradiction that $a$ 
has at least $|G/K(a)|-1$ elements not in $K(a)$.
Applying (i) in $G/K(a)$ we have $|K(a+K(a))|>1$,
which contradicts (ii), so we are done.
\end{proof}

\subsection{The full result.} \label{StrongPMNSSec}
Theorem~\ref{PMNeccSuff} follows from Theorem~\ref{WeakerPMNS} and the following result.

\begin{lemma} \label{ImplySoluble} 
Suppose that $k \geq 3$ and $H$ is a $k$-graph such that every $2k(k-3)$-far full pair for $H$ is soluble.
Then every full pair for $H$ is soluble.
\end{lemma}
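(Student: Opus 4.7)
The plan is to follow the strategy of Lemma~\ref{AbstractSoluble}, using the subsequence-sum machinery (Lemmas~\ref{UnionOfCosets}--\ref{UniqueMaximal}) together with the merging calculus of Definition~\ref{DefMerge} and Lemma~\ref{LemmaMerge} to coarsen the given full pair into one that is $2k(k-3)$-far rather than $2k^{k+3}$-far. Fix a full pair $(\Part,L)$ for $H$ and write $G = G(\Part, L)$, so by Lemma~\ref{GroupSize} we have $|G| = |\Part| \le k-1$. The small case $k = 3$ (in which $|G| \le 2$) is dealt with by hand: if $|G| \le 1$ or if every edge of $H$ already has residue $0$ in $G$ then $(\Part, L)$ is itself $0$-far and the hypothesis applies; otherwise some edge $e$ has $R_G(e) \ne 0$ generating $G$, whence $\Sart_G(\{e\}) = G \ni R_G(V(H))$ and Lemma~\ref{EquivSol} yields direct solubility.

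For $k \ge 4$, I would first build a witness matching $M \subseteq H$ greedily: starting from $M = \emptyset$ with residue sequence $a = ()$, while there exists an edge $e \in H$ disjoint from $V(M)$ with $R_G(e) \notin K(a)$, add $e$ to $M$ and append $R_G(e)$ to $a$. By Lemma~\ref{UnionOfCosets}, each such $e$ strictly enlarges $\Sart_G(M)$, so the procedure terminates with $|M| \le |G| - 1 \le k-2 \le 2(k-3)$; in particular $|V(M)| \le 2k(k-3)$. The stopping condition guarantees that every edge of $H$ disjoint from $V(M)$ has $R_G(e) \in K := K(a)$.

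Now the case split. If $R_G(V(H)) \in \Sart_G(M)$, pick $M' \subseteq M$ with $R_G(V(M')) = R_G(V(H))$; then $\ib_\Part(V(H)\sm V(M')) \in L$ and $(\Part, L)$ is soluble by Lemma~\ref{EquivSol}. Otherwise $R_G(V(H)) \notin \Sart_G(M) \supseteq K$, so $K \subsetneq G$, and I form the merged pair $(\Part_K, L_K)$ from Definition~\ref{DefMerge}, which is a full pair by Lemma~\ref{LemmaMerge}. Every edge $e \in H$ with $\ib_{\Part_K}(e) \notin L_K$ satisfies $R_G(e) \notin K$ and hence is hit by $V(M)$ (by the stopping condition), so $(\Part_K, L_K)$ is $2k(k-3)$-far. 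The hypothesis then supplies a matching $M^*$ with $R_{G/K}(V(H) \sm V(M^*)) = 0$, i.e.\ $R_G(V(H) \sm V(M^*)) = g$ for some $g \in K$.

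The main obstacle, which I expect to be the hardest part of the proof, is the final combining step: from $M^*$ produce a matching $M''$ in $H$ with $R_G(V(H) \sm V(M'')) = 0$. The natural attempt is $M'' = M^* \cup M_0$ where $M_0 \subseteq M$ satisfies $R_G(V(M_0)) = g$ and $V(M_0) \cap V(M^*) = \emptyset$; such an $M_0$ with the correct residue is available because $g \in K \subseteq \Sart_G(M)$, but ensuring disjointness from $V(M^*)$ is subtle because $M^*$ is forced to meet $V(M)$ (otherwise its edges would lie in $H \sm V(M)$ with residues in $K$, forcing $R_G(V(H)) \in K \subseteq \Sart_G(M)$, contrary to our case). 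To overcome this I plan to exploit the flexibility of $M$ via the sum-structure lemmas: minimise the residue sequence $a$ using Lemma~\ref{MinSequence} and Lemma~\ref{UniqueMaximal}(iii) to bound the length by $(|K|-1) + (|G/K|-2) \le k-3$ while preserving $\Sart_G$, then use a swap argument that replaces an edge $e^* \in M^*$ meeting some $e \in M$ by $e$ and absorbs the resulting residue change $R_G(e) - R_G(e^*)$ into the chosen $M_0$ using the abundance of representations of each element of $K$ in $\Sart_G(M)$; finally Lemma~\ref{EquivSol} trims $M''$ to the required size $\le |\Part| - 1$ in the definition of solubility.
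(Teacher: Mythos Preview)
Your reduction to the merged pair $(\Part_K, L_K)$ is correct and matches the paper's first move: the greedy matching $M$ has $|M| \le |G|-1 \le k-2 \le 2(k-3)$ for $k \ge 4$, so $(\Part_K, L_K)$ is indeed $2k(k-3)$-far and the hypothesis yields a solution $M^*$. The genuine gap is exactly where you flag it: the combining step. Your swap argument does not work as stated, and I do not see how to repair it along those lines. If $e^* \in M^*$ meets $e \in M$ and you replace $e^*$ by $e$, then (a) $e$ may intersect other edges of $M^*$, (b) $e$ is no longer available for $M_0$, and (c) the residue defect $R_G(e)-R_G(e^*)$ need not be absorbable by the \emph{remaining} edges of $M$, whose subsequence sums you no longer control once you start removing edges (minimality of $a$ was chosen for $\Sart_G$, not for robustness under deletion). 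The claim that ``abundance of representations of each element of $K$ in $\Sart_G(M)$'' saves you is unjustified: by Lemma~\ref{MinSequence} a minimal $a$ has at most $|K|-1$ terms in $K$, so representations of a fixed $g \in K$ can be scarce.

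The paper avoids this disjointness problem entirely by an iterative scheme rather than a one-shot reduction. It maintains a chain $G = K_0 \supsetneq K_1 \supsetneq \dots$ and a growing set $S_t \supseteq V(M_{\mathrm{sol}}^{(t)})$ where $M_{\mathrm{sol}}^{(t)}$ solves $(\Part_{K_t},L_{K_t})$ using only vertices in $S_t$. At step $t$ the new witness matching $M$ is built inside $H - S_t$, hence automatically disjoint from the stored solution. If the resulting key subgroup satisfies $K(M)=K_t$, then $M_{\mathrm{sol}}^{(t)}$ and a suitable $M' \subseteq M$ are disjoint by construction and together solve $(\Part,L)$; otherwise one passes to $K_{t+1}=K(M) \subsetneq K_t$, applies the hypothesis to the $|S_t \cup V(M)|$-far pair $(\Part_{K_{t+1}},L_{K_{t+1}})$ to obtain the next $M_{\mathrm{sol}}^{(t+1)}$, and enlarges $S_{t+1}=S_t \cup V(M) \cup V(M_{\mathrm{sol}}^{(t+1)})$. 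The real work is the size accounting: one needs the refined bound $|M| \le (|K_{t+1}|-1) + (|K_t/K_{t+1}|-2)$ from Lemmas~\ref{MinSequence} and~\ref{UniqueMaximal}(iii), together with $|M_{\mathrm{sol}}^{(t+1)}| \le |G/K_{t+1}|-1$, and then a purely arithmetic inequality (Lemma~\ref{Calc}) to show that the accumulated $|S_t \cup V(M)|$ never exceeds $2k(k-3)$ over all iterations. Your single-step bound $|M| \le k-2$ is too crude to survive this accumulation; the paper's decomposition into the three pieces $|G/K_j|$, $|K_{j-1}/K_j|$, $|K_j|$ is what makes the telescoping in Lemma~\ref{Calc} go through.
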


First we require the following calculation.

\begin{lemma} \label{Calc} Suppose $t \ge 1$ and $\xb \in \Z^{t+1}$ with $x_i \ge 2$ for $i \in [t+1]$. Let
$$f(\xb) = \sum_{j \in [t]} x_1 \ldots x_j + \sum_{j \in [t+1]} x_j + \sum_{j \in [t]} x_{j+1} \ldots x_{t+1} - 4t - 2.$$
Then $f(\xb) \leq 2(x_1 \ldots x_{t+1} - 2)$.
\end{lemma}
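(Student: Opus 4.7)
The plan is to induct on $t$. For the base case $t=1$, direct substitution gives $f(x_1,x_2) = 2x_1 + 2x_2 - 6$, and the desired inequality $f(x_1,x_2) \le 2(x_1 x_2 - 2)$ rearranges to $(x_1-1)(x_2-1) \ge 1$, which is immediate from $x_1,x_2 \ge 2$.

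For the inductive step, I would set $\yb = (x_1,\dots,x_t)$, write $Q = x_1\cdots x_t$, and let $g(\yb)$ denote the analogue of $f$ for the shorter sequence $\yb$, so that the inductive hypothesis gives $g(\yb) \le 2(Q-2)$. A careful term-by-term comparison (noting that appending $x_{t+1}$ adds the single summand $x_1\cdots x_t$ to the first sum, adds $x_{t+1}$ to the middle sum, and multiplies each existing summand of the third sum by $x_{t+1}$ while introducing a new terminal summand $x_{t+1}$) yields
$$f(\xb) - g(\yb) \;=\; Q + 2x_{t+1} + (x_{t+1}-1)T - 4, \qquad T := \sum_{j=1}^{t-1} x_{j+1}\cdots x_t.$$
Combining with the inductive hypothesis and rearranging, the target $f(\xb) \le 2(P-2) = 2Qx_{t+1}-4$ reduces to
$$(x_{t+1}-1)(2Q - T) - Q - 2(x_{t+1}-2) \;\ge\; 0.$$

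At this point I would invoke the elementary bound $T \le Q$, which follows from a geometric-series estimate: each summand satisfies $x_{j+1}\cdots x_t = Q/(x_1\cdots x_j) \le Q/2^j$, so $T \le Q(1 - 2^{1-t}) \le Q$. Hence $2Q - T \ge Q$ and $(x_{t+1}-1)(2Q-T) \ge (x_{t+1}-1)Q$; substituting collapses the displayed quantity to $(x_{t+1}-2)(Q-2)$, which is nonnegative since $x_{t+1} \ge 2$ and $Q \ge 2^t \ge 2$. This closes the induction.

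The hard part will be the bookkeeping involved in identifying $f(\xb) - g(\yb)$ correctly: unlike the first and middle sums, the right-side partial-product sum in $f$ is not obtained from that in $g$ merely by appending one new term, since every existing summand gets multiplied by $x_{t+1}$. Once that difference is computed, the inequality reduces cleanly to a single nonnegativity statement that the crude bound $T \le Q$ resolves with room to spare.
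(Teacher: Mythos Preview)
Your proof is correct. The bookkeeping in the identity
\[
f(\xb) - g(\yb) = Q + 2x_{t+1} + (x_{t+1}-1)T - 4
\]
checks out, the reduction to $(x_{t+1}-1)(2Q-T) - Q - 2(x_{t+1}-2) \ge 0$ is accurate, and the geometric bound $T \le Q(1-2^{1-t}) < Q$ is exactly what is needed to collapse this to $(x_{t+1}-2)(Q-2) \ge 0$.

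Your argument differs genuinely from the paper's. The paper does not induct on $t$; instead it fixes $t$ and successively replaces each coordinate $x_r$ by $2$, proving that the ratio $f(\xb)/f(\xb^{r\to 2})$ is at most $x_r/2$, which in turn is at most the corresponding ratio of right-hand sides. This reduces everything to the all-$2$ vector, but the ratio bound fails in one exceptional configuration ($t=2$, $r=2$, $x_1=x_3=2$), which the paper must handle separately. Your induction on $t$ sidesteps this case analysis entirely and is in that sense cleaner; the price is the somewhat delicate recurrence relating $f(\xb)$ to $g(\yb)$, which you have handled correctly. Either approach works, but yours requires no special cases beyond the base.
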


\begin{proof}
First we consider the case $t=1$. Then $f(\xb) = 2x_1 + 2x_2 - 6$, 
so the required inequality is equivalent to $x_1 + x_2 - 1 \le x_1x_2$;
this holds as $(x_1 - 1)(x_2 - 1) \ge 0$.
Now suppose $t>1$. Note that the inequality holds if $x_i=2$ for all $i \in [t+1]$,
as then $f(\xb) = (2^{t+1}-2) + 2(t+1) + (2^{t+1}-2) - 4t-2 < 2(2^{t+1}-2)$. Also note that the inequality holds if $t=2$ and $\xb = (2,s,2)$ for some $s > 2$, as then $f(\xb) = 5s + 8 - 10 < 2(4s - 2)$.
For $r \in [t+1]$ write $\xb^{r \to 2}$ for the vector 
obtained from $x$ by setting $x_r$ equal to $2$. 
We will show that 
\[\frac{f(\xb)}{f(\xb^{r \to 2})} \le \frac{x_r}{2}
\le \frac{x_1 \ldots x_{t+1} - 2}{2x_1 \ldots x_{r-1} x_{r+1} \ldots x_{t+1} - 2},\]
except in the case where $t = r = 2$ and $\xb = (2, s, 2)$ for some $s > 2$. This suffices to prove the lemma, since we can replace each $x_r$ by $2$ in turn until we reach one of the cases considered above.
The second inequality is immediate from $x_r \ge 2$.
For the first, we rewrite it as
\begin{align*}
& 0 \le x_rf(\xb^{r \to 2}) - 2f(\xb) \\
& = (x_r - 2)\left(\sum_{j \in [r-1]} x_1 \ldots x_j + \sum_{j \in [t+1], j \neq r} x_j 
+ \sum_{r \leq j \leq t} x_{j+1} \ldots x_{t+1} - 4t - 2\right).
\end{align*}
If $t \ge 3$, or if $t=2$ and $r \ne 2$, this holds since there are $2t$ positive terms in the last bracket, 
each of which is at least $2$ and at least one of which is at least $4$.
Alternatively, if $t=r=2$ and either $x_1 \geq 3$ or $x_3 \geq 3$ we note that at least two terms in the bracket are at least $3$.
In any case we have the desired inequality.
\end{proof}

\begin{proof}[Proof of Lemma~\ref{ImplySoluble}.] 
We proceed by induction on $|\Part|$.
Let $(\Part, L)$ be a full pair and assume that every full pair $(\Part', L')$ with $|\Part'| < |\Part|$ is soluble.
We will show that $(\Part, L)$ is soluble. Write $G = G(\Part, L)$.
We will construct a sequence 
$G = K_0 \supseteq K_1 \supseteq K_2 \supseteq \ldots \supseteq K_{t_0} \supseteq 0$ of distinct subgroups of $G$, 
together with a sequence of vertex sets $\emptyset = S_0 \subseteq S_1 \subseteq S_2 \subseteq \ldots \subseteq S_{t_0}$, 
such that the following conditions hold:
\begin{enumerate}[(i)]
\item $|S_t|/k \leq \sum_{j \in [t]} |G/K_j| + \sum_{j \in [t]} |K_{j-1}/K_j| + \sum_{j \in [t]} |K_j| - 4t$.
\item Every edge $e \in H$ such that $R_G(e) \notin K_t$ intersects $S_t$.
\item There exists a solution for $(\Part_{K_t}, L_{K_t})$ using only vertices from $S_t$.
\end{enumerate}
Recalling that $\Part_{G}$ is the trivial partition of $V(H)$ into a single part, 
it is clear that $S_0 = \emptyset$ satisfies these conditions.
Given $K_t$ and $S_t$, we construct $K_{t + 1}$ and $S_{t + 1}$ as follows.

For any matching $M$, let $K(M)$ be the key subgroup of the sequence of residues of edges in $M$ (in any order).
Consider the set of all matchings $M$ in $H - S_t$ such that $\Sart_G(M \cup \{e\}) = \Sart_G(M)$ 
for every edge $e \in H - S_t$ which does not intersect $V(M)$.
This is a non-empty set, as it contains all maximal matchings in $H-S_t$.
We choose such an $M$ so that 
$\Sart_G(M)$ is maximal, and subject to this $M$ is minimal. Having chosen $M$, we set $K_{t+1} := K(M)$. 
 Note that the choice of $M$ implies that $\Sart_G(M') \neq \Sart_G(M)$ for any strict subset $M' \subset M$. Indeed, if $\Sart_G(M') = \Sart_G(M)$ then minimality of $M$ implies that some edge $e \in H - S_t$ which does not intersect $V(M')$ has $\Sart_G(M' \cup \{e\}) \neq \Sart_G(M') = \Sart_G(M)$, but then a maximal matching containing $M' \cup \{e\}$ contradicts the maximality of $\Sart_G(M)$.
Also observe that any edge $e$ with $R_G(e) \notin K_{t+1}$ intersects $S_t \cup V(M)$;
otherwise $\Sart_G(M)$ is a union of cosets of $\bgen{R_G(e)}$ by Lemma~\ref{UnionOfCosets},
which contradicts the definition of $K(M) = K_{t+1}$ as being the unique maximal subgroup of $G$ such that $\Sart_G(M)$ is a union of cosets of $K(M)$.
Thus $(\Part_{K_{t+1}}, L_{K_{t+1}})$ is a $|S_t \cup V(M)|$-far full pair for $H$
by Lemma~\ref{LemmaMerge}. 

First we consider the case that $K_{t+1} = K_t$.
By (iii) there is a solution $M_{sol}$ for $(\Part_{K_t}, L_{K_t})$ using only vertices from $S_t$.
Then $R_G(V(H) \sm V(M_{sol})) \in K_t$ by Lemma~\ref{LemmaMerge}.
Since $K(M)=K_{t+1}=K_t$ there is $M' \sub M$ with $R_G(V(M')) = R_G(V(H) \sm V(M_{sol}))$.
Now $M' \cup M_{sol}$ is a matching, as $M$ is disjoint from $S_t$,
and $R_G(V(H) \sm V(M' \cup M_{sol})) = 0_G$,
so $(\Part, L)$ is soluble by Lemma~\ref{EquivSol}.

Now we can assume that $K_{t+1}$ is a strict subgroup of $K_t$.
We will show that $|S_t \cup V(M)| \le 2k(k-3)$, 
so that we can apply the hypothesis of the lemma.
Define $\xb \in \Z^{t+1}$ by $x_j = |K_{j-1}/K_j|$ for $j \in [t]$ and $x_{t+1} = |K_t|$.
Then $x_j \ge 2$ for all $j \in [t+1]$ and $x_1 \dots x_{t+1} = |G| \le k-1$ by Lemma~\ref{GroupSize}.
Note that $M$ has at most $|K_{t+1}|-1$ elements $e$ with $R_G(e) \in K_{t+1}$ by Lemma~\ref{MinSequence}.
Also, since $R_G(e) \in K_t$ for all $e \in M$ by (ii), 
$M$ has at most $|K_t/K_{t+1}|-2$ elements $e$ with $R_G(e) \notin K_{t+1}$ by Lemma~\ref{UniqueMaximal}(iii).
Therefore $|M| \le |K_{t+1}| + |K_t/K_{t+1}| - 3$.

By (i) and Lemma~\ref{Calc} we have
\begin{align*} 
|S_t \cup V(M)|/k & \leq \sum_{j \in [t]} |G/K_j| + \sum_{j \in [t+1]} |K_{j-1}/K_j| + \sum_{j \in [t+1]} |K_j| - 4t - 3 \\
& = f(\xb) - (|K_t/K_{t+1}| - 1)(|K_{t+1}| - 1) \\
& \le 2(|G|-2) \le 2(k-3).
\end{align*}
Thus $(\Part_{K_{t+1}}, L_{K_{t+1}})$ is a $2k(k-3)$-far full pair for $H$,
so by assumption has a solution $M_{sol}$, which by definition has size at most $|G/K_{t+1}|-1$.

If $|K_{t+1}|=1$ then $M_{sol}$ is a solution for $(\Part, L)$.
Otherwise, we define $S_{t+1} = S_t \cup V(M) \cup V(M_{sol})$ and proceed to the next step.
The required conditions on $S_{t+1}$ hold as 
\[|S_{t+1}|/k \le |S_t|/k + |M| + |M_{sol}| \le |S_t|/k + |G/K_{t+1}| + |K_t/K_{t+1}| + |K_{t+1}| - 4,\]
every edge with $R_G(e) \notin K_{t+1}$ intersects $S_t \cup V(M)$,
and there is a solution for $(\Part_{K_{t+1}}, L_{K_{t+1}})$ using only vertices from $S_{t+1}$ (namely $M_{sol}$).
\end{proof}

\section{Deferred proofs} \label{DeferredProofsSection}

In this section we present the proofs of Lemmas~\ref{FindPMLemma},~\ref{RobustRandom} and~\ref{NonPartiteLemma}.
 
\subsection{Proof of Lemma~\ref{FindPMLemma}} 
Recall that we are given a $k$-graph $H$ on $n$ vertices that has a perfect matching,
and for each set $A$ of $k-1$ vertices of $H$ we let $t_A = \max(0, (1/k + \gamma)n - d_H(A))$.
Write $V=V(H)$, 
\[ \chi_1 = \sum_{A \in {V \choose k-1}} t_A^2 \ \ \text{ and } \ \
\chi_2 = \max\left( 0, \frac{n^{k-1}}{3k!} - \delta_1(H) \right).\]
We are given
\begin{enumerate}
\item[(i)] $\chi_1 < \eps \gamma^2 n^{k+1}/4 + 3kn^k$, and
\item[(ii)] $\chi_2 +  \tfrac{\chi_1}{\sqrt{\eps}\gamma^2 n^2} < \sqrt{\eps} n^{k-1}$.
\end{enumerate}
(We have slightly rephrased (ii) from the hypothesis of the lemma by including the maximum in
the definition of $\chi_2$; to see that it is equivalent, note that if $\chi_2=0$
then by (i) we have $\tfrac{\chi_1}{\sqrt{\eps}\gamma^2 n^2} < \sqrt{\eps} n^{k-1}$.)

Let $e$ be any edge of $H$.
By (i) there are at most $2\eps n^{k-1}$ $(k-1)$-sets $A \subseteq V \sm e$ such that 
$d(A) < (1/k + \gamma/2)n$, and by (ii) we have $\chi_2 < \sqrt{\eps} n^{k-1}$, so $\delta_1(H - e) \geq n^{k-1}/6k!$. 
Thus we can apply Theorem~\ref{polytime} to $H-e$ for every $e \in H$, with $\gamma/2$ and $3\eps$ in place of $\gamma$ and $\eps$, to construct the set $E'$ of edges $e$ such that $H-e$ contains a perfect matching; in total this takes time $O(n^{3k^2 - 7k})$. 

We will show that we can find an edge $e \in E'$ in time $O(n^{2k})$ such that $H-e$ satisfies the analogues of (i) and (ii). 
More precisely, for any $(k-1)$-set $A \sub V$ we define $t_A^e$ by
$$ t_A^e = \begin{cases} 
 \max(0, (1/k + \gamma)(n-k) - d_{H-e}(A)) & \mbox{if $A \sub V \sm e$}, \\
 0 & \mbox{otherwise}.
\end{cases}$$
 We also define 
\[ \chi_1^e = \sum_{A \in {V \sm e \choose k-1}} (t_A^e)^2 \ \ \text{ and } \ \
\chi_2^e = \max\left(0, \frac{(n-k)^{k-1}}{3k!} - \delta_1(H-e)\right).\]
Then we need to find $e \in E'$ so that
\begin{enumerate}
\item[(i)${}^e$] $\chi_1^e < \eps \gamma^2 (n-k)^{k+1}/4 + 3k(n-k)^k$, and
\item[(ii)${}^e$] $\chi_2^e +  \tfrac{\chi_1^e}{\sqrt{\eps}\gamma^2 (n-k)^2} < \sqrt{\eps} (n-k)^{k-1}$.
\end{enumerate}

First we claim that for each $A \in {V \choose k-1}$, 
\begin{equation} \label{FindPMEqn0}
(t^e_A)^2 \leq t_A^2 + 2t_A(|H(A) \cap e| - 1 - k\gamma) + k^2.
\end{equation}
Note that if $t_A = 0$ then~(\ref{FindPMEqn0}) follows from $t^e_A \leq k$.
On the other hand, if $t_A > 0$, then 
\begin{align*}
(t^e_A)^2 &\leq (t_A + |H(A) \cap e| - 1 - k\gamma)^2 \\
&= t_A^2 + 2t_A(|H(A) \cap e| - 1 - k\gamma) + (|H(A) \cap e| - 1 - k\gamma)^2 \\
&\leq t_A^2 + 2t_A(|H(A) \cap e| - 1 - k\gamma) + k^2,
\end{align*}
so (\ref{FindPMEqn0}) holds. Now to find the desired edge $e$, we split into two cases. 
 
\medskip \noindent \textbf{Case 1:} \emph{$\chi_2 > 0$.} 
In this case, by definition there must exist a vertex $x$ of degree at most $n^{k-1}/3k!$, 
and we can find such a vertex in time $O(n^{k})$. 
Since $H$ contains a perfect matching, there is some $e \in E'$ containing $x$.
We now delete $e$ and show that conditions (i)$^e$ and (ii)$^e$ hold.
First we define $\mc{A} = \{A \in {V \choose k-1}: x \in A\}$ and
note that $\sum_{A \in \mc{A}} d_H(A) = (k-1)d_H(x) \leq n^{k-1}/3(k-1)!$.
Since $|\mc{A}| = {n-1 \choose k-2}$ we have
\[\sum_{A \in \mc{A}} t_A \geq \left(\frac{1}{k} + \gamma\right)n{n-1 \choose k-2} - \frac{n^{k-1}}{3(k-1)!} \geq \frac{n}{2k}{n-1 \choose k-2}.\]
Now by the Cauchy-Schwartz inequality,
we obtain $\sum_{A \in \mc{A}} t_A^2 \geq \tfrac{n^2}{4k^2}{n-1 \choose k-2}$.
Moreover, by~(\ref{FindPMEqn0}) and then Cauchy-Schwartz we have
\begin{align*}
\sum_{A \in {{V \sm e} \choose k-1}} & \brac{(t^e_A)^2 - t_A^2} \leq \sum_{A \in {{V \sm e} \choose k-1}} \brac{2t_A(k-1) + k^2} \\
&\leq 2(k-1)\sqrt{{n \choose k-1} \chi_1} + k^2{n-k \choose k-1} \leq \sqrt{\eps} n^k,
\end{align*}
where the final inequality holds by (i). Since $\mc{A} \subseteq \binom{V}{k-1} \sm \binom{V \sm e}{k-1}$, we have
\begin{equation} \label{chi1e}
\chi_1^e \leq \chi_1 - \frac{n^2}{4k^2}{n-1 \choose k-2} + \sqrt{\eps} n^k \leq \chi_1 - \frac{n^k}{7k!},
\end{equation}
so $\chi_1^e < \eps \gamma^2 n^{k+1}/4 + 3kn^k - n^k/7k! \leq \eps \gamma^2 (n-k)^{k+1}/4 + 3k(n-k)^k$, which proves (i)${}^e$.

Note also that $\chi_2^e \leq \chi_2 + k{n \choose k-2}$, so by (\ref{chi1e}) we have
\begin{align*}
& \brac{\frac{\chi_1^e}{\sqrt{\eps} \gamma^2 (n-k)^2} + \chi_2^e}
-  \brac{\frac{\chi_1}{\sqrt{\eps} \gamma^2 n^2} + \chi_2} \\
& \le \frac{\chi_1}{\sqrt{\eps} \gamma^2 n^2} \cdot \frac{2kn-k^2}{(n-k)^2} 
 - \frac{n^k}{7k!\sqrt{\eps} \gamma^2 (n-k)^2} + k{n \choose k-2} \\
& \le k \sqrt{\eps} n^{k-2} - \frac{n^{k-2}}{\gamma} + k{n \choose k-2} < -n^{k-2}.
\end{align*}
Now by (ii) we deduce that
\[ \frac{\chi_1^e}{\sqrt{\eps} \gamma^2 (n-k)^2} + \chi_2^e  
< \sqrt{\eps} n^{k-1} - n^{k-2} < \sqrt{\eps} (n-k)^{k-1}, \]
which proves (ii)${}^e$. 

\medskip \noindent \textbf{Case 2:} \emph{$\chi_2 = 0$.}
We claim that we can find in time $O(n^{2k})$ an edge $e \in E'$ such that
\begin{equation} \label{FindPMEqn1}
\sum_{A \in {V \choose k-1}} \Big( 2t_A(|H(A) \cap e| - 1 - k\gamma) - |A \cap e| t_A^2 \Big) \leq -\frac{k(k+1)\chi_1}{n}.
\end{equation}
To see this, consider a perfect matching $M$ in $H$
(this exists by hypothesis of the lemma, although the algorithm does not have access to it).
Note that
\begin{align*}
&\sum_{e \in M} \sum_{A \in {V \choose k-1}} 2t_A(|H(A) \cap e| - 1 - k\gamma) \\
& = \sum_{A \in {V \choose k-1}} 2t_A \sum_{e \in M} (|H(A) \cap e| - 1 - k\gamma) \\
& = \sum_{A \in {V \choose k-1}} 2t_A\left(\left(\frac{1}{k} + \gamma\right)n - t_A - \frac{(1 + k \gamma)n}{k}\right) \\
& = \sum_{A \in {V \choose k-1}} -2t_A^2 = -2\chi_1.
\end{align*}
(For the second equality, note that terms with $t_A = 0$ contribute zero to each side of the equation, whilst terms with $t_A > 0$ are equal by definition of $t_A$ and the fact that $M$ has size $n/k$).
Note also that 
\[\sum_{e \in M} \sum_{A \in {V \choose k-1}} |A \cap e| t_A^2 = \sum_{A \in {V \choose k-1}} (k-1) t_A^2 = (k-1)\chi_1.\]
We deduce that
\[ \sum_{e \in M} \sum_{A \in {V \choose k-1}} 
\Big( 2t_A(|H(A) \cap e| - 1 - k\gamma) - |A \cap e| t_A^2 \Big) = -(k+1)\chi_1,\]
so by averaging there is some $e \in M$ that satisfies~(\ref{FindPMEqn1}).
Note further that $e \in E'$, so we can find such an edge $e$ in time $O(n^{2k})$ 
simply by checking~(\ref{FindPMEqn1}) for every edge of $E'$.  

We will show that (i)${}^e$ and (ii)${}^e$ hold for this choice of $e$.
Let $D(e)$ be the family of $(k-1)$-sets $A \subseteq V$ which intersect $e$.
Applying~(\ref{FindPMEqn0}), we obtain
\begin{align} \label{FindPMEqn2}
\chi_1^e - \chi_1 &= \sum_{A \in {V \choose k-1}} ((t_A^e)^2 - t_A^2) 
= \sum_{A \in {V\sm e \choose k-1}} ((t_A^e)^2 - t_A^2) - \sum_{A \in D(e)} t_A^2 \nonumber \\
&\leq \sum_{A \in {V\sm e \choose k-1}} 2t_A(|H(A) \cap e| - 1 - k\gamma) - \sum_{A \in D(e)} t_A^2 + k^2 {n \choose k-1}.
\end{align}
We will show that the first and second terms of (\ref{FindPMEqn2}) are close to those of (\ref{FindPMEqn1}).
For the first, note that $|D(e)| = \binom{n}{k-1}-\binom{n-k}{k-1} \le \frac{k^2}{n}\binom{n}{k-1}$, so
\begin{align} \label{FindPMEqn3} 
& \sum_{A \in {V\sm e \choose k-1}} 2t_A(|H(A) \cap e| - 1 - k\gamma)
- \sum_{A \in {V \choose k-1}} 2t_A(|H(A) \cap e| - 1 - k\gamma) \nonumber \\
& \le \sum_{A \in D(e)} n \le k^2 \binom{n}{k-1}.
\end{align}
For the second, observe that if $A \in \binom{V}{k-1}$ is chosen uniformly at random then $|A \cap e|$ is hypergeometric with mean $k(k-1)/n$, so
\[\sum_{A \in D(e)} |A \cap e| = \sum_{A \in \binom{V}{k-1}} |A \cap e| = \frac{k(k-1)}{n} {n \choose k-1}. \]
We also have 
\[|D(e)| = {n \choose k-1} - {n-k \choose k-1} \geq \brac{ \frac{k(k-1)}{n} - \frac{k^4}{2n^2} } {n \choose k-1},\] 
so $\sum_{A \in D(e)} (|A \cap e| - 1) \leq \frac{k^4}{2n^2} {n \choose k-1}$
and so $\sum_{A \in D(e)} (|A \cap e| - 1) t_A^2 \leq k^2 {n \choose k-1}$.
It follows that 
\begin{equation} \label{FindPMEqn4}
\sum_{A \in D(e)} t_A^2 \geq \sum_{A \in {V \choose k-1}} |A \cap e| t_A^2 - k^2{n \choose k-1}.
\end{equation}
Substituting~(\ref{FindPMEqn3}) and~(\ref{FindPMEqn4}) into~(\ref{FindPMEqn2}) and then applying~(\ref{FindPMEqn1}), we obtain
\begin{align*}
\chi_1^e &\leq \chi_1 - \frac{k(k+1)\chi_1}{n} + k^2{n \choose k-1} + k^2{n \choose k-1}  + k^2{n \choose k-1} \\
&\leq \brac{1-\frac{k(k+1)}{n}}\left(\frac{\eps \gamma^2 n^{k+1}}{4} + 3kn^k\right) + 3k^2{n \choose k-1} \\
&\leq \eps \gamma^2 (n-k)^{k+1}/4 + 3k(n-k)^k,
\end{align*}
using (i) for the second inequality, and $n^{k+1} - (n-k)^{k+1} \le k(k+1)n^k$ and $n^k - (n-k)^k \le k^2 n^{k-1}$ for the third. This proves (i)${}^e$.
Further, since $\chi_2 = 0$ we have $\chi_2^e \leq k{n - 1 \choose k-2}$, so
\[\frac{\chi_1^e}{\sqrt{\eps} \gamma^2 (n - k)^2} + \chi_2^e 
< \frac{\sqrt{\eps} (n - k)^{k-1}}{2} + k{n - 1 \choose k-2} < \sqrt{\eps} (n - k)^{k-1},\]
which proves (ii)${}^e$.
\qed

\subsection{Weak hypergraph regularity}

The proofs of Lemma~\ref{RobustRandom} and~\ref{NonPartiteLemma} use the weak hypergraph regularity lemma.
We state this after the following definitions.

\begin{defin}
Suppose that $\Part$ partitions a set $V$ into $r$ parts $V_1, \dots, V_r$ and $G$ is a $\Part$-partite $k$-graph on $V$. 
For $A \in \binom{[r]}{k}$ we write $G_A$ for the induced $k$-partite subgraph of $G$ with parts $V_i$ for $i \in A$.
\begin{enumerate}[(i)]
\item The \emph{density} of $G_A$ is $d(G_A) = \frac{|G_A|}{\prod_{i \in A} |V_i|}$. 
\item For $\eps>0$ and $A \in \binom{[r]}{k}$, we say that the $k$-partite subgraph $G_A$ is
\emph{$\eps$-vertex-regular} if for any sets $V'_i \subseteq V_i$ with $|V'_i| \ge \eps |V_i|$ for $i \in A$,
writing $V' = \bigcup_{i \in A} V'_i$, we have $d(G_A[V']) = d(G_A) \pm \eps$.  
\item We say that a partition $\Part$ of $V(G)$ is \emph{$\eps$-regular} if all but at most $\eps |V|^k$ edges of $G$
  belong to $\eps$-vertex-regular $k$-partite subgraphs.
\item The \emph{reduced $k$-graph} $R^d_\Part$ is the $k$-graph whose vertices are the parts of $\Part$ and whose edges are all $k$-sets of
parts of $\Part$ that induce an $\eps$-vertex-regular $k$-partite subgraph of $G$ of density at least $d$.
\end{enumerate}
\end{defin}

Note that we use the same notation $d(\cdot)$ for density as for degree, but there should be no confusion. We shall use the following formulation of the weak hypergraph regularity lemma to obtain regular partitions. 

\begin{thm} (Weak hypergraph regularity lemma) \label{weakrl}
Suppose that $1/n \ll 1/m_0 \ll \eps, 1/r \ll 1/k$. Suppose that $G$ is a $k$-graph on $n$ vertices and that $\Part$
is a partition of $V = V(G)$ into at most $r$ parts of size at least $\eps n$. Then there is an $\eps$-regular partition $\Qart$ of $V$ into $m \leq m_0$ parts such that
\begin{enumerate}[(i)]
\item each part of $\Qart$ has size $\lfloor n/m \rfloor$ or $\lceil n/m \rceil$, and
\item $\Qart$ is $\eps$-close to a refinement $\Part'$ of $\Part$.
\end{enumerate}
\end{thm}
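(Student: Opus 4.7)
The plan is the standard Szemer\'edi-style energy-increment argument, seeded with $\Part$ and followed by an equitization step. For a partition $\Qart = (V_1, \dots, V_s)$ of $V$, I would define the index
\[\mathrm{ind}(\Qart) = \sum_{A \in \binom{[s]}{k}} \frac{\prod_{i \in A} |V_i|}{n^k}\, d(G_A)^2 \in [0,1],\]
which is non-decreasing under refinement by Cauchy--Schwarz. A defect version reads: if $G_A$ fails $\eps$-vertex-regularity, as witnessed by subsets $V_i' \sub V_i$ of density at least $\eps$ for each $i \in A$ with $|d(G_A[\bigcup V_i']) - d(G_A)| \ge \eps$, then refining each $V_i$ into $V_i'$ and $V_i \sm V_i'$ increases the contribution of $A$ to $\mathrm{ind}$ by at least $\eps^{k+2} w_A /2$, where $w_A = \prod_{i \in A}|V_i|/n^k$. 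If $\Qart$ is not $(\eps/2)$-regular, the non-regular tuples carry at least $(\eps/2)n^k$ edges, hence a combined weight $\sum w_A \ge \eps/2$; performing these defect refinements simultaneously produces a common refinement $\Qart'$ with $\mathrm{ind}(\Qart') \ge \mathrm{ind}(\Qart) + \eps^{k+3}/4$.

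Starting from $\Qart_0 := \Part$ and iterating, the process halts within $4\eps^{-k-3}$ steps at some $(\eps/2)$-regular refinement $\Part''$ of $\Part$ with at most $N = N(\eps, r, k)$ parts; by the assumed hierarchy $1/m_0 \ll \eps, 1/r \ll 1/k$ we may force $N \le m_0^{1/2}$, say. To achieve (i), I would then equitize: fix $m$ with $N \ll m \le m_0$ and arbitrarily split each $X \in \Part''$ into blocks of size $\lfloor n/m \rfloor$ or $\lceil n/m \rceil$ plus one leftover of size $<n/m$. Gathering all leftovers (at most $N \cdot n/m \ll \eps n/2$ vertices) and redistributing them so that every part has size $\lfloor n/m \rfloor$ or $\lceil n/m \rceil$ yields the final partition $\Qart$. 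By construction $\Qart$ is $\eps$-close to a refinement $\Part'$ of $\Part$ obtained from $\Part''$ by sub-dividing its parts, giving (i) and (ii).

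The main technical point is to verify that $\Qart$ remains $\eps$-regular after equitization. Each $(\eps/2)$-vertex-regular tuple from $\Part''$ now splits into many $k$-tuples of parts of $\Qart$, and the hard part will be to check that these inherit vertex-regularity despite the movement of vertices. Because only an $O(N/m)$-fraction of each vertex class is exchanged and $N/m \ll \eps$, a short counting argument shows that the density on any refined $k$-tuple, restricted to subsets of density $\ge \eps$ in each part, differs from the original density by at most $O(\eps N/m) + O(\eps^2)$, which is far less than $\eps$. Running the whole iteration with $\eps' = \eps/C(k)$ in place of $\eps$ for a suitably large constant $C(k)$ absorbs all these constant-factor losses and delivers $\eps$-regularity for $\Qart$.
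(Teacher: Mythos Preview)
The paper takes a much shorter route: it simply invokes the standard weak hypergraph regularity lemma of Chung~\cite{C} in its usual form with an exceptional set $V_0$ of size at most $\eps' n$ and an equitable $\eps'$-regular partition of $V \sm V_0$ refining $\Part$, and then distributes the vertices of $V_0$ as evenly as possible among the parts. Since $\eps' \ll \eps$, this immediately gives (i) and (ii) while preserving $\eps$-regularity. Your approach, by contrast, re-proves the regularity lemma from scratch via energy increment, and this is where a gap arises.

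The equitization step as written does not work. You run the energy iteration to obtain a refinement $\Part''$ of $\Part$ which is $(\eps/2)$-regular, and only \emph{then} split each part of $\Part''$ into blocks of size roughly $n/m$. Your verification addresses only the redistribution of the $O(Nn/m)$ leftover vertices, but the subdivision itself is the problem: an arbitrary subdivision of an $(\eps/2)$-vertex-regular $k$-tuple into small pieces need not yield $\eps$-vertex-regular sub-tuples. Concretely, if some $V_i \in \Part''$ has size of order $n$ and $Q_i \sub V_i$ is one of your blocks of size $n/m$, then a subset $W_i \sub Q_i$ with $|W_i| \ge \eps|Q_i| = \eps n/m$ may still satisfy $|W_i| \ll (\eps/2)|V_i|$, so the regularity of the original tuple says nothing about the density on $(W_1, \dots, W_k)$. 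The degradation in the regularity parameter is by a factor of order $|V_i|/|Q_i|$, which depends on $m$ and is not absorbed by replacing $\eps$ with $\eps/C(k)$. The standard remedy is to equitize at \emph{every} step of the iteration, so that parts remain balanced throughout and this scale mismatch never occurs; this is precisely what Chung's lemma does internally, and the paper just quotes it.
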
 

\begin{proof}
Taking $1/m_0 \ll \eps' \ll \eps$, the most commonly-used form of the weak hypergraph regularity lemma (see~\cite{C}) states that there exist an integer $m \leq m_0$, an exceptional set $V_0 \sub V$ of size $|V_0| \leq \eps' n$ and a partition $\Part'$ of $V \sm V_0$ into parts $V_1, \dots, V_m$ of equal size which is $\eps'$-regular (with respect to $H[V \sm V_0]$) and has the property that $V_j$ is a subset of some part of $\Part$ for any $j \in [m]$. By distributing the vertices of $V_0$ as equally as possible among the parts of $\Part'$ we obtain the desired partition $\Qart$. 
\end{proof}

\subsection{Proof of Lemma~\ref{RobustRandom}} 

Before giving the proof of Lemma~\ref{RobustRandom}, we give a brief sketch of the idea.
A rough statement of the lemma is that robust maximality is preserved by random selection,
i.e.\ if some partition $\Part$ of $V(H)$ is robustly maximal with respect to $H$
and $S$ is a suitable random subset of $V(H)$ then with high probability $\Part[S]$ 
is also robustly maximal (with slightly weaker parameters) with respect to $H[S]$.
It is not hard to see that $H[S]$ has a transferral-free robust edge-lattice with respect to $\Part[S]$.
The main difficulty is to show that with high probability there is no strict refinement $\Part^\circ$ of $\Part[S]$
(with large parts) that has a transferral-free robust edge-lattice.
Since there are many possible refinements $\Part^\circ$ to consider, 
a straightforward union bound on the probability will not suffice.
Instead, we use the weak hypergraph regularity lemma.
We show that any such refinement $\Part^\circ$ gives rise to a partition
of the reduced $k$-graph, which in turn gives rise a refinement $\Part^*$ of $\Part$ with a transferral-free robust edge-lattice. However, this contradicts the robust maximality of $\Part$,
so no such refinement $\Part^\circ$ can exist.
 
\begin{proof}[Proof of Lemma~\ref{RobustRandom}.] 
Recall that we are given a $k$-graph $H$ on $n$ vertices and
a partition $\Part$ of $V(H)$ that is $(c, c', \mu, \mu')$-robustly maximal with respect to $H$.
We are also given a partition $\Part'$ of $V(H)$ that refines $\Part$ and integers $(n_Z)_{Z \in \Part'}$ such that $\eta |Z| \le n_Z \le |Z|$. 
We choose $S \sub V(H)$ uniformly at random subject to the condition that $|S \cap Z| = n_Z$ for each $Z \in \Part'$.
First we note that $|S| \geq \eta n$, and for each part $X \in \Part$ that $|S \cap X| \ge \eta |X| \ge \eta cn$.
Next we show that $L_{\Part[S]}^{\mu/c}(H[S])$ is transferral-free. 
It suffices to prove that $I_{\Part[S]}^{\mu/c}(H[S]) \subseteq I_\Part^\mu(H)$. 
To see this, note that if $\ib \in I_{\Part[S]}^{\mu/c}(H[S])$ then there are at least $(\mu/c)(\eta n)^k \geq \mu n^k$ edges $e \in H[S] \sub H$ with $\ib_{\Part[S]}(e) = \ib$, so $\ib \in I_\Part^\mu(H)$. 

It remains to show that with high probability there is no refinement $\Part^\circ$ of $\Part[S]$ with parts of size at least $(3c'/\eta)|S|$, such that $L_{\Part^\circ}^{(\mu')^3}(H[S])$ is transferral-free. Suppose that we have such a partition $\Part^\circ$; then we will obtain a contradiction using events that hold with high probability.
Introduce new constants $\eps, N_0, N_1$ such that $1/n \ll 1/N_1 \ll 1/N_0 \ll \eps \ll \mu$. 
We apply Theorem~\ref{weakrl} to $H$ to obtain an $\eps$-regular partition $\Qart$ of $V(H)$ which has $n_R$ parts of
almost equal size, for some $N_0 \le n_R \le N_1$, and which is $\eps$-close to a refinement of $\Part$. Let $n_0 = n/n_R$, so every cluster has size $\lfloor n_0 \rfloor$ or $\lceil n_0 \rceil$. We henceforth omit the floor and ceiling signs as these do not affect the argument.
Let $R = R^{\mu'/3}_\Qart$ be the $\mu'/3$-reduced $k$-graph on $\Qart$.

For any $Y \in \Qart$, note that $|Y \cap S|$ is a sum of independent hypergeometric random variables, with
\[ \Exp[|Y \cap S|] = \sum_{Z \in \Part'} \frac{n_Z |Y \cap Z|}{|Z|} \ge \sum_{Z \in \Part'} \eta |Y \cap Z| = \eta n_0.\]
Hence by Corollary~\ref{SumOfHyper} with high probability $|Y \cap S| \geq \eta n_0/2$ for every $Y \in \Qart$. 
Next we choose a partition of $V(R)$ that is `representative' of $\Part^\circ$.

\begin{claim} \label{RobustRandomClaim} There exists a partition $\Sart$ of $V(R)$, whose parts $(X_\Sart)_{X \in \Part^\circ}$ correspond to those of $\Part^\circ$, such that
\begin{enumerate}[(i)]
\item Each part $X_\Sart \in \Sart$ has size at least $3c'n_R/2$, and
\item $|X \cap Y| \geq (c')^2 n_0$ whenever $Y \in X_\Sart$. 
\end{enumerate}
\end{claim}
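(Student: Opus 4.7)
The plan is to assign each cluster $Y \in \Qart$ to some part $X \in \Part^\circ$ in such a way that $|X \cap Y|$ is not small and each $X_\Sart$ is large. I would begin by observing that $|S| = \sum_{Z \in \Part'} n_Z \geq \eta n$, so the hypothesis that each part of $\Part^\circ$ has size at least $(3c'/\eta)|S|$ forces $|\Part^\circ| \leq \eta/(3c')$ and $|X| \geq 3c' n$ for every $X \in \Part^\circ$. By Corollary~\ref{SumOfHyper} and a union bound over $\Qart$, I may condition on the high-probability event that $|Y \cap S| \geq \eta n_0 / 2$ for every $Y \in \Qart$.

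Next I would define the bipartite graph $G$ on $\Part^\circ \cup \Qart$ whose edges are the pairs $\{X, Y\}$ with $|X \cap Y| \geq (c')^2 n_0$, and establish two properties. First, every $Y \in \Qart$ has a neighbour in $G$: since each $X \in \Part^\circ$ lies in $S$, pigeonhole gives some $X$ with $|X \cap Y| \geq |Y \cap S|/|\Part^\circ| \geq (3c'/2) n_0 \geq (c')^2 n_0$. Second, a Hall-type bound: for every $\Part^\circ_0 \subseteq \Part^\circ$, $|N_G(\Part^\circ_0)| \geq (3c' n_R/2)|\Part^\circ_0|$. I would prove this by double-counting $\sum_{X \in \Part^\circ_0} |X| = \sum_Y \sum_{X \in \Part^\circ_0} |X \cap Y|$, bounding the inner sum by $n_0$ when $Y \in N_G(\Part^\circ_0)$ and by $|\Part^\circ_0|(c')^2 n_0$ otherwise, then comparing with the lower bound $|\Part^\circ_0| \cdot 3c' n = 3c'|\Part^\circ_0| n_R n_0$.

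Finally I would apply Hall's theorem for $b$-matchings to $G$ with demand $3c' n_R/2$ at each $X \in \Part^\circ$ to produce pairwise disjoint sets $T_X \sub N_G(X)$ with $|T_X| \geq 3c' n_R/2$; the two properties above supply exactly the required Hall condition. The total demand is $|\Part^\circ| \cdot 3c' n_R/2 \leq \eta n_R/2 < n_R$, so some clusters remain unassigned, and each such $Y$ I would place into any $T_X$ with $(X, Y) \in E(G)$ (which exists by the first property). Setting $X_\Sart = T_X$ for each $X \in \Part^\circ$ yields the desired partition $\Sart$, and (i) and (ii) hold by construction. I do not foresee a substantial obstacle: the size hypothesis on $\Part^\circ$ simultaneously bounds $|\Part^\circ|$, drives the pigeonhole estimate for the first property, and supports the double count for the Hall condition.
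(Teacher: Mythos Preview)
Your proposal is correct and takes a genuinely different route from the paper. The paper proves the claim by a \emph{random assignment}: each cluster $Y$ is placed in $X_\Sart$ with probability proportional to $|X \cap Y|$, restricted to those $X$ with $|X \cap Y| \geq c'|S \cap Y|$; property (ii) then holds by construction (using $|S \cap Y| \geq \eta n_0/2$), and (i) follows from a Chernoff bound once one computes $\Exp[|X_\Sart|] \geq 2c'n_R$. Your argument is instead fully deterministic: you set up the bipartite graph $G$, verify that every cluster has a neighbour by pigeonhole, establish the defect-Hall condition $|N_G(\Part^\circ_0)| \geq (3c'/2)|\Part^\circ_0| n_R$ by the double count (which indeed gives $|N_G(\Part^\circ_0)| \geq (3c' - (c')^2)|\Part^\circ_0| n_R$, and $c' \ll 1$ absorbs the $(c')^2$ loss), and then invoke Hall's theorem for $b$-matchings. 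The leftover clusters are handled exactly as you say.

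Both approaches are short. Yours has the minor advantage of avoiding a second layer of randomness inside a proof that is already conditioning on events for the random set $S$; the paper's ``with high probability'' in this claim refers to $n_R \to \infty$ rather than $n \to \infty$, which is harmless but slightly awkward. The paper's approach, on the other hand, avoids invoking Hall's theorem and keeps the toolkit uniform with the rest of the section. One small point to tidy in your write-up: the demand $3c'n_R/2$ need not be an integer, so you should take $b(X) = \lceil 3c'n_R/2 \rceil$; the slack $(3c'/2 - (c')^2)n_R \geq 1$ (since $1/n_R \ll c'$ in the hierarchy) covers the rounding.
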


To prove the claim, we choose $\Sart$ randomly as follows. Let $P$ be the set of pairs $(X,Y)$ such that $X \in \Part^\circ$, $Y \in V(R) = \Qart$ and $|X \cap Y| \geq c'|S \cap Y|$. We independently assign each $Y \in V(R)$ to a part $X_\Sart$ such that $(X, Y) \in P$ with probability proportional to $|X \cap Y|$, so with probability 
\[\frac{|X \cap Y|}{\sum_{X: (X,Y) \in P} |X \cap Y|} \ge \frac{|X \cap Y|}{|S \cap Y|}.\]
Note that whenever $Y \in X_\Sart$ we have $|X \cap Y| \geq c'(\eta n_0/2) \geq (c')^2 n_0$, so (ii) is satisfied. 
For (i), consider any $X \in \Part^\circ$, and note that $\sum_{Y \in V(R)} |X \cap Y| = |X| \ge (3c'/\eta)|S| \ge 3c'n$. Then
$$\sum_{Y \in V(R)} \frac{|X \cap Y|}{|S \cap Y|} 
\geq \sum_{Y \in V(R)} \frac{|X \cap Y|}{n_0} \geq \frac{3c'n}{n_0} = 3c'n_R, \ \text { and }$$
\begin{align*}
\Exp[|X_\Sart|] & \geq \sum_{Y: (X,Y) \in P} \frac{|X \cap Y|}{|S \cap Y|} \\
&= \sum_{Y \in V(R)} \frac{|X \cap Y|}{|S \cap Y|} - \sum_{Y: (X,Y) \notin P} \frac{|X \cap Y|}{|S \cap Y|} \\
&\geq 3c'n_R - c'n_R = 2c'n_R.
\end{align*}
Thus (i) holds with high probability by Lemma~\ref{VaryingChernoff}, which proves the claim. (Note that here we used the phrase `with high probability' to mean with probability $1 - e^{-\Omega(n_R^c)}$ for some $c > 0$ as $n_R \to \infty$, that is, with $n_R$ large instead of our usual definition with $n$ large.)

\medskip

We now define a partition $\Qart^*$ of $V$ by 
\[\Qart^* = \{\bigcup_{Y \in X} Y \mid X \in \Sart\}.\]
So $\Qart$ refines $\Qart^*$, and by Claim~\ref{RobustRandomClaim}(i) each part of $\Qart^*$ has size at least $3c'n_Rn_0/2 = 3c'n/2$.   
Recall that we chose $\Qart$ to be $\eps$-close to a refinement of $\Part$, so we can obtain a refinement of $\Part$ from $\Qart$ by changing the part of a set $B$ of at most $\eps n$ vertices of $V$. Furthermore, any cluster $Y$ was assigned to $X_\mc{S}$ for some $X \in \Part^\circ$ such that $|X \cap Y| \geq (c')^2n_0$ by Claim~\ref{RobustRandomClaim}(ii). Since $\Part^\circ$ is a refinement of $\Part$, we deduce that we may obtain a refinement of $\Part$ from $\Qart^*$ by possibly changing the part of
\begin{enumerate}[(i)]
\item the at most $\eps n$ vertices in $B$, and
\item the at most $n_0 \cdot (\eps n / (c')^2 n_0) < \mu' n/5$ vertices which lie in clusters $Y$ with $|Y \cap B| \geq (c')^2n_0$.
\end{enumerate}
We conclude that $\Qart^*$ is $(\mu'/4)$-close to a refinement $\Part^*$ of $\Part$; 
in particular, $\Part^*$ then has parts of size at least $c'n$.

To finish the proof of the lemma, it suffices to prove the following claim.
Indeed, since $L_{\Part^\circ}^{(\mu')^3}(H[S])$ is transferral-free,
it will imply that the same is true of $L_{\Part^*}^{\mu'}(H)$,
contradicting the robust maximality of $\Part$.

\begin{claim} \label{RobustRandomClaim2} 
$I_{\Part^*}^{\mu'}(H) \sub I_{\Qart^*}^{\mu'/2}(H) \sub I_{\Sart}^{\mu'/10}(R) \sub I_{\Part^\circ}^{(\mu')^3}(H[S])$.
\end{claim}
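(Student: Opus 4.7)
The plan is to prove the three inclusions in turn, with the technical difficulty increasing at each step. The first inclusion $I_{\Part^*}^{\mu'}(H) \sub I_{\Qart^*}^{\mu'/2}(H)$ is essentially immediate: $\Qart^*$ and $\Part^*$ are both partitions of $V(H)$ whose parts are indexed by $\Sart$, and $\Qart^*$ is $(\mu'/4)$-close to $\Part^*$ by construction, so Proposition~\ref{similarlattices} (with $H' = H$ and $\alpha = \mu'/4$) yields the inclusion.

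For the second inclusion $I_{\Qart^*}^{\mu'/2}(H) \sub I_\Sart^{\mu'/10}(R)$, I will fix $\ib \in I_{\Qart^*}^{\mu'/2}(H)$ and count $k$-sets of clusters of $\Qart$ forming edges of $R$ with $\ib_\Sart = \ib$. Each of the $\geq (\mu'/2) n^k$ edges $e \in H$ with $\ib_{\Qart^*}(e) = \ib$ lies in a $k$-tuple of clusters, and since each cluster lies in a single $\Qart^*$-part by construction, the $\Sart$-index of such a tuple equals $\ib$ whenever the clusters are distinct. I then prune: edges with two vertices in one cluster contribute at most $k^2 n_0 n^{k-1} \le (\mu'/100)n^k$ because $n_R \ge N_0$ is huge; edges in non-$\eps$-regular tuples contribute at most $\eps n^k \ll \mu' n^k$; and edges in $\eps$-regular tuples of density $<\mu'/3$ contribute at most $(\mu'/3)n^k/k!$. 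This leaves $\ge (\mu'/5) n^k$ edges, distributed over $k$-tuples forming edges of $R$; since each tuple contains at most $n_0^k$ edges of $H$, there are at least $(\mu'/10) n_R^k$ edges of $R$ with $\Sart$-index $\ib$.

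For the third inclusion $I_\Sart^{\mu'/10}(R) \sub I_{\Part^\circ}^{(\mu')^3}(H[S])$, I will exploit the randomness of $S$ together with $\eps$-regularity. Fix $\ib \in I_\Sart^{\mu'/10}(R)$ and any edge $\{Y_1,\ldots,Y_k\}$ of $R$ contributing to this count. By Claim~\ref{RobustRandomClaim}(ii), each $Y_i$ was assigned to an $\Sart$-part corresponding to some $X_i \in \Part^\circ$ with $|X_i \cap Y_i| \ge (c')^2 n_0$. Concentration (Corollary~\ref{SumOfHyper}) applied to $|X_i \cap Y_i \cap S| = \sum_{Z \in \Part'} |X_i \cap Y_i \cap Z \cap S|$ gives, with high probability, $|X_i \cap Y_i \cap S| \ge \eta(c')^2 n_0/2 \ge \eps n_0$. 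Then $\eps$-regularity of $(Y_1,\ldots,Y_k)$ in $H$ yields at least $(\mu'/3 - \eps)\prod_i |X_i \cap Y_i \cap S| \ge (\mu'/4)(\eta(c')^2 n_0/2)^k$ edges of $H[S]$ inside $\prod_i(X_i \cap Y_i \cap S)$, each with $\ib_{\Part^\circ}(e) = \ib$. Summing over the $\ge (\mu'/10) n_R^k$ good $R$-edges easily exceeds $(\mu')^3 |S|^k$ given the hierarchy $\mu' \ll c', \eta$.

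The main obstacle is precisely the concentration step inside the third inclusion: since $\Part^\circ$ depends on the random set $S$, we cannot pick one $\Part^\circ$ in advance and union-bound over it, and a naive union bound over all $O(2^n)$ candidate partitions is infeasible. My plan to handle this is to establish a single high-probability event that $|Y \cap Z \cap S|$ is close to its expectation $(n_Z/|Z|)|Y \cap Z|$ for every $(Y,Z) \in \Qart \times \Part'$ (a union bound over $O(1)$ events by Corollary~\ref{SumOfHyper}), and then to combine this with the piece-level lower bound $|X_i \cap Y_i| \ge (c')^2 n_0$ together with martingale concentration (Corollary~\ref{ApplyAzuma}) for the edge count inside each good $R$-edge, to obtain the required lower bound on $|X_i \cap Y_i \cap S|$ \emph{uniformly} over all candidate partitions $\Part^\circ$ satisfying the hypotheses of Claim~\ref{RobustRandomClaim}.
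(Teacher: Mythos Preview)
Your first two inclusions match the paper's proof essentially verbatim.

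For the third inclusion you have introduced a non-existent difficulty. Recall that $\Part^\circ$ is a partition of $S$ (it is a refinement of $\Part[S]$), so every part $X_i \in \Part^\circ$ satisfies $X_i \subseteq S$. Hence $X_i \cap Y_i \cap S = X_i \cap Y_i$, and the lower bound $|X_i \cap Y_i| \ge (c')^2 n_0$ from Claim~\ref{RobustRandomClaim}(ii) is already a lower bound on the set you want, with no concentration needed. The $\eps$-regularity of $\{Y_1,\dots,Y_k\}$ in $H$ (not in $H[S]$) applied to the sub-parts $X_j \cap Y_j$ then gives at least $(\mu'/3-\eps)((c')^2 n_0)^k$ edges of $H$ lying entirely inside $\bigcup_j (X_j \cap Y_j) \subseteq S$; these are automatically edges of $H[S]$ with $\ib_{\Part^\circ}$-index equal to $\ib$. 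Summing over the $\ge (\mu'/10)n_R^k$ edges of $R$ finishes the count. This is exactly the paper's argument, and it is completely deterministic once the earlier high-probability event $|Y \cap S| \ge \eta n_0/2$ (used only inside Claim~\ref{RobustRandomClaim}) has been secured.

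Consequently, your ``main obstacle'' --- that $\Part^\circ$ depends on $S$ and so one cannot union-bound over all candidate partitions --- simply does not arise: there is no event about $S$ to bound in the third inclusion. Your proposed workaround via uniform martingale concentration is unnecessary, and in fact would not obviously succeed as stated, since knowing $|Y \cap Z \cap S|$ for each $Z \in \Part'$ gives no control over $|X_i \cap Y_i \cap S|$ for an arbitrary $X_i \subseteq S$. Once you drop the redundant ``$\cap\, S$'', your argument collapses to the paper's.
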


The first inequality holds by Proposition~\ref{similarlattices} since $\Qart^*$ is $(\mu'/4)$-close to $\Part^*$. For the second inequality, consider $\ib \in I_{\Qart^*}^{\mu'/2}(H)$. By definition of $R$, there are at most $\eps n^k + n_R^k \cdot (\mu'/3) n_0^k \leq 2\mu' n^k/5$ edges $e \in H$ with $\ib_{\Part^*}(e) = \ib$ which do not lie in $k$-graphs corresponding to edges of $R$. There are at least $\mu' n^k/2$ edges $e \in H$ with $\ib_{\Qart^*}(e) = \ib$, so at least $\mu' n^k/10$ of these lie in $k$-graphs corresponding to edges of $R$. Thus
there are at least $\mu' n^k/10n_0^k = \mu'n_R^k/10$ edges $e \in R$ such that $\ib_{\Sart}(e) = \ib$, i.e.\ $\ib \in I_{\Sart}^{\mu'/10}(R)$.

For the final inequality consider $\ib \in I_{\Sart}^{\mu'/10}(R)$. Then there are at least $\mu' n_R^k/10$ edges $e \in R$ with $\ib_{\Sart}(e) = \ib$. 
Consider such an edge $e = \{Y_1,\dots,Y_k\}$, let $(X_j)_\Sart$ be the part of $\Sart$ containing $Y_j$ for each $j \in [k]$, and let $F$ be the $k$-partite subgraph of $H$ with vertex classes $(X_j \cap Y_j)_{j \in [k]}$. By definition of $R$, since each part of $F$ has size $(c')^2 n_0$ by Claim~\ref{RobustRandomClaim}(ii), we have $|F| \ge (\mu'/3 - \eps)((c')^2 n_0)^k \geq 10(\mu')^2 (|S|/n_R)^k$. 
Summing over all choices of $e$, there are at least $(\mu')^3 |S|^k$ edges $e' \in H[S]$ with $\ib_{\Part^\circ}(e') = \ib$, 
i.e.\ $\ib \in I_{\Part^\circ}^{(\mu')^3}(H[S])$.
This completes the proof of the claim, and so of the lemma.
\end{proof}

\subsection{Proof of Lemma~\ref{NonPartiteLemma}}
Recall that we are given a $k$-graph $H$ on a set $V =V(H)$ of size $kn$ and a partition $\Part$ of $V(H)$ such that
(i) at most $\eps n^{k-1}$ $(k-1)$-sets $S \subseteq V$ have $d_H(S) < (1+\gamma)n$,
(ii) $\Part$ is $(c, c, \mu, \mu')$-robustly maximal with respect to $H$,
(iii) any vertex is in at least $dn^{k-1}$ edges $e$ with $\ib_\Part(e) \in L_\Part^\mu(H)$, and
(iv) $\ib_\Part(V) \in L_\Part^\mu(H)$.

We choose uniformly at random a partition $\Part' = (V_1, \ldots, V_k)$ of $V$ into $k$ parts each of size $n$. Let $H'$ be the induced $\Part'$-partite subgraph of $H$ and let $\hPart$ be the common refinement of $\Part$ and $\Part'$.
We also set $c_* = c/2k$, $d_* = d/2k^k$ and $\gamma_* = \gamma/2k$, and introduce a new constant $\mu_*$ with $\mu \ll \mu_* \ll \mu'$. Note that $\mu \ll \mu_* \ll c_*, d_* \ll \gamma_*, 1/k$.
We will show that the following conditions hold with high probability. 
\begin{enumerate}[({F}1)]
\item At most $\eps n^{k-1}$ $\Part'$-partite $(k-1)$-sets $S \sub V$ have $|H'(S)| < (1/k + \gamma_*)n$,
\item $\hPart$ is $(c_*, c_*, \mu, \mu_*)$-robustly maximal with respect to $H'$,
\item any vertex is in at least $d_* n^{k-1}$ edges $e \in H'$ with $\ib_{\hPart}(e) \in L_{\hPart}^{\mu}(H')$, and
\item $\ib_{\hPart}(V) \in L_{\hPart}^{\mu}(H')$.
\end{enumerate}
We then apply Lemma~\ref{kPartiteLemma} to $H'$ with $\hPart$ in place of $\Part$ and $c_*, \mu_*, d_*, \gamma_*$ in place of $c, \mu', d, \gamma$ to obtain a perfect matching in $H'$, which is also a perfect matching in $H$.

First we note that (F1) is immediate from (i) and Corollary~\ref{SumOfHyper}, and (F3) follows from (iii) and Corollary~\ref{ApplyAzuma}, similarly to the proof of (E3) in Claim~\ref{choiceclaim}. 
Next, since each part of $\Part$ has size at least $c|V| = ckn$, by Corollary~\ref{SumOfHyper} with high probability each part of $\hPart$ has size at least $cn/2 = c_*|V|$ and by Proposition~\ref{InverseRestriction}(i), for every $\ib \in L_{\hPart}^{\mu}(H') \subseteq L_{\hPart}^{\mu}(H)$ we have $(\ib \mid \Part) \in L_\Part^\mu(H)$, so $L_{\hPart}^{\mu}(H')$ is transferral-free. 
This gives part of (F2); it also allows us to apply Proposition~\ref{properties} and deduce that $L_{\hPart}^{\mu}(H')$ is full with respect to $\Part'$. 

Now we claim that $\ib \in L_{\hPart}^{\mu}(H')$ for every index vector $\ib$ with respect to $\hPart$ such that $(\ib \mid \Part')$ is a multiple of $\1$ and $(\ib \mid \Part) \in L_\Part^{\mu}(H)$. Note that this will imply (F4), as $(\ib_{\hPart}(V) \mid \Part') = n \cdot \1$ and $(\ib_{\hPart}(V) \mid \Part) = \ib_\Part(V) \in  L_\Part^{\mu}(H)$ by (iv). To prove the claim, we apply Proposition~\ref{properties0}(\ref{properties:index}) to find parts $X$ and $X'$ of $\hPart$ such that $\ib - \ub_X + \ub_{X'} \in L_{\hPart}^\mu(H')$. Now $(\ub_X - \ub_{X'} \mid \Part) = (\ib \mid \Part) - (\ib - \ub_X + \ub_{X'} \mid \Part) \in L_\Part^{\mu}(H)$, so $X$ and $X'$ must be contained in the same part of $\Part$, as $L_\Part^{\mu}(H)$ is transferral-free. Also, since $H'$ is $\Part'$-partite, $X'$ is contained in the same part of $\Part'$ as $X$, so $X'=X$ and $\ib \in L_{\hPart}^{\mu}(H')$, as claimed.

To finish the proof of (F2), we need to show that with high probability there is no  strict refinement $\hPart^*$ of $\hPart$ with parts of size at least $c_*|V|$ such that $L_{\hPart^*}^{\mu_*}(H')$ is transferral-free; this will occupy the remainder of the proof. Suppose that we have such a partition $\hPart^*$; then we will obtain a contradiction using events that hold with high probability. By (F1) we can apply Proposition~\ref{properties} to deduce that $L_{\hPart^*}^{\mu_*}(H')$ is full and every part of $\hPart^*$ has size at least $(1/k+\gamma_*/2)n$. Also, by Proposition~\ref{properties0}(i) there is some integer $\ell$ such that every part of $\Part'$ is refined into $\ell$ parts by $\hPart^*$; note that $\ell<k$. 

Now we use an argument which is similar in spirit to that of Lemma~\ref{RobustRandom} (but more complicated). Let $N_0, N_1$ satisfy $1/n \ll 1/N_1 \ll 1/N_0 \ll \eps$. We apply Theorem~\ref{weakrl} to obtain an $\eps$-regular partition $\Qart$ of $V$ which is $\eps$-close to a refinement of $\Part$ and has $n_R$ parts of almost equal size, for some $N_0 \le n_R \le N_1$. Let $n_0 = kn/n_R$, so every cluster has size $\lceil n_0 \rceil$ or $\lfloor n_0 \rfloor$; we omit the floor and ceiling signs since they do not affect the argument. Also let $R = R^{\mu'/4}_\Qart$ be the $\mu'/4$-reduced $k$-graph of $H$ on $\Qart$. For any $e \in R$ we write 
\[H_e = \{e' \in H: e' \cap X \ne \es \ \forall X \in e \} \]
for the $k$-partite subgraph of $H$ corresponding to $e$. 
We note for future reference the following claim, which shows that $R$ inherits a codegree condition from $H$.

\begin{claim} \label{NonParClaim0}
At most $\gamma^{-1}\eps n_R^{k-1}$ $(k-1)$-sets $A \sub V(R)$ have $|R(A)|<n_R/k$. 
\end{claim}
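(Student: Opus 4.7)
The plan is a double-counting argument showing that any bad $(k-1)$-set $A \in \binom{V(R)}{k-1}$ (one with $|R(A)|<n_R/k$) must contain many low-degree $\Qart$-partite $(k-1)$-sets of $H$, contradicting condition~(i) of Lemma~\ref{NonPartiteLemma} when bad $A$'s are too numerous. Concretely, for each $(k-1)$-set $A = \{Y_1,\dots,Y_{k-1}\}$ of clusters, let $D(A)$ be the number of $(k-1)$-sets $S \sub \bigcup A$ with $|S \cap Y_i|=1$ for each $i \in [k-1]$ and $d_H(S) < (1+\gamma)n$. Distinct $A$'s give disjoint families of such $S$, so condition (i) immediately gives $\sum_A D(A) \le \eps n^{k-1}$.

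The key estimate is on $\Sigma(A) := \sum_S d_H(S)$ over the $n_0^{k-1}$ partite $(k-1)$-sets $S$ associated to $A$. From the definition of $D(A)$, I get the lower bound $\Sigma(A) \ge (1+\gamma)n\bigl(n_0^{k-1} - D(A)\bigr)$. For the upper bound I count incidences $(S,v)$ with $v \in H(S)$ by classifying the cluster $Y_k$ containing $v$. The contribution from $v$ in a cluster of $A$ comes from edges meeting some $Y_i$ twice and totals $O(k n_0^k)$. For $Y_k \notin A$ the contribution is $|H_{A \cup \{Y_k\}}|$, which I split three ways: (a) $Y_k \in R(A)$ contributes at most $n_0^k$ each, so at most $(n_R/k) n_0^k = n \cdot n_0^{k-1}$ in total by the bad-$A$ hypothesis; (b) clusters with $d(H_{A \cup \{Y_k\}}) < \mu'/4$ contribute at most $(\mu' k/4)\, n\, n_0^{k-1}$ in total; (c) clusters for which $H_{A\cup\{Y_k\}}$ is $\eps$-irregular with density $\ge \mu'/4$ contribute at most $|I'(A)| n_0^k$, where $I'(A)$ is the set of such clusters. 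Combining these with $\mu' \ll \gamma$ and $n_0 \ll n$ yields
\[ \tfrac{\gamma}{2} n\, n_0^{k-1} \;\le\; (1+\gamma) n\, D(A) + |I'(A)| n_0^k. \]

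Summing over the $N_{\mathrm{bad}}$ bad $A$'s and using $n_0 n_R = kn$ gives
\[ N_{\mathrm{bad}} \cdot \tfrac{\gamma}{2} n\, n_0^{k-1} \;\le\; (1+\gamma)\eps n^k + n_0^k \cdot k |\mathcal{I}'|, \]
where $\mathcal{I}'$ is the global family of high-density ($\ge \mu'/4$), $\eps$-irregular $k$-subsets of clusters. The final ingredient is the global regularity bound: since $\Qart$ is $\eps$-regular, at most $\eps(kn)^k$ edges of $H$ lie in irregular $k$-partite subgraphs, while each member of $\mathcal{I}'$ contains at least $(\mu'/4) n_0^k$ such edges, forcing $|\mathcal{I}'| \le 4\eps n_R^k/\mu'$. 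Substituting and simplifying (with $n/n_0 = n_R/k$) yields $N_{\mathrm{bad}} \le \gamma^{-1}\eps\, n_R^{k-1}$, as desired. The main obstacle is controlling the contribution of $\mathcal{I}'$; this is what forces the choice of the regularity parameter in the declared hierarchy $1/N_1 \ll 1/N_0 \ll \eps \ll \mu$, so that the term $|\mathcal{I}'| n_0^k$ is dominated by a $\mu'$-fraction of $\gamma n\, n_0^{k-1}$ after summing.
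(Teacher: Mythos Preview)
Your double-counting strategy is exactly the paper's, and your lower and upper bounds on $\Sigma(A)$ are set up correctly. The gap is in the last step, where you control the global irregular contribution. Your estimate $|\mathcal{I}'| \le 4\eps n_R^k/\mu'$ is valid, but substituting it gives
\[
k\,n_0^k\,|\mathcal{I}'| \;\le\; \frac{4k}{\mu'}\,\eps (n_0 n_R)^k \;=\; \frac{4k^{k+1}}{\mu'}\,\eps n^k,
\]
and after dividing through by $(\gamma/2)\,n\,n_0^{k-1}$ this contributes a term of order $\bigl(k^2\eps/(\gamma\mu')\bigr) n_R^{k-1}$ to $N_{\mathrm{bad}}$. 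Since $\mu' \ll \gamma$ in the hierarchy, this is much larger than the target $\gamma^{-1}\eps\,n_R^{k-1}$. The hierarchy $\eps \ll \mu$ cannot rescue you here: the offending factor is $1/\mu'$, not a power of $\eps$, so making $\eps$ smaller only makes both sides smaller in the same proportion. The loss comes from restricting $\mathcal{I}'$ to high-density irregular $k$-tuples and then bounding the edge count in each such tuple by the crude $n_0^k$ rather than by its actual edge count.

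The paper sidesteps this with a threshold argument instead of a global density bound. It first discards the at most $k^2\sqrt{\eps}\,n_R^{k-1}$ sets $A$ for which more than $\sqrt{\eps}\,n_0^{k-1} n$ irregular edges meet every cluster of $A$; this count follows directly from the global bound of $\eps|V|^k$ irregular edges, with no appeal to density and hence no $1/\mu'$. For every remaining bad $A$ the total irregular contribution to $\Sigma(A)$ is at most $\sqrt{\eps}\,n\,n_0^{k-1}$, which is absorbed into the $\gamma\,n\,n_0^{k-1}$ slack since $\sqrt{\eps} \ll \gamma$; one then obtains $D(A) \ge \tfrac{1}{2}\gamma n_0^{k-1}$ for each such $A$ and bounds their number via $\sum_A D(A) \le \eps n^{k-1}$ exactly as you do. Replacing your step~(c) with this threshold split fixes the argument.
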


To see this, recall that at most $\eps |V|^k$ edges of $H$ do not lie in $\eps$-vertex-regular $k$-partite subgraphs of $H$ formed by parts of $\Qart$; call these \emph{irregular} edges. So there are at most $k^2\sqrt{\eps} n_R^{k-1}$ $(k-1)$-sets $A \subseteq V(R)$ for which more than $\sqrt{\eps} n_0^{k-1} n$ irregular edges intersect each member of $A$. Fix any other $A$ for which $|R(A)| < n_R/k$. Then we have $\sum_{B \in V_A} |H(B)| < n_0^k n_R/k + (\mu'/4) n_0^{k-1} n + \sqrt{\eps} n_0^{k-1} n$, 
where $V_A$ denotes the set of $(k-1)$-sets with one vertex from each cluster of $A$. 
Let $m_A$ be the number of $(k-1)$-sets $B \in V_A$ such that $|H(B)| < (1 + \gamma)n$. 
Then $\sum_{B \in V_A} |H(B)| \geq (n_0^{k-1} - m_A)(1 + \gamma)n$, and so 
\begin{align*}
m_A & \geq n_0^{k-1} - \frac{n_0^k n_R/k +  (\mu'/4)  n_0^{k-1} n + \sqrt{\eps} n_0^{k-1}n}{(1 + \gamma)n} \\
& = n_0^{k-1} \frac{\gamma - \mu'/4 - \sqrt{\eps}}{1 + \gamma} \geq \frac{1}{2}\gamma n_0^{k-1}. 
\end{align*}
Since by (i) there are at most $\eps n^{k-1}$ $(k-1)$-sets $B \subseteq V$ with $|H(B)| < (1+\gamma)n$,
there can be at most $\eps n_R^{k-1}/2\gamma$ such sets $A$. Together with the at most  $k^2 \sqrt{\eps} n_R^{k-1}$ sets $A$ considered earlier we have a total of at most $\gamma^{-1} \eps n_R^{k-1}$ $(k-1)$-sets $A \subseteq V(R)$ with $|R(A)| < n_R/k$, proving the claim.

\medskip

In the next claim we choose for each cluster of $R$ a representative part of $\hPart^*$ within each part of $\Part'$. We encode these choices by $k$-vectors with respect to $\hPart^*$ which are $\Part'$-partite, by which we mean that they correspond to $\Part'$-partite $k$-sets. 

\begin{claim} \label{NonParClaim1}
There are $\Part'$-partite $k$-vectors $\ib^*(Y) \in \Z^{\hPart^*}$ for each $Y \in V(R)$ such that
\begin{enumerate}[({G}1)]
\item $|Y \cap Z| \geq c_*n_0$ whenever $i^*(Y)_Z = 1$, and
\item $R_Z := |\{Y: i^*(Y)_Z=1\}| \ge n_R/k$ for every $Z \in \hPart^*$.
\end{enumerate}
\end{claim}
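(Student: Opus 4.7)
The plan is to construct the vectors $\ib^*(Y)$ by a random procedure and verify (G1) and (G2) hold with high probability. For each cluster $Y \in V(R)$ and each $j \in [k]$, independently choose a part $Z^j_Y$ of $\hPart^*$ contained in $V_j$, and set $\ib^*(Y) = \sum_{j \in [k]} \ub_{Z^j_Y}$. This is automatically a $\Part'$-partite $k$-vector since each $V_j$ contributes exactly one part.

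First, some structural groundwork: since $\Part'$ is a uniformly random balanced $k$-partition of $V$, Corollary~\ref{SumOfHyper} applied to each cluster shows that with high probability $|Y \cap V_j| = (1 \pm \eta) n_0/k$ for every $Y \in V(R)$ and every $j \in [k]$, where $\eta$ can be chosen arbitrarily small. Since each $V_j$ is refined into exactly $\ell < k$ parts of $\hPart^*$ (as noted in the text before the claim), pigeonhole within $V_j$ yields at least one part $Z$ of $\hPart^*$ with $|Y \cap Z| \ge |Y \cap V_j|/\ell \gg c_* n_0 = cn_0/(2k)$, using $c \ll 1/k$. Thus the set of \emph{valid} parts (those with $|Y \cap Z| \ge c_* n_0$) inside each $V_j$ is nonempty for every pair $(Y,j)$.

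I would pick $Z^j_Y$ among the valid parts $Z \subseteq V_j$ with probability proportional to $|Y \cap Z|$. Condition (G1) is then immediate by construction. For (G2), fix $Z \in \hPart^*$ with $Z \subseteq V_j$. Writing $D^j_Y$ for the normalizer, $D^j_Y \le |Y \cap V_j| \le (1+\eta)n_0/k$, so
\[
\Exp R_Z \ge \sum_{Y:\,|Y\cap Z|\ge c_* n_0} \frac{k|Y \cap Z|}{(1+\eta)n_0}.
\]
Since every part of $\hPart^*$ has size at least $(1/k + \gamma_*/2)n$ (as established just before the claim) and the mass lost from invalid clusters is at most $c_* n_0 \cdot n_R = cn/2 \ll \gamma_* n$ (because $c \ll \gamma/k \le \gamma_*$ follows from the hierarchy $c \ll \gamma, 1/k$), we obtain
\[
\Exp R_Z \ge \frac{k((1/k+\gamma_*/2)n - cn/2)}{(1+\eta)n_0} \ge (1 + k\gamma_*/4)\,\frac{n_R}{k}.
\]
Lemma~\ref{VaryingChernoff}, applied to the sum of $n_R$ independent Bernoulli trials defining $R_Z$, then gives $R_Z \ge n_R/k$ with probability at least $1 - e^{-\Omega(N_0)}$ (ensuring $1/N_0 \ll \gamma_*, 1/k$ in the regularity hierarchy), and a union bound over the at most $k\ell \le k^2$ parts of $\hPart^*$ yields (G2).

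The main obstacle is guaranteeing that the expectation calculation beats the threshold $n_R/k$: the modest $\gamma_*/2$ surplus in $|Z|$ must survive the loss $c_* n_0 \cdot n_R$ from invalid clusters. This works precisely because $c \ll \gamma/k$ in the given hierarchy, after which the concentration and union bound are routine.
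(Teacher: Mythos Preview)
Your proof is correct and follows essentially the same approach as the paper's: choose $\ib^*(Y)$ randomly by selecting, for each $j\in[k]$, a part $Z\subseteq V_j$ with $|Y\cap Z|\ge c_*n_0$ with probability proportional to $|Y\cap Z|$, so that (G1) holds by construction, and then bound $\Exp R_Z$ from below using $\Prob[Z^j_Y=Z]\ge |Y\cap Z|/|Y\cap V_j|$ together with the concentration $|Y\cap V_j|=(1\pm\eps)n_0/k$ and $|Z|\ge(1/k+\gamma_*/2)n$, concluding via a Chernoff bound. The paper bounds the contribution of invalid clusters slightly differently (bounding $\sum_{Y:(Y,Z)\notin P_i}|Y\cap Z|/|Y\cap V_i|\le 2kc_*n_R$ rather than bounding the lost mass in the numerator), and does not spell out the pigeonhole argument for nonemptiness of valid parts that you include, but these are cosmetic differences.
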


The proof of the claim is similar to that of Claim~\ref{RobustRandomClaim}. For each $i \in [k]$ we let $P_i$ be the set of pairs $(Y,Z)$ such that $Y \in V(R) = \Qart$, $Z \in \hPart^*$, $Z \sub V_i$ and $|Y \cap Z| \geq c_*n_0$. For each $Y \in V(R)$ and $i \in [k]$ we let $i^*(Y)_Z = 1$ for a part $Z$ with $(Y, Z) \in P_i$ chosen with probability proportional to $|Y \cap Z|$, and let $i^*(Y)_Z = 0$ otherwise, where all random choices are made independently. Thus (G1) is satisfied by choice of $P_i$. For (G2), first note that 
$\Prob[i^*(Y)_Z = 1] \geq \frac{|Y \cap Z|}{|Y \cap V_i|}$ for every $Y$ and $Z \subseteq V_i$ such that $|Y \cap Z| \geq c_* n_0$. Also, since $|Y \cap V_i|$ is distributed hypergeometrically, with high probability $|Y \cap V_i| = (1 \pm \eps)n_0/k$ for every $Y \in \Qart$ and $V_i \in \Part'$ by Corollary~\ref{SumOfHyper}. Now for any $V_i \in \Part'$ and any $Z \in \hPart^*$ with $Z \subseteq V_i$ we have 
$$\sum_{Y \in V(R)} \frac{|Y \cap Z|}{|Y \cap V_i|} \geq \sum_{Y \in V(R)} \frac{|Y \cap Z|}{(1 + \eps)n_0/k}
= \frac{|Z|}{(1 + \eps)n_0/k}, \ \ \text{ and }$$ 
$$\sum_{Y: (Y,Z) \notin P_i} \frac{|Y \cap Z|}{|Y \cap V_i|} \leq n_R \cdot \frac{c_*n_0}{(1-\eps)n_0/k} < 2kc_*n_R.$$
Since every part $Z$ of $\hPart^*$ has size at least $(1/k+\gamma_*/2)n$, we obtain
$$\Exp[R_Z] \geq \sum_{Y: (Y,Z) \in P_i} \frac{|Y \cap Z|}{|Y \cap V_i|} \geq (1 + \gamma_*/3)n_R/k.$$
Thus (G2) holds with high probability by Lemma~\ref{VaryingChernoff}, which completes the proof of the claim. 
 
\medskip

In the next claim we show that certain `bad' occurrences are rare. We will consider a cluster to be bad if it is not well-represented by its choice of vector in Claim~\ref{NonParClaim1}. We will consider a vertex to be bad if it belongs to a good cluster but not to the part assigned to this cluster in Claim~\ref{NonParClaim1}. We will consider an edge of the reduced $k$-graph to be bad if it either contains a bad cluster or contains two clusters whose vectors are `incompatible'. More precisely, we make the following definitions. 

\begin{defin} $ $
\begin{enumerate}[(i)]
\item We call a cluster $Y \in V(R)$ \emph{bad} if there exists $Z \in \hPart^*$ such that $\ib^*(Y)_Z = 0$, but $|Y \cap Z| \geq \mu_*^{1/4} n_0$; otherwise we call $Y$ \emph{good}. 
\item We call a vertex \emph{bad} if it is contained in $Y \cap Z$ for some good cluster $Y$ 
and some $Z \in \hPart^*$ such that $\ib^*(Y)_Z = 0$.
\item We call an edge $e$ of $R$ \emph{bad} if either $e$ contains a bad cluster, or $e$ contains two clusters $Y_1, Y_2$ such that $\ib^*(Y_1)$ and $\ib^*(Y_2)$ are neither identical nor orthogonal to each other. 
\end{enumerate}
\end{defin}

\begin{claim} \label{NonParClaim2}
$H$ has at most $k^3 \mu_*^{1/4} n$ bad vertices and $R$ has at most $\sqrt{\mu_*} n_R^k$ bad edges.
\end{claim}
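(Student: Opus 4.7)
For the bad vertex bound, I would observe that only vertices in good clusters are considered bad, and for each good cluster $Y$, a bad vertex must lie in some part $Z \in \hPart^*$ with $\ib^*(Y)_Z = 0$; by the definition of ``good'' each such $|Y \cap Z| < \mu_*^{1/4} n_0$. By Proposition~\ref{properties0}(\ref{properties:parts}) applied to the full edge-lattice $L_{\hPart^*}^{\mu_*}(H')$ (which we verified earlier in the proof is full with respect to $\Part'$), each part of $\Part'$ is refined into the same number $r$ of parts by $\hPart^*$. Combining this with the size lower bound from Proposition~\ref{properties}(\ref{properties:final}) (applied via condition (F1)) gives $r \le k-1$, hence $|\hPart^*| = kr \le k(k-1) < k^2$. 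Thus each good cluster contributes fewer than $k^2 \mu_*^{1/4} n_0$ bad vertices, and summing over all $n_R$ clusters gives the claimed bound $k^3 \mu_*^{1/4} n$.

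For the bad edge bound, the plan is to split bad edges by the reason they are bad: type~(A), containing a bad cluster, or type~(B), containing a pair of clusters whose vectors are neither identical nor orthogonal. The key idea in both cases is to use the transferral-freeness of $L_{\hPart^*}^{\mu_*}(H')$: in type~(B), a bad pair $(Y_1, Y_2)$ agreeing on some $Z \in \hPart^*$ in $V_i$ but disagreeing on distinct parts $Z'_1, Z'_2 \in \hPart^*$ in $V_j$ gives a candidate transferral $\ub_{Z'_1} - \ub_{Z'_2}$; in type~(A), a bad cluster $Y$ with missed part $Z$ and chosen $Z^* = Z^Y_i$ gives a candidate transferral $\ub_Z - \ub_{Z^*}$. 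Grouping witnesses by their discrete data (with only a constant number of groups) and using the codegree condition of Claim~\ref{NonParClaim0} to extend each witness to many edges of $R$, one swaps the relevant vertex within each dense subgraph $H_e$ (of density at least $\mu'/4$ in $H$) to produce two families of $H'$-edges whose $\hPart^*$-indices differ by the candidate transferral. If the total number of bad edges exceeded $\sqrt{\mu_*} n_R^k$, some group would contain so many witnesses that aggregating the two families across all extensions would place both index vectors above the $\mu_*$ detection threshold, so both would lie in $I_{\hPart^*}^{\mu_*}(H')$ and produce a transferral, contradicting the transferral-freeness of $L_{\hPart^*}^{\mu_*}(H')$.

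The main obstacle is the careful accounting in this transferral extraction. One needs to verify that after restricting $H_e$ to its $\Part'$-partite edges in $H'$ (controlled by the random $\Part'$ via Corollary~\ref{SumOfHyper}), enough edges of each of the two swap-families survive; that the large intersection $|Y \cap Z| \ge \mu_*^{1/4} n_0$ (for type~(A)) or the analogous intersection sizes (for type~(B)) cause the aggregated index-vector counts to cross the $\mu_*$ threshold; and that the codegree bounds on $R$ from Claim~\ref{NonParClaim0} provide enough $(k-2)$-set extensions. Balancing these parameters is precisely what dictates the exponent $1/4$ in $\mu_*^{1/4}$ (used to define ``bad'') against the threshold $\sqrt{\mu_*}$ in the final bound.
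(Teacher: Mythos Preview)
Your bad-vertex argument matches the paper's exactly (the paper just says it is ``immediate from the definitions, using $|\hPart^*| \le k^2$ and $n_0 n_R = kn$'').

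For the bad-edge bound, you have the right core mechanism --- the swap construction producing two families whose $\hPart^*$-indices differ by a transferral --- but the paper runs the argument in the contrapositive direction, which is substantially cleaner. Rather than aggregating across bad edges to force \emph{both} indices above the $\mu_*$ threshold, the paper observes that for \emph{each} bad edge $e$, transferral-freeness of $L_{\hPart^*}^{\mu_*}(H')$ forces at least \emph{one} of the two indices to lie outside the lattice; hence $H_e$ already contains at least $\mu_*^{1/3} n_0^k$ edges of $H'$ whose $\hPart^*$-index is not in $L_{\hPart^*}^{\mu_*}(H')$. Globally there are at most $\ell^k \mu_*(kn)^k$ such edges (at most $\ell^k$ index vectors, each with fewer than $\mu_*(kn)^k$ edges by definition of $I^{\mu_*}$), and the $H_e$ for distinct $e \in R$ are edge-disjoint, so dividing gives the bound directly. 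No grouping, no pigeonhole over pairs of indices, and in particular no appeal to Claim~\ref{NonParClaim0} is needed here --- that claim is only used later, in Claim~\ref{NonParClaim3}. Your worry about restricting $H_e$ to $H'$ via Corollary~\ref{SumOfHyper} is also unnecessary: the swap families are built inside parts $Z_j \subseteq V_j$, one per $j \in [k]$, so every edge produced is automatically $\Part'$-partite and hence lies in $H'$ deterministically. Your route would go through with careful bookkeeping (group bad edges by the full pair $(\ib_1,\ib_2)$, of which there are only $O_k(1)$), but the paper's ``one index is rare'' count avoids all of that.
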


The first part of the claim is immediate from the definitions, using $|\hPart^*| \le k^2$ and $n_0n_R=kn$. 
For the second, we will show that if $e$ is bad then the number of edges $e' \in H_e$ with $\ib_{\hPart^*}(e') \notin L_{\hPart^*}^{\mu_*}(H')$ is at least $\mu_*^{1/3} n_0^k$. The claim then follows from the fact that $H'$ has at most $\ell^k \mu_* (kn)^k$ such edges in total.

First we consider $e \in R$ that is bad because it contains a bad cluster $Y_1$. Then there is $Z \in \hPart^*$ such that $\ib^*(Y_1)_Z = 0$ but $|Y_1 \cap Z| \geq \mu_*^{1/4} n_0$. Without loss of generality $Z \subseteq V_1$. Let $Y_2, \ldots, Y_k$ be the remaining clusters of $e$, and let $Z_j$ be the parts of $\hPart^*$ such that $Z_j \subseteq V_j$ and $\ib^*(Y_j)_{Z_j} = 1$ for each $j \in [k]$. Now we consider two induced subgraphs of $H_e$: let $F_1$ have parts $(Y_j \cap Z_j)_{j \in [k]}$ and $F_2$ have parts $Y_1 \cap Z$ and $(Y_j \cap Z_j)_{2 \leq j \leq k}$. Let $\ib_1$ and $\ib_2$ be the index vectors with respect to $\hPart^*$ of edges in $F_1$ and $F_2$ respectively. Note that $\ib_1$ and $\ib_2$ are $\Part'$-partite $k$-vectors, so the edges of $F_1$ and $F_2$ are $\Part'$-partite. Also observe that $\ib_1 - \ib_2 = \ub_{Z_1} - \ub_{Z}$, and hence at least one of $\ib_1$ and $\ib_2$ does not lie in $L_{\hPart^*}^{\mu_*}(H')$, since $Z$ and $Z_1$ are distinct and $L_{\hPart^*}^{\mu_*}(H')$ is transferral-free by our choice of $\hPart$. Thus it suffices to show that each of $F_1$ and $F_2$ contains at least $\mu_*^{1/3} n_0^k$ edges. To see this, note that by (G1) and the choice of $Z$, each of $F_1$ and $F_2$ contains at least $c_*n_0$ vertices from $k-1$ vertex classes and at least $\mu_*^{1/4} n_0$ vertices from the remaining vertex class. Furthermore $H_e$ is $\eps$-regular with density at least $\mu'/4$ by definition of the reduced $k$-graph $R$. Thus each of $F_1$ and $F_2$ contains at least $(\mu'/4 - \eps)(c_*)^{k-1}\mu_*^{1/4} n_0^k > \mu_*^{1/3} n_0^k$ edges, as required.

The other case is that $e \in R$ is bad because it contains two clusters $Y_1, Y_2$ such that $\ib^*(Y_1), \ib^*(Y_2)$ are neither identical nor orthogonal to each other. Then without loss of generality there exist $Z_1, Z'_1, Z_2 \in \hPart^*$ with $Z_1, Z'_1 \subseteq V_1$ and $Z_2 \subseteq V_2$, such that $\ib^*(Y_1)_{Z_1} = \ib^*(Y_2)_{Z'_1} = \ib^*(Y_1)_{Z_2} = \ib^*(Y_2)_{Z_2} = 1$. Let $Y_3, \ldots, Y_k$ be the remaining clusters of $e$ and select $Z_j \in \hPart^*$ such that $Z_j \subseteq V_j$ and $\ib^*(Y_j)_{Z_j} = 1$ for each $3 \leq j \leq k$. Now we consider two induced subgraphs of $H_e$: let $F_1$ have parts $(Y_j \cap Z_j)_{j \in [k]}$ and $F_2$ have parts $Y_2 \cap Z'_1, Y_1 \cap Z_2$ and $(Y_j \cap Z_j)_{3 \leq j \leq k}$. Similarly to the previous case, each of $F_1$ and $F_2$ contains at least $\mu_*^{1/3} n_0^k$ edges, each of which is $\hPart$-partite, and for one of them the index vector is not in $L_{\hPart^*}^{\mu_*}(H')$.
This completes the proof of the claim. 

\medskip

In the next claim we use the index vectors from Claim~\ref{NonParClaim1} 
that are well-represented to define a partition of $V$.
More precisely, we let
\[ R_\ib = \{Y \in V(R) \mid \ib^*(Y) = \ib\} \ \ \text{ and } \ \ 
T = \{\ib \in \Z^{\hPart^*} \mid |R_\ib| \geq \mu_*^{1/5} n_R\}.\]
Then we define $\Part^* = \{X_\ib^*\}_{\ib \in T}$, where $X_\ib^* = \bigcup_{Z \in \hPart^*, i_Z = 1} Z$ for each $\ib \in T$. 

\begin{claim} \label{NonParClaim3}
$\Part^*$ is a partition of $V$ which strictly refines $\Part$ into parts of size at least $c|V|$.
\end{claim}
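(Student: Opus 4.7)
I plan to verify directly that $\Part^* = \{X_\ib^*\}_{\ib \in T}$ is a partition of $V$ satisfying the required conditions, by establishing five properties in turn: pairwise disjointness of the $X_\ib^*$, containment of each $X_\ib^*$ in a single part of $\Part$, coverage of $V$, size at least $c|V|$ per part, and $|T| > |\Part|$. I begin with disjointness. For distinct $\ib_1, \ib_2 \in T$, suppose they are neither equal (clear, by assumption) nor orthogonal as vectors in $\Z^{\hPart^*}$. Then every edge $e \in R$ with $e \cap R_{\ib_j} \ne \es$ for $j=1,2$ is bad by definition. Using the codegree bound from Claim~\ref{NonParClaim0} (most $(k-1)$-sets $A$ of $R$ have $|R(A)| \ge n_R/k$) together with $|R_{\ib_j}| \ge \mu_*^{1/5} n_R$, a standard double-counting argument supplies at least $\mu_*^{2/5} n_R^k / k^{O(k)}$ such bad edges, contradicting the bound $\sqrt{\mu_*} n_R^k$ of Claim~\ref{NonParClaim2} (since $\mu_* \ll 1/k$). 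Hence $\ib_1 \perp \ib_2$, and so $X_{\ib_1}^* \cap X_{\ib_2}^* = \es$.

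\textbf{Containment in parts of $\Part$ and coverage.} Since $\Qart$ is $\eps$-close to some refinement $\Part''$ of $\Part$, I assign each cluster $Y$ a home part $X_Y \in \Part$ via its corresponding part of $\Part''$; all but $O(\eps n_R/c) \ll \mu_*^{1/5} n_R$ clusters satisfy $|Y \setminus X_Y| < c_* n_0/2$. For such a home cluster $Y$ and any $Z$ with $i^*(Y)_Z = 1$, condition (G1) forces $Z \subseteq X_Y$. Every $\ib \in T$ is therefore chosen by at least one home cluster, and any two home clusters choosing the same $\ib$ must share their home (else the support of $\ib$ would straddle two parts of $\Part$, which is impossible). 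Hence each $X_\ib^*$ lies in a single part of $\Part$. For coverage, each part $Z \in \hPart^*$ satisfies $R_Z \ge n_R/k$ by (G2); among the at most $k^{k-1}$ $\Part'$-partite $k$-vectors with $i_Z = 1$, pigeonhole yields one chosen by $\ge n_R/k^k > \mu_*^{1/5} n_R$ clusters, placing $Z$ inside some $X_\ib^*$.

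\textbf{Sizes, refinement, and obstacle.} Each $X_\ib^*$ unions $k$ parts of $\hPart^*$ (one per part of $\Part'$, since $\ib$ is $\Part'$-partite), and every part of $\hPart^*$ has size at least $(1/k + \gamma_*/2)n$ as observed before the claim, giving $|X_\ib^*| \ge n = |V|/k > c|V|$. For strict refinement, Proposition~\ref{properties0}(i) produces an integer $\ell$ such that $\hPart^*$ partitions each $V_i$ into exactly $\ell$ parts; since $\hPart$ partitions each $V_i$ into exactly $|\Part|$ parts (with high probability each $X \cap V_i$ is nonempty as $\Part'$ is chosen randomly) and $\hPart^*$ strictly refines $\hPart$, we have $\ell > |\Part|$. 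Combined with disjointness and coverage, $|T| = |\hPart^*|/k = \ell > |\Part|$. The main obstacle lies in the double-counting for disjointness: we must extract at least $\Omega(\mu_*^{2/5} n_R^k)$ bad edges between $R_{\ib_1}$ and $R_{\ib_2}$ using only the codegree condition on $(k-1)$-sets from Claim~\ref{NonParClaim0}, which requires some care to avoid overcounting.
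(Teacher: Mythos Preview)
Your proposal is correct and follows essentially the same route as the paper: orthogonality of distinct vectors in $T$ via the bad-edge count from Claim~\ref{NonParClaim2} and the codegree bound of Claim~\ref{NonParClaim0}; coverage of each $Z \in \hPart^*$ via (G2); refinement of $\Part$ via the clusters that meet only one part of $\Part$ in at least $c_*n_0$ vertices. Your coverage argument uses pigeonhole over the at most $\ell^{k-1}$ admissible vectors, whereas the paper argues contrapositively, but these are equivalent. One point worth noting: the paper's written proof of this claim actually stops at ``refines $\Part$'' and does not spell out strictness, so your final paragraph (using Proposition~\ref{properties0}(\ref{properties:parts}) to get a uniform $\ell$, comparing with the $|\Part|$ parts of $\hPart$ in each $V_i$, and concluding $|T|=\ell>|\Part|$) supplies a detail the paper leaves implicit.
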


To see this, we start by showing that any distinct vectors $\ib_1$, $\ib_2$ of $T$ are orthogonal. For suppose otherwise, and note that the number of $(k-1)$-sets of clusters that intersect $R_{\ib_1}$ and $R_{\ib_2}$ is at least $k!^{-1} (n_R)^{k-3} |R_{\ib_1}| |R_{\ib_2}| \geq k!^{-1} \mu_*^{2/5} (n_R)^{k-1}$. By Claim~\ref{NonParClaim0} all but at most $\gamma^{-1} \eps n_R^{k-1}$ of these $(k-1)$-sets $A$ satisfy $|R(A)| \ge n_R/k$. But then we obtain at least $(k!^{-1}\mu_*^{2/5} - \gamma^{-1} \eps) n_R^{k-1} (n_R/k) > \sqrt{\mu_*} n_R^k$ bad edges, which contradicts Claim~\ref{NonParClaim2}. 

Thus for every $Z \in \hPart^*$ there is at most one $\ib \in T$ such that $i_Z = 1$, so the parts of $\Part^*$ are pairwise disjoint. Furthermore, for each $Z \in \hPart^*$ there must be some $\ib \in T$ such that $i_Z = 1$. Indeed, if $i_Z = 0$ for every $\ib \in T$ then the number of $Y \in V(R)$ with $i^*(Y)_Z = 1$ is at most $\ell^k \mu_*^{1/5} n_R$, which contradicts (G2). Thus $\Part^*$ is a partition of $V$. Clearly each part of $\Part^*$ has size at least $c|V|$, since this is true of $\hPart^*$. 

Finally, to see that $\Part^*$ refines $\Part$, recall that $\Qart$ was chosen to be $\eps$-close to a refinement of $\Part$. So there are at most $\eps |V| / c_*n_0 < \mu_*^{1/5}n_R$ clusters $Y$ of $\Qart$ which intersect more than one part of $\Part$ in at least $c_*n_0$ vertices. Now, consider any $\ib \in T$. By definition of $T$ we have $|R_\ib| \geq \mu_*^{1/5}n_R$, so we may choose $Y \in R_\ib$ for which there is a unique part $X \in \Part$ such that $|X \cap Y| \geq c^*n_0$. We claim that $Z \sub X$ for any $Z \in \hPart^*$ with $Z \sub X_\ib^*$. Indeed, since $\hPart^*$ is a refinement of $\Part$ we have $Z \sub X'$ for some $X' \in \Part$. But $i_Z=1$ by definition of $X_\ib^*$, so $|Y \cap Z| \geq c_*n_0$ by (G1). By choice of $Y$ this implies that $X=X'$, as claimed. This completes the proof of Claim~\ref{NonParClaim3}.
 
\medskip

Our final claim will complete the proof of the lemma. Indeed, it implies that $\ib \in L_{\hPart^*}^{\mu_*}(H')$ for any index vector $\ib$ with respect to $\hPart^*$ such that $\ib' = (\ib \mid \Part^*) \in L_{\Part^*}^{\mu'}(H)$ and $(\ib \mid \Part')$ is a multiple of $\1$. 
We can apply this with $\ib' \in L_{\Part^*}^{\mu'}(H)$ equal to a transferral $\ub_{X_1^*} - \ub_{X_2^*}$, 
which must exist since $\Part$ is $(c, c, \mu, \mu')$-robustly maximal with respect to $H$ and $\Part^*$ strictly refines $\Part$ into parts of size at least $c|V|$. Letting $Z_1, Z_2$ be the intersections of $X_1^*$ and $X_2^*$ respectively with $V_1$ 
we deduce that $\ub_{Z_1} - \ub_{Z_2} \in L_{\hPart^*}^{\mu_*}(H)$, 
contradicting our assumption that $L_{\hPart^*}^{\mu_*}(H)$ was transferral-free. 
Thus it remains to prove the following claim.

\begin{claim} \label{LastClaim}
For any $\Part'$-partite $k$-vector $\ib$ with respect to $\hPart^*$ such that $(\ib \mid \Part^*) \in I_{\Part^*}^{\mu'}(H)$
we have $\ib \in I_{\hPart^*}^{\mu_*}(H')$.
\end{claim}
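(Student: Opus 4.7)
The plan is to locate, for $\jb := (\ib \mid \Part^*) \in I_{\Part^*}^{\mu'}(H)$, a large subfamily of the $\ge \mu'(kn)^k$ edges of $H$ of $\Part^*$-index $\jb$ which, by virtue of the structure of the regularity partition $\Qart$ and the cluster assignments $\ib^*(\cdot)$, are forced to lie in $H'$ with $\hPart^*$-index exactly $\ib$. Write $\ib = \sum_{i \in [k]} \ub_{Z_i}$ with $Z_i \in \hPart^*$ the unique part of $\hPart^*$ inside $V_i$ picked out by $\ib$, and let $\ib'(i) \in T$ be the unique element with $\ib'(i)_{Z_i} = 1$, so that $\jb = \sum_{i \in [k]} \ub_{X^*_{\ib'(i)}}$.

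First I would prune the $\ge \mu'(kn)^k$ edges of $H$ of $\Part^*$-index $\jb$ by discarding those falling into any of the following exceptional classes: edges not contained in an $\eps$-vertex-regular $k$-partite subgraph ($\le \eps(kn)^k$); edges in $H_f$ for $f \notin R$ ($\le (\mu'/4)(kn)^k$ by the density threshold defining $R$); edges in $H_f$ for some bad $f \in R$ ($\le \sqrt{\mu_*}(kn)^k$ by Claim~\ref{NonParClaim2}); edges containing a bad vertex ($\le k^{k+2}\mu_*^{1/4}n^k$ by Claim~\ref{NonParClaim2}); edges with two vertices in a common cluster (of order $(kn)^k/n_R$); and edges containing a vertex in a cluster $Y$ with $\ib^*(Y) \notin T$ (there are at most $k^k$ possible values of $\ib^*(\cdot)$ and each value outside $T$ is attained by fewer than $\mu_*^{1/5}n_R$ clusters, giving at most $k^{k+1}\mu_*^{1/5}(kn)^k$ such edges). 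Under the hierarchy $\eps \ll \mu_* \ll \mu'$ these losses sum to at most $\mu'(kn)^k/2$, leaving at least $(\mu'/2)(kn)^k$ \emph{well-behaved} edges. For such an $e \in H_f$ with $f = \{Y_1,\dots,Y_k\}$, each vertex $v_j \in Y_j$ is typical in a good cluster with $\ib^*(Y_j) \in T$, and pairwise orthogonality of distinct elements of $T$ (from Claim~\ref{NonParClaim3}) forces $v_j \in X^*_{\ib^*(Y_j)}$; hence $\ib_{\Part^*}(e) = \sum_j \ub_{X^*_{\ib^*(Y_j)}}$, and the equality $\ib_{\Part^*}(e) = \jb$ forces the multiset $\{\ib^*(Y_j)\}_{j \in [k]}$ to coincide with $\{\ib'(i)\}_{i \in [k]}$. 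Call such $f$ \emph{$\ib$-compatible}; since each $H_f$ contributes at most $n_0^k$ well-behaved edges, there are at least $(\mu'/2)n_R^k$ such $\ib$-compatible non-bad $f \in R$ (with all clusters good and $\ib^*(Y_j) \in T$).

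Finally I would produce, for each such $f$, many edges of $H'$ of $\hPart^*$-index $\ib$ lying in $H_f$ by invoking regularity. Fix a permutation $\sigma$ with $\ib^*(Y_{\sigma(i)}) = \ib'(i)$ for every $i$. Crucially, $\Qart$ is obtained by applying Theorem~\ref{weakrl} to $(H,\Part)$ and is therefore independent of the random partition $\Part'$, so by Corollary~\ref{SumOfHyper} we have $|Y \cap V_i| = (1\pm\eps)n_0/k$ for every $Y \in \Qart$ and $i \in [k]$ with high probability over $\Part'$. Combined with the goodness of $Y_{\sigma(i)}$, which gives $|Y_{\sigma(i)} \cap Z'| < \mu_*^{1/4}n_0$ for every part $Z' \sub V_i$ of $\hPart^*$ distinct from $Z_i$, this yields $|Y_{\sigma(i)} \cap V_i \cap Z_i| \ge n_0/(2k)$. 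By $\eps$-vertex-regularity of $H_f$ of density at least $\mu'/4$, the $k$-partite subgraph of $H_f$ with parts $(Y_{\sigma(i)} \cap V_i \cap Z_i)_{i \in [k]}$ contains at least $(\mu'/8)(n_0/(2k))^k$ edges, each of which is $\Part'$-partite and has $\hPart^*$-index $\ib$. Summing over all $\ib$-compatible non-bad $f \in R$ (edge-disjointly, since the cluster tuple is determined by the edge) produces at least $(\mu'/2)n_R^k \cdot (\mu'/8)(n_0/(2k))^k = (\mu')^2n^k/(16\cdot 2^k)$ edges, exceeding $\mu_*(kn)^k$ by the hierarchy $\mu_* \ll \mu'$.

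The main obstacle is not any single step but the careful bookkeeping in the pruning: verifying that each exceptional class (in particular vertices in clusters with $\ib^*(Y) \notin T$ and atypical vertices in good clusters) is genuinely small, and then translating the multiset identity into the geometric condition that $Y_{\sigma(i)} \cap V_i \cap Z_i$ is large enough for the regularity deduction of the final step.
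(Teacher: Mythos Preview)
Your proposal is correct and follows essentially the same approach as the paper: prune the edges of $H$ of $\Part^*$-index $(\ib \mid \Part^*)$ down to those lying in good reduced edges $f \in R$ whose clusters all have $\ib^*(Y) \in T$ and which contain no bad vertex, deduce that for each such $f$ the multiset of cluster labels matches $\{\ib'(i)\}$, and then use $\eps$-vertex-regularity on the parts $Y_{\sigma(i)} \cap Z_i$ to produce many edges of $H'$ with $\hPart^*$-index $\ib$. The only notable difference is that the paper bounds $|Y_{\sigma(i)} \cap Z_i|$ directly from (G1) (since $\ib^*(Y_{\sigma(i)})_{Z_i}=1$ gives $|Y_{\sigma(i)} \cap Z_i| \ge c_* n_0$), whereas you take the slightly longer route via the hypergeometric estimate $|Y \cap V_i| = (1 \pm \eps)n_0/k$ together with goodness; both are valid, and the paper's shortcut spares you the appeal to concentration at that point.
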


To see this, let $J_\ib$ be the set of good edges $e \in R$ such that $e \sub \bigcup_{{\ib'} \in T} R_{\ib'}$ for which
$H_e$ contains some edge $e'$ with no bad vertex and $\ib_{\Part^*}(e') = (\ib \mid \Part^*)$.
We claim that $|J_\ib| \ge \mu' n_R^k/2$.
To see this, note that since $(\ib \mid \Part^*) \in I_{\Part^*}^{\mu'}(H)$, 
there are at least $\mu' (kn)^k$ edges $e' \in H$ such that $\ib_{\Part^*}(e') = (\ib \mid \Part^*)$.
Of these, at most $\eps (kn)^k + \mu' (kn)^k/4$ are not contained in $\bigcup_{e \in R} H_e$,
and at most $k^2 \mu_*^{1/4} (kn)^k$ contain a bad vertex.
Thus there are at least $2\mu' n_R^k/3$ edges of $R$ 
that contain at least one of the remaining edges $e'$.
Of these edges of $R$, at most $\sqrt{\mu_*} n_R^k$ are bad, 
and at most $k^k \mu_*^{1/5} n_R^k$ intersect $\bigcup_{\ib \notin T} R_\ib$.
We conclude that $|J_\ib| \ge \mu' n_R^k/2$, as claimed.

Next we claim that for any $e \in J_\ib$ there are 
at least $(\mu')^2n_0^k$ edges $e^* \in H_e$ with $\ib_{\hPart^*}(e^*) = \ib$.
To see this, fix some $e' \in H_e$ with no bad vertex, and consider any $v \in e'$. Since $v$ is not bad we have $\ib^*(Y_v)_{Z_v}=1$ for the $Y_v\in e$ and $Z_v \in \hPart^*$ with $v \in Y \cap Z$, and since each cluster $Y \in e$ lies in $R_{\ib'}$ for some $\ib' \in T$, we have $\ib^*(Y_v) \in T$.
Recall that the vectors in $T$ are orthogonal, so for each $Z \in \hPart^*$ there is a unique $\ib^Z \in T$ such that $i^Z_Z=1$. So we must have $\ib^*(Y_v)=\ib^{Z_v}$, that is, $Y_v \in R_{\ib^{Z_v}}$.
Since $\ib_{\Part^*}(e') = (\ib \mid \Part^*)$, there must be exactly $(\ib \mid \Part^*)_{X_{\ib'}^*}$ 
clusters of $e$ in $R_{\ib'}$ for each $\ib' \in T$.
Thus we can order the clusters of $e$ as $(Y_1,\dots,Y_k)$
such that $\ib^*(Y_j)_{Z_j} = 1$ for each $j \in [k]$,
where $Z_j \sub V_j$ is such that $i_{Z_j}=1$ for $j \in [k]$.
Now the sets $Y_j \cap Z_j$ for $j \in [k]$ each have size at least $c_*n_0$ by (G1), 
and so by definition of $R$ they induce a subgraph of $H_e$ 
with at least $(\mu'/4 - \eps)(c_*n_0)^k > (\mu')^2n_0^k$ edges, as claimed.

Summing over $e \in J_\ib$, we obtain at least $(\mu')^2n_0^k \cdot \mu' n_R^k/2 \geq \mu_* (kn)^k$ 
edges $e^* \in H$ with $\ib_{\hPart^*}(e^*) = \ib$, and each such edge lies in $H'$ since $\ib$ is $\hPart$-partite. Thus $\ib \in I_{\hPart^*}^{\mu_*}(H')$, 
which completes the proof of Claim~\ref{LastClaim}, and so of the lemma. 
\qed 

\section{Concluding Remarks} \label{PartiteSection}

The only case where we did not resolve the complexity status of $\PM(k, \delta)$ is when $\delta=1/k$;
this has been solved while this paper was under review by Han~\cite{H2}:
he showed that there is a polynomial time algorithm, 
using some theory developed in our paper and a lattice-based absorbing method. 

We will conclude our paper with some remarks on 
multipartite versions of our results, 
tightness of the parameters, 
and other degree conditions.

\subsection{Multipartite analogues}
The following multipartite versions of our main results may be proved similarly to the non-partite versions.

\begin{thm} \label{PartiteMain} 
Fix $k \geq 3$ and $\gamma > 0$. Then there is an algorithm with running time $O(n^{3k^2 - 7k + 1})$, which given any $k$-partite $k$-graph with parts of size $n$ such that every partite $(k-1)$-set has degree at least $(1/k + \gamma)n$, finds either a perfect matching or a certificate\footnote{A $C$-certificate for a $k$-partite $k$-graph is defined in an analogous way as for a general $k$-graph.} that no perfect matching exists.
\end{thm}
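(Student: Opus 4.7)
The plan is to mirror the proof of Theorem~\ref{main} by developing partite analogues of its three main ingredients---the structural characterization (Theorem~\ref{PMNeccSuff}), the exhaustive-search decision procedures (Procedures~\ref{mainProc} and~\ref{OmegaProc}), and the safe-edge lemma (Lemma~\ref{FindPMLemma})---and then assembling them with the same algorithmic scheme. First I would run the partite decision procedure to test for a perfect matching, producing a partite certificate if none exists; otherwise, iteratively invoke the partite safe-edge lemma to delete edges until only a constant number of vertices remain, at which point brute force suffices.

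For the structural result, I would prove a partite analogue of Theorem~\ref{PMNeccSuff} in which a partite $C$-certificate consists of a full pair $(\Part,L)$ where $\Part$ refines the given vertex partition $\Part'$ and $L \sub L^{\Part\Part'}_{\max}$ is full with respect to $\Part'$ in the sense defined in Section~\ref{FullSection}; solubility and $C$-farness carry over verbatim. The ``only if'' direction follows from the pigeonhole argument applied to the partite coset group $L^{\Part\Part'}_{\max}/L$, whose order $r$ is at most $k-1$ by Proposition~\ref{properties0}(\ref{parGroupSize}). For the ``if'' direction, I would first prove the easier variant (analogue of Theorem~\ref{WeakerPMNS}) by applying the partite key lemma, Lemma~\ref{kPartiteLemma}, directly to a canonical partition obtained after moving the bad vertices as in Lemma~\ref{MoveVertices}---crucially, the random $k$-partition step used to derive Lemma~\ref{NonPartiteLemma} from Lemma~\ref{kPartiteLemma} is not needed here. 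The sharpening to $2k(k-3)$-far certificates then proceeds exactly as in Section~\ref{deduceThm}, since the subsequence-sum analysis of Lemma~\ref{ImplySoluble} depends only on the order of the coset group, not on partiteness.

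For the algorithmic components, the partite version of Procedure~\ref{OmegaProc} restricts each bin assignment to parts of $\Part$ contained in the same vertex class of $\Part'$ as the vertex being placed; the reliable-vertex argument and the bound on the size of the search tree use only partite codegrees, which are available by hypothesis. The analogue of Lemma~\ref{FindPMLemma} tracks partite codegree deficiencies $t_A = \max(0, (1/k+\gamma)n - d_H(A))$ over $\Part'$-partite $(k-1)$-sets $A$; since every edge is $\Part'$-partite and every deletion therefore affects only partite sets, the averaging argument used to locate a safe edge carries through unchanged. The running time accordingly remains $O(n^{3k^2 - 7k + 1})$.

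The main obstacle is bookkeeping rather than mathematics: one must formulate a partite version of Setup~\ref{setup} (with $(\mathrm{deg})$ and $(\mathrm{codeg})$ ranging over $\Part'$-partite sets) and verify that each intermediate result of Sections~\ref{PrelimSection}--\ref{FullSection}---in particular Propositions~\ref{similarlattices}, \ref{InverseRestriction} and~\ref{NonParproperties} and Lemmas~\ref{RobustInherit}, \ref{RobustRandom} and~\ref{MoveVertices}---retains its validity after this restriction, typically with identical proofs once the appropriate partite analogue of Proposition~\ref{properties} is used in place of Proposition~\ref{NonParproperties}. Since the partite forms of the deepest ingredients (Theorem~\ref{HypergraphMatching}, Lemma~\ref{kPartiteLemma}, and the partite lattice theory of Section~\ref{FullSection}) are already established in the excerpt, no substantive new ideas are required.
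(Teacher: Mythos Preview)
Your proposal is correct and follows exactly the approach the paper indicates: the paper itself omits the proof of Theorem~\ref{PartiteMain}, stating only that it ``may be proved similarly to the non-partite versions,'' and your outline spells out precisely what this entails. Your observation that Lemma~\ref{kPartiteLemma} is already partite (so the random $k$-partition reduction of Lemma~\ref{NonPartiteLemma} is unnecessary) and that the group-theoretic reduction of Lemma~\ref{ImplySoluble} is insensitive to partiteness are the key simplifications, and your identification of the remaining work as bookkeeping---adapting Setup~\ref{setup}, Lemma~\ref{MoveVertices}, and the algorithmic procedures to the partite setting---is accurate.
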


\begin{thm} \label{PartitePMNeccSuff} 
Suppose that $k \ge 3$ and $1/n \ll \eps \ll \gamma \ll 1/k$.
Let $\Part'$ partition a vertex set $V$ into $k$ parts each of size $n$.
Suppose that $H$ is a $\Part'$-partite $k$-graph $H$ on $V$
such that $\delta_1(H) \geq \gamma n^{k-1}$, and
at most $\eps n^{k-1}$ $\Part'$-partite $(k-1)$-sets $A \sub V(H)$ have $d_H(A) < (1/k + \gamma)n$.
Then $H$ has a perfect matching if and only if there is no $2k(k-3)$-certificate for $H$.
\end{thm}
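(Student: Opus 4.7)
The plan is to follow the proof of Theorem~\ref{PMNeccSuff} closely, exploiting the $\Part'$-partite structure of $H$ to bypass the random $k$-partition step that was needed in the non-partite proof; we will instead apply Lemma~\ref{kPartiteLemma} directly. We distinguish (as is required in the partite setting) between the ambient partition $\Part'$ and refinements $\Part$ of $\Part'$ that appear in full pairs, where ``full'' and ``soluble'' are to be interpreted in the partite sense of Section~\ref{FullSection}.

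For the forward direction, suppose $H$ has a perfect matching $M^*$ and let $(\Part,L)$ be a $2k(k-3)$-far full pair, where $\Part$ refines $\Part'$ and $L$ is full with respect to $\Part'$. By Proposition~\ref{properties0}(\ref{parGroupSize}), the quotient $G := L^{\Part\Part'}_{\max}/L$ has order $r \le k-1$. Since $\ib_\Part(V(H))=\ib_\Part(V(M^*)) \in L^{\Part\Part'}_{\max}$, applying Proposition~\ref{abeliangroup} to the residues in $G$ of the edges of $M^*$ yields a sub-matching $M \subseteq M^*$ of size at most $|G|-1 \le |\Part|-1$ with $\ib_\Part(V(H) \sm V(M)) \in L$, so $(\Part,L)$ is soluble and no $2k(k-3)$-certificate exists.

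For the reverse direction the first step is to establish a partite analogue of Lemma~\ref{ImplySoluble}: if every $2k(k-3)$-far full pair for $H$ is soluble, then every full pair for $H$ is soluble. The subsequence-sum lemmas of Section~\ref{deduceThm} (Lemmas~\ref{UnionOfCosets}--\ref{UniqueMaximal}) are purely group-theoretic and apply verbatim to $G=L^{\Part\Part'}_{\max}/L$. The structural Theorem~\ref{FullStructure}, the merging construction (Definition~\ref{DefMerge}) and Lemma~\ref{LemmaMerge} all admit partite counterparts proved by exactly the same arguments, substituting Proposition~\ref{properties0} for Lemma~\ref{NonParproperties0} and Lemma~\ref{GroupSize}. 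With these in hand, the argument of Lemma~\ref{ImplySoluble} carries over unchanged, and the bound $|G| \le k-1$ again produces the constant $2k(k-3)$.

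Now assume every full pair for $H$ is soluble. Fix constants $c$ and $\mu_1 \ll \cdots \ll \mu_{s+1} \ll c$ with $\eps \ll c \ll \gamma$ and $s=\lfloor 1/c \rfloor$, and apply Proposition~\ref{RobustExistence} (starting from $\Part'$, whose robust edge-lattice is trivially transferral-free) to obtain a refinement $\Part$ of $\Part'$ that is $(c,c,\mu_t,\mu_{t+1})$-robustly maximal with respect to $H$ for some $t$. A partite version of Lemma~\ref{MoveVertices} then produces a refinement $\Part''$ of $\Part'$, obtained from $\Part$ by moving a few bad vertices, such that $\Part''$ is robustly maximal with slightly weaker constants, the robust edge-lattice is preserved, and every vertex lies in many edges of good index. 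The one delicate point is that these vertex moves must respect the $\Part'$-partite structure; this is automatic, because fullness of $L_\Part^{\mu_t}(H)$ with respect to $\Part'$ forces $(\ub_X - \ub_{X'} \mid \Part')=\0$ whenever $\ub_X-\ub_{X'} \in L_\Part^{\mu_t}(H)$, so the candidate target parts $X(v)$ identified by the pigeonhole argument lie in the same vertex class of $\Part'$ as $v$. By our assumption, the full pair $(\Part'', L_{\Part''}^\mu(H))$ has a solution $M$; we then apply Lemma~\ref{kPartiteLemma} (with $\ell=k$) to $H[V(H) \sm V(M)]$, using Lemma~\ref{RobustInherit} and Proposition~\ref{similarlattices} to transfer the robust-maximality and lattice conditions across the vertex deletion, to obtain a perfect matching; together with $M$ this is a perfect matching of $H$.

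The main obstacle is verifying the partite counterparts of the group-theoretic material of Section~\ref{FullSection} and the merging analysis of Section~\ref{deduceThm}. These are all routine once Proposition~\ref{properties0} is in place, since the relevant arguments depend only on the quotient $L^{\Part\Part'}_{\max}/L$ having order at most $k-1$ and on the existence of a full generating set of $k$-vectors; the single substantive partite-specific subtlety is the vertex-movement issue addressed above.
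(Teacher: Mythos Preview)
Your proposal is correct and matches precisely what the paper intends: the paper omits the proof entirely, stating only that Theorem~\ref{PartitePMNeccSuff} ``may be proved similarly to the non-partite version'', and you have spelled out that similar proof, correctly noting that the key simplification is to apply Lemma~\ref{kPartiteLemma} directly rather than via the random-partition reduction of Lemma~\ref{NonPartiteLemma}. One small point on phrasing: your justification that the vertex moves in the partite analogue of Lemma~\ref{MoveVertices} stay within a $\Part'$-class is not quite right as stated (the claim ``$(\ub_X-\ub_{X'}\mid\Part')=\0$ whenever $\ub_X-\ub_{X'}\in L$'' is vacuous, since $L$ is transferral-free); the correct reason is that $L\subseteq L^{\Part\Part'}_{\max}$, so $\ib_\Part(e)-\ub_X+\ub_{X(v)}\in L$ together with $(\ib_\Part(e)\mid\Part')=\1$ forces $(\ub_X-\ub_{X(v)}\mid\Part')=\0$.
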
 

Let $\mathbf{PPM}(k, \delta)$ denote the problem of deciding whether there is a perfect matching in a given $k$-partite $k$-graph with parts of size $n$ such that every partite $(k-1)$-set has degree at least $\delta n$. Theorem~\ref{PartiteMain} implies that $\mathbf{PPM}(k, \delta)$ can be decided in polynomial time when $\delta > 1/k$. On the other hand, a similar argument to that of Szyma\'nska~\cite{Szy09} shows that $\mathbf{PPM}(k, \delta)$ is NP-complete for $\delta < 1/k$.

\subsection{Tightness of parameters}
We believe that a stronger version of Lemma~\ref{ImplySoluble} is true, in which $2k(k-3)$ is replaced by $k(k-3)$, as in the case for full pairs $(\Part,L)$ where $|G(\Part, L)|$ is prime (see Remark~\ref{Prime}).
This would immediately imply the following improved version of Theorem~\ref{PMNeccSuff}.

\begin{conj} \label{StrongerPMNS} 
Under Setup~\ref{setup}, $H$ has a perfect matching if and only if there is no $k(k-3)$-certificate for $H$.
\end{conj}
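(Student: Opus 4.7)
By Theorem~\ref{WeakerPMNS}, Conjecture~\ref{StrongerPMNS} reduces to strengthening Lemma~\ref{ImplySoluble} so that $k(k-3)$ replaces $2k(k-3)$: that is, showing that if every $k(k-3)$-far full pair for $H$ is soluble, then every full pair is soluble. The plan is to follow the overall structure of the existing proof, building an iterative chain $G = K_0 \supset K_1 \supset \ldots$ of subgroups together with vertex sets $S_t$ so that at some step the hypothesis produces the desired solution. The current argument yields the factor of $2$ because at each step the vertex cost consists of two separate contributions, the progress matching $M_t$ and the auxiliary solution $M_{\mathrm{sol},t}$; this is what forces $|S_t \cup V(M_{t+1})|/k \le 2(|G|-2)$ via Lemma~\ref{Calc}.

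To halve the constant, I would attempt to combine both roles in a single matching per step. Instead of separately invoking the hypothesis to obtain $M_{\mathrm{sol},t}$, I would choose $M_t$ (the greedy matching maximising $\Sart_G(M_t)$) so that some sub-matching of $M_t$ already achieves the residue needed to serve as $M_{\mathrm{sol},t}$ on its own. This would reduce the per-iteration cost from roughly $|K_t| + |K_{t-1}/K_t| + |G/K_t| - 4$ to roughly $|K_t| + |K_{t-1}/K_t| - 3$, which telescopes over the chain to at most $|G| - 2 \le k-3$ edges, i.e.\ $k(k-3)$ vertices.

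The essential combinatorial input this plan requires is a sharpening of Proposition~\ref{abeliangroup}: given a sequence $a$ of residues in $G$ with key subgroup $K(a) = K$, and any target $\tau \in \Sart(a)$, there should exist a subsequence $a' \subseteq a$ of length at most $|K| + |G/K| - 3$ with $\tau \in \Sart(a')$. The main obstacle is that this combinatorial statement is false for arbitrary sequences in abelian groups (in a cyclic group of order $m$, the shortest subsequence of a generic sequence summing to a given target can have length $m-1$). Any proof therefore has to exploit additional structure specific to our setting, most naturally via Theorem~\ref{FullStructure} and the vertex-degree condition~(iii) of Lemma~\ref{NonPartiteLemma}, which together guarantee that many edges of every allowable residue class are present in $H$. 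Formulating and proving the correct structural strengthening of Proposition~\ref{abeliangroup} for edge-residue sequences arising from full pairs, and then verifying that the modified iterative construction preserves the required compatibility between consecutive steps, is the main difficulty; this is why the improved constant remains only conjectural.
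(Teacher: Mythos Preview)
The statement you were asked to prove is a \emph{conjecture}; the paper does not prove it. In the concluding remarks the authors explicitly say that they \emph{believe} the stronger version of Lemma~\ref{ImplySoluble} (with $k(k-3)$ in place of $2k(k-3)$) is true, note that it holds when $|G(\Part,L)|$ is prime (Remark~\ref{Prime}), and leave the general case open. Your write-up correctly recognises this situation: you trace the reduction to a sharpening of Lemma~\ref{ImplySoluble}, locate the source of the factor $2$ in the two separate matchings $M$ and $M_{\mathrm{sol}}$ contributing to $S_{t+1}$ at each step, and end by acknowledging that the required combinatorial sharpening is not established. That is an honest and accurate account of the state of affairs, matching the paper's own discussion.

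One technical remark on your sketch: the per-step cost you quote as ``roughly $|K_t|+|K_{t-1}/K_t|+|G/K_t|-4$'' is correct (see the final displayed bound on $|S_{t+1}|/k$ in the proof of Lemma~\ref{ImplySoluble}), but your proposed target of $|K_t|+|K_{t-1}/K_t|-3$ per step would not telescope to $|G|-2$ in general; for a chain of length $t_0>1$ the sum $\sum_{j}(|K_j|+|K_{j-1}/K_j|-3)$ can exceed $|G|-2$. So even granting the combinatorial sharpening you describe, the bookkeeping would need to be reorganised more carefully than your sketch suggests. This does not affect the bottom line, which is that no proof is available and your conclusion that ``the improved constant remains only conjectural'' is exactly right.
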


A proof of this conjecture would allow us to improve the running time in Theorem~\ref{main}.
We also conjecture that a similar strengthening of Theorem~\ref{PartitePMNeccSuff} holds.
Conjecture~\ref{StrongerPMNS}, if true, would be best-possible.
To see this, consider the following construction, which is similar to Construction~\ref{nopm}.

\begin{construct} \label{Generalnopm}
Let $\Part = (A_1, \ldots, A_{k-1})$ be a partition of a set of $n$ vertices 
with $|A_j| = n/(k-1) \pm 2$ and $\sum_{j \in [k-1]} j|A_j| = k-2$ modulo $k-1$. 
Let $B$ be a subset of $A_1$ of size $k(k-2) - 1$.
Let $H$ be the $k$-graph with vertex set $\bigcup_{j \in [k-1]} A_j$ whose edges are
\begin{enumerate}
\item any $k$-set $e$ with $\sum_{j \in [k-1]} j|e \cap A_j| = 0$ modulo $k-1$, and
\item any $k$-set of vertices in $B$.
\end{enumerate}
\end{construct}

Consider the full pair $(\Part, L)$ for $H$, where $L$ is the lattice of index vectors $\ib$ such that $\sum_{j \in [k-1]} i_j j = 0$ modulo $k-1$, and note that $(\Part, L)$ is a $k(k-3)$-far full pair for $H$.
Since any matching $M$ contains at most $k-3$ edges contained in $B$, 
we have $\sum_{j \in [k-1]} j|V(M) \cap A_j| \neq k-2$ for any matching $M$ in $H$ and so $H$ cannot contain a perfect matching.
On the other hand, $(\Part, L)$ is not a $(k(k-3) - 1)$-far full pair for $H$ since removing $k(k-3) - 1$ vertices from $B$ still leaves $k$ vertices which form a single edge $e$ with $\ib_\Part(e) \notin L$.
It is not hard to show that indeed no $(k(k-3) - 1)$-far full pair for $H$ exists.

We also remark that the following construction shows that the constant $1/k$ in Setup~\ref{setup} is best-possible.

\begin{construct} \label{spacebar} (Space Barrier)
Let $V$ be a set of size $n$ and fix $S \sub V$ with $|S|<n/k$. 
Let $H$ be the $k$-graph whose edges are all $k$-sets that intersect $S$.
\end{construct}

Note that any matching in Construction~\ref{spacebar} has at most $|S|$ edges, so is not perfect.

\subsection{Other degree conditions}
It is natural to ask whether similar results could be obtained 
for the decision problem for perfect matching in $k$-graphs
under the weaker complex degree sequence assumption used in~\cite{KM11p}.
However, our inductive argument in the key lemma depends crucially
on the inheritance of the codegree condition by subgraphs,
and this inheritance need not hold for a complex degree sequence condition.
Thus an alternative proof strategy would be needed for such a result.

It is also natural to consider the decision problem of determining the existence of a perfect matching in $k$-graphs 
satisfying a minimum $\ell$-degree condition for any $\ell \in [k-1]$.
However, one should note that the $\ell$-degree threshold for the existence 
of a perfect matching is open in general. One can read~\cite{HPS} as suggesting
that either a space barrier or a divisibility barrier is always extremal for such a problem,
and this was formalised as Conjecture 3.6 in~\cite{RR10}.
Let $\mathbf{PM}(k,\ell,\delta)$ denote the problem of deciding whether there is a perfect matching 
in a given $k$-graph on $n$ vertices with minimum $\ell$-degree at least $\delta \binom{n}{k-\ell}$.
Szyma\'nska~\cite{Szy} showed that $\mathbf{PM}(k,\ell,\delta)$ is NP-complete
when $\delta$ is less than the space barrier threshold,
i.e.\ $\delta < 1 - (1-1/k)^{k-\ell}$.
It is then natural to make the following conjecture. 

\begin{conj} \label{vdegconj}
$\mathbf{PM}(k,\ell,\delta)$ is in P for $\delta > 1 - (1-1/k)^{k-\ell}$.
\end{conj}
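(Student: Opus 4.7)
The plan is to adapt the three-part strategy of the paper---establish an effective structural characterisation of perfect matchings in terms of certificates, design an exhaustive-search algorithm that looks either for a matching or for a certificate, and finally derandomise a random greedy removal argument to produce the matching---from the codegree setting to the general $\ell$-degree setting. Since Sections~\ref{AlgSection} and~\ref{deduceThm}, and most of Sections~\ref{FullSection} and~\ref{RobMaxSection}, depend only on the combinatorics of lattices and full pairs rather than on the particular degree parameter $\ell$, the bulk of the framework transfers with essentially cosmetic changes; what is really required is a suitable minimum-$\ell$-degree input theorem in the spirit of Theorem~\ref{HypergraphMatching}.

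First I would formulate an analogue of Setup~\ref{setup} whose (codeg) condition reads ``at most $\eps n^{k-\ell}$ $\ell$-sets $A \sub V(H)$ have $d_H(A) < (1 - (1-1/k)^{k-\ell} + \gamma)\binom{n}{k-\ell}$'', and then prove that under this condition either $H$ contains a perfect matching or there is a partition $\Part$ of $V(H)$ into boundedly many parts of size $\Omega(n)$ such that $L^\mu_\Part(H)$ is incomplete and transferral-free. For $\ell = k-1$ this is essentially the combination of Lemma~\ref{PrunePartiteComplex} and Theorem~\ref{HypergraphMatching} used here, but for general $\ell$ it is not immediately in the literature. A plausible route is to adapt the absorbing-plus-regularity proof of~\cite{KM11p} by replacing partite $(k-1)$-degree conditions with minimum $\ell$-degree conditions above the space-barrier threshold, or alternatively to combine the lattice-based absorbing method of Han~\cite{H2} with the partite structural techniques of~\cite{KM11p}. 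Either route has to respect the ``$\mu$-robust'' nature of the edge-lattice, since this is what allows Propositions~\ref{similarlattices}--\ref{robmaxinverse} and Lemma~\ref{RobustRandom} to carry over essentially unchanged.

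Given such an input, the adaptation of the key Lemmas~\ref{NonPartiteLemma} and~\ref{kPartiteLemma} requires more care than it may at first appear. The induction in Lemma~\ref{kPartiteLemma} exploits Proposition~\ref{properties}(\ref{properties:codegree}), which inherits a near-tight codegree condition on each canonical subgraph $H_\ib$; for $\ell < k-1$ an $\ell$-set can distribute its neighbourhood very unevenly across the parts of $\Part_\ib$, so this inheritance is not automatic. I would handle this by enlarging the initial cleanup matching to cover, in addition to the vertices identified in Proposition~\ref{properties}(\ref{properties:vdegree}), those $\ell$-sets whose neighbourhood concentrates on the ``wrong'' combinations of parts; the bound on the number of such $\ell$-sets should follow from a double-counting argument using the $\mu$-robust edge-lattice together with the $\ell$-degree hypothesis. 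Once this inheritance is in place, the random-partition argument of Section~\ref{KeyLemmaSection} (including the Farkas and Baranyai rounding steps) and the deduction in Sections~\ref{deduceThm}--\ref{AlgSection} go through with only changes in constants and a possibly slower polynomial running time.

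The main obstacle, by some margin, is the structural input described above: it is essentially the ``only divisibility barriers matter above the space-barrier threshold'' half of Conjecture~3.6 of~\cite{RR10}, which for $2 \le \ell \le k-2$ remains open in full generality. Any proof of Conjecture~\ref{vdegconj} therefore either has to resolve this structural conjecture outright, or to design the polynomial-time algorithm directly without first passing through an approximate characterisation---perhaps by extending the lattice-based absorbing approach of Han~\cite{H2} from the boundary codegree case to arbitrary $\ell$ above the space-barrier threshold. Either way, once the structural or absorbing ingredient is secured, the remainder of the plan should be largely routine.
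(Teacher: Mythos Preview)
The statement you are addressing is a \emph{conjecture}, not a theorem; the paper does not prove it and explicitly says so. In the concluding remarks the authors note that Theorem~\ref{main} settles the case $\ell=k-1$, list a handful of further cases that follow from known perfect-matching thresholds, and then write: ``To our knowledge all other cases remain open.'' So there is no proof in the paper to compare your proposal against.

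Your outline is a reasonable high-level strategy, and you are right that the decisive missing ingredient is a structural result of Keevash--Mycroft type for minimum $\ell$-degree above the space-barrier threshold. You also correctly flag that this is essentially the divisibility half of the H\`an--Person--Schacht/R\"odl--Ruci\'nski conjecture, which is open for general $\ell$. That honesty is appropriate: until such a structural input exists, the rest of the plan is conditional.

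There is, however, a concrete gap in the part you treat as routine. You propose to rescue the inheritance step (the analogue of Proposition~\ref{properties}(\ref{properties:codegree})) by ``enlarging the initial cleanup matching to cover \dots\ those $\ell$-sets whose neighbourhood concentrates on the wrong combinations of parts.'' But a matching covers \emph{vertices}, not $\ell$-sets; for $\ell \ge 2$ there may be $\Theta(n^\ell)$ bad $\ell$-sets sharing no common vertex, and no small matching can eliminate them. In the codegree case ($\ell=k-1$) the inheritance works for a different reason: the neighbourhood of a $(k-1)$-set is a set of single vertices, so if many $(k-1)$-sets had a significant fraction of their degree in the ``wrong'' part one would directly produce a transferral, contradicting transferral-freeness. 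For $\ell<k-1$ the neighbourhood consists of $(k-\ell)$-sets with many possible index vectors, and this forcing argument breaks down. The paper itself warns (in the paragraph on ``Other degree conditions'') that the inductive argument ``depends crucially on the inheritance of the codegree condition by subgraphs'' and that ``an alternative proof strategy would be needed''; your proposed patch does not supply one.
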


Theorem~\ref{main} establishes the case $\ell=k-1$ of Conjecture~\ref{vdegconj}. Beyond this, the conjecture is immediate for those cases where the space barrier threshold is (asymptotically) equal to the threshold at which a perfect matching is guaranteed. The conjecture of H\`an, Person and Schacht mentioned above would therefore imply all cases of Conjecture~\ref{vdegconj} with $(1-1/k)^{k-\ell} \leq 1/2$, and some cases of the latter conjecture are implied by partial results for the former. Specifically, Conjecture~\ref{vdegconj} holds in the case $k = 3, \ell = 1$ by a result of H\`an, Person and Schacht~\cite{HPS}, in the case $k=4, \ell = 1$ by a result of Lo and Markstr\"om~\cite{LM}, and in the cases $k=5, \ell = 1$ and $k=6, \ell = 2$ by results of Alon, Frankl, Huang, R\"odl, Ruci\'nski and Sudakov~\cite{Alon+}. To our knowledge all other cases remain open. 

\section*{Acknowledgements}

We thank the anonymous referees for many helpful comments.

\end{document}